\documentclass[11pt]{article}
\RequirePackage[OT1]{fontenc}
\RequirePackage{amsthm,amsmath}
\RequirePackage{hyperref}
\usepackage{amsfonts}
\usepackage{amssymb}
\usepackage{amstext}
\usepackage{parskip}
\usepackage{fullpage}
\usepackage{graphicx}
\usepackage{subcaption}

\RequirePackage[numbers]{natbib}
\usepackage{amsfonts,amssymb,amsthm,amsmath,enumerate}

\bibliographystyle{ims}

\newcommand{\prob}[1]{\ensuremath{\mathbb P}\left(#1\right)}

\newcommand{\R}{\ensuremath{\mathbb R}}

\newcommand{\Z}{\ensuremath{\mathbb Z}}

\newcommand{\size}[1]{\ensuremath{\left|#1\right|}}

\newcommand{\argmin}{\operatorname{argmin}}

\newcommand{\e}{\epsilon}
\newcommand{\ve}{\varepsilon}

\newcommand{\half}{\ensuremath{\frac{1}{2}}}
\newcommand{\silent}[1]{}

\newcommand{\D}{{\mathbb D}}
\newcommand{\Ball}{{\mathbf B}}

\newcommand{\C}{{\mathcal C}}

\newcommand{\M}{{\mathcal M}}

\newcommand{\cov}{\textsf{Cov}}

\newcommand{\gap}{\textsf{gap}}

\newcommand{\inv}[1]{\frac{1}{#1}}
\newcommand{\abs}[1]{\left\lvert#1\right\rvert}
\newcommand{\twonorm}[1]{\left\lVert#1\right\rVert_2}
\newcommand{\norm}[1]{\left\lVert#1\right\rVert}
\newcommand{\ldot}{\ldots}

\def\cpk{{\mathcal P}_k}
\def\Z{{\mathbb Z}}
\def\E{{\mathbb E}}

\def\M{{\mathcal M}}

\def\supp{\mathop{\text{supp}\kern.2ex}}
\def\argmin{\mathop{\text{arg\,min}\kern.2ex}}
\let\hat\widehat
\let\tilde\widetilde

\def\qed{\hskip1pt $\;\;\scriptstyle\Box$}

\newcommand{\vecone}{{\bf 1}}
\newcommand{\vc}{{\bf c}}

\newcommand{\BW}{\ensuremath{\mathbb{W}}}
\def\argmax{\mathop{\text{arg\,max}\kern.2ex}}

\def\fatnorm#1{|\kern-.2ex|\kern-.2ex| #1 |\kern-.2ex|\kern-.2ex|}

\newcommand{\shnorm}[1]{\lVert#1\rVert}
\newcommand{\fnorm}[1]{\left\lVert#1\right\rVert_F}
\newcommand{\infonenorm}[1]{\left\lVert#1\right\rVert_{\infty \to 1}}
\newcommand{\onenorm}[1]{\left\lVert#1\right\rVert_{1}}

\def\SDP{\mathop{\text{SDP}\kern.2ex}}
\newcommand{\func}[1]{\ensuremath{\mathrm{#1}}}
\newcommand{\diag}{\func{diag}}

\newcommand{\offd}{\func{offd}}

\newcommand{\Sp}{\mathbb{S}}
\newcommand{\ip}[1]{\;\langle{\,#1\,}\rangle\;}

\newcommand{\beq}{\begin{equation}}
\newcommand{\eeq}{\end{equation}}
\newcommand{\ben}{\begin{eqnarray}}
\newcommand{\een}{\end{eqnarray}}
\newcommand{\bnum}{\begin{enumerate}}
\newcommand{\enum}{\end{enumerate}}
\newcommand{\bit}{\begin{itemize}}
\newcommand{\eit}{\end{itemize}}
\newcommand{\bens}{\begin{eqnarray*}}
\newcommand{\eens}{\end{eqnarray*}}

\def \D {\mathbb{D}}

\DeclareMathOperator*{\Span}{span}

\newcommand{\tr}{{\rm tr}}

\newcommand{\mvec}[1]{\rm{vec}\left\{\,#1\,\right\}}
\newcommand{\opt}{\ensuremath{\text{opt}}}
\newcommand{\CC}{{\mathcal C}}
\newcommand{\Net}{{\mathcal N}}

\newtheorem{theorem}{Theorem}[section]

\newtheorem{fact}[theorem]{Fact}
\newtheorem{lemma}[theorem]{Lemma}
\newtheorem{proposition}[theorem]{Proposition}

\newtheorem{definition}[theorem]{Definition}

\newtheorem{corollary}[theorem]{Corollary}

\def\qed{\hskip1pt $\;\;\scriptstyle\Box$}

\newenvironment{proofof}[1]{\hspace*{20pt}{\it Proof}{ of #1}.\hskip10pt}{\qed\vskip5pt}
\newenvironment{proofof2}{\hskip10pt}{\qed\vskip5pt}

\begin{document}

\title{Semidefinite programming on population clustering: a global analysis}

  \author{Shuheng Zhou\\
    University of California, Riverside, CA 92521}

\date{}

\maketitle

\begin{abstract}
In this paper, we consider the problem of 
partitioning a small data sample of size $n$ drawn from a mixture of $2$ sub-gaussian
distributions.
Our work is motivated by the application of 
clustering individuals according to their population of 
origin using markers, when the divergence between the two populations is small.
We are interested in the case that individual features are of low average 
quality $\gamma$, and we want to use as few of them as 
possible to correctly partition the sample.
We consider semidefinite relaxation of an integer quadratic program 
which is formulated essentially as finding the maximum cut on a graph 
where edge weights in the cut represent dissimilarity scores 
between two nodes based on their features.
A small simulation result in Blum, Coja-Oghlan, Frieze and Zhou (2007,
2009) shows that  even when the sample size $n$ is small, by
increasing $p$ so that
$np= \Omega(1/\gamma^2)$,  one can classify a mixture of two product
populations using the spectral method therein with success rate reaching an ``oracle'' curve.
There the ``oracle'' was computed  assuming that distributions were known, 
where success rate means the ratio between correctly classified 
individuals and the sample size $n$.
In this work, we show the theoretical underpinning of this observed
concentration of measure phenomenon in high dimensions,
simultaneously for the semidefinite optimization goal and the spectral
method, where the input is based on the gram matrix computed from centered data.
We allow a full range of tradeoffs between the sample
size and the number of features such that the product of
these two is lower bounded by $1/{\gamma^2}$ so long as the number of
features $p$ is lower bounded by $1/\gamma$.

\end{abstract}

\section{Introduction}
\label{sec:intro}
We explore a type of classification problem that arises in the context
of computational biology.  The problem is that we are given a small
sample of size $n$, 
e.g., DNA of $n$ individuals (think of $n$ in the hundreds or thousands),
each described by the values of $p$ {\em features} or {\em markers},
e.g., SNPs (Single Nucleotide Polymorphisms, 
think of $p$ as an order of magnitude larger than $n$).
Our goal is to use these features to classify the individuals according to
their population of origin.  
Features have slightly different probabilities 
depending on which population the individual belongs to.
Denote by $\Delta^2$ the $\ell_2^2$ distance between two 
population centers (mean vectors), namely, $\mu^{(1)}, \mu^{(2)} \in \R^p$.
We focus on the case where $p >n$, although it is not needed.
 Note that $\Delta$ measures the Euclidean distance between
 $\mu^{(1)}$ and $\mu^{(2)}$ and thus represents
 their separation.
  
The objective we consider is to minimize the total data size $D=n p$
needed to correctly classify the individuals in the sample as a
function of the ``average quality'' $\gamma$ of the features:
\ben
\label{eq::Delta}
 \gamma := \Delta^2/p, \; \text{ where} \; \; \Delta^2  :=
 \sum_{k=1}^p (\mu^{(1)}_k - \mu^{(2)}_k)^2 \;
\text{ and }  \mu^{(i)} = (\mu^{(i)}_1, \ldots, \mu^{(i)}_p) \in \R^p, i =1, 2.
\een
Suppose we are given a data matrix $X \in \R^{n \times p}$ with
samples from two populations $\C_1, \C_2$, such  that 
\ben 
\label{eq::Xmean}
\forall i \in  \C_g, \; \; \E (X_{i j}) =  \mu^{(g)}_{j}  \; \; \;  
g =1,  2, \forall j =1, \ldots, p. 
\een 
Our goal in the present work is to estimate the group membership
vector $\bar{x} \in \{-1,1\}^n$ such that
\ben
\label{eq::xbar}
\bar{x}_i = 1 \; \text{ for } \; i \in \CC_1 \;  \text{ and } \;
\bar{x}_i =- 1 \; \text{ for } \; i \in \CC_2,
\een
where the sizes of clusters $\abs{\C_j} =: n_j, \forall j$ may not be
the same. Our ultimate goal is to estimate the solution to the discrete optimization problem:
\ben
\label{eq::quadratic}
\text{maximize} \; \; x^T \bar{A} x \quad \text{subject to} \; \; x 
\in  \{-1, 1\}^n 
\een
where $\bar{A}$ is a static reference matrix to be specified.
It was previously shown that, in expectation, among all balanced cuts
in the complete graph formed among $n$ vertices (sample points), the cut of maximum weight corresponds to
the correct partition of the $n$ points according to their distributions in
the balanced case ($n_1 = n_2  = n/2$).
Here the weight of a cut is 
the sum of weights across all edges in the cut, and the edge weight 
equals the Hamming distance between the bit vectors of the two
endpoints~\cite{CHRZ07,Zhou06}.
Under suitable conditions, the statement above also holds with high
probability (w.h.p.).

In other words, in the context of population clustering,
it has been previously shown one can use a random instance of the integer quadratic program: 
\ben
\label{eq::graphcut}
\text{maximize} \; \; x^T A x \quad \text{ subject to} \quad x \in  \{-1, 1\}^n
\een
to identify the correct partition of nodes according to their population
of  origin w.h.p., so long as the data size $D$ is sufficiently large
and the separation metric is at the order of $\Delta^2 = \Omega(\log n)$.
The analyses focused on the high dimensional setting, where $p \gg
n$~\cite{CHRZ07,Zhou06}.
Here $A =(a_{ij})$ is an $n \times n$ symmetric matrix
where for $1\le i, j \le n$, $a_{ij} = a_{ji}$ denotes the edge weight
between nodes $i$ and $j$, computed from the individuals’ bit vectors.
This result is structural, rather than algorithmic.
The integer quadratic program~\eqref{eq::quadratic}
(or~\eqref{eq::graphcut}) is NP-hard.
In a groundbreaking paper~\cite{GW95}, Goemans and Williamson
show that one can use semidefinite program (SDP) as relaxation to solve
these approximately.

In this paper, we propose a semidefinite relaxation framework, inspired
by~\cite{GV15}, where we design and analyze computational efficient
algorithms  to partition data into two groups approximately according
to their population of origin.
More generally, one may consider semidefinite relaxations for
the following sub-gaussian mixture model with $k$ centers
(implicitly, with rank-$k$ mean matrix embedded), where
\ben
\label{eq::model}
X_i = \mu^{(\psi_i)} + \Z_i
\een
where $\Z_1, \ldot, \Z_n \in \R^{p}$ are independent, sub-gaussian, mean-zero, random vectors and
$\psi_i: i \to \{1, \ldots, k\}$ assigns node $i$ to a group $\C_j$ with the 
mean $\mu^{(j)} \in \R^{p}$ for some $j \in [k]$.
Here we denote by $[k]$ the set of integers $\{1, \ldots, k\}$.
Here, each row vector of $X$ is a $p$-dimensional
sub-gaussian random vector and we assume rows are independent.
We will consider a flexible model of parametrization for $\Z_j, j \in [n]$ in 
Section~\ref{sec::theory}. 
In particular, we allow each population to have distinct covariance
structures, with diagonal matrices as special cases.
The analysis framework for the semidefinite relaxation by Guédon and Vershynin~\cite{GV15}
was set in the context of community detection in sparse networks, 
where $A$ represents the adjacency matrix of a random graph.
In other words, they study the semidefinite relaxation of the integer program 
\eqref{eq::graphcut}, where an $n \times n$ symmetric 
random adjacency matrix $A$ (observed) is used to replace the hidden
static matrix $\bar{A}$ in the original problem~\eqref{eq::quadratic}
such that $\E (A) = \bar{A}$.

The innovative proof strategy of~\cite{GV15} is to apply the Grothendieck's
inequality for the random error $A -\bar{A}$ rather than the original matrix $A$ as considered in the 
earlier literature. We call this approach the global analysis,
following~\cite{CCLN21}. With proper adjustments, we apply this
methodology to our settings and prove the first main
Theorem~\ref{thm::SDPmain} regarding the partial recovery of the group memberships based on $n$ sequences of
$p$ features following the mean model~\eqref{eq::Xmean}.
The important distinction is: here, we replace the random adjacency matrix $A$ arising from stochastic block models as considered in~\cite{GV15}
with an instance of symmetric matrix $A$, cf.~\eqref{eq::defineAintro},
computed from the centered data matrix which we now elaborate.
Let $\diag(A)$ and $\offd(A)$ be the diagonal and the off-diagonal 
part of matrix $A$ respectively.

\noindent{\bf Estimators.}
We propose the following estimators.
As in many statistical problems,
one simple but crucial step is to first obtain the centered data.
Let $\vecone_n = [1, \ldots, 1] \in \R^n$ denote a vector of all $1$s.
Let $X$ be a data matrix with row vectors $X_1, \ldots, X_n$ as defined
in~\eqref{eq::Xmean}.
Denote by
\ben
\label{eq::defineY}
Y & = & X - \vecone_n
\otimes \hat{\mu}_n = X - P_1 X, \quad \text{ where}
\quad P_1 = \inv{n} \vecone_n \vecone_n^T \; \; \text{ and }  \\
\label{eq::muhat}
\hat{\mu}_n & = & \inv{n}\sum_{i} X_i \; \text{is the average
  over $n$ random vectors in $\R^p$.}
\een
Loosely speaking, this procedure is 
called ``global centering'' in the statistical literature, for 
example, see~\cite{horns19}.
To estimate the group membership vector $\bar{x} \in \{-1, 1\}^n$,
we use the following adjusted $A$:
\ben
\label{eq::defineAintro}
A & := & Y Y^T -\lambda (E_n - I_n), \;\; \text{ where} \;
\lambda  = \frac{2}{n(n-1)}\sum_{i<j} \ip{Y_i, Y_j},
\\
\label{eq::defineLT}
E_n  & := & \vecone_n \vecone_n^T, \; \text{ and } \; I_n \text{ denotes the 
  identity matrix,}
\een
and consider the following semidefinite optimization problem: 
\ben 
\label{eq::sdpmain}
\text{SDP: } && \text{maximize}
\; \;  \ip{A, Z} \;\; \text{ subject to} \quad Z \succeq 0, 
\; I_n \succeq \diag(Z),
\een 
where $Z \succeq 0$ indicates that the matrix $Z$ is constrained to be positive semidefinite and 
$A \succeq B$ means that $A - B \succeq 0$; moreover, the inner 
product of matrices $\ip{A, B} = \tr(A^T B)$.
Here and in the sequel, denote by 
$\M^{+}_{G} := \{Z: Z \succeq 0,  I_n \succeq \diag(Z)\}$
the set of postive semidefinite matrices whose entries 
are bounded by 1 in absolute value.
More explicitly, the optimization problem  SDP \eqref{eq::sdpmain} is equivalent to:
\ben 
\label{eq::hatZintro}
\text{SDP2: } && \text{maximize}
\; \;  \ip{YY^T - \lambda E_n,  Z} \;\; \text{ subject to} \quad 
Z \succeq 0,  \diag(Z) = I_n.
\een
In our setting, centering the data $X$ plays a key role in the statistical 
analysis and in understanding the roles of sample size lower bounds 
for partial recovery of the clusters. We mention in passing that our
probabilistic analysis in terms of covariance estimation,
cf. Theorems~\ref{thm::YYcrossterms} and~\ref{thm::YYcovcorr},
can be readily applied to the rank-$k$ model (or $k$-means) settings
as well.  Before we continue, some
definitions and notations. Let $\Ball_2^n$ and $\Sp^{n-1}$ be the unit Euclidean ball and the unit sphere in $\R^n$ respectively.
\begin{definition}
  Recall for a random variable $X$, the $\psi_2$-norm of $X$,
  denoted by $\norm{X}_{\psi_2}$, is $\norm{X}_{\psi_2} = \inf\{t > 0\; : \; \E \exp(X^2/t^2) \le 2 \}$.  A random vector $W \in \R^m$ is called sub-gaussian if the one-dimensional marginals $\ip{W, h}$ are sub-gaussian random 
variables for all $h \in \R^{m}$:
(1) $W$ is called isotropic if for every $h \in \R^m$,
$\E \abs{\ip{W, h}}^2= \twonorm{h}^2$, where $\twonorm{h}^2 = \sum_{j=1}^m h_j^2$;
(2)  $W$ is $\psi_2$ with a constant $C_0$ if for every $h \in
  \R^m$,  $\norm{\ip{W, h}}_{\psi_2} \le C_0  \twonorm{h}$.
  The sub-gaussian norm of $W \in \R^m$ is denoted by
  \ben
  \label{eq::Wpsi}
  \norm{W}_{\psi_2}  := \sup_{h \in \Sp^{m-1}} \norm{\ip{W, h}}_{\psi_2}.
  \een
\end{definition}
Throughout this paper, we use $Z =(Z_{ij})$ to denote the positive 
semidefinite matrix in SDP objective functions. 
We use $\Z = (z_{ij})$ to denote the mean-zero random 
matrix with independent, mean-zero, sub-gaussian row vectors $\Z_1,
\ldots, \Z_n \in \R^p$ as considered in~\eqref{eq::model}, where for a constant $C_0$ ,
\ben
\label{eq::covZ1}
\forall j=1, \ldots, n, \; 
\norm{\ip{\Z_j, x}}_{\psi_2} & \le & C_0 \norm{\ip{\Z_j, x}}_{L_2}
\text{ for any } x \in  \R^{p}  \\
\label{eq::covZ2}
\text{ where } \; \norm{\ip{\Z_j, x}}^2_{L_2} &:=& \E \ip{\Z_j, x}^2 = x^T \E
(\Z_j \Z_j^T) x =: x^T \cov(\Z_j) x.
\een
Examples of random vectors with sub-gaussian marginals include the 
multivariate normal random vectors $\Z_j \sim N(0, \Sigma)$
with covariance $\Sigma \succ 0$, and vectors with independent Bernoulli random variables, where the mean parameters $p_j^i := \E (X_{ij})$ for all $i
\in [n]$ and $j \in [p]$ are assumed to be bounded away from 0 or 1;
See for example~\cite{CHRZ07,BCFZ09}.
For a symmetric matrix $A$, let $\lambda_{\max}(A)$ and 
$\lambda_{\min}(A)$ be the largest and the smallest 
eigenvalue of $A$ respectively. 
The operator norm $\twonorm{A}$ is defined to be 
$\sqrt{\lambda_{\max}(A^T A)}$.

\noindent{\bf Signal-to-noise ratios and sample lower bounds.}
Our work is inspired by the two threads of work in combinatorial
optimization and in community detection, and particularly
by~\cite{GV15} to revisit the max-cut problem~\eqref{eq::graphcut} and to
formulate the SDP~\eqref{eq::sdpmain}.
Our focus is on the sample size lower bound, similar to the earlier work 
of the author~\cite{CHRZ07,Zhou06}.
Moreover,  we adopt the following notion of signal-to-noise
ratio when the sub-gaussian random vectors $\Z_i$ in \eqref{eq::model} are isotropic:
\ben
\label{eq::SNR}
\text{\bf SNR isotropic: } \quad s^2 = ({\Delta^2}/{C_0^2})
\wedge ({n p \gamma^2}/{C_0^4})
\een
where $C_0$ is $\psi_2$-constant of the high dimensional vectors $\Z_i
\in \R^{p}$; This notion of SNR appears in~\cite{GV19}, which
can be properly adjusted when coordinates in $\Z_i$ are
dependent in view of~\eqref{eq::covZ1} and \eqref{eq::covZ2}:
\ben
\label{eq::SNR2}
\text{\bf SNR anisotropic: } \quad s^2 =\frac{\Delta^2}
{C_0^2 \max_{j}\twonorm{\cov(\Z_j)}} \wedge \frac{n p \gamma^2}
{C_0^4 \max_{j}\twonorm{ \cov(\Z_j)}^2}.
\een
We can rewrite the separation condition that is implicit in
Theorem~\ref{thm::SDPmain} as follows: 
  \ben
  \label{eq::miles}
  \Delta^2=  p \gamma \ge
   C_0^2 \max_{i} \twonorm{\cov(\Z_i)} \left(\inv{\xi^2} \vee \sqrt{\frac{p}{n 
        \xi^2}} \right) \; \text{ for some } \; 0< \xi <1/2.
  \een
We obtain in Theorem~\ref{thm::SDPmain}
misclassification error that is inversely proportional to the square root of
the SNR parameter $s^2$ as in~\eqref{eq::SNR}
(resp.~\eqref{eq::SNR2}) for isotropic $\Z_i$ (resp. for $\Z_i, i\in [n]$ with
covariance structures), assuming that it is lower bounded.
In the settings of Theorem~\ref{thm::SDPmain} and
Lemma~\ref{lemma::twogroup}, we are able to prove that the error
decays exponentially with respect to the SNR $s^2$ in
Theorem~\ref{thm::exprate}.
The implication of such an exponentially decaying error bound is: when $s^2
=\Omega(\log n)$, perfect recovery of the cluster structure is
accomplished. This result is in the same spirit as that
in~\cite{GV19}; See also~\cite{Royer17,FC18,FC21} and references
therein. Due to its significant length, we defer its proof to another paper.
We compare with~\cite{FC18,FC21,GV19} in the sequel.
Also closely related is the work of~\cite{BCFZ09}.

In more details, spectral algorithms in~\cite{BCFZ09} partition
samples based on the top few eigenvectors of the
gram matrix $XX^T$, following an idea which goes back at least to~\cite{Fie1973}.
In~\cite{BCFZ09}, the two parameters $n, p$ are 
assumed to be  roughly at the same order, hence not allowing a full
range of tradeoffs between the two  
dimensions as considered in the present work; cf~\eqref{eq::miles}.
The spectral analysis in this paper is based on $YY^T$, which
will directly improve the results in~\cite{BCFZ09} in the
sense that  we remove the lower bound on $n$, concerning spectral
clustering for $k=2$; cf. Theorem~\ref{thm::SVD}.
Such a lower bound on $n$ was deemed to be unnecessary given the
empirical evidence in~\cite{BCFZ09}; See  ``summary and future
direction'' in~\cite{BCFZ09}.

\subsection{Contributions}
In summary, we make the following theoretical contributions in this paper:
(a) We construct the estimators in~\eqref{eq::sdpmain}, which
crucially exploit the geometric properties of the two mean vectors, as
we show in Section~\ref{sec::estimators};
(b) Moreover, we use $YY^T$ (and the corresponding 
$A$~\eqref{eq::defineAintro}) instead of the gram matrix $X X^T$ as 
considered in~\cite{BCFZ09}, as the input to our optimization
algorithms, ensuring both computational efficiency and statistical
convergence, even in the low SNR case ($s^2= o(\log n)$);
(c) This approach allows a transparent and unified 
global and local analysis framework for the semidefinite  
programming optimization problem~\eqref{eq::sdpmain}, as given  
in Theorems~\ref{thm::SDPmain} and~\ref{thm::exprate} 
respectively; 
(d)  With the new results on concentration of measure  bounds on 
$\norm{YY^T -\E YY^T}$, we can simultaneously analyze the 
SDP~\eqref{eq::sdpmain} as well as spectral
algorithms based on the leading eigenvector of $YY^T$.
Here and in the sequel, we use $\norm{\cdot}$ to indicate either an 
operator or a  cut norm; cf. Definition~\ref{def::cutnorm}.

In Section~\ref{sec::kmeans}, we make further connections to the
existing semidefinite relaxations of the  $k$-means clustering
problems, which include the baseline spectral algorithm based on the singular
value decomposition (SVD) of $YY^T$. This allows even faster
computation. We analyze a simple spectral algorithm in Theorem~\ref{thm::SVD} through the
Davis-Kahan Perturbation Theorem, where we obtain error bounds 
similar to Theorem~\ref{thm::SDPmain}.
There, we further justify the global centering approach taken in the 
current paper. 
We compare numerically the two algorithms, namely, based on SDP
and spectral clustering respectively and show they indeed have similar
trends as predicted by the signal-to-noise ratio parameter.

\subsection{Notations and organizations}
\label{sec::notation}
Let $e_1, \ldots, e_n$ be the canonical basis of $\R^n$.
For a set $J \subset \{1, \ldots, n\}$, denote
$E_J = \Span\{e_j: j \in J\}$.
Let $P_1 =\vecone_n \vecone_n^T/n$ and $E_n = n P_1$.
Denote by $E_{n \times m} \subset \R^{n \times m}$ a matrix of all ones.
For a vector $v \in \R^n$, we use $v_{J}$ to denote the subvector
$(v_j)_{j \in J}$.
For a vector $x$, $\norm{x}_{\infty} := \max_{j} \abs{x_j}$, $\onenorm{x} := \sum_{j} \abs{x_j}$, 
and  $\twonorm{x} := \sqrt{\sum_{j} x^2_j}$; $\diag(x)$ denotes the diagonal matrix whose 
main diagonal entries are the entries of $x$.
For a matrix $B \in \R^{n \times n}$, $\tr(B) = \sum_{i=1}^n B_{ii}$. 
For a matrix $A = (a_{ij})$ of size $n \times m$,
let $\mvec{A}$ be formed by concatenating columns of matrix 
$A$ into a long vector of size $nm$; 
we use $\onenorm{A} =  \sum_{i=1}^n \sum_{j=1}^m |a_{ij}|$ denote
the $\ell_1$ norm of $\mvec{A}$, and $\fnorm{A} = (\sum_{i, j}
a_{ij}^2)^{1/2}$ the $\ell_2$ norm of $\mvec{A}$, which is also known as
the matrix Frobenius norm.
For a matrix $A$, let  $\norm{A}_{\infty} = \max_{i} \sum_{j=1}^n |a_{ij}|$ denote the
maximum absolute row sum;
Let $\norm{A}_{\max} = \max_{i,j} |a_{ij}|$ denote the component-wise
max norm.
For two numbers $a, b$, $a \wedge b := \min(a, b)$, and 
$a \vee b := \max(a, b)$.
We write $a \asymp b$ if $ca \le b \le Ca$ for some positive absolute
constants $c,C$ which are independent of $n, p$, and $\gamma$.
We write $f = O(h)$ or $f \ll h$ if $\abs{f} \le C h$ for some absolute constant
$C< \infty$ and $f=\Omega(h)$ or $f \gg h$ if $h=O(f)$.
We write $f = o(h)$ if $f/h \to 0$ as $n \to \infty$, where the
parameter $n$ will be the size of the matrix under consideration.
In this paper, $C, C_1, C_2, C_4, c, c', c_1$, etc,
denote various absolute positive constants which may change line by line.

The rest of the paper is organized as follows.
In Section~\ref{sec::theory}, we present the main theoretical results
of the paper. In Section~\ref{sec::estimators}, we present the proof outline for  
Theorem~\ref{thm::SDPmain} on the  
semidefinite program~\eqref{eq::sdpmain} and concentration of measure 
bounds on $B$ in Theorem~\ref{thm::reading}.
In Section~\ref{sec::kmeans},
we discuss various forms of semidefinite relaxations that have been 
considered in the literature,
and highlight the connections and main differences 
with the current work.
Section~\ref{sec::proofreadingmain} gives an outline of the arguments 
for proving Theorem~\ref{thm::reading}, highlighting concentration
bounds on $\norm{YY^T-\E YY^T}$ in Theorems~\ref{thm::YYaniso}
and~\ref{thm::YYnorm}.
In Section~\ref{sec::YYcutnorm}, we present main ideas in proving
Theorem~\ref{thm::YYnorm} with regards to the operator and cut norm
using independent design. In Section~\ref{sec::finalYYaniso}, we discuss correlated design and their concentration of measure bounds concerning
Theorem~\ref{thm::YYaniso}, one of the most technical results of this paper.
Section~\ref{sec::experiments} shows numerical results that validate
our theoretical predictions. We conclude in Section~\ref{sec::conclude}.
We defer all technical proofs to the supplementary material.

\section{Theory}
\label{sec::theory}
We will first construct a matrix $Y$ such that we
subtract the sample mean $\hat{\mu}_n \in \R^{p}$ as computed from \eqref{eq::muhat} from each
row vector $X_i$ of the data matrix. A straight-forward calculation leads to the expression of the
reference matrix $R$ in view of \eqref{eq::EYpre}, and hence $\E(Y) \E(Y)^T = R$ for $R$ as in~\eqref{eq::Rtilt}.
\begin{definition}{\textnormal{ \bf (The estimators)}}
\label{def::estimators}
Let $Y$ be as in~\eqref{eq::defineY} and  $X$ as in~\eqref{eq::Xmean}.
Denote by $\Z_i = X_i - \E (X_i)$ and
\ben
\label{eq::muvector}
\hat{\mu}_n - \E \hat{\mu}_n & := & \inv{n}
\sum_{i=1}^n \Z_i = \inv{n} \sum_{i=1}^n  X_i - \E (X_i),
\een
where $\mu_n$ is as in \eqref{eq::muhat}.

Clearly, by linearity of expectation, $\E \hat{\mu}_n = w_1 \mu^{(1)}+
w_2 \mu^{(2)}$, where $w_j = {\abs{C_j}}/{n}$ for $j = 1, 2$.  Hence
we have for $w_i = \abs{\C_i} /n, i =1, 2$
\ben
\label{eq::EYpre}
\E (Y_i) & := &
\left\{
  \begin{array}{rl} w_2 (\mu^{(1)} - \mu^{(2)})  & \text{ if }  \; i \in \C_1; \\
    w_1 (\mu^{(2)} - \mu^{(1)}) &  \text{ if }  \; i \in \C_2.
  \end{array}\right.
\een
\end{definition}

\begin{definition}{\textnormal{ \bf (The reference matrix)}}
\label{def::reference}
Denote by $n_1 = w_1 n$ and $n_2 = w_2 n$. For $Y$ as defined in
\eqref{eq::defineY} (cf. Definition~\ref{def::estimators}), and
$\Delta^2 =p \gamma$ as
in~\eqref{eq::Delta}, we have
\ben
 R= \E(Y) \E(Y)^T
    &=: &
\label{eq::Rtilt}
p \gamma 
\left[
\begin{array}{cc}
 w_2^2 E_{n_1} &- w_1 w_2E_{n_1 \times n_2} \\
 - w_1 w_2 E_{n_2 \times n_1} & w_1^2 E_{n_2} 
\end{array}
\right].
\een
\end{definition}

\begin{definition}{\textnormal{\bf (Data generative process.)}}
  \label{def::WH}
  Suppose that random matrix $\BW =(w_{jk})\in \R^{n \times m}$ has  $W_1, \ldots, W_n \in   \R^{m}$
 as row vectors, where $W_j, j \in [n]$ are independent, mean-zero, 
isotropic sub-gaussian random vectors  with independent entries satisfying 
\ben 
\label{eq::Wpsi2}
\forall j \in [n], \quad \cov(W_j) :=\E (W_j W_j^T) = I_m, \; \; 
\E[w_{jk}] = 0, \forall j, k, \quad \text{and } \; \; \max_{jk} \norm{w_{jk}}_{\psi_2} \le C_0. 
\een
Suppose that we have for row vectors of $\Z \in \R^{n \times p}$,   $\forall j =1, \ldots, n$,
 \bens 
  \Z^T_j & = & W^T_j H_j^T \; \text{ where} \; H_j \in \R^{p \times
    m}, \; \; 0< \twonorm{H_j} < \infty,
\eens
and $H_j$ is allowed to repeat, for example, across rows from the  same cluster $\C_i$ for
some $i =1, 2$. 
\end{definition}
Throughout this paper, we assume that $m \ge p$ to simplify our 
exposition, although this is not necessary.
First, Lemma~\ref{lemma::twogroup} characterizes the two-group design 
matrix variance and covariance structures to be considered in 
Theorems~\ref{thm::SDPmain} and~\ref{thm::exprate}.
It is understood that when $H_i$ is a symmetric square matrix, it can be
taken as the unique square root of positive semidefinite covariance
matrix, denoted by $\cov(\Z_j) := H_i H_i^T =:\Sigma_i \succeq 0$ for all $j \in
\C_i$.
\begin{lemma}\textnormal{\bf (two-group sub-gaussian mixture model)}
  \label{lemma::twogroup}
Denote by $X$ the two-group design matrix as considered in
\eqref{eq::Xmean}.
Let $W_1, \ldots, W_n \in \R^{m}$ be independent, mean-zero, 
isotropic, sub-gaussian random vectors satisfying~\eqref{eq::Wpsi2}. 
Let $\Z_j = X_j - \E X_j = H_i W_j$, for all $j \in \C_i$, where $H_i
\in \R^{p \times m}$, and $0< \twonorm{H_i} < \infty$, for $i \in \{1,
2\}$. Then $\Z_1, \ldots, \Z_n$ are independent sub-gaussian random vectors
with $\cov(\Z_i)$ satisfying \eqref{eq::covZ1} and \eqref{eq::covZ2}, where
\ben 
\label{eq::rowcov}
\forall j \in \C_i, \quad
\cov(\Z_j) := \E (\Z_j \Z_j^T) 
& = &  \E (H_i W_j W_j^T H_i^T)  =  H_i  H_i^T \text{ and } \;
V_i := \fnorm{H_i}^2 = \tr(\Sigma_i).
\een
\end{lemma}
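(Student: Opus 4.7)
The plan is to verify each of the four claims in turn using the structural identity $\Z_j = H_i W_j$ for $j \in \C_i$ and the properties of the reference vectors $W_j$ from Definition~\ref{def::WH}.

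First, I would dispense with the easy structural points. Independence of $\Z_1,\ldots,\Z_n$ is inherited directly from the independence of $W_1,\ldots,W_n$, since each $\Z_j$ is a measurable (linear) function of $W_j$ alone. The mean-zero property follows from $\E\Z_j = H_i \E W_j = 0$. The covariance identity \eqref{eq::rowcov} is a one-line computation: for $j\in\C_i$,
\begin{equation*}
\cov(\Z_j) = \E(H_i W_j W_j^T H_i^T) = H_i \E(W_j W_j^T) H_i^T = H_i I_m H_i^T = H_i H_i^T,
\end{equation*}
using isotropy $\cov(W_j)=I_m$ from~\eqref{eq::Wpsi2}. The identity $V_i = \fnorm{H_i}^2 = \tr(H_iH_i^T) = \tr(\Sigma_i)$ is then standard.

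Next I would turn to the sub-gaussian bounds \eqref{eq::covZ1} and \eqref{eq::covZ2}, which is where the real content lies. Fix $j\in\C_i$ and an arbitrary test vector $x\in\R^p$. Setting $h := H_i^T x \in \R^m$, I rewrite the one-dimensional marginal as
\begin{equation*}
\ip{\Z_j, x} = \ip{H_i W_j, x} = \ip{W_j, H_i^T x} = \ip{W_j, h} = \sum_{k=1}^m h_k w_{jk}.
\end{equation*}
For the $L_2$ side, isotropy of $W_j$ gives $\E\ip{W_j,h}^2 = \twonorm{h}^2 = x^T H_i H_i^T x = x^T \cov(\Z_j) x$, matching \eqref{eq::covZ2}. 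For the $\psi_2$ side, since $w_{j1},\ldots,w_{jm}$ are independent, mean-zero, and $\max_k \norm{w_{jk}}_{\psi_2}\le C_0$, I invoke the standard rotational-invariance inequality for sums of independent centered sub-gaussians to get $\norm{\sum_k h_k w_{jk}}_{\psi_2}^2 \le c\sum_k h_k^2 \norm{w_{jk}}_{\psi_2}^2 \le c C_0^2 \twonorm{h}^2$ for an absolute constant $c$. Combining,
\begin{equation*}
\norm{\ip{\Z_j,x}}_{\psi_2} \le \sqrt{c}\, C_0\, \twonorm{H_i^T x} = \sqrt{c}\, C_0\, \norm{\ip{\Z_j,x}}_{L_2},
\end{equation*}
which yields \eqref{eq::covZ1} after absorbing the absolute constant $\sqrt{c}$ into $C_0$ (or by noting the hypothesis allows $C_0$ to be taken as the appropriate universal constant times the coordinate $\psi_2$-bound).

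The only nontrivial step is the sum-of-independent-sub-gaussians inequality; this is a textbook fact (see e.g.\ the general Hoeffding bound for sub-gaussians), so I do not anticipate a real obstacle. The minor point to be careful about is keeping the constants tidy so that the final $\psi_2$-to-$L_2$ ratio in \eqref{eq::covZ1} matches the $C_0$ used elsewhere in the paper; this is purely bookkeeping and can be handled by rescaling the constant once and for all at the start of Section~\ref{sec::theory}.
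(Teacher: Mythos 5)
Your proposal is correct and follows essentially the same route the paper takes (the paper gives no separate proof of this lemma, but the identical computations appear in Section~\ref{sec::covest} and Lemma~\ref{lemma::anisoproj}: write $\ip{\Z_j,x}=\ip{W_j,H_i^Tx}$, use isotropy for the $L_2$ norm, and bound the $\psi_2$ norm via $\norm{W_j}_{\psi_2}\le CC_0$, which is the same coordinate-wise general Hoeffding fact you invoke). Your remark about absorbing the universal constant into $C_0$ is exactly how the paper handles it as well.
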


\subsection{Main results}
Throughout this paper, we use $n_{\min}  := n w_{\min}$ and $n_{\max}
:= n w_{\max}$ to represent the size of the smallest and the largest 
clusters respectively.
Denote by $w_{\min} := \min_{j =1, 2} w_j$, where $w_j = n_j/n$.
We first make the following assumptions (A1) and (A2), assuming random
matrix $\Z$ has independent sub-gaussian entries, matching
the separation (and SNR) condition~\eqref{eq::kilo}.
As a baseline, we state in Theorem~\ref{thm::SDPmain} our 
first main result under (A1) and (A2).
However, the conclusions of Theorem~\ref{thm::SDPmain} hold for
the general two-group model so long as (A2) holds, upon
adjusting~\eqref{eq::kilo}. \\
\noindent{\bf (A1)}
Let $\Z = X - \E X = (z_{ij})$. Let $\Z_i = X_i - \E X_i, i =1, \ldots, n$ be 
  independent, mean-zero, sub-gaussian random vectors with independent coordinates such that for all $i,j$, $\norm{z_{ij} }_{\psi_2} := \norm{X_{ij} - \E 
    X_{ij}}_{\psi_2} \le C_0$. \\
\noindent{\bf (A2)}
  The two distributions have variance profile discrepancy bounded in the
  following sense:
\ben
\nonumber
\abs{V_1 - V_2}  & \le & \inv{3} \xi 
n p \gamma \; \; \text{ for some } \; \; 1> 2 \xi =
\Omega(1/n_{\min}), \; \text{ where} \;\\
\label{eq::Varprofile}
V_1 & = & \E \ip{\Z_j, \Z_j}  \quad \forall j \in \C_1 \;\; \text{and}\;\; 
V_2  =  \E \ip{\Z_j, \Z_j} \quad \forall j \in \C_2.
\een
\begin{theorem}
  \label{thm::SDPmain}
  Let $1> \delta = \Omega(1/n)$.
  Let $\C_j \subset [n]$ denote the group membership, with $\abs{\C_j} =
  n_j$ and $\sum_{j} n_j = n$.
  Suppose that for $j \in \C_i$, $\E X_j = \mu^{(i)}$, where $i =1,
  2$.    Let $\hat{Z}$ be a solution of the SDP~\eqref{eq::sdpmain}.
Suppose that (A1) and (A2) hold and for some absolute constants $C, C'$,
  \ben 
  \label{eq::kilo}
 p \gamma = \Delta^2 \ge \frac{C' C_0^2}{\xi^2}  \; \;  \text{ and } \; 
  p n \ge \frac{C C_0^4}{\xi^2 \gamma^2} , \; \text{ where  $\xi$ is
    the same as in~\eqref{eq::Varprofile}}.
  \een
Then with probability at least $1-2 \exp(-c n)$, we have
\ben
\label{eq::hatZFnorm}
\onenorm{\hat{Z} - \bar{x} \bar{x}^T} /n^2 & =: &   \delta \le {2 K_G
  \xi}/{w_{\min}^2}   \; \; \text{and} \; \;
\fnorm{\hat{Z} - \bar{x} \bar{x}^T}^2 / n^2\le {4 K_G \xi}/{w_{\min}^2},
\een
where $\bar{x}$ is as in \eqref{eq::xbar}.
The same error bounds \eqref{eq::hatZFnorm} also hold for the more general two-group sub-gaussian mixture model as considered
in~Lemma~\ref{lemma::twogroup},
upon adjusting~\eqref{eq::kilo}, so that~\eqref{eq::miles} holds.
\end{theorem}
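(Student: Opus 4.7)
The plan is to follow the global–analysis template of Guédon–Vershynin, applied to the deviation $A - R$ rather than to $A$ itself, where $R = \E(Y)\E(Y)^T$ is the reference matrix of Definition~\ref{def::reference}. The argument has three conceptual ingredients: (i) an SDP optimality inequality, (ii) a sharpness/curvature bound for $R$ at $\bar{x}\bar{x}^T$, and (iii) a cut-norm concentration bound for $A - R$, combined with Grothendieck's inequality.

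For (i)--(ii): since $\bar{x}\bar{x}^T \in \M^{+}_G$ is feasible and $\hat Z$ is optimal for SDP~\eqref{eq::sdpmain}, we have $\ip{A,\hat Z - \bar{x}\bar{x}^T}\ge 0$, which rearranges to $\ip{R,\bar{x}\bar{x}^T - \hat Z}\le \ip{A-R,\hat Z - \bar{x}\bar{x}^T}$. Because both $\hat Z$ and $\bar{x}\bar{x}^T$ lie in $\M^{+}_G$, Grothendieck's inequality bounds the RHS by $2K_G\,\infonenorm{A-R}$. For the LHS I use Definition~\ref{def::reference}: $R = p\gamma\,v v^T$ with $v = (w_2\vecone_{n_1}, -w_1\vecone_{n_2})$, whose entrywise signs agree with $\bar{x}$, so $|v_i| = \bar{x}_i v_i \ge w_{\min}$. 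Parametrising any $Z\in\M^{+}_G$ by a Gram form $Z_{ij}=b_i^T b_j$ with $\twonorm{b_i}\le 1$ and setting $u_i := \bar{x}_i b_i$, one computes
\[
\ip{R,\bar{x}\bar{x}^T - Z} \;=\; p\gamma\sum_{i,j}|v_i||v_j|\bigl(1 - u_i^T u_j\bigr) \;\ge\; p\gamma\,w_{\min}^2\sum_{i,j}\bigl(1-u_i^T u_j\bigr) \;=\; p\gamma\, w_{\min}^2\,\onenorm{Z - \bar{x}\bar{x}^T},
\]
using $1 - u_i^T u_j \ge 0$ and $|Z_{ij}-\bar{x}_i\bar{x}_j| = 1 - u_i^T u_j$. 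Combining with (i) yields
\[
p\gamma\, w_{\min}^2\,\onenorm{\hat Z - \bar{x}\bar{x}^T} \;\le\; 2K_G\,\infonenorm{A - R},
\]
so the $\ell_1$ bound in~\eqref{eq::hatZFnorm} reduces to showing $\infonenorm{A - R}\le \xi\, n^2 p\gamma$ with probability at least $1-2e^{-cn}$. The Frobenius bound then follows from $\norm{\hat Z - \bar{x}\bar{x}^T}_\infty \le 2$, which gives $\fnorm{\hat Z - \bar{x}\bar{x}^T}^2 \le 2\onenorm{\hat Z - \bar{x}\bar{x}^T}$.

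For (iii), write $Y = \E Y + (I-P_1)\Z$ and split
\[
A - R \;=\; \underbrace{\E Y\,\Z^T(I-P_1) + (I-P_1)\Z\,\E Y^T}_{\text{cross term}} \;+\; \underbrace{(I-P_1)\Z\Z^T(I-P_1) - \E\bigl[(I-P_1)\Z\Z^T(I-P_1)\bigr]}_{\text{centered noise gram}} \;+\; \underbrace{\E\bigl[(I-P_1)\Z\Z^T(I-P_1)\bigr] - \lambda(E_n - I_n)}_{\text{variance-profile residual}}.
\]
The cross term has sub-gaussian entries of scale $\Delta\,(\max_i\twonorm{\cov(\Z_i)})^{1/2}$, so a standard $\epsilon$-net over $\{\pm 1\}^n\times\{\pm 1\}^n$ delivers cut norm $O(\Delta\, n^{3/2})$, which is $\le \xi\,n^2 p\gamma$ precisely when $np\gamma^2\gtrsim 1/\xi^2$ as in~\eqref{eq::kilo}. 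The centered noise gram is controlled by the operator-norm concentration in Theorems~\ref{thm::YYaniso} and~\ref{thm::YYnorm} (and its assembly in Theorem~\ref{thm::reading}); since $\infonenorm{M}\le n\,\twonorm{M}$ up to constants for symmetric $M$, the same SNR threshold~\eqref{eq::kilo} is sufficient. Finally, $\lambda$ in~\eqref{eq::defineAintro} is calibrated to cancel the rank-one "uniform off-diagonal" component of the expected centered gram, leaving only a block-constant residual of order $|V_1 - V_2|$; its cut norm is $O(|V_1 - V_2|\,n)$, which is $\le \xi\,n^2 p\gamma$ exactly by assumption~(A2).

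The main obstacle is the cut-/operator-norm control of the centered noise gram: the rows of $(I-P_1)\Z$ are coupled through the common centering by $\hat\mu_n$; the coordinates within each row are correlated under the general two-group anisotropic model of Lemma~\ref{lemma::twogroup}; and the random scalar $\lambda$ contributes a further rank-one perturbation that must be absorbed into the above decomposition. These are precisely the statements packaged by Theorems~\ref{thm::reading}, \ref{thm::YYcovcorr}, \ref{thm::YYcrossterms}, \ref{thm::YYaniso} and~\ref{thm::YYnorm}. Once they are in hand, the curvature + Grothendieck argument above assembles into Theorem~\ref{thm::SDPmain}; the extension from~(A1) to the general two-group model in Lemma~\ref{lemma::twogroup} only changes the concentration step, with~\eqref{eq::kilo} replaced by~\eqref{eq::miles} to accommodate $\max_i\twonorm{\cov(\Z_i)}$.
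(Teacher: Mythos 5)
Your overall architecture (curvature of $\ip{R,\cdot}$ at $\bar{x}\bar{x}^T$ $+$ Grothendieck applied to the deviation $+$ a cut-norm concentration bound) is the same as the paper's, and your Gram-vector derivation of the curvature inequality is a clean alternative to the paper's blockwise proof of Lemma~\ref{lemma::onenorm}. However, there is a genuine gap in step (iii): you reduce the theorem to the claim $\infonenorm{A-R}\le \xi n^2 p\gamma$, and that claim is false in the regime the theorem is designed for. The matrix $A=YY^T-\lambda(E_n-I_n)$ carries an uncancelled diagonal bias: $\E\,\diag(YY^T)$ has entries of order $V_m=w_1V_1+w_2V_2=\Theta(p)$ (under (A1), $V_i=\tr(\cov(\Z_j))\asymp C_0^2 p$), and the correction $\lambda(E_n-I_n)$ has zero diagonal, so your ``variance-profile residual'' is not merely a block-constant matrix of order $|V_1-V_2|$ --- it contains a term $\approx V_m\,\frac{n-1}{n}I_n$. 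Taking $s=t$ in the definition of the cut norm shows this diagonal alone contributes $\Theta(nV_m)=\Theta(np)$ to $\infonenorm{A-R}$, and $np\le \xi n^2p\gamma$ forces $n\gtrsim 1/(\xi\gamma)$, an extra sample-size lower bound that the theorem explicitly avoids (it only assumes $np\gtrsim 1/(\xi^2\gamma^2)$ with $p\gamma\gtrsim 1/\xi^2$, so $p\gg n$ with $n\ll 1/(\xi\gamma)$ is allowed).

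The repair is exactly the paper's oracle step, which your write-up skips: by Proposition~\ref{prop::optsol} every optimizer of the SDP has $\diag(\hat Z)=I_n$ (this uses $\diag(YY^T)\ge 0$), and $\diag(\bar{x}\bar{x}^T)=I_n$ as well, so $\ip{A-R,\hat Z-\bar{x}\bar{x}^T}=\ip{B-R,\hat Z-\bar{x}\bar{x}^T}$ for $B=A-\E\tau I_n$ with $\tau=\tr(YY^T)/n$; the shift removes the $V_m$-diagonal and leaves only $-\frac{\tr(R)}{n-1}(I_n-E_n/n)$ plus the $W_0,\mathbb{W}$ terms controlled by (A2), after which Theorem~\ref{thm::reading} gives $\infonenorm{B-R}\le\xi n^2p\gamma$ and your assembly goes through verbatim. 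Two smaller points: (a) your cross-term cut-norm rate $O(\Delta n^{3/2})$ is too optimistic --- maximizing over sign vectors gives $\sum_i|\ip{\Z_i,\mu^{(1)}-\mu^{(2)}}|\asymp nC_0\Delta$, hence cut norm $O(C_0 n^2\Delta)$, which is $\le\xi n^2p\gamma$ under the \emph{first} condition $p\gamma\ge C'C_0^2/\xi^2$ of \eqref{eq::kilo}, not the second one you cite; (b) you should justify that the fluctuation $(\lambda-\E\lambda)(E_n-I_n)$ is absorbed, which follows from $(n-1)|\lambda-\E\lambda|\le\twonorm{YY^T-\E YY^T}$ as in Lemma~\ref{lemma::TLbounds}.
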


\noindent{\bf Discussions.}
We give a proof outline of Theorem~\ref{thm::SDPmain} in
Section~\ref{sec::estimators} for completeness. Our proof covers both isotropic and 
anisotropic cases. See~\cite{Royer17,GV19} for justifications of
(A2). 
Our analysis shows the surprising result that 
Theorem~\ref{thm::SDPmain} does not depend on the clusters being 
balanced,  nor does it require identical variance profiles, so long as
(A2) holds.
Let us also choose a convex subset $\M_{\opt}$:
\ben
\label{eq::moptintro}
\M_{\opt} = \{Z: Z \succeq 0,  \diag(Z) = I_n\} \subset \M^{+}_{G}.
\een
Our proof follows the sequence of arguments in~\cite{GV15}, which were
specified for the stochastic block model. However, when adapting to
our setting, we crucially use the sub-gaussian concentration of
measure bounds as given in Theorems~\ref{thm::reading}
and~\ref{thm::YYnorm}, as well as verifying a non-trivial global
curvature of the excess risk $\ip{R,  Z^{*} - \hat{Z}}$ for the
feasible set $\M_{\opt}$ at the maximizer $Z^{*} = \argmax_{Z \in
  \M_{\opt}} \ip{R, Z} = \bar{x} \bar{x}^T$, cf. Lemma~\ref{lemma::onenorm}.
In order to control the  misclassification error using the
global approach, the parameters $(\delta, \xi)$ must satisfy the
following: in view of~\eqref{eq::kilo} and~\eqref{eq::hatZFnorm},
\ben
\label{eq::trend}
\xi^2 \asymp \frac{C_0^2}{p \gamma} \vee \frac{C_0^4}{ n p \gamma^2} =
1/s^2
\; \; \text{ and } \; \;\delta \le  \frac{2 K_G \xi}{w_{\min}^2}.
\een
Here the parameter $0< \xi^2 <1/4$ is understood to be chosen to be inversely
proportional to the SNR parameter $s^2$, so that
with probability at least $1-\exp(-c n)$,
\bens
\twonorm{YY^T - \E YY^T} \le C (C_0^2 (\sqrt{pn} \vee n) \vee C_0 n \sqrt{p \gamma} ) \asymp \xi n p \gamma 
\eens
as we will show in Theorems~\ref{thm::YYnorm} and \ref{thm::YYaniso}.

Clearly, the larger separation $\Delta^2$, the larger sample size $n$, and
the larger $s^2$, the easier it is for (A2) to be satisfied, since by definition and~\eqref{eq::kilo},
\bens
\xi n p \gamma \ge n/(\xi) \vee \inv{ \xi \gamma} \asymp \sqrt{s^2} (n
\vee \inv{\gamma})
\eens
Hence so far, the misclassification error rate $\delta \asymp
\xi/w_{\min}^2$ is bounded to be inversely proportional
to the square root of $s^2$. More explicitly, we have
Corollary~\ref{coro::misclass}.
\begin{corollary}{\textnormal{(Clustering with $o(n)$ misclassified vertices)}}
  \label{coro::misclass}
  Let $\hat{x}$ denote the eigenvector of $\hat{Z}$
  corresponding to the largest eigenvalue, with $\twonorm{\hat{x}}
  = \sqrt{n}$.
  Then in both settings of Theorem~\ref{thm::SDPmain}, we
  have with probability at least $1-2 \exp(-c n)$,
  \ben
  \label{eq::definedelta}
  \min_{\alpha = \pm 1} \twonorm{\alpha \hat{x} - \bar{x}}^2 \le
\delta n
  = \delta  \twonorm{\bar{x}}^2, \; \text{ where} \; \delta \le {32  K_G \xi}/{w_{\min}^2};
  \een
Moreover,  the signs of the coefficients of $\hat{x}$ correctly 
  estimate the partition of the vertices into the two clusters, up to 
  at most $\delta n$ misclassified vertices. 
  \end{corollary}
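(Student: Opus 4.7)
The plan is to deduce the corollary from the Frobenius bound in Theorem~\ref{thm::SDPmain} by a standard Davis--Kahan perturbation argument, followed by an elementary counting argument to convert an $\ell_2$ closeness statement between $\hat{x}$ and $\bar{x}$ into a bound on the number of misclassified vertices.

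First I would set up the geometry. The reference matrix $\bar{x}\bar{x}^T$ is rank one, with the single nonzero eigenvalue equal to $n = \twonorm{\bar{x}}^2$ and corresponding unit eigenvector $\bar{x}/\sqrt{n}$; all other eigenvalues are zero, so its spectral gap is $n$. By the hypothesis of the corollary, $\hat{x}/\sqrt{n}$ is a unit top eigenvector of $\hat{Z}$. Theorem~\ref{thm::SDPmain} provides, on the event of probability at least $1-2\exp(-cn)$, the Frobenius bound $\fnorm{\hat{Z}-\bar{x}\bar{x}^T}^2 \le 4K_G\xi n^2/w_{\min}^2$.

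Next I would invoke a Davis--Kahan $\sin\theta$ theorem for rank-one projectors (for instance, the Yu--Wang--Samworth refinement) to compare the rank-one projectors $\hat{P}=\hat{x}\hat{x}^T/n$ and $P=\bar{x}\bar{x}^T/n$. In the form I need, this yields
\[
\min_{\alpha=\pm 1}\twonorm{\alpha\, \hat{x}/\sqrt{n}-\bar{x}/\sqrt{n}}^2 \;\le\; \frac{8\,\fnorm{\hat{Z}-\bar{x}\bar{x}^T}^2}{n^2},
\]
using that the eigengap of $\bar{x}\bar{x}^T$ equals $n$. Multiplying through by $n$ and substituting the Frobenius bound gives
\[
\min_{\alpha=\pm 1}\twonorm{\alpha \hat{x}-\bar{x}}^2 \;\le\; \frac{8\,\fnorm{\hat{Z}-\bar{x}\bar{x}^T}^2}{n} \;\le\; \frac{32\,K_G\,\xi}{w_{\min}^2}\,n \;=\; \delta n,
\]
which is exactly \eqref{eq::definedelta}. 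The cleanest way to produce this $\sin\theta$ inequality without invoking an external theorem would be via the variational characterization $(\hat{x}/\sqrt{n})^T\hat{Z}(\hat{x}/\sqrt{n}) \ge (\bar{x}/\sqrt{n})^T\hat{Z}(\bar{x}/\sqrt{n})$ together with the identity $\fnorm{\hat{P}-P}^2=2(1-\ip{\hat{x},\bar{x}}^2/n^2)$ and a Cauchy--Schwarz estimate; the constant $32$ should emerge directly.

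Finally, to convert the $\ell_2$ closeness into a classification statement, I fix the sign $\alpha$ achieving the minimum above and count: for any coordinate $i$ where $\mathrm{sgn}(\alpha\hat{x}_i)\neq \bar{x}_i$, the values $\alpha\hat{x}_i$ and $\bar{x}_i\in\{-1,+1\}$ have opposite signs, hence $(\alpha\hat{x}_i-\bar{x}_i)^2\ge 1$. Summing, the number of misclassified indices is at most $\min_{\alpha}\twonorm{\alpha\hat{x}-\bar{x}}^2\le \delta n$, giving the second claim. I do not foresee a serious obstacle here; the main point of care is tracking the constants so that the factor $32$ in \eqref{eq::definedelta} matches, which amounts to choosing the right version of Davis--Kahan and using that the Frobenius norm bound in Theorem~\ref{thm::SDPmain} carries the factor $4$ rather than the $2$ appearing in the $\onenorm{\cdot}/n^2$ bound.
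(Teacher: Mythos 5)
Your proposal is correct and follows essentially the same route as the paper: the paper also applies the Yu--Wang--Samworth form of Davis--Kahan with spectral gap $\gap(1,\bar{x}\bar{x}^T)=n$, bounds the operator norm by the Frobenius norm, and plugs in $\fnorm{\hat{Z}-\bar{x}\bar{x}^T}^2\le 4K_G\xi n^2/w_{\min}^2$ from Theorem~\ref{thm::SDPmain} to get the factor $32$. Your explicit sign-counting argument for the misclassification claim is the standard one the paper implicitly defers to (via Corollary 1.2 of~\cite{GV15}), so nothing is missing.
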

Next, we present in Theorem~\ref{thm::exprate}
(resp. Corollary~\ref{coro::misexp}) an error bound
\eqref{eq::hatZFnorm2} (resp.~\eqref{eq::eigenconv}), which decays
exponentially in the SNR parameter $s^2$ as defined in~\eqref{eq::SNR2}.
The settings as considered in Theorem~\ref{thm::exprate} include that of 
Theorem~\ref{thm::SDPmain} as a special case, which we elaborate 
in Section~\ref{sec::covest}.  We prove Theorem~\ref{thm::exprate} in a 
concurrent paper. Corollaries~\ref{coro::misclass} and~\ref{coro::misexp} 
follow from the Davis-Kahan Theorem, Theorems~\ref{thm::SDPmain}
and~\ref{thm::exprate} respectively, which we prove
in the supplementary Section~\ref{sec::proofofmisclass}.
\begin{theorem}
  \label{thm::exprate}
  Let $W_1, \ldots, W_n \in \R^{m}$ be independent, mean-zero, 
isotropic, sub-gaussian random vectors satisfying~\eqref{eq::Wpsi2}. 
   Suppose the conditions in Theorem~\ref{thm::SDPmain} hold,
   except that instead of (A1), we assume that the noise matrix $\Z = X-\E(X)$ is
generated according to Definition~\ref{def::WH}:
\bens
 \forall  j \in \C_i, \quad
\Z_j = H_i W_j  \; \text{ for } \; i \in \{1, 2\} \; \;  \text{ and } \; H_i \in \R^{p \times m},  
\eens
where $0< \twonorm{H_i} < \infty$.
Suppose that  for some absolute constant $C, C_{1}$,
\ben
\label{eq::NKlower}
p \gamma \ge \frac{C C_0^2 \max_{j}\twonorm{\cov(\Z_j)} }{w_{\min}^4} \;
\;\text{ and } \; \; np \ge \frac{C_{1} C_0^4 \max_{j}\twonorm{\cov(\Z_j)}^2}{\gamma^2 w_{\min}^4}.
\een
Let $s^2$ be as defined in~\eqref{eq::SNR2}.
Then with probability at least $1-2 \exp(-c_1 n) - c_2/n^2$,
\ben 
\label{eq::hatZFnorm2}
\onenorm{\hat{Z} - \bar{x} \bar{x}^T}/n^2 \le \exp(-c_0 s^2 w_{\min}^4) 
\een
for $\hat{Z}$ as in~\eqref{eq::sdpmain},  for some absolute constants $c, c_0, c_1$.
\end{theorem}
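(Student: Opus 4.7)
The plan is to bootstrap Theorem~\ref{thm::SDPmain} into an exponentially small bound via a two-step global-then-local argument. Theorem~\ref{thm::SDPmain}, applied in the anisotropic form encoded by~\eqref{eq::miles} and~\eqref{eq::NKlower}, already yields the crude bound $\onenorm{\hat{Z}-\bar{x}\bar{x}^T}/n^2 \lesssim \xi/w_{\min}^2 \asymp 1/(s\, w_{\min}^2)$ with probability at least $1-2\exp(-cn)$. This certifies that $\hat{Z}$ lies in a small neighborhood of the rank-one truth $Z^{\ast}=\bar{x}\bar{x}^T$, so the remaining task is a local perturbation analysis rather than a global one.

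For the local step, I would set up a primal-dual certificate for the SDP~\eqref{eq::sdpmain}. The KKT conditions demand a diagonal multiplier $D^{\ast}$ with $D^{\ast}-(YY^T-\lambda E_n) \succeq 0$ and $(D^{\ast}-A)\bar{x}=0$; the natural choice is $D^{\ast}_{ii}=\bar{x}_i \sum_{j} A_{ij}\bar{x}_j/n$. Verifying positive semidefiniteness on $\bar{x}^{\perp}$ reduces to a spectral-norm bound on a centered random matrix that is exactly controlled by Theorems~\ref{thm::YYaniso} and~\ref{thm::YYnorm}, combined with the curvature bound for $\ip{R, Z^{\ast}-Z}$ on $\M_{\opt}$ (Lemma~\ref{lemma::onenorm}). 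Writing $\hat{Z}=Z^{\ast}+\Delta_Z$ and exploiting primal-dual optimality, the error $\Delta_Z$ is forced to project onto a subspace where the restricted quadratic form $\ip{D^{\ast}-A,\cdot}$ is strongly positive with high probability.

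The exponential rate itself comes from a row-wise sub-gaussian tail analysis. For each vertex $i$, the event that $i$ is misclassified by the leading eigenvector of $\hat{Z}$ (via Corollary~\ref{coro::misclass}) corresponds to a sub-gaussian linear form $\ip{\Z_i,\mu^{(1)}-\mu^{(2)}+r_i}$ exceeding a threshold of order $w_{\min}\Delta^2$, where $r_i$ is a small residual depending on $\hat{Z}$. By~\eqref{eq::covZ1}--\eqref{eq::covZ2} this tail is at most $\exp(-c\, w_{\min}^2 \Delta^4/(\twonorm{\cov(\Z_i)}\Delta^2))=\exp(-c\, w_{\min}^2 s^2)$. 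To make this rigorous one must decouple $\Z_i$ from $\hat{Z}$, for example via a leave-one-out device in which row $i$ of $Y$ is replaced by an independent copy; the crude bound from Theorem~\ref{thm::SDPmain} together with the concentration bounds of Theorems~\ref{thm::YYaniso} and~\ref{thm::YYnorm} should ensure the resulting perturbation $\fnorm{\hat{Z}-\hat{Z}^{(i)}}$ is negligible compared to the signal. Summing the per-vertex tails and controlling the resulting sum of (weakly dependent) indicators delivers the bound $\onenorm{\hat{Z}-\bar{x}\bar{x}^T}/n^2 \le \exp(-c_0 s^2 w_{\min}^4)$, where one factor $w_{\min}^2$ reflects cluster imbalance in the signal and the other arises from the SDP constraint geometry (the $w_{\min}^{-2}$ loss already visible in Theorem~\ref{thm::SDPmain}).

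The hard part will be the decoupling step: quantitatively bounding $\fnorm{\hat{Z}-\hat{Z}^{(i)}}$ uniformly over $i\in [n]$ with probability $1-o(1/n)$ while simultaneously ensuring the perturbation does not absorb the $w_{\min}\Delta^2$ signal on row $i$. A parallel difficulty is that the anisotropic design $\Z_j=H_i W_j$ from Definition~\ref{def::WH} forces all sub-gaussian tails to carry the $\twonorm{\cov(\Z_j)}$ prefactor, so Theorem~\ref{thm::YYaniso}---flagged in the excerpt as the most technical result of the paper---must be invoked in a directionally adaptive way to control both operator-norm and cut-norm fluctuations of $YY^T-\E YY^T$ on subspaces determined by the candidate misclassified sets. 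These difficulties, together with the careful bookkeeping of $w_{\min}$-factors needed to extract the $w_{\min}^4$ exponent, are the reason the full proof is deferred to a companion paper.
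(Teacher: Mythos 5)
The paper does not actually contain a proof of Theorem~\ref{thm::exprate}: the author states explicitly that, due to its length, the proof is deferred to a concurrent paper. So there is no in-paper argument to compare yours against; what can be said is whether your outline would plausibly close the gap on its own. As written, it would not, for two concrete reasons.

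First, the dual-certificate step is problematic. The multiplier you propose, $D^{\ast}_{ii}=\bar{x}_i\sum_j A_{ij}\bar{x}_j/n$, does not satisfy $(D^{\ast}-A)\bar{x}=0$ (you need $D^{\ast}_{ii}=\bar{x}_i(A\bar{x})_i$ without the $1/n$, using $\bar{x}_i^2=1$). More fundamentally, a certificate with $D^{\ast}-A\succeq 0$ and $(D^{\ast}-A)\bar{x}=0$ would certify that $Z^{\ast}=\bar{x}\bar{x}^T$ is itself an exact optimizer of the SDP, i.e.\ exact recovery. In the regime the theorem covers, $s^2$ can be a bounded constant, where exact recovery is information-theoretically impossible and such a certificate cannot hold with the claimed probability; the whole point of the exponential-rate results in \cite{FC18,FC21,GV19} (which the paper says Theorem~\ref{thm::exprate} parallels) is to replace the certificate by a self-bounding ``local'' inequality: starting from $\ip{B,\hat{Z}-Z^{\ast}}\ge 0$ and the curvature bound of Lemma~\ref{lemma::onenorm}, one bounds $\sup\{\ip{B-R,Z-Z^{\ast}}:\ Z\in\M_{\opt},\ \onenorm{Z-Z^{\ast}}\le t\}$ by a quantity that grows sublinearly in $t$ (via a peeling/chaining argument over the $\ell_1$-ball rather than the single Grothendieck bound of Lemma~\ref{lemma::GVGD}), which forces $\onenorm{\hat{Z}-Z^{\ast}}$ below the exponential threshold. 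Your sketch never sets up this refined empirical-process bound, which is where the exponent $\exp(-cs^2)$ actually comes from.

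Second, the passage from per-vertex misclassification tails to the claimed bound on $\onenorm{\hat{Z}-\bar{x}\bar{x}^T}/n^2$ is unjustified. The theorem controls the $\ell_1$ norm of the full $n\times n$ matrix error, not a count of misclassified vertices; $\hat{Z}$ need not be entrywise close to any sign matrix $ss^T$, and Corollary~\ref{coro::misclass} only transfers Frobenius/$\ell_1$ control of $\hat{Z}-Z^{\ast}$ \emph{to} the eigenvector, not the reverse. So even if you could show each vertex is ``misclassified'' with probability $\exp(-cs^2w_{\min}^4)$ after a leave-one-out decoupling, you would still need a separate argument converting that into the matrix $\ell_1$ bound \eqref{eq::hatZFnorm2}. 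Combined with the admitted open issue of uniform control of $\fnorm{\hat{Z}-\hat{Z}^{(i)}}$, the proposal is a reasonable research plan in the spirit of the cited literature, but it is not a proof and its central mechanism (the PSD dual certificate) would fail in the low-SNR regime the theorem is designed for.
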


\begin{corollary}{\textnormal{(Exponential decay in $s^2$)}}
  \label{coro::misexp}
  Denote by $\theta_{\SDP} = \angle(\hat{x}, \bar{x})$, the angle
  between $\hat{x}$ and $\bar{x}$, where recall $\bar{x}_j = 1$ if $j
  \in \C_1$ and $\bar{x}_j = -1$ if  $j \in \C_2$, and $\hat{x}$ is as
  in Corollary~\ref{coro::misclass}.
  In the settings of Theorem~\ref{thm::exprate},
  with probability at least $1-2 \exp(-c n) - 2/n^2$,
for some absolute constants $c, c_0, c_1$,
\ben
\nonumber
 \sin(\theta_{\SDP}) & \le & 2 \twonorm{\hat{Z} - \bar{x}
   \bar{x}^T}/{n}  \le \exp(- c_1 s^2 w_{\min}^4) \; \;  \text{ and } \\
 \label{eq::eigenconv}
\min_{\alpha = \pm 1}  \twonorm{(\alpha \hat{x} - \bar{x})/\sqrt{n}}
  & \le &
  {2^{3/2} \twonorm{\hat{Z} - \bar{x} \bar{x}^T}}/{n}  \le
 4 \exp(-c_0 s^2 w_{\min}^4/2) 
 \een 
\end{corollary}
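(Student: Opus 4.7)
The plan is to deduce Corollary~\ref{coro::misexp} from Theorem~\ref{thm::exprate} by invoking the Davis--Kahan $\sin\theta$ theorem, in exact parallel with how Corollary~\ref{coro::misclass} follows from Theorem~\ref{thm::SDPmain}. The key structural observation is that the target $\bar{x}\bar{x}^T$ is rank one, with eigenvalues $n, 0, \ldots, 0$ and unique top unit eigenvector $\bar{x}/\sqrt{n}$, so its spectral gap between the first and second eigenvalues is exactly $n$ and does not shrink with $w_{\min}$. This is what makes Davis--Kahan effective here: the denominator in the perturbation bound is the full dimension $n$.

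The execution would proceed in three short steps. First, apply Davis--Kahan to the pair $(\bar{x}\bar{x}^T, \hat{Z})$ with perturbation $E := \hat{Z} - \bar{x}\bar{x}^T$ to obtain
\[
\sin\theta_{\SDP} \le \frac{2 \twonorm{\hat{Z} - \bar{x}\bar{x}^T}}{n}.
\]
Since both $\hat{x}/\sqrt{n}$ and $\bar{x}/\sqrt{n}$ are unit vectors (recall $\twonorm{\hat{x}} = \twonorm{\bar{x}} = \sqrt{n}$), the elementary identity $\min_{\alpha = \pm 1} \twonorm{\alpha u - v} \le \sqrt{2}\, \sin\angle(u,v)$ immediately gives the eigenvector bound $\min_{\alpha = \pm 1}\twonorm{(\alpha\hat{x}-\bar{x})/\sqrt{n}} \le 2^{3/2} \twonorm{E}/n$, matching the first inequality in~\eqref{eq::eigenconv}. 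Second, I would convert the $\ell_1$ bound from Theorem~\ref{thm::exprate} into an operator-norm bound. Since $\hat{Z} \in \M_{\opt}$ is positive semidefinite with $\diag(\hat{Z}) = I_n$, every entry satisfies $|\hat{Z}_{ij}| \le \sqrt{\hat{Z}_{ii}\hat{Z}_{jj}} \le 1$, while $|\bar{x}_i \bar{x}_j| = 1$, so $\norm{\hat{Z} - \bar{x}\bar{x}^T}_{\max} \le 2$. Using this entrywise bound to trade the squared Frobenius norm for the $\ell_1$ norm,
\[
\twonorm{\hat{Z} - \bar{x}\bar{x}^T}^2 \le \fnorm{\hat{Z} - \bar{x}\bar{x}^T}^2 \le 2 \onenorm{\hat{Z} - \bar{x}\bar{x}^T} \le 2 n^2 \exp(-c_0 s^2 w_{\min}^4),
\]
where the last step invokes Theorem~\ref{thm::exprate}. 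Taking square roots and dividing by $n$ yields $2\twonorm{E}/n \le 2\sqrt{2}\exp(-c_0 s^2 w_{\min}^4/2)$, which is precisely the second line of~\eqref{eq::eigenconv} after multiplication by $\sqrt{2}$ for the eigenvector inequality. For the first line of~\eqref{eq::eigenconv}, the constant prefactor $2\sqrt{2}$ is absorbed into the exponent by choosing $c_1$ strictly less than $c_0/2$, which is valid whenever $s^2 w_{\min}^4$ exceeds a sufficiently large absolute constant; this in turn is ensured by the separation hypothesis~\eqref{eq::NKlower} underlying Theorem~\ref{thm::exprate}.

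There is no substantive obstacle beyond bookkeeping: the only subtlety is verifying that Davis--Kahan applies with the full gap $n$, i.e.\ that $\twonorm{E} < n$, and this is automatic from the exponential decay derived above. The template is identical to the one used for Corollary~\ref{coro::misclass}, except that the polynomial $\ell_1$ bound $\onenorm{\hat{Z} - \bar{x}\bar{x}^T}/n^2 \le 2K_G\xi/w_{\min}^2$ from Theorem~\ref{thm::SDPmain} is replaced by the exponential bound from Theorem~\ref{thm::exprate}; tracing the same chain $\twonorm{E}^2 \le \fnorm{E}^2 \le 2 \onenorm{E}$ through Davis--Kahan in that polynomial setting is what produces the factor $32 K_G\xi/w_{\min}^2$ appearing in~\eqref{eq::definedelta}, providing a consistency check on both proofs.
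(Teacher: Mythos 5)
Your proposal is correct and follows essentially the same route as the paper's own proof: Davis--Kahan applied to the rank-one matrix $\bar{x}\bar{x}^T$ with spectral gap $n$, followed by the chain $\twonorm{\hat{Z}-\bar{x}\bar{x}^T}\le\fnorm{\hat{Z}-\bar{x}\bar{x}^T}\le(2\onenorm{\hat{Z}-\bar{x}\bar{x}^T})^{1/2}$ and the $\ell_1$ bound of Theorem~\ref{thm::exprate}, yielding exactly the constant $2^{3/2}\cdot\sqrt{2}=4$. Your explicit remark that the prefactor $2\sqrt{2}$ in the $\sin(\theta_{\SDP})$ bound must be absorbed into the exponent by taking $c_1<c_0/2$ is a slightly more careful account of what the paper dismisses as ``adjusting the constants.''
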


\subsection{Covariance estimation}
\label{sec::covest}
\noindent{\bf Remarks on covariance being diagonal.}
In Theorem~\ref{thm::SDPmain}, each noise 
vector $\Z_j, \forall j \in [n]$ has independent, mean-zero,
sub-gaussian coordinates with uniformly bounded $\psi_2$ norms.
Suppose that we generate two clusters according
to Lemma~\ref{lemma::twogroup},
with diagonal $H_1$ and $H_2$ respectively, where
\bens
H_j H^T_j =  \diag(\sigma_{1 j}^2, \ldots, \sigma_{p j}^2) \quad
\forall  j \in \{1, 2\},
\eens
Let  $\sigma_{\max}  :=  \max_{i} \twonorm{H_i} =
(\max_{j, k} \E (z_{j k}^2) )^{1/2}$.
Then for each row vector  in $\C_i$, we have
\bens 
\cov(\Z_j) & = & H_i H_i^T \; \text{  and hence }\; \; V_i  := \E
\ip{\Z_j, \Z_j} = \tr(H_i H_i^T) = \sum_{k=1}^p \sigma_{k i}^2
\eens
 by Lemma~\ref{lemma::twogroup}, where $V_i$ is the common variance
 profile for nodes $j \in \C_i$. 
Now, we have by independence of coordinates of $\Z_j$ and by definition of~\eqref{eq::Wpsi} 
\bens
\norm{\Z_j}_{\psi_2} & := & \sup_{h \in \Sp^{p-1}}\norm{\ip{\Z_j, 
    h}}_{\psi_2} \le C \max_{k \le p} \norm{z_{j k}}_{\psi_2}
\le  C C_0 (\max_{i} \twonorm{H_i})
\eens
where $\Sp^{p-1}$ denotes the sphere in $\R^{p}$, and  we
use \eqref{eq::Wpsi2} and the fact that
\bens
\max_{k \le p} \norm{z_{j k}}_{\psi_2}
\le  \sigma_{\max} (\max_{j,k}\norm{w_{j k}}_{\psi_2}) 
\le  \sigma_{\max} C_0 \; \text{ since} \;  \norm{w_{jk}}_{\psi_2} \le C_0, \forall j, k.
\eens
Then clearly, $\max_{j} \twonorm{\cov(\Z_j)} =  \max_{i} \twonorm{H_i H_i^T} =
\sigma_{\max}^2$ and hence \eqref{eq::NKlower} implies
that~\eqref{eq::kilo} holds; cf.

\noindent{\bf Remarks on more general covariance.}
When we allow each population to have distinct
covariance structures following Theorem~\ref{thm::exprate},
we have for some  universal constant $C$, and for all $j \in \C_i$, 
\ben
\label{eq::Zpsi2}
\norm{\Z_j}_{\psi_2}  :=  \sup_{h \in \Sp^{p-1}}\norm{\ip{\Z_j,
    h}}_{\psi_2}  &\le & \norm{W_j}_{\psi_2} \twonorm{H_i} \le C C_0 \max_{i} \twonorm{H_i} \; \text{ since } \\
\forall h \in \Sp^{p-1}, \quad \norm{\ip{\Z_j, h}}_{\psi_2} & = &
 \norm{\ip{H_i W_j,  h}}_{\psi_2} \le  \norm{W_j}_{\psi_2}  \twonorm{H_i^T h}
\een
 where $\norm{W_j}_{\psi_2}  \le   C C_0$  by definition
 of~\eqref{eq::Wpsi2}.
 Without loss of generality (w.l.o.g.), one may assume that $C_0 = 1$, as one can 
 adjust $H_i$  to control the upper
 bound in~\eqref{eq::Zpsi2} through $\twonorm{H_i}$.
As we will show in Theorems~\ref{thm::YYcrossterms} and \ref{thm::YYcovcorr},
with probability at least $1- 2 \exp(-c n)$, for $\Z$ as in Definition~\ref{def::WH},
\ben
\label{eq::ZZoppre}
\inv{p}\twonorm{\Z \Z^T - \E \Z \Z^T}
&\le& C'  (C_0 \max_{i} \twonorm{H_i})^2 \left(\sqrt{\frac{n}{p}} \vee \frac{n}{p}\right), 
\een
for absolute constants $c, C'$.
We discuss the concentration of measure bounds on $\norm{YY^T - \E YY^T}$,
using \eqref{eq::ZZoppre} in Sections~\ref{sec::reduction}
and~\ref{sec::finalYYaniso}; cf. Lemmas~\ref{lemma::tiltproject}
and~\ref{lemma::YYdec}.

\subsection{Related work}
\label{sec::related}
In the present work, we use semidefinite relaxation of 
the graph cut problem~\eqref{eq::graphcut}, which was originally formulated 
in~\cite{CHRZ07,Zhou06} in the context of population clustering.
The biological context for this problem is we are
given DNA information from $n$ individuals from $k$ populations 
of origin and we wish to classify each individual into the correct category.  
DNA contains a series of markers called SNPs, each of which has 
two variants (alleles). 
Given the population of origin of an individual, the
genotypes can be reasonably assumed to be generated by drawing alleles
independently from the appropriate distribution.
In the theoretical computer science literature,
earlier work focused on learning from mixture of well-separated
Gaussians (component distributions), where one aims to 
classify each sample according to which component distribution it 
comes from; See for example~\cite{DS00,AK01,VW02,AM05,KMV10,KK10}.
In earlier works~\cite{DS00,AK01}, the separation requirement 
depends on the number of dimensions of each distribution; this has recently 
been reduced to be independent of $p$, the dimensionality of 
the distribution for certain classes of distributions~\cite{AM05,KSV05}.
While our aim is different from those results, where $n > p$ is almost
universal and we focus on cases $p > n$, we do have one common
axis for comparison, the $\ell_2$-distance between any two centers of the
distributions as stated in~\eqref{eq::datasize}, which is essentially optimal.

Suppose~\eqref{eq::kilo} holds so that the $\ell_2$-separation and total data size satisfy
\ben
\label{eq::datasize}
\Delta^2 := p \gamma = \tilde\Omega(1/{(\xi^2)})  \; \text{ and } \; p n =
\tilde\Omega({1}/{(\xi^2 \gamma^2)}), \; \;
\; \text{ where } \; 1/n <\xi \le c w_{\min}^2
\een
and the $\tilde\Omega(\cdot)$ symbol only hides $\psi_2$-constants for
the high dimensional sub-gaussian random vectors $\Z_i \in \R^{p}$
in~\eqref{eq::model}.
Our results show that even when $n$ is small, by increasing $p$ so
that the total sample size satisfies~\eqref{eq::datasize},
we ensure partial recovery of cluster structures using the
SDP~\eqref{eq::sdpmain} or the spectral algorithm as described in
Theorem~\ref{thm::SVD}.
Previously, such results were only known to exist for balanced max-cut
algorithms~\citep{Zhou06,CHRZ07}, where $\tilde\Omega(\cdot)$ symbol
in~\eqref{eq::datasize} may also hide logarithmic factors.
Results in~\cite{Zhou06,CHRZ07} were among the first such
results towards understanding rigorously and intuitively why their
proposed algorithms and previous methods
\cite{PattersonEtAl,PriceEtAl} work with low sample settings when $p \gg n$ and $n p$ satisfies~\eqref{eq::datasize}.
These earlier results still need the SNR to be at the order of $s^2 =
O(\log n)$; Moreover these results were structural as no
polynomial time algorithms were given for finding the max-cut.

The main contribution of the present work is: we use the
proposed SDP~\eqref{eq::sdpmain} and the related spectral algorithms
to find the partition, and prove quantitively tighter bounds than those
in~\cite{Zhou06,CHRZ07} by removing these logarithmic factors.
Recently, this barrier has also been broken down by the sequence of
work~\cite{Royer17,FC18,GV19}, which we elaborate in
Section~\ref{sec::kmeans}, cf. Variation 3.
For example,~\cite{FC18,FC21} have also established 
exponentially decaying error bounds with respect to
an appropriately defined SNR,
which focuses on balanced clusters and requires an extra
$\sqrt{\log n}$ factor in \eqref{eq::FZ18} in the second component:
\ben
\label{eq::FZ18}
\text{In~\cite{FC18}, cf. eq.(8): } \quad \Delta^2 = p \gamma &= & 
  \Omega\left(1 +  \sqrt{\frac{p \log n}{n}}  \right) \quad  \text{or } \\
  \label{eq::FZ21}
    \text{In~\cite{FC21},  cf. eq.(13): } \quad \Delta^2 = p \gamma &= & \Omega\left((1 \vee \frac{p}{n})  + \sqrt{\frac{p \log n}{n}}  \right) 
    \een
As a result, in~\eqref{eq::FZ21}, a lower bound on the sample size is
imposed: $n \ge 1/\gamma$ in case $p > n$,
and moreover, the size of the matrix $n p  \ge \log n/\gamma^2$,
similar to the bounds in~\cite{BCFZ09}; cf. Theorem 1.2 therein.
We refer to~\cite{CHRZ07,BCFZ09} for references to 
earlier results on spectral clustering and graph partitioning.
We also refer to~\cite{KMV10,RCY11,GV15,Abbe16,BWY17,CY18,BMVV+18,GV19,LLLS+20,FC21,LZZ21,AFW22,nda22}
and references therein for related work on the Stochastic Block Models
(SBM), mixture of (sub)Gaussians and clustering in more general metric spaces.
Our proof technique may be of independent interests, since centering the data
matrix so that each column has empirical mean 0 is an idea broadly
deployed in statistical data analysis.

\section{The (oracle) estimators and the global analysis}
\label{sec::estimators}
Exposition in this subsection follows that of~\cite{GV15}, which we
include for self-containment. First we state
Grothendieck's inequality following~\cite{GV15}.
The concept of cut-norm plays a major role in the work of Frieze and 
Kannan~\cite{FK99} on efficient approximation algorithms for dense graph and 
matrix problems. The cut norm is also crucial for 
the arguments in~\cite{GV15} to go through. 
\begin{definition}{\textnormal{\bf (Matrix cut norm)}}
\label{def::cutnorm}
For a matrix  $A =(a_{ij})$, we denote by $\norm{(a_{i j})}_{\infty\to1}$ its
$\ell_{\infty} \to \ell_1$ norm, which is
\bens
\norm{(a_{i j})}_{\infty\to1} & = &
\max_{\norm{s}_{\infty} \le 1} \norm{A s}_1 = 
\max_{s, t \in \left\{-1, 1\right\}^n}
\ip{A, st^T} 
\eens
This norm is equivalent to the matrix cut norm
defined as: for $A \in \R^{m \times n}$,
\bens
\norm{A}_{\square} & = &
\max_{I \subset [m], J \subset [n]} \abs{ \sum_{i
    \in I} \sum_{j \in J} a_{i, j}}
\eens
and hence
\bens
  \infonenorm{A}
  & = & \max_{x,y \in \{-1, 1\}^n}
  \sum_{i=1}^n \sum_{j=1}^n a_{ij}  x_i y_i \le 
  \twonorm{x}\twonorm{y} \twonorm{A} \le n \twonorm{A} 
  \eens
\end{definition}

\begin{theorem}\textnormal{(Grothendieck's inequality)}
  Consider an $n \times n$ matrix of real numbers $B=(b_{ij})$. Assume
  that, for any numbers $s_i, t_j   \in \{-1, 1\}$, we have
 \ben
 \label{eq::integer}
\abs{ \sum_{i,j} b_{ij} s_i t_j} = \abs{\ip{B, s t^T}}\le 1
\een
 Then for all vectors $S_i, V_i \in B_2^n$, we have
$\abs{  \sum_{i,j} b_{ij} \ip{S_i,  V_j} }= \abs{\ip{B, S V^T}} \le K_G$,
where $K_G$ is an absolute constant referred to as the Grothendieck's constant:
\ben
\label{eq::KG}
K_G \le \frac{\pi}{2 \ln(1+ \sqrt{2})} \le 1.783.
\een
\end{theorem}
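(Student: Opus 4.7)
The plan is to prove Grothendieck's inequality with the specific constant $K_G \le \pi/(2\ln(1+\sqrt{2}))$ (Krivine's bound) by combining a Gaussian rounding step with a nonlinear Hilbert-space embedding that ``undoes'' the arcsine distortion introduced by rounding. The integer hypothesis \eqref{eq::integer} will be invoked only at the very end.

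\textbf{Step 1: Gaussian rounding.} After embedding all of the $S_i, V_j$ into a common Hilbert space $\HH$, I would draw a standard Gaussian vector $g \in \HH$ and set $s_i = \operatorname{sgn}(\ip{g, S_i})$ and $t_j = \operatorname{sgn}(\ip{g, V_j})$, which are $\pm 1$-valued. A standard computation (Grothendieck's identity) gives $\E\, s_i t_j = \tfrac{2}{\pi}\arcsin(\ip{S_i, V_j})$ for unit vectors. Applying this directly and then invoking \eqref{eq::integer} would only bound $\bigl|\sum_{i,j} b_{ij}\arcsin(\ip{S_i, V_j})\bigr|$, which is the wrong linear functional of $B$; one needs the plain inner product $\ip{S_i, V_j}$.

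\textbf{Step 2: Krivine's embedding.} To remove the arcsine, I would replace $S_i, V_j$ by new unit vectors $S_i', V_j'$ in a larger Hilbert space $\HH'$ satisfying $\ip{S_i', V_j'} = \sin\!\bigl(c\,\ip{S_i, V_j}\bigr)$ for a constant $c>0$ to be optimized. The building block is the symmetric-tensor embedding
\[
\Phi(x) \;=\; \bigoplus_{k\ge 0} \sqrt{\tfrac{c^{2k+1}}{(2k+1)!}}\; x^{\otimes(2k+1)},
\]
which satisfies $\ip{\Phi(x),\Phi(y)} = \sinh(c\,\ip{x,y})$ and $\|\Phi(x)\|^2 = \sinh(c)$ for unit $x,y$. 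Pairing this with a sign-twisted variant on the other side (i.e.\ using $\Phi$ for the $S_i$ and a second copy with alternating signs on the odd Taylor coefficients for the $V_j$, so that the even- and odd-$k$ cross terms cancel in the right way) converts $\sinh(c\,\ip{x,y})$ into $\sin(c\,\ip{x,y})$; after normalising, the vectors $S_i', V_j'$ have unit norm precisely when $\sinh(c) \le 1$, i.e.\ when $c \le \ln(1+\sqrt{2})$. I take $c$ at this endpoint.

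\textbf{Step 3: Conclusion.} Apply Step 1 to the embedded vectors: $\E\, s_i' t_j' = \tfrac{2}{\pi}\arcsin(\ip{S_i', V_j'}) = \tfrac{2c}{\pi}\,\ip{S_i, V_j}$. Linearity then gives
\[
\ip{B, S V^{T}} \;=\; \sum_{i,j} b_{ij}\ip{S_i, V_j} \;=\; \tfrac{\pi}{2c}\,\E\,\ip{B, s'(t')^{T}},
\]
and the integer hypothesis \eqref{eq::integer} bounds $|\ip{B, s'(t')^T}|\le 1$ pointwise, yielding $|\ip{B, SV^{T}}| \le \pi/(2c) = \pi/(2\ln(1+\sqrt 2))$, which is exactly \eqref{eq::KG}. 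The main obstacle is the algebraic construction and norm bookkeeping in Step 2: verifying on the nose (not merely up to leading Taylor order) that the sign-twisted pair realises the identity $\ip{S_i', V_j'} = \sin(c\,\ip{S_i, V_j})$ and that the norm constraint pins down $c = \ln(1+\sqrt 2)$. Everything else is routine linearity and Gaussian expectation; an alternative route via Haagerup's complex-Gaussian construction would give the same constant but is technically heavier.
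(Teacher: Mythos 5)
The paper does not prove this theorem at all: it is imported verbatim as a classical result (following Gu\'edon--Vershynin), so there is no in-paper argument to compare against. Your sketch is the standard Krivine proof and it is the correct way to obtain precisely the constant $\pi/(2\ln(1+\sqrt2))$: Gaussian sign-rounding plus the identity $\E\,\operatorname{sgn}\ip{g,u}\operatorname{sgn}\ip{g,v}=\tfrac{2}{\pi}\arcsin\ip{u,v}$, preceded by the tensor-series embedding that replaces $\ip{S_i,V_j}$ by $\sin(c\ip{S_i,V_j})$. Two points need to be tightened. First, the arcsine identity holds only for \emph{unit} vectors, so you must (i) reduce to the case $\twonorm{S_i}=\twonorm{V_j}=1$ at the outset --- which is legitimate because $\ip{B,SV^T}$ is affine in each row separately, hence maximized at extreme points of $B_2^n$ --- and (ii) take $\sinh(c)=1$ \emph{exactly} (not $\le 1$), i.e.\ $c=\ln(1+\sqrt2)$, so that $S_i',V_j'$ land on the unit sphere rather than merely in the ball; with $\sinh(c)<1$ the rounding step would instead produce $\arcsin\bigl(\sin(c\ip{S_i,V_j})/\sinh(c)\bigr)$, which is no longer linear. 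Second, the cancellation $\arcsin(\sin(c t))=ct$ requires $|ct|\le\pi/2$; this holds here because $c=\ln(1+\sqrt2)<\pi/2$ and $|\ip{S_i,V_j}|\le1$, but it should be stated, since it is exactly the place where the argument would break for a larger $c$. With those two items verified (and the routine check that the $(-1)^k$-twisted copy of $\Phi$ really yields $\ip{S_i',V_j'}=\sum_k(-1)^k\tfrac{c^{2k+1}}{(2k+1)!}\ip{S_i,V_j}^{2k+1}=\sin(c\ip{S_i,V_j})$ on the nose), your outline closes into a complete proof of the stated bound \eqref{eq::KG}.
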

Here $B_2^n = \{x \in \R^n: \twonorm{x} \le 1\}$ denotes the unit ball for 
Euclidean norm. Consider the following two sets of matrices:
\bens
\M_1 := \left\{st^T \;  : \; s, t \in \{-1, 1\}^n\right\}, \quad \M_G :=
\left\{S V^T  : \; \text{ all rows}  \; S_i, V_j \in B_2^n \right\}.
\eens
Clearly, $\M_1 \subset \M_G$.
As a consequence, Grothendieck's inequality can be stated as follows: 
\ben
\label{eq::GD}
\forall B \in \R^{n \times n} \; \; 
\max_{Z \in \M_G} \abs{\ip{B, Z}} \le K_G \max_{Z \in \M_1} \abs{\ip{B, Z}}.
\een
Clearly, the RHS \eqref{eq::GD} can be related to the cut norm in
Definition~\ref{def::cutnorm}:
\ben
\label{eq::GD1}
\max_{Z \in \M_1} \abs{\ip{B, Z}} =
  \max_{s, t \in \{-1, 1\}^n} \ip{B, st^T} = \norm{B}_{\infty \to 1}
  \een
To keep the discussion sufficiently general, following~\cite{GV15}, 
we first let $\M_{\opt}$ be any subset of the Grothendieck's set 
$\M_G^{+}$ defined in~\eqref{eq::GDSet}:
\ben 
\label{eq::GDSet}
\M_G^{+} := \left\{Z :  Z \succeq 0, \diag(Z) \preceq I_n\right\}
\subset \M_G \subset [-1, 1]^{n \times n}. 
\een
Lemma~\ref{lemma::GVGD} elaborates on the relationship between 
$\hat{Z}$ for any given $B$ (random or deterministic), and $Z^{*}$ with respect to the  
objective function using $R$, as defined in \eqref{eq::hatZ}. Let
  \ben 
\label{eq::hatZ}
\hat{Z} := \argmax_{Z \in \M_{\opt}} \ip{B, Z} \quad \text{and} \quad Z^{*} := \argmax_{Z \in 
    \M_{\opt}} \ip{R, Z}
  \een
\begin{lemma}{\textnormal{(Lemma 3.3~\cite{GV15})}}
  \label{lemma::GVGD}
  Let $\M_{\opt}$  be any subset of $M_G^{+} \subset [-1, 1]^{n \times 
    n}$ as defined in~\eqref{eq::GDSet}.  Then for $\hat{Z}$ and $Z^{*}$ as defined in \eqref{eq::hatZ},
  \ben
  \label{eq::hatZsand}
  \ip{R, Z^{*}} - 2 K_G \norm{B -R}_{\infty \to 1} \le   \ip{R, \hat{Z}}
  \le   \ip{R, Z^{*}}
  \een
  where the Grothendieck's constant $K_G$ is the same as defined in \eqref{eq::KG}.
  Then
  \ben
\label{eq::upperZZR}
0 \le  \ip{R, Z^{*} - \hat{Z}} & \le &  2 K_G \norm{B -R}_{\infty \to 1} \;\;
 \text{moreover, we have} \\
\label{eq::supZZR}
\sup_{Z \in  \M_{\opt}} \abs{\ip{B- R, Z - Z^{*}} } & \le &  2 K_G \norm{B -R}_{\infty \to 1} 
  \een
\end{lemma}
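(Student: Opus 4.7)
The plan is to deduce both bounds by combining the variational definitions of $\hat{Z}$ and $Z^{*}$ with Grothendieck's inequality, in the form already recorded as~\eqref{eq::GD}. First, the right inequality $\ip{R,\hat{Z}}\le\ip{R,Z^{*}}$ in \eqref{eq::hatZsand} is immediate from the definition $Z^{*}=\argmax_{Z\in\M_{\opt}}\ip{R,Z}$, so only the left inequality requires work.

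For the left inequality, I would write the identity
\[
\ip{R,Z^{*}-\hat{Z}} \;=\; \ip{R-B,\,Z^{*}-\hat{Z}} \;+\; \ip{B,\,Z^{*}-\hat{Z}},
\]
and observe that the second summand is non-positive, because $\hat{Z}=\argmax_{Z\in\M_{\opt}}\ip{B,Z}$ and $Z^{*}\in\M_{\opt}$ is a feasible competitor. Therefore
\[
0 \;\le\; \ip{R,Z^{*}-\hat{Z}} \;\le\; \ip{B-R,\,\hat{Z}-Z^{*}} \;\le\; \abs{\ip{B-R,\hat{Z}}}+\abs{\ip{B-R,Z^{*}}},
\]
and it remains to bound each of the two terms on the right by $K_G\norm{B-R}_{\infty\to 1}$. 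This is exactly what Grothendieck's inequality gives, once we verify the inclusion $\M_{\opt}\subset\M_G^{+}\subset\M_G$: every $Z\in\M_G^{+}$ admits a Cholesky-type factorization $Z=UU^T$ whose rows satisfy $\twonorm{U_i}^2=Z_{ii}\le 1$, so $Z$ can be written as $SV^T$ with $S=V=U$ and rows in $\Ball_2^n$. Applying \eqref{eq::GD} to $B-R$ yields
\[
\max_{Z\in\M_G}\abs{\ip{B-R,Z}} \;\le\; K_G\max_{Z\in\M_1}\abs{\ip{B-R,Z}} \;=\; K_G\norm{B-R}_{\infty\to 1},
\]
where the final equality is~\eqref{eq::GD1}. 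Combining with the displayed inequality above proves both \eqref{eq::hatZsand} and \eqref{eq::upperZZR}.

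For the uniform estimate \eqref{eq::supZZR}, the same argument applies without using any optimality of $\hat{Z}$: for any $Z\in\M_{\opt}$, we have $Z,Z^{*}\in\M_G^{+}\subset\M_G$, hence by the triangle inequality and the displayed Grothendieck bound
\[
\abs{\ip{B-R,\,Z-Z^{*}}} \;\le\; \abs{\ip{B-R,Z}}+\abs{\ip{B-R,Z^{*}}} \;\le\; 2K_G\norm{B-R}_{\infty\to 1},
\]
and taking the supremum over $Z\in\M_{\opt}$ finishes the proof.

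The only conceptual step that is not purely formal is the inclusion $\M_{\opt}\subset\M_G$, which is what allows the combinatorial cut norm $\norm{B-R}_{\infty\to 1}$ to control the continuous inner product over PSD, diagonal-bounded test matrices at the cost of only the Grothendieck constant. Everything else is bookkeeping: the optimality of $\hat{Z}$ for $B$, the optimality of $Z^{*}$ for $R$, and the bilinearity of $\ip{\cdot,\cdot}$. No probabilistic input is needed at this stage — $B$ can be random or deterministic, and the statement is a deterministic consequence of Grothendieck's inequality.
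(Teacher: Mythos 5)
Your proof is correct and follows essentially the same route as the paper: the right-hand inequality from the optimality of $Z^{*}$ for $R$, the left-hand inequality from the optimality of $\hat{Z}$ for $B$ combined with Grothendieck's inequality applied twice to $B-R$ (the paper writes this as the chain $\ip{R,\hat{Z}} \ge \ip{B,\hat{Z}} - \ve \ge \ip{B,Z^{*}} - \ve \ge \ip{R,Z^{*}} - 2\ve$, which is the same algebra as your decomposition), and \eqref{eq::supZZR} by the triangle inequality. Your explicit verification of the inclusion $\M_G^{+}\subset\M_G$ via the factorization $Z=UU^T$ is a detail the paper takes for granted, but it is correct and does no harm.
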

Lemma~\ref{lemma::GVGD}  shows that $\hat{Z}$ as defined in 
\eqref{eq::hatZ}   for  the original problem for a given $B$
provides an almost optimal solution to the reference problem if the 
original matrix $B$ and the reference matrix $R$ are close. 
Lemma~\ref{lemma::GVGD} motivates the consideration of the oracle $B$
as defined in \eqref{eq::sdpB} in Section~\ref{sec::estimators} and $\hat{Z}$ as in \eqref{eq::hatZ}.  
Lemma~\ref{lemma::GVGD} appears as Lemma 3.3 in~\cite{GV15}. 
We include the proof in the supplementary
Section~\ref{sec::proofofSDPglobal} for self-containment.

\subsection{The oracle estimators}
The overall goal of convex relaxation is to:
(a) estimate the solution of the 
discrete optimization problem~\eqref{eq::quadratic} with an 
appropriately chosen reference matrix $R$ such that 
solving the integer quadratic problem~\eqref{eq::quadratic} (with $R$ replacing $\bar{A}$) will 
recover the cluster {\it exactly}; (b)
Moreover, the convex set $\M^{+}_{G}$ (resp. $\M_{\opt}$) is chosen
such that the semidefinite relaxation of the static problem~\eqref{eq::quadratic} is tight.
This means that when we replace $A$ (resp. $A'$) with  $R = \E(Y)  \E(Y)^T$ in
SDP~\eqref{eq::sdpmain} (resp. SDP2~\eqref{eq::hatZintro}),
we obtain a solution $Z^{*} = \bar{x} \bar{x}^T$, which can then be used to
recover the clusters exactly; cf.  Lemma~\ref{lemma::ZRnormintro}.

Note that unlike the settings of~\cite{GV15}, $\E A \not= R$,
resulting in a bias; However,
a remedy is to transform~\eqref{eq::sdpmain} into an equivalent {\bf
  Oracle SDP} formulation to bridge the gap between $YY^T$ and the
reference matrix $R$ which we now define: recall
$\M_{\opt} = \left\{Z :  Z \succeq 0, 
  \diag(Z) = I_n\right\} \subset \M^{+}_{G}$,
\ben 
\label{eq::sdpB}
{\bf Oracle SDP: }   &&
\text{maximize}\; \; \ip{B, Z} \quad \text{ subject to} \quad Z \in
\M_{\opt} \; \text{where} \\
\label{eq::defineBintro}
B  & := & A - \E \tau I_n \; \;\; \text{ where } \; \; \tau =\inv{n}
\sum_{i=1}^n \ip{Y_i, Y_i}
\een 
and $A$ is as in~\eqref{eq::sdpmain}.

Moreover, on $\M_{\opt}$, the adjustment term $\E \tau I_n$ plays no
role in optimization, since the extra trace term $\propto \ip{I_n, Z}
=  \tr(Z)$ is a constant function of $Z$ across the feasible set
$\M_{\opt}$.  However, the diagonal term $\E \tau I_n$ is added in
\eqref{eq::defineBintro} so that the bias $\norm{\E B -R}$ is 
small.
To conclude, the optimization goal~\eqref{eq::sdpmain} is equivalent 
to \eqref{eq::sdpB} in view of Proposition~\ref{prop::optsol}; 
cf~\eqref{eq::optsolAB}.
In words, optimizing the original SDP~\eqref{eq::sdpmain}
over the larger constraint set $\M^{+}_{G}$ is equivalent to 
maximizing $\ip{B, Z}$ over $Z \in \M_{\opt}$ as shown 
in~\eqref{eq::optsolAB}, where we replace the symmetric matrix $A$
with $B$.

\begin{proposition}
  \label{prop::optsol}
  The optimal solutions $\hat{Z}$ as in \eqref{eq::sdpmain}
  must have their  diagonals set to $I_n$.
Thus, the set of optimal solutions $\hat{Z}$ in~\eqref{eq::sdpmain}
coincide with those on the convex subset $\M_{\opt}$ as in \eqref{eq::moptintro},
\ben
\label{eq::Aquiv}
\argmax_{Z \in  \M^{+}_{G} }  \ip{A , Z}
& = &  \argmax_{Z \in \M_{\opt}}   \ip{A , Z} \\
\label{eq::optsolAB}
  & = &
  \argmax_{Z \in \M_{\opt}}  (  \ip{A , Z} - \E \tau \ip{I_n, Z} ).
   \een 
\end{proposition}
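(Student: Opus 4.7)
The plan is to prove both equalities in \eqref{eq::Aquiv}--\eqref{eq::optsolAB} by a single diagonal-lifting argument: given any feasible $Z$ in the larger set $\M^{+}_{G}$, modify only its diagonal to push it into $\M_{\opt}$ without decreasing the SDP objective. Concretely, for $Z \in \M^{+}_{G}$ the constraint $\diag(Z) \preceq I_n$ means $I_n - \diag(Z)$ is a nonnegative diagonal matrix, hence PSD. Setting $Z' := Z + (I_n - \diag(Z))$ preserves positive semidefiniteness as a sum of PSD matrices, and by construction $\diag(Z') = I_n$, so $Z' \in \M_{\opt}$.

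Next I would compute the objective increment. Because $E_n - I_n$ has zero diagonal,
\begin{equation*}
A_{ii} = (YY^T)_{ii} - \lambda(E_n - I_n)_{ii} = \twonorm{Y_i}^2 \ge 0,
\end{equation*}
and therefore
\begin{equation*}
\ip{A, Z'} - \ip{A, Z} \;=\; \ip{A, I_n - \diag(Z)} \;=\; \sum_{i=1}^n \twonorm{Y_i}^2 \, (1 - Z_{ii}) \;\ge\; 0.
\end{equation*}
This immediately gives $\sup_{Z \in \M^{+}_{G}} \ip{A, Z} = \sup_{Z \in \M_{\opt}} \ip{A, Z}$. To upgrade this to the stronger statement that \emph{every} optimal $\hat Z$ has $\diag(\hat Z) = I_n$, I would note that on the high-probability event underlying our concentration bounds (Theorems~\ref{thm::reading} and~\ref{thm::YYnorm}), each $\twonorm{Y_i}^2$ is bounded below by a strictly positive quantity; since $Z_{ii} \le 1$, any optimizer $\hat Z$ must satisfy $\twonorm{Y_i}^2(1 - \hat Z_{ii}) = 0$ for all $i$, forcing $\hat Z_{ii} = 1$. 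This establishes \eqref{eq::Aquiv}.

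For the second equality \eqref{eq::optsolAB}, I would simply observe that on $\M_{\opt}$ the trace is pinned: $\ip{I_n, Z} = \tr(Z) = \tr(I_n) = n$ is a constant function of $Z$. Therefore the modified objective $\ip{A, Z} - \E\tau \cdot \ip{I_n, Z}$ differs from $\ip{A, Z}$ only by the additive constant $-n\,\E\tau$, so the two have identical argmax sets over $\M_{\opt}$. Equivalently, $\ip{B, Z} = \ip{A, Z} - n\,\E\tau$ for all $Z \in \M_{\opt}$, so the Oracle SDP~\eqref{eq::sdpB} and the original SDP~\eqref{eq::sdpmain} agree on their optimal sets; the purpose of the shift $-\E\tau I_n$ in \eqref{eq::defineBintro} is solely to reduce the bias $\norm{\E B - R}$ for the subsequent Grothendieck-based analysis via Lemma~\ref{lemma::GVGD}, not to alter the argmax.

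There is no real obstacle here: the proposition is a structural observation about the SDP relaxation. The only point requiring care is the passage from ``the argmax sets coincide'' to ``every optimal solution already lies in $\M_{\opt}$,'' which hinges on strict positivity of $\twonorm{Y_i}^2$ for all $i$; this is harmless since it holds on the same high-probability event we use throughout, and in any case any optimum on $\M^{+}_{G}$ admits an equivalent optimum in $\M_{\opt}$ obtained by the diagonal lift above.
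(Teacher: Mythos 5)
Your proof is correct and takes essentially the same route as the paper: both arguments rest on the observations that lifting the diagonal to $I_n$ preserves positive semidefiniteness and can only increase the objective because the $\lambda(E_n-I_n)$ term has zero diagonal while $A_{ii}=(YY^T)_{ii}=\twonorm{Y_i}^2\ge 0$, and that $\tr(Z)=n$ is constant on $\M_{\opt}$ so the $\E\tau\, I_n$ shift leaves the argmax unchanged. The only (minor) difference is that you are more explicit than the paper about upgrading ``the optimal values coincide'' to ``every optimizer has unit diagonal,'' which indeed requires strict positivity of each $\twonorm{Y_i}^2$ -- a point the paper glosses over by simply asserting $\diag(YY^T)\ge 0$.
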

We prove Proposition~\ref{prop::optsol} in the supplementary
Section~\ref{sec::optsolAB}.
We emphasize that our algorithm solves the SDP~\eqref{eq::sdpmain}
rather than the oracle SDP~\eqref{eq::sdpB}. 
However, formulating the oracle SDP~\eqref{eq::sdpB}
helps us with the global analysis, in controlling $\norm{\E B - R}$, 
as we now show in Theorem~\ref{thm::reading}. 
\begin{theorem}{\bf{($R$ is the leading term)}}
\label{thm::reading}
Suppose the conditions in Theorem~\ref{thm::SDPmain} hold.
Then with probability at least $1-2\exp(-cn)$,
we have
\bens 
\twonorm{B - R} & \le &  \xi n p \gamma  \; \text{ and } \;
\infonenorm{B - R}  \le   \xi n^2 p \gamma 
\eens
\end{theorem}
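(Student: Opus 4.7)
The plan is to split $B - R$ into a dominant stochastic piece, a scalar fluctuation, and a deterministic bias, and to bound each in both $\twonorm{\cdot}$ and $\infonenorm{\cdot}$. The key algebraic simplification is that global centering forces $Y^T\vecone_n = 0$ (since $(I_n - P_1)\vecone_n = 0$), hence $\sum_{i,j}\ip{Y_i,Y_j} = \twonorm{Y^T\vecone_n}^2 = 0$, so $\lambda$ collapses to $\lambda = -\tau/(n-1)$. Writing $F := (I_n - P_1)\Z$ so that $Y = \E(Y) + F$, $\E YY^T = R + \nu$ with $\nu := \E FF^T$, and $\E\lambda = -\E\tau/(n-1)$, this yields the clean decomposition
\bens
B - R = \bigl(YY^T - \E YY^T\bigr) + \frac{\tau - \E\tau}{n-1}(E_n - I_n) + \Bigl(\nu + \tfrac{\E\tau}{n-1}(E_n - I_n) - \E\tau\, I_n\Bigr).
\eens
The first summand is exactly what Theorems~\ref{thm::YYnorm} and~\ref{thm::YYaniso} control: under the SNR condition~\eqref{eq::kilo} they already produce, with probability at least $1-2\exp(-cn)$, fluctuations of order $\xi n p\gamma$ in operator norm and $\xi n^2 p\gamma$ in cut norm---exactly the target rates.

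For the scalar fluctuation I will exploit $\tau = n^{-1}\fnorm{Y}^2 = n^{-1}\fnorm{X}^2 - \twonorm{\hat\mu_n}^2$, which is a quadratic form in the independent sub-gaussian rows of $\Z = X - \E X$; a standard Hanson--Wright estimate gives $|\tau - \E\tau| = O(\sqrt{p/n})$ with probability at least $1 - 2\exp(-cn)$. Combined with $\twonorm{E_n - I_n} \le n+1$ and $\infonenorm{E_n - I_n} \le n^2$, this contributes at most $O(\sqrt{p/n})$ to the operator norm and $O(\sqrt{np})$ to the cut norm---both negligible relative to $\xi n p \gamma$ and $\xi n^2 p \gamma$ under~\eqref{eq::kilo}, since $\xi^2 n p \gamma^2 \gtrsim 1$.

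The deterministic bias reduces to a direct calculation using Lemma~\ref{lemma::twogroup}: $\nu_{ij} = (\bar V - V_{g(i)} - V_{g(j)})/n$ for $i \ne j$ and $\nu_{ii} = V_{g(i)}(1 - 2/n) + \bar V/n$, where $\bar V = w_1 V_1 + w_2 V_2$ and $g(i) \in \{1,2\}$ is $i$'s cluster. The identity $\vecone_n^T R \vecone_n = 0$ (from $n_1 w_2 = n_2 w_1$) yields $\E\tau = w_1 w_2 p\gamma + \bar V(n-1)/n$. Substituting, every off-diagonal entry of the bias matrix reduces to a signed multiple of $(V_1 - V_2)/n$ plus an $O(p\gamma/n)$ correction, and every diagonal entry to a signed multiple of $(V_1 - V_2)(1 - 2/n)$ plus $-w_1 w_2 p\gamma$. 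Since the off-diagonal part is block-constant (rank at most two modulo the diagonal) and $|V_1 - V_2| \le \xi n p \gamma / 3$ by~(A2), the bias has operator norm $O(\xi n p \gamma)$ and cut norm $O(\xi n^2 p \gamma)$.

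The main obstacle is the delicate cancellation on the off-diagonals between $\nu_{ij}$ and the scalar shift $\E\tau/(n-1)$---both of order $\bar V/n \asymp p/n$---from which only the $|V_1 - V_2|/n$ residue survives; this is precisely where~(A2) emerges as the natural hypothesis, since the variance-profile discrepancy controls the final rate. The remaining stochastic ingredients---the concentration bounds for $YY^T - \E YY^T$ and the Hanson--Wright estimate for $\tau$---are by this point packaged in the cited theorems and in standard sub-gaussian tools.
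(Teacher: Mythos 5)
Your proposal is correct and follows essentially the same route as the paper: split $B-R$ into the fluctuation of $YY^T$ (handled by Theorems~\ref{thm::YYnorm} and~\ref{thm::YYaniso}), the scalar fluctuation of $\lambda=-\tau/(n-1)$, and a deterministic bias whose entries reduce, via the cancellation you identify, to signed multiples of $(V_1-V_2)/n$ plus an $O(p\gamma/n)$ correction controlled by (A2) and $\xi=\Omega(1/n)$ --- this is exactly the content of the paper's Lemma~\ref{lemma::EBRtilt} and Propositions~\ref{eq::M123hub}--\ref{prop::biasfinal}. The one point to fix is your treatment of $\tau-\E\tau$: at confidence level $1-2\exp(-cn)$ a Bernstein/Hanson--Wright bound for $\fnorm{Y}^2$ gives $\abs{\tau-\E\tau}=O(C_0^2\sqrt{p}+C_0\sqrt{p\gamma})$, not $O(\sqrt{p/n})$ (the latter is only the standard-deviation scale); the term is still negligible against $\xi np\gamma$ under~\eqref{eq::kilo}, but the paper avoids this issue entirely by noting $\abs{\tau-\E\tau}=\inv{n}\abs{\tr(YY^T-\E YY^T)}\le\twonorm{YY^T-\E YY^T}$ (Lemma~\ref{lemma::TLbounds}), so no separate concentration estimate is needed.
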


\noindent{\bf Discussions.}
Notice that $B$ is not attainable, since we do not know $\E \tau$;
however, this is irrelevant, since in the proposed algorithm
\eqref{eq::sdpmain}, we are able to readily compute $A$ using the
centered data (or their gram matrix).
Theorem~\ref{thm::reading} is useful in proving Theorem~\ref{thm::SDPmain} 
in view of  Lemma~\ref{lemma::GVGD}; A proof sketch
 for Theorem~\ref{thm::reading} appears in  Section
 \ref{sec::proofreadingmain} and the complete proof appears
 in the supplementary Section~\ref{sec::proofofreading}.
The effectiveness of the SDP procedure~\eqref{eq::sdpmain} crucially depends on
controlling  the bias term $\norm{\E B - R}_{\infty \to 1}$ as well as
the concentration of measure bounds on $\norm{B - \E B}_{\infty \to
  1}$, which in turn depend on Lemma~\ref{lemma::EBRtilt}, 
Theorems~\ref{thm::YYnorm} and~\ref{thm::YYaniso}  respectively. 
As we will show in the proof of Theorem~\ref{thm::reading}, the bias term
\bens
\E B -R & = & \E Y Y^T - \E (Y) \E(Y)^T- \E \lambda (E_n - I_n) - \E \tau
I_n 
\eens
is substantially smaller than $\E A-R$ in the operator and cut norm,
under assumption (A2). Moreover, the concentration of measure bounds on $\norm{YY^T - \E  YY^T}$ imply that, up to a constant factor,  the same bounds also
hold for $\norm{B - \E B}$. Controlling both leads to the conclusion in 
Theorem~\ref{thm::reading}.

In Theorem~\ref{thm::SVD}, we prove convergence results on bounding
the angle and $\ell_2$ distance between the leading eigenvectors of
$R$ and $B$ (resp. $YY^T$) respectively.
Indeed, computing the operator and cut norm for $B-R$ is one of the 
key technical steps in the current work,
unifying Theorems~\ref{thm::SVD} and~\ref{thm::SDPmain}.

\subsection{Proof of  Theorem~\ref{thm::SDPmain}}
\label{sec::proofSDPintro}
Lemma~\ref{lemma::ZRnormintro} shows
that the outer product of group membership vector, namely,
$Z^*$ will  maximize $\ip{R, Z}$ among all $Z \in [-1, 1]^{n 
  \times n}$, and naturally among all $Z \in \M_{\opt}$.
The final result we need is to verify a non-trivial global curvature
of the excess risk $\ip{R, Z^{*} - \hat{Z}}$ for the feasible set $\M_{\opt}$ at the maximizer $Z^{*}$, which is given in Lemma~\ref{lemma::onenorm}.
We then combine  Lemmas~\ref{lemma::GVGD} and~\ref{lemma::onenorm},
and Theorem~\ref{thm::reading} to obtain the final error bound for
$\shnorm{\hat{Z}  -Z^{*}}$ in the $\ell_1$ or Frobenius norm. Recall $\onenorm{A} = \sum_{i, j} \abs{a_{ij}}$.
\begin{lemma}{(\textnormal{\bf Optimizer of the reference objective
      function})}
  \label{lemma::ZRnormintro}
  Let $R$ be as defined in Definition~\ref{def::reference}.
  Let  $\M_{\opt} \subseteq \M_G^{+} \subset [-1, 1]^{n \times n}$
  be as defined in~\eqref{eq::moptintro}. Then
\ben
\label{eq::refoptsol}
Z^{*} = \argmax_{Z \in \M_{\opt}}\ip{R, Z} =  \left[\begin{array}{cc}
  E_{n_1} &-E_{n_1 \times n_2} \\
 - E_{n_2 \times n_1} & E_{n_2} 
\end{array}
\right] = \bar{x} \bar{x}^T
\een
\end{lemma}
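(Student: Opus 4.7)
\textbf{Proof plan for Lemma~\ref{lemma::ZRnormintro}.} The plan is to reduce the matrix optimization to a rank-one quadratic form and then exploit the fact that matrices in $\M_{\opt}$ are entrywise bounded by $1$ in absolute value.

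First, I would observe that $R$ factors as a rank-one PSD matrix. Writing $v \in \R^n$ with $v_i = w_2$ for $i \in \C_1$ and $v_i = -w_1$ for $i \in \C_2$, the definition \eqref{eq::EYpre} gives $\E(Y) = v \, (\mu^{(1)} - \mu^{(2)})^T$, so
\begin{equation*}
R = \E(Y)\E(Y)^T = \twonorm{\mu^{(1)} - \mu^{(2)}}^2 \, v v^T = p\gamma \, v v^T,
\end{equation*}
which matches the block form~\eqref{eq::Rtilt}. Consequently $\ip{R,Z} = p\gamma \cdot v^T Z v$ for every symmetric $Z$, and maximizing $\ip{R,Z}$ over $\M_{\opt}$ is the same as maximizing the scalar quadratic form $v^T Z v$.

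Next I would use the structure of $\M_{\opt}$. Because $Z \succeq 0$ and $\diag(Z) = I_n$, Cauchy--Schwarz applied to the PSD Gram representation of $Z$ yields $\abs{Z_{ij}}^2 \le Z_{ii} Z_{jj} = 1$ for all $i,j$. Therefore
\begin{equation*}
v^T Z v = \sum_{i,j} v_i v_j Z_{ij} \;\le\; \sum_{i,j} \abs{v_i}\abs{v_j} = \Bigl(\sum_i \abs{v_i}\Bigr)^2 = (n_1 w_2 + n_2 w_1)^2 = (2 n w_1 w_2)^2,
\end{equation*}
and this is an absolute upper bound over all $Z \in \M_{\opt}$.

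Finally I would show this bound is attained by $Z = \bar{x} \bar{x}^T$. The key observation is that the sign pattern of $v_i v_j$ matches $\bar{x}_i \bar{x}_j$: both equal $+1$ when $i,j$ lie in the same cluster and $-1$ otherwise. Hence picking $Z_{ij} = \bar{x}_i \bar{x}_j \in \{-1,+1\}$ aligns the signs and saturates $\abs{Z_{ij}} = 1$, giving equality in the display above. A direct computation confirms this: $\bar{x}^T v = n_1 w_2 + n_2 w_1 = 2 n w_1 w_2$, so $v^T (\bar{x}\bar{x}^T) v = (\bar{x}^T v)^2 = (2 n w_1 w_2)^2$. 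Since $\bar{x}\bar{x}^T$ is clearly PSD with unit diagonal, it lies in $\M_{\opt}$, so it is a maximizer. (Uniqueness follows because $v_i v_j \ne 0$ for all $i,j$, forcing $Z_{ij} = \bar{x}_i \bar{x}_j$ at any maximizer.)

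There is no serious obstacle in this argument; the only point requiring a little care is the entrywise bound $\abs{Z_{ij}} \le 1$ on $\M_{\opt}$, which is not literally part of the definition but follows from positive semidefiniteness and the unit-diagonal constraint. Once this is in place the whole proof is a one-line application of the triangle inequality matched by an explicit optimizer.
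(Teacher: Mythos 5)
Your proof is correct and follows essentially the same route as the paper's: relax the problem to the entrywise-bounded set $[-1,1]^{n\times n}$, observe that the sign-matching matrix $\bar{x}\bar{x}^T$ maximizes $\ip{R,Z}$ there, and note that it happens to lie in $\M_{\opt}$. The only addition is your explicit derivation of $\abs{Z_{ij}}\le 1$ from $Z\succeq 0$ and $\diag(Z)=I_n$, which the paper instead takes from the already-stated inclusion $\M_{\opt}\subseteq\M_G^{+}\subset[-1,1]^{n\times n}$.
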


The proof of Lemma~\ref{lemma::onenorm} follows from ideas in 
Lemma 6.2~\cite{GV15}
and is deferred to the supplementary Section~\ref{sec::proofofonenorm}.
As a result, we can apply the Grothendieck's inequality for the random error 
$B - R$ (cf.~Lemma~\ref{lemma::GVGD}) to obtain an upper bound on 
$\ip{R, Z^{*} - \hat{Z}}$ uniformly for all $\hat{Z} \in \M_{\opt}$, 
where $Z^{*}$ is as defined in \eqref{eq::refoptsol}. 
Putting things together, we can prove Theorem~\ref{thm::SDPmain}.
\begin{lemma}
  \label{lemma::onenorm}
  Let $R$ be as defined in Definition~\ref{def::reference} and $Z^{*}$
  be as in \eqref{eq::refoptsol}.
  For every $Z \in \M_{\opt}$, 
 \ben
 \label{eq::Rlower}
 \ip{R, Z^{*} - Z} \ge p \gamma w_{\min}^2 \onenorm{Z- Z^{*}}.
 \een
\end{lemma}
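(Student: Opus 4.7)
The key observation is that $R$ has a rank-one structure compatible with the block pattern of $Z^{*} = \bar{x}\bar{x}^T$. Writing $u \in \R^n$ with $u_i = w_2$ for $i \in \C_1$ and $u_i = -w_1$ for $i \in \C_2$, one checks directly from Definition~\ref{def::reference} that $R = p\gamma \, u u^T$, so $\ip{R, Z^{*}-Z} = p\gamma\, u^T(Z^{*}-Z)u$. I will however keep the block form, because the $\ell_1$-distance $\onenorm{Z-Z^{*}}$ is naturally written block-by-block. For every $Z \in \M_{\opt}$, positive semidefiniteness together with $\diag(Z) = I_n$ yields $|Z_{ij}| \le \sqrt{Z_{ii}Z_{jj}} = 1$, a fact I will use repeatedly.

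First, expand $\ip{R, Z^{*}-Z}$ by splitting indices according to their cluster membership. From Definition~\ref{def::reference} and $Z^{*}_{ij} = \bar{x}_i \bar{x}_j$,
\begin{align*}
\ip{R, Z^{*} - Z} = p\gamma \Bigl[ & w_2^2 \!\!\sum_{i,j \in \C_1}\!\! (1 - Z_{ij}) + w_1^2 \!\!\sum_{i,j \in \C_2}\!\! (1 - Z_{ij}) + 2 w_1 w_2 \!\!\sum_{i \in \C_1, j \in \C_2}\!\! (1 + Z_{ij}) \Bigr].
\end{align*}
By the bound $|Z_{ij}| \le 1$, each of the three sums is nonnegative. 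Second, expand $\onenorm{Z - Z^{*}} = \sum_{i,j} |Z_{ij} - Z^{*}_{ij}|$ using the same block decomposition; since $Z_{ij} \le 1$ on intra-cluster blocks and $Z_{ij} \ge -1$ on cross-cluster blocks, the absolute values drop to give
\[
\onenorm{Z - Z^{*}} = \sum_{i,j \in \C_1}(1 - Z_{ij}) + \sum_{i,j \in \C_2}(1 - Z_{ij}) + 2 \sum_{i \in \C_1, j \in \C_2}(1 + Z_{ij}).
\]

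Third, compare the two expressions term by term. The three block coefficients in $\ip{R, Z^{*}-Z}/(p\gamma)$ are $w_2^2$, $w_1^2$, and $2w_1 w_2$, while those in $\onenorm{Z-Z^{*}}$ are $1$, $1$, and $2$. Each block coefficient of $R$ dominates $w_{\min}^2$ times the corresponding coefficient in the $\ell_1$-norm, since $w_1^2, w_2^2 \ge w_{\min}^2$ trivially, and $w_1 w_2 \ge w_{\min}^2$ because $w_{\max} \ge w_{\min}$. Combining these three inequalities term by term, using the nonnegativity of the block sums, yields
\[
\ip{R, Z^{*} - Z} \ge p\gamma \, w_{\min}^2 \onenorm{Z - Z^{*}},
\]
which is \eqref{eq::Rlower}. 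There is no serious obstacle here; the only point requiring care is justifying that the absolute values in $\onenorm{\cdot}$ can be removed to produce exactly the same block sums that appear (with positive coefficients) in $\ip{R, Z^{*}-Z}$, and this is where the constraints $Z \succeq 0$, $\diag(Z) = I_n$ enter decisively.
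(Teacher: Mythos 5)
Your proof is correct and follows essentially the same route as the paper: both expand $\ip{R, Z^{*}-Z}$ block-by-block, use the sign pattern of $Z^{*}$ together with $|Z_{ij}|\le 1$ (the paper invokes $\M_{\opt}\subset[-1,1]^{n\times n}$, you derive it from $Z\succeq 0$ and $\diag(Z)=I_n$, which is the same fact) to identify the block sums with those of $\onenorm{Z-Z^{*}}$, and then bound each coefficient below by $w_{\min}^2 = w_1^2 \wedge w_2^2 \wedge w_1 w_2$. No gaps.
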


\begin{proofof}{Theorem~\ref{thm::SDPmain}}
We will first conclude from Theorem~\ref{thm::reading} and Lemma~\ref{lemma::GVGD}
that the maximizer of the actual objective function 
$\hat{Z} = \argmax_{Z \in \M_{\opt}} \ip{B, Z},$
must be close to $Z^{*}$ as in \eqref{eq::refoptsol} in terms of the $\ell_1$ distance.
Under the conditions of Lemmas~\ref{lemma::GVGD}
and~\ref{lemma::onenorm}, 
\bens 
\onenorm{\hat{Z} - Z^{*}}/n^2 
& \le&
\frac{\ip{R, Z^{*} - \hat{Z}}}{n^2p \gamma w_{\min}^2 }  \le 
\frac{2 K_G \norm{B -R}_{\infty \to 1}}{n^2  p \gamma w_{\min}^2 }   \le  \frac{2 K_G \xi}{w_{\min}^2 } =: \delta 
\eens 
where by Theorem~\ref{thm::reading}, $\infonenorm{B - R} \le \xi n^2 p \gamma$.
Thus
\bens
\fnorm{Z^{*}- \hat{Z}}^2
& \le &  \norm{Z^{*}- \hat{Z}}_{\max} \onenorm{Z^{*}- \hat{Z}}\le  2 \delta
\eens
where all entries of $\hat{Z}, Z^{*}$ belong to $[-1, 1]$ and hence
$\norm{Z^{*}- \hat{Z}}_{\max} \le 2$.
\end{proofof}

\section{Semidefinite programming relaxation for clustering}
\label{sec::kmeans}
Denote by $X \in \R^{n \times p}$ the data matrix with row vectors 
$X_i$ as in~\eqref{eq::SSE}.
The $k$-means criterion of a partition $\C = \{C_1, \ldots, \C_k\}$ of sample points $\{1,
\ldots, n\}$ is based on the total sum-of-squared Euclidean distances from each point
$X_i \in \R^{p}$ to its assigned cluster centroid $\vc_j$, namely,
\ben
\label{eq::SSE}
f(X, \C, k) := \sum_{j=1}^k \sum_{i \in \C_j} \twonorm{X_i - \vc_j}^2 \; \text{
  where} \; \; \vc_j := \inv{\abs{\C_j}} \sum_{\ell \in \C_{j}}
X_{\ell} \in \R^{p}
\een
Getting a global solution to~\eqref{eq::SSE} through an integer
programming formulation as in~\cite{PX05,PW07}, is NP-hard and it is NP-hard for $k=2$~\cite{DFK+04,ADHP09}.
Various semidefinite relaxations of the objective function have been
considered in different contexts.  We refer
to~\cite{ZDGH+02,PW07,ABCK+15,IMPV17,LLLS+20,MVW17,Royer17,FC18,GV19,FC21}
and references therein for a more complete picture.
Let $\Psi_n$ denote the linear space of real $n$ by $n$ symmetric
matrices.

\noindent{\bf Representation of the partition.}
The work by~\cite{ZDGH+02,PX05,PW07} show that minimizing the $k$-means objective $f(X, 
\C, k)$ is equivalent to solving the following maximization problem:
\ben 
\label{eq::relax7}
&& \text{maximize} \; \; \ip{\hat{S}_n, Z}  \quad \text{ s.t. }  Z \in 
\cpk 
\een
where  $\hat{S}_n = X X^T$ and the constraint set $\cpk$ is defined as
in~\eqref{eq::cpk}:
\ben
\label{eq::cpk}
\mathcal{P}_k =\{B \in \Psi_{n}: B \ge 0, B^2 = B, B \vecone_n = \vecone_n, \tr(B) = k\}
\een
where $B \ge 0$ means that all elements of $B$ are nonnegative.
Hence matrices in $\cpk$ are block diagonal, symmetric, nonnegative
projection matrices with $\vecone_n$ as an eigenvector.
The following matrix set $\Phi_{n,k}$ is a compact convex subset of 
$\Psi_n$, for any $k \in [n]$: 
\ben 
\label{eq::tracek}
\Phi_{n,k}
= \left\{Z \in \Psi_n:  I_n \succeq Z \succeq 0, \tr(Z) = k \right\}
\een
\noindent{\bf Variation 1.}
Peng and Wei~\cite{PW07} first replace the requirement that $Z^2 =
Z$, namely, $Z$ is a projection matrix, with the relaxed condition that
all eigenvalues of $Z$ must stay in $[0, 1]$: $I_n \succeq Z \succeq
0$. Now consider the following semidefinite relaxation of ~\eqref{eq::relax7},
\ben
\label{eq::relax16}
&& \text{maximize}
\quad \ip{\hat{S}_n, Z} \text{ s.t.} \;Z \in \M_k \; \text{
  where } \; \M_k = \{Z \in \Phi_{n,k}: Z  
 \ge 0, Z \vecone_n = \vecone_n \}
  \een
  The key differences between this and the SDP~\eqref{eq::sdpmain} are:
  (a) In the convex set~$\M_{\opt}$~\eqref{eq::moptintro}, we do not enforce that all
entries are nonnegative, namely, $Z_{ij} \ge 0, \forall i, j$; This
allows faster computation; (b) In order to derive concentration of
measure bounds that are sufficiently tight,
we make a natural, yet important data processing step in the current 
work, where we center the data according to their column means
following Definition~\ref{def::estimators} before computing $A$ as in~\eqref{eq::defineAintro};
(c) Given this centering step,  we do not need to enforce $Z \vecone_n
= \vecone_n$. See Variation 2 for details.

\noindent{\bf Variation 2.}
To speed up computation, one can drop the nonnegative 
constraint on elements of $Z$ in \eqref{eq::relax16}~\citep{ZDGH+02, PW07}.
The following semidefinite relaxation is also considered
in~\cite{PW07}: 
\ben
\label{eq::relax17}
 \text{maximize} \quad \ip{\hat{S}_n, Z}
 && \text{ s.t. } \; Z \vecone_n = \vecone_n, Z \in \Phi_{n,k}
\; \;\text{ for } \;  \Phi_{n,k} \text{ as in}~\eqref{eq::tracek}.
\een
Moreover, Peng and Wei~\citep{PW07} show that the set of feasible
solutions to~\eqref{eq::relax17} have immediate connections to the SVD
of $YY^T$, via the following reduction step, closely related to our proposal.
When $Z$ is a feasible solution to~\eqref{eq::relax17},
$\vecone_n/\sqrt{n}$ is the unit-norm leading eigenvector of $Z$ and 
one can define
\ben 
\label{eq::Z1}
Z_1 & := & Z - \inv{n} \vecone_n \vecone_n^T \; \; \text{ and hence 
}\; \; Z_1 :=  (I-P_1) Z = (I-P_1)Z (I-P_1).
\een 
Then $\tr(Z_1) =\tr(Z) -1 = k-1$ and $Z_1 \in \Phi_{n, k-1}$.
Hence~\eqref{eq::relax17} is reduced to
 \ben 
 \label{eq::relax20}
 \text{maximize} \quad \ip{YY^T, Z_1} \quad \text{ s.t. }   I_n \succeq
 Z_1 \succeq 0, \tr(Z_1) = k-1
 \een
 since $YY^T = (I-P_1) \hat{S}_n (I-P_1)$.
Let $\lambda_1 \ge  \ldots \ge \lambda_{n-1}$ be the
largest $(n-1)$ eigenvalues of $YY^T$ in descending order.
The optimal solution to~\eqref{eq::relax20} can be achieved if and
only if $\ip{YY^T,  Z_1} = \sum_{i=1}^{k-1} \lambda_i$; see for example~\cite{OW93}.
Then the algorithm for solving~\eqref{eq::relax20} and
correspondingly~\eqref{eq::relax17} is given as follows~\citep{PW07}: \\
(a) Use singular value decomposition method to compute the first $k-1$
largest eigenvalues of $YY^T$, and their corresponding eigenvectors
$v_1, \ldots, v_{k-1}$; (b) Set
\bens
Z_1 =\sum_{j=1}^{k-1} v_j v_j^T; \; \; \text{and return} \; \; Z =
\inv{n} \vecone_n \vecone_n^T + Z_1 \; \text{ as a solution to \eqref{eq::relax17}}.
\eens
Now for $k=2$, we have $Z_1 = v_1 v_1^T$.
In Theorem~\ref{thm::SVD}, we show convergence for the angle as well as 
the $\ell_2$ distance between the two vectors $v_1$ and $\bar{v}_1$, 
where $v_1$ and $\bar{v}_1$ are the leading eigenvectors of $YY^T$ and 
the reference matrix $R$ respectively.
Theorem~\ref{thm::SVD} demonstrates another excellent application of our estimation procedure and concentration of measure bounds, 
namely, Theorem~\ref{thm::reading}.
\begin{theorem}{\textnormal{\bf (SVD: imbalanced case)}}
\label{thm::SVD}
Denote by $v_1$  the leading unit-norm eigenvector of $YY^T$,
which also coincides with that of $A$ \eqref{eq::defineAintro} and
$B$~\eqref{eq::defineBintro}. Let $\bar{v}_1$ be the leading unit-norm eigenvector of $R$ as  in~\eqref{eq::Rtilt}:
\ben
\label{eq::Rleadingv1}
\bar{v}_1 = [w_2 \vecone_{n_1}, -w_1 \vecone_{n_2}]/\sqrt{w_2 w_1 n}
= [\sqrt{w_2/w_1} \vecone_{n_1}, -\sqrt{w_1/w_2} \vecone_{n_2}]/{\sqrt{n} },
\een
where $\ip{\bar{v}_1, \vecone_n}=0$.
Then under the conditions in Theorem~\ref{thm::reading},
we have with probability at least $1-2 \exp(-c n)$,  for some absolute constants $c, c_0, c_1, c_2$,
\ben
\label{eq::angSVD}
\sin(\theta_1) & := & \sin(\angle({v}_1, \bar{v}_1)) \le 
\frac{2 \twonorm{B - R}}{w_1 w_2 n p \gamma} \le \frac{2 \xi}{w_1 w_2} \\
\label{eq::normSVD}
  \min_{\alpha=\pm 1}  \twonorm{\alpha v_1 -  \bar{v}_1}^2
  & \leq & \delta', \text{ where} \; \; \delta' = {8  \xi^2}/{(w_1^2
    w_2^2)} \le c_2 \xi^2/w_{\min}^2; 
\end{eqnarray}
where $\theta_1 = \angle({v}_1, \bar{v}_1)$ denotes
the angle between the two vectors $v_1$ and $\bar{v}_1$.
\end{theorem}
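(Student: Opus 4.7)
My plan is to apply the Davis-Kahan $\sin\theta$ theorem to the pair $(R, B)$ and then convert the bound on $\sin\theta_1$ into the $\ell_2$ eigenvector distance. The three ingredients are: (i) a clean identification of the leading eigenvector across $YY^T$, $A$, and $B$; (ii) an explicit rank-one spectral decomposition of $R$ which supplies a large eigengap; and (iii) the operator-norm bound $\twonorm{B-R}\le \xi n p \gamma$ from Theorem~\ref{thm::reading}.

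First I would verify that the leading unit eigenvector coincides. Since $Y = (I-P_1)X$, we have $\vecone_n^T Y = 0$, so $\vecone_n$ lies in the null space of $YY^T$. On the invariant subspace $\vecone_n^{\perp}$, the operator $E_n = \vecone_n\vecone_n^T$ vanishes, so $A = YY^T -\lambda (E_n - I_n)$ acts as $YY^T + \lambda I$, and $B = A - \E\tau I_n$ further only shifts eigenvalues by a constant. Thus on $\vecone_n^{\perp}$, $YY^T$, $A$, $B$ share eigenvectors, and provided the top eigenvalue of each lies on $\vecone_n^{\perp}$ rather than along $\vecone_n$ (which follows from the strong-signal regime, since the concentration bound in Theorem~\ref{thm::reading} forces $\lambda_1(B)$ to dominate the eigenvalue on $\vecone_n$), the top eigenvector $v_1$ is the same for all three.

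Next I would compute the spectrum of $R$. Since $\E(Y)$ has rank one, namely $\E(Y) = a\,(\mu^{(1)}-\mu^{(2)})^T$ with $a = [w_2\vecone_{n_1};\, -w_1 \vecone_{n_2}]$, we get $R = \E(Y)\E(Y)^T = p\gamma\, a a^T$, which is rank one with sole nonzero eigenvalue
\[
\lambda_1(R) = p\gamma \twonorm{a}^2 = p\gamma(w_2^2 n_1 + w_1^2 n_2) = w_1 w_2\, n p\gamma,
\]
and unit eigenvector $\bar{v}_1 = a/\twonorm{a}$, matching~\eqref{eq::Rleadingv1}. Hence the eigengap of $R$ is $\lambda_1(R)-\lambda_2(R)=w_1 w_2\, n p\gamma$.

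With (i) and (ii) in hand, I would invoke the Davis-Kahan $\sin\theta$ theorem (in the form of Yu-Wang-Samworth), which gives
\[
\sin\theta_1 \;\le\; \frac{2\twonorm{B-R}}{\lambda_1(R)-\lambda_2(R)} \;=\; \frac{2\twonorm{B-R}}{w_1 w_2 \, n p \gamma}.
\]
Substituting $\twonorm{B-R}\le \xi n p\gamma$ from Theorem~\ref{thm::reading}, which holds with probability at least $1-2\exp(-cn)$, delivers~\eqref{eq::angSVD}. For~\eqref{eq::normSVD} I would use the elementary identity $\min_{\alpha=\pm 1}\twonorm{\alpha v_1 - \bar{v}_1}^2 = 2-2|\ip{v_1,\bar{v}_1}| \le 2\sin^2\theta_1$, giving the bound $8\xi^2/(w_1 w_2)^2$. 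The final absorption into $w_{\min}$ is $w_1 w_2 = w_{\min} w_{\max} \ge w_{\min}/2$ (since $w_{\max}\ge 1/2$), yielding $(w_1 w_2)^2 \ge w_{\min}^2/4$ and thus $c_2 = 32$.

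The main obstacle is step (i): carefully justifying that the top eigenvectors of $YY^T$, $A$, and $B$ all coincide and all live in $\vecone_n^{\perp}$ in the relevant regime. The added terms are scalar shifts only off the $\vecone_n$ direction, but showing that no eigenvalue along $\vecone_n$ overtakes $\lambda_1$ on $\vecone_n^{\perp}$ requires the same concentration input used downstream. Once this is cleared, the rest of the argument is essentially a one-line application of Davis-Kahan combined with the already-proved bound from Theorem~\ref{thm::reading}.
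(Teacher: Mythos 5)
Your proposal is correct and follows essentially the same route as the paper: establish that $YY^T$, $A$, and $B$ share the leading eigenvector, use the rank-one structure of $R$ to get the eigengap $w_1w_2\,np\gamma$, and apply Davis--Kahan (Yu--Wang--Samworth) with the bound $\twonorm{B-R}\le\xi np\gamma$ from Theorem~\ref{thm::reading}. The only minor difference is in step (i): the paper's Fact~\ref{fact::topeigen} settles the eigenvector coincidence deterministically via the trace identity $\lambda_{\max}(YY^T)\ge \tr(YY^T)/(n-1)=n\tau/(n-1)$ (the eigenvalue along $\vecone_n$), whereas you invoke the concentration bound to rule out the $\vecone_n$ direction, which also works but is slightly heavier than necessary.
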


\begin{corollary}\textnormal{(Clustering with $o(n/s^2)$ misclassified
    vertices)}
  \label{coro::SVD}
  Suppose that $w_{1}, w_2 \in (0, 1)$ are bounded away from $0, 1$. 
  Under the conditions in Theorem~\ref{thm::SVD}, we have with  
probability at least $1-2 \exp(-c n)$,  for some absolute constants
$c$, the signs of the coefficients of $v_1$ correctly estimate the
partition of the vertices into two clusters, up to at most $O(\xi^2
n)$ misclassified vertices.
\end{corollary}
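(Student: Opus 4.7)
The plan is to reduce the question of misclassification counting to the $\ell_2$-bound \eqref{eq::normSVD} already provided by Theorem~\ref{thm::SVD}. The key observation is that every coordinate of the reference eigenvector $\bar{v}_1$ has magnitude bounded below by a constant over $\sqrt{n}$: explicitly, from \eqref{eq::Rleadingv1}, $|(\bar{v}_1)_i| = \sqrt{w_2/w_1}/\sqrt{n}$ for $i \in \C_1$ and $|(\bar{v}_1)_i| = \sqrt{w_1/w_2}/\sqrt{n}$ for $i \in \C_2$, both of which are at least $\sqrt{w_{\min}/w_{\max}}/\sqrt{n} \ge c_0/\sqrt{n}$ for an absolute constant $c_0 > 0$ under the assumption that $w_1, w_2$ are bounded away from $0$ and $1$.

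First, I would fix the sign $\alpha \in \{-1, +1\}$ achieving the minimum in \eqref{eq::normSVD}, so that $\twonorm{\alpha v_1 - \bar{v}_1}^2 \le \delta' \le c_2 \xi^2/w_{\min}^2$. Next, observe that the signs of $\bar{v}_1$ encode the correct partition, being positive on $\C_1$ and negative on $\C_2$. Call a coordinate $i$ \emph{misclassified} if $\operatorname{sgn}(\alpha (v_1)_i) \ne \operatorname{sgn}((\bar{v}_1)_i)$; for any such $i$, the triangle inequality forces $|\alpha (v_1)_i - (\bar{v}_1)_i| \ge |(\bar{v}_1)_i| \ge c_0/\sqrt{n}$.

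Letting $M$ denote the number of misclassified coordinates, summing the per-coordinate lower bound over the misclassified set gives
\begin{equation*}
\frac{c_0^2}{n} \cdot M \;\le\; \sum_{i \text{ misclassified}} (\alpha (v_1)_i - (\bar{v}_1)_i)^2 \;\le\; \twonorm{\alpha v_1 - \bar{v}_1}^2 \;\le\; \delta' \;\le\; \frac{c_2 \xi^2}{w_{\min}^2}.
\end{equation*}
Rearranging yields $M \le (c_2/c_0^2) \xi^2 n/w_{\min}^2 = O(\xi^2 n)$, since $w_{\min}$ is bounded away from $0$ by assumption. This holds on the same high-probability event as Theorem~\ref{thm::SVD}, namely with probability at least $1 - 2\exp(-cn)$.

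I do not anticipate a serious obstacle here: the entire argument is a one-line counting application of \eqref{eq::normSVD}, and the only substantive ingredient beyond Theorem~\ref{thm::SVD} is the uniform lower bound on $|(\bar{v}_1)_i|$, which is immediate from the explicit form in \eqref{eq::Rleadingv1} together with the assumption that $w_1, w_2$ are bounded away from $0$ and $1$. The mildly delicate point to mention is the selection of the correct global sign $\alpha$, which is intrinsic to spectral clustering (the eigenvector is only defined up to sign), and this is exactly what the $\min_{\alpha = \pm 1}$ in \eqref{eq::normSVD} accommodates.
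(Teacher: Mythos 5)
Your proposal is correct and follows essentially the same route the paper intends: the discussion after Theorem~\ref{thm::SVD} indicates the corollary is obtained exactly by combining the $\ell_2$-bound \eqref{eq::normSVD} with the per-coordinate counting argument, using that each entry of $\bar{v}_1$ has magnitude at least $\sqrt{w_{\min}/w_{\max}}/\sqrt{n}$ when $w_1, w_2$ are bounded away from $0$ and $1$. No gap; the sign-selection point via $\min_{\alpha=\pm 1}$ is handled correctly.
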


\noindent{\bf Discussions.}
We prove Theorem~\ref{thm::SVD} and its corollary in the supplementary
Section~\ref{sec::proofofSVD}. The signs of the coefficients of $v_1$ correctly estimate the partition of the vertices, up to at most $\delta' n \asymp \xi^2 n$ misclassified 
vertices, where recall $\xi^2 \asymp 1/s^2$~\eqref{eq::trend}.
Hence the misclassification error is bounded to be inversely proportional to the SNR
parameter $s^2$; cf. \eqref{eq::trend}.
This should be compared with~\eqref{eq::definedelta}, where we show in
Theorem~\ref{thm::SDPmain} that we
have up to at most $\delta n \asymp \xi n$  misclassified
vertices, which is improved to $O( n \exp(-c_0 s^2 w_{\min}^4))$ in Theorem~\ref{thm::exprate}.
Moreover, one can sort the values of $v_1$ and
find the {\it nearly optimal } partition according to the $k$-means 
criterion; See Section~\ref{sec::experiments} for Algorithm 2 and numerical examples.

\noindent{\bf Variation 3.}
The main issue with the $k$-means relaxation is that the solutions
tend to put sample points into groups of the same sizes, and moreover,
the diagonal matrix $\Gamma$ can cause a bias, where
\bens
\Gamma = (\E [\ip{\Z_i, \Z_j}])_{i,j} = \diag([\tr(\cov(\Z_1)),
\ldots, \tr(\cov(\Z_n)) ]),
\eens
especially when $V_1, V_2$ differ from each other; See
the supplementary Section~\ref{sec::proofofbias} for bias analysis.
In~\cite{Royer17,GV19,BGLR+20}, they propose a preliminary estimator
of $\Gamma$, denoted by $\hat{\Gamma}$, and consider
\ben
\label{eq::relaxadjust}
&& \hat{Z} \in \arg\max_{Z \in \M_k} \ip{X X^T - \hat{\Gamma}, Z} \; \;
\text{ where} \; \M_k \; \text{ is as in~\eqref{eq::relax16} }
\een
instead of the original Peng-Wei SDP relaxation~\eqref{eq::relax16}.
Although our general results in Theorem~\ref{thm::exprate} coincide
with that of~\cite{GV19} for $k=2$, we emphasize that we prove these
bounds for the SDP~\eqref{eq::sdpmain}, which is motivated by the
graph partition problem~\eqref{eq::graphcut}, while they establish
such bounds for the semidefinite relaxation based on the $k$-means
criterion~\eqref{eq::SSE} directly, following~\cite{PW07}.
There, cf.~\eqref{eq::relax16}, and \eqref{eq::relaxadjust},
the matrix $Z$ is not only constrained to be positive semidefinite but
also with non-negative entries. As mentioned, the advantage of
dropping the nonnegative constraints on elements of $Z$ in
\eqref{eq::sdpmain} is to speed up the computation.

Hence another main advantage of our SDP and spectral formulation is
that  we do not need to have a  separate estimator for
$\tr(\Sigma_j)$, where $\Sigma_j, j=1, 2$ denote the covariance
matrices of sub-gaussian random vectors $\Z_j, j \in [n]$, so long as
(A2) holds. When it does not, one may consider adopting 
similar ideas. We emphasize that part of our probabilistic bounds, 
namely, Theorems~\ref{thm::YYcrossterms} and~\ref{thm::YYcovcorr}, 
already work for the general $k$-means clustering problem.

\section{Outline of the arguments for proving 
  Theorem~\ref{thm::reading}}
\label{sec::proofreadingmain}
We emphasize that results in this section apply to both settings under 
consideration: design matrix with independent entries or 
with independent anisotropic sub-gaussian rows. This allows us to 
prove Theorem~\ref{thm::reading} for both cases.
Let $Y$ be as in Definition~\ref{def::estimators}.
By definition of~\eqref{eq::defineAintro} and \eqref{eq::defineBintro},
\ben
\label{eq::Bdev}
A - \E A =
B -\E B & := & Y Y^T - \E Y Y^T  - (\lambda- \E \lambda)(E_n - I_n) \\
\text{ hence} \;
\nonumber
\infonenorm{B - R}
& = & \infonenorm{B- \E B + \E B -R} \\
\label{eq::Bdev2}
& \le & \infonenorm{B- \E B} + 
\infonenorm{\E B -R} 
\een
We have by the triangle inequality,~\eqref{eq::Bdev},~\eqref{eq::Bdev2} and
the supplementary Lemma~\ref{lemma::TLbounds},
for 
\ben
\label{eq::YYop}
\twonorm{B - R}  &\le & 2 \twonorm{\Psi}+ \twonorm{\E B - R} \;
\text{ where } \; \Psi := YY^T -  \E (Y Y^T), \\
\nonumber
\text{ and } \; \;
\infonenorm{B - R}  &\le &
\infonenorm{\Psi} + n \twonorm{\Psi} +
\infonenorm{\E B - R}
\een
Lemma~\ref{lemma::EBRtilt}  states that the bias $\E B -R$ is
substantially reduced for $B$ as in \eqref{eq::defineBintro}, thanks
to the adjustment term $\E \tau  I_n$, and even more so when clusters
have similar variance profiles in the sense that
\eqref{eq::Varprofile} is bounded.
Theorem~\ref{thm::reading} follows immediately from
Lemma~\ref{lemma::EBRtilt} and
Theorem~\ref{thm::YYnorm} (resp.  \ref{thm::YYaniso}), where we bound
$\norm{YY^T -  \E (Y Y^T)}$ for design matrix with independent
entries (resp. with independent anisotropic sub-gaussian rows).
All results except for Theorems~\ref{thm::YYnorm} and \ref{thm::YYaniso} are stated as
deterministic bounds.
Let $c_7, c_8, C_2, C_3, \ldots$ be some absolute constants.
All constants such as $1/6, 2/3, \ldots$ are arbitrarily chosen.
\begin{lemma}
  \label{lemma::EBRtilt}
  Suppose (A2) holds.
  Suppose that $\xi \ge \inv{2n} (4 \vee \inv{w_{\min}})$ and $n \ge 4$.
  Then we have 
\bens 
\label{eq::EBRtiltnorm}
\twonorm{\E B - R} & \le & \frac{2}{3} \xi  n p \gamma  \; \text{ and }
\; 
\infonenorm{\E B - R} \le \frac{2}{3} \xi  n^2 p \gamma
\eens
Finally, when $V_1 = V_2$, we have $\infonenorm{\E B - R} \le  n
\twonorm{\E B - R} \le  p n \gamma/3$.
\end{lemma}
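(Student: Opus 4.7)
My plan is to compute $\E B - R$ in closed form via the centering identity $Y-\E Y = (I-P_1)\Z$, split it into a rank-two ``overall variance'' piece plus a ``variance-discrepancy'' piece, and bound each in operator and cut norms, using (A2) to control the second piece.

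First, since $Y = (I-P_1)X$, we have $\E YY^T = R + (I-P_1) D (I-P_1)$, where $D = \diag(\cov(\Z_i))$ is the diagonal matrix with $D_{ii} = V_{g(i)}$ (here $g(i) \in \{1,2\}$ is the cluster of $i$). Two identities drive the calculation: $\vecone_n^{T} \E Y = 0$ (so $\vecone_n^{T} R \vecone_n = 0$) and $(I-P_1)\vecone_n = 0$ (so row sums of $(I-P_1)D(I-P_1)$ vanish). From these, a direct evaluation of $\tau = \inv n\tr(YY^{T})$ and $\lambda = \frac{2}{n(n-1)}\sum_{i<j}\ip{Y_i,Y_j}$ gives
\begin{equation*}
\E \tau = p\gamma w_1 w_2 + \tfrac{n-1}{n}\bar V,
\qquad
\E \lambda = -\tfrac{p\gamma w_1 w_2}{n-1} - \tfrac{\bar V}{n},
\qquad \bar V := w_1 V_1 + w_2 V_2.
\end{equation*}

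Second, I write $D = \bar V I_n + \tilde D$ with $\tilde D_{ii} = V_{g(i)} - \bar V$, so that $\tilde D_{ii} = w_2(V_1-V_2)$ on $\C_1$ and $-w_1(V_1-V_2)$ on $\C_2$. Using $(I-P_1)^2 = I-P_1$ and collecting terms (the $\bar V$ contributions cancel cleanly against $\E\tau$ and $\E\lambda$),
\begin{equation*}
\E B - R \;=\; \tfrac{p\gamma w_1 w_2}{n-1}\bigl(E_n - n I_n\bigr) \;+\; (I-P_1)\,\tilde D\,(I-P_1).
\end{equation*}
This is the key algebraic identity, and the main technical care is getting the coefficient of $I_n$ to become $-\frac{np\gamma w_1 w_2}{n-1}$ exactly (not $O(\bar V/n)$), which is the whole point of introducing $\E\tau I_n$ into the oracle $B$.

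Third, I bound each piece. The first piece is $\frac{p\gamma w_1 w_2}{n-1}(E_n - n I_n)$; since $E_n - n I_n$ has spectrum $\{0, -n, \ldots, -n\}$, it has operator norm $n$, and a short calculation via the orthogonal decomposition $s = (\vecone_n^{T}s/n)\vecone_n + s_\perp$ shows $\infonenorm{E_n - nI_n} = n^{2}$. Together with $w_1 w_2 \le \tfrac14$ and $\tfrac{n}{n-1}\le\tfrac43$ for $n\ge 4$, this piece contributes at most $p\gamma/3$ in operator norm and $np\gamma/3$ in cut norm. For the second piece, since $\tilde D$ is diagonal and $\twonorm{I-P_1}\le 1$,
\begin{equation*}
\twonorm{(I-P_1)\tilde D(I-P_1)} \;\le\; \twonorm{\tilde D} \;\le\; |V_1 - V_2| \;\le\; \tfrac{\xi n p \gamma}{3}
\end{equation*}
by (A2), and $\infonenorm{\cdot}\le n\twonorm{\cdot}\le \xi n^{2}p\gamma/3$. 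The hypothesis $\xi \ge \inv{2n}(4\vee\inv{w_{\min}})$ gives $\xi \ge 2/n$, so $p\gamma/3 \le \xi n p\gamma/6$ and $n p\gamma/3 \le \xi n^{2}p\gamma/6$. Adding the two pieces yields $\twonorm{\E B-R}\le \xi n p\gamma/2 \le \tfrac23\xi n p\gamma$ and similarly for the cut norm. In the special case $V_1=V_2$, $\tilde D=0$ and only the first piece survives, with $\twonorm{\E B-R}\le p\gamma/3$ and the cut bound following from $\infonenorm{\cdot}\le n\twonorm{\cdot}$.

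The easy-looking main obstacle is the algebraic identity in the second paragraph: it is tempting but sloppy to bound $\twonorm{\E YY^{T}-R}=\twonorm{(I-P_1)D(I-P_1)}$ directly, which is of order $\bar V$ and thus much too large. The whole benefit of the oracle correction $-\E\lambda(E_n - I_n)-\E\tau I_n$ is that the $\bar V$-scale terms cancel exactly, leaving only the $O(p\gamma w_1 w_2)$ rank-two residual plus the $|V_1-V_2|$-controlled diagonal perturbation, and verifying this cancellation is where the care is needed.
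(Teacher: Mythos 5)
Your proof is correct and follows essentially the same route as the paper: compute $\E B-R$ exactly, observe that the $\bar V$-scale terms cancel against the $\E\tau I_n$ and $\E\lambda(E_n-I_n)$ corrections, and bound the remaining rank-deficient $p\gamma w_1w_2$ piece and the $\abs{V_1-V_2}$-controlled piece separately under (A2). Your packaging of the discrepancy term as $(I-P_1)\tilde D(I-P_1)$ with $\twonorm{I-P_1}\le 1$ is a tidier rendering of the paper's explicit block matrices $W_0$ and $\mathbb{W}$ (indeed $W_0-\mathbb{W}=(I-P_1)W_0(I-P_1)$ with $W_0=\tilde D$), and it even saves a factor of $2$ on that term, so the argument closes comfortably within the stated $\tfrac23\xi np\gamma$ bound.
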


\begin{theorem}{\textnormal{\bf (Design with independent entries)}}
  \label{thm::YYnorm}
In the initial settings as specified in Theorem~\ref{thm::SDPmain},
suppose that $(A1)$, $(A2)$ and~\eqref{eq::kilo} hold with $\max_{j, k} \norm{z_{jk}}_{\psi_2} := \norm{X_{jk} - \E X_{jk}}_{\psi_2} \le C_0$.
Then, with probability at least $1 - 2\exp(-c_7 n)$,
  \bens
\twonorm{YY^T - \E (Y Y^T)}
& \le &
C_2 C_0^2 (\sqrt{p n} \vee n) + C_3 C_0 n \sqrt{p \gamma}
\le \inv{6}\xi n p \gamma  
\eens
\end{theorem}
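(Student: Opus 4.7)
\begin{proofof}{Theorem~\ref{thm::YYnorm} (sketch)}
The plan is to decompose $YY^T-\E YY^T$ into terms that can be bounded separately, exploiting the rank-one structure of the mean matrix for the cross terms and standard sub-gaussian sample covariance concentration for the noise-noise term. Write $X=M+\Z$ where $M=\E X$ has row $\mu^{(g)}$ for $i\in\C_g$ and $\Z=X-\E X$ satisfies (A1). Let $\tilde M=(I-P_1)M$ and $\tilde\Z=(I-P_1)\Z$, so that $Y=\tilde M+\tilde\Z$, and hence
\begin{equation*}
YY^T-\E YY^T \;=\; \tilde M\tilde\Z^T+\tilde\Z\tilde M^T+\bigl(\tilde\Z\tilde\Z^T-\E\tilde\Z\tilde\Z^T\bigr),
\end{equation*}
because $\tilde M\tilde M^T=R$ is deterministic and $\E\tilde\Z=0$. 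By the triangle inequality I only need to bound the three summands in operator norm.

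For the cross terms I will use the fact that $\tilde M$ has rank one. Writing $\tilde M=uv^T$ with $u_i=w_2$ on $\C_1$, $u_i=-w_1$ on $\C_2$, and $v=\mu^{(1)}-\mu^{(2)}$, we get $\twonorm{u}=\sqrt{n w_1 w_2}$ and $\twonorm{v}^2=\Delta^2=p\gamma$. Then
\begin{equation*}
\twonorm{\tilde M\tilde\Z^T}\;=\;\twonorm{u\,(\tilde\Z v)^T}\;\le\;\twonorm{u}\,\twonorm{\Z v},
\end{equation*}
where $\twonorm{I-P_1}\le 1$ was used. Since $\Z v$ has $n$ independent mean-zero sub-gaussian coordinates with $\psi_2$-norm at most $C_0\twonorm{v}$, a Hoeffding/Bernstein-type concentration (or the general result of Theorem~\ref{thm::YYcrossterms}) gives $\twonorm{\Z v}\le CC_0\sqrt{n\,p\gamma}$ with probability $1-2\exp(-cn)$, hence
\begin{equation*}
\twonorm{\tilde M\tilde\Z^T}\;\le\;CC_0\,\sqrt{n w_1 w_2}\,\sqrt{n\,p\gamma}\;\le\;\tfrac{1}{2}C_3C_0\,n\sqrt{p\gamma},
\end{equation*}
using $w_1w_2\le 1/4$, and the same bound holds for $\tilde\Z\tilde M^T$.

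For the noise-noise piece I use $\tilde\Z\tilde\Z^T-\E\tilde\Z\tilde\Z^T=(I-P_1)(\Z\Z^T-\E\Z\Z^T)(I-P_1)$ and $\twonorm{I-P_1}\le 1$, so it suffices to control $\twonorm{\Z\Z^T-\E\Z\Z^T}$. Because $\Z$ has independent sub-gaussian entries with $\norm{z_{ij}}_{\psi_2}\le C_0$, the standard $\varepsilon$-net plus Bernstein argument (or the general bound cited as Theorem~\ref{thm::YYcovcorr}, which gives \eqref{eq::ZZoppre}) yields
\begin{equation*}
\twonorm{\Z\Z^T-\E\Z\Z^T}\;\le\;C_2C_0^2(\sqrt{np}\vee n)
\end{equation*}
with probability $1-2\exp(-c_7 n)$. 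Collecting the three pieces proves the first inequality. For the final inequality $\le\tfrac{1}{6}\xi n p\gamma$, I plug in the lower bounds from \eqref{eq::kilo}: the hypothesis $np\ge CC_0^4/(\xi^2\gamma^2)$ makes $C_2C_0^2\sqrt{np}\le\tfrac{1}{12}\xi np\gamma$ and likewise $C_2C_0^2 n\le\tfrac{1}{12}\xi np\gamma$ once $p\gamma\ge C'C_0^2/\xi^2$, while $p\gamma\ge C'C_0^2/\xi^2$ also gives $C_3C_0 n\sqrt{p\gamma}=C_3C_0 n\sqrt{p\gamma}\le\tfrac{1}{12}\xi np\gamma$. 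The main obstacle is establishing the sub-gaussian quadratic-form concentration $\twonorm{\Z v}\le CC_0\sqrt{np\gamma}$ uniformly and sharply enough to avoid any $\sqrt{\log n}$ loss, which is precisely the content of Theorem~\ref{thm::YYcrossterms}; everything else is either a deterministic matrix-norm manipulation or a direct appeal to Theorem~\ref{thm::YYcovcorr}.
\end{proofof}
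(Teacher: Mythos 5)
Your proof is correct and follows essentially the same route as the paper: the identity $YY^T-\E YY^T=\E(Y)(Y-\E Y)^T+(Y-\E Y)\E(Y)^T+(\hat\Sigma_Y-\Sigma_Y)$ is exactly \eqref{eq::projection}, your rank-one factorization $\tilde M=uv^T$ with a sub-gaussian bound on $\twonorm{\Z v}$ is a compact repackaging of Lemmas~\ref{lemma::tiltproject}--\ref{lemma::projerr}, and the $(I-P_1)$ contraction plus net/Hanson--Wright bound on $\twonorm{\Z\Z^T-\E\Z\Z^T}$ is Lemma~\ref{lemma::YYdec} together with Theorem~\ref{thm::YYcrossterms}. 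The only blemishes are cosmetic: the citation for the $\twonorm{\Z v}$ step should be Lemma~\ref{lemma::isotropic} rather than Theorem~\ref{thm::YYcrossterms} (and Theorem~\ref{thm::YYcrossterms}, not Theorem~\ref{thm::YYcovcorr}, for the isotropic quadratic term), and the three $\tfrac{1}{12}\xi np\gamma$ pieces sum to $\tfrac14\xi np\gamma$ rather than $\tfrac16\xi np\gamma$, which is fixed by enlarging $C,C'$ in \eqref{eq::kilo} exactly as the paper does.
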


\begin{theorem}{\textnormal{\bf (Anisotropic design matrix.)}}
  \label{thm::YYaniso}
Let $Y$ be as in Definition~\ref{def::estimators}.
Suppose all conditions in Theorem~\ref{thm::SDPmain} and
Lemma~\ref{lemma::twogroup} hold. Suppose~\eqref{eq::miles} holds.
Then with probability at least $1 - 2\exp(-c_8 n)$,
  \bens
  \twonorm{YY^T - \E (Y Y^T)}
& \le &
\inv{12} \xi n p \gamma  + C_4  (C_0 \max_{i} \twonorm{H_i})^2
(\sqrt{p n} \vee n) \le \inv{6}  \xi n p \gamma.
\eens
where $C_0$ is the same as in~\eqref{eq::miles} and~\eqref{eq::Zpsi2}.
\end{theorem}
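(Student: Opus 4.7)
The plan is to decompose $YY^T - \E YY^T$ into a cross term involving the signal and a pure noise term, then bound each piece separately, exploiting the rank-one structure of $\E Y$ to make the cross term tight.

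First I will set up the decomposition. Write $M = \E X$ and $\bar{M} = \E Y = (I-P_1)M$, $\tilde{\Z} = (I - P_1)\Z$, so $Y = \bar{M} + \tilde{\Z}$. Since $\E \tilde{\Z} = 0$, the cross terms $\bar{M}\tilde{\Z}^T$ and $\tilde{\Z}\bar{M}^T$ have mean zero, and
\begin{equation*}
YY^T - \E YY^T = \bar{M}\tilde{\Z}^T + \tilde{\Z}\bar{M}^T + \bigl(\tilde{\Z}\tilde{\Z}^T - \E \tilde{\Z}\tilde{\Z}^T\bigr).
\end{equation*}
By the triangle inequality it suffices to bound each operator norm. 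The key structural observation is that $\bar{M}$ has rank one: from Definition~\ref{def::estimators}, $\bar{M} = u(\mu^{(1)}-\mu^{(2)})^T = \Delta\, u\nu^T$, where $\nu$ is the unit vector along $\mu^{(1)}-\mu^{(2)}$, and $\twonorm{u}^2 = n_1 w_2^2 + n_2 w_1^2 = nw_1w_2$.

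Next I handle the quadratic noise term. Since $\tilde{\Z}\tilde{\Z}^T - \E\tilde{\Z}\tilde{\Z}^T = (I-P_1)(\Z\Z^T - \E\Z\Z^T)(I-P_1)$ and $\twonorm{I - P_1} \le 1$, one gets
\begin{equation*}
\twonorm{\tilde{\Z}\tilde{\Z}^T - \E \tilde{\Z}\tilde{\Z}^T} \le \twonorm{\Z\Z^T - \E \Z\Z^T} \le C' (C_0 \max_i \twonorm{H_i})^2 \bigl(\sqrt{np}\vee n\bigr)
\end{equation*}
with probability $\ge 1 - 2\exp(-cn)$, by the concentration inequality~\eqref{eq::ZZoppre} cited from Theorems~\ref{thm::YYcrossterms} and~\ref{thm::YYcovcorr}. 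This gives exactly the second summand $C_4(C_0\max_i\twonorm{H_i})^2(\sqrt{pn}\vee n)$ in the theorem.

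For the cross term, using the rank-one factorization,
\begin{equation*}
\twonorm{\bar{M}\tilde{\Z}^T} = \Delta\,\twonorm{u}\,\twonorm{\tilde{\Z}\nu} \le \Delta\,\sqrt{nw_1w_2}\,\twonorm{\Z\nu},
\end{equation*}
since $\twonorm{(I-P_1)\Z\nu} \le \twonorm{\Z\nu}$. The coordinates of $\Z\nu$ are independent mean-zero sub-gaussians with $\|\ip{\Z_j,\nu}\|_{\psi_2} \le C_0\twonorm{H_{i(j)}^T\nu} \le C_0\max_i \twonorm{H_i}$ by~\eqref{eq::Zpsi2}, so $\twonorm{\Z\nu}^2$ is sub-exponential with mean $O(n(C_0\max_i\twonorm{H_i})^2)$. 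A Bernstein bound then yields $\twonorm{\Z\nu} \le C'' C_0\max_i\twonorm{H_i}\sqrt{n}$ with probability at least $1 - 2\exp(-cn)$. Substituting back and using $\Delta = \sqrt{p\gamma}$ and $w_1w_2\le 1/4$,
\begin{equation*}
\twonorm{\bar{M}\tilde{\Z}^T} \le C'' C_0\,\max_i\twonorm{H_i}\,n\sqrt{p\gamma}.
\end{equation*}

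Finally I combine the two bounds using the separation condition~\eqref{eq::miles}. Squaring the cross-term bound and comparing with $\xi np\gamma$ shows we need $C_0^2\max_i\twonorm{\cov(\Z_i)} \le c\xi^2 p\gamma$, which is exactly the first clause of~\eqref{eq::miles}; hence $2\twonorm{\bar{M}\tilde{\Z}^T} \le \tfrac{1}{12}\xi np\gamma$. For the quadratic piece, $(C_0\max)^2\sqrt{np} \le \tfrac{1}{24}\xi np\gamma$ is the second clause of~\eqref{eq::miles} (equivalently $np\gamma^2 \ge C_0^4\max^2/\xi^2$), and $(C_0\max)^2 n \le \tfrac{1}{24}\xi np\gamma$ is the first clause. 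This yields the final bound $\tfrac{1}{6}\xi np\gamma$. The main technical obstacle I anticipate is the precise probabilistic bookkeeping to ensure all three events (sub-gaussian concentration of $\twonorm{\Z\nu}$, the bound on $\twonorm{\Z\Z^T - \E\Z\Z^T}$, and implicit control of $\twonorm{\tilde{\Z}}$) hold simultaneously on a single event of probability $\ge 1 - 2\exp(-c_8 n)$; this just requires invoking each bound with a compatible failure exponent and a union bound.
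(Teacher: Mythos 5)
Your proposal is correct, and it uses the same top-level decomposition as the paper (Proposition~\ref{prop::decompose}, eq.~\eqref{eq::projection}): the signal--noise cross term $\E(Y)(Y-\E Y)^T + (Y-\E Y)\E(Y)^T$ plus the centered quadratic term $\hat\Sigma_Y-\Sigma_Y$, with the latter reduced via $\twonorm{I-P_1}\le 1$ to $\twonorm{\Z\Z^T-\E\Z\Z^T}$ and then controlled by the anisotropic Hanson--Wright bound of Theorem~\ref{thm::YYcovcorr}, exactly as in the paper. The one place you genuinely diverge is the cross term: the paper routes it through the deterministic comparison Lemma~\ref{lemma::tiltproject}, which bounds $\twonorm{M_Y}$ by $4\sqrt{nw_1w_2}\,\sup_{q\in\Sp^{n-1}}|\sum_i q_i\ip{\Z_i,\mu^{(1)}-\mu^{(2)}}|$, and then controls that supremum by an $\ve$-net over $\Sp^{n-1}$ plus a union bound (Lemmas~\ref{lemma::projWH} and~\ref{lemma::anisoproj}); you instead observe that $\E Y=u(\mu^{(1)}-\mu^{(2)})^T$ is exactly rank one with $\twonorm{u}^2=nw_1w_2$, so $\twonorm{\E Y\,\tilde\Z^T}=\Delta\twonorm{u}\twonorm{\tilde\Z\nu}$ is an identity, and you bound $\twonorm{\Z\nu}^2=\sum_j\ip{\Z_j,\nu}^2$ directly by Bernstein for independent sub-exponentials. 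The two are equivalent in strength (the paper's supremum over $q$ of a linear functional \emph{is} $\twonorm{\Z\mu}$), but your version avoids the net argument entirely and makes the $\sqrt{w_1w_2}$ factor transparent; the paper's version retains the slightly sharper constant $\max_i\twonorm{H_i^T\mu}$ in place of $\max_i\twonorm{H_i}$, though either suffices under~\eqref{eq::miles}. Your final bookkeeping matching the two clauses of~\eqref{eq::miles} to the $\tfrac{1}{12}\xi np\gamma$ budgets is correct.
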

We prove Lemma~\ref{lemma::EBRtilt} in the supplementary
Section~\ref{sec::proofofEBR}, where balanced cases are shown to be
slightly more tightly bounded; cf Lemma~\ref{lemma::unbalancedbias} therein.
We prove Theorems~\ref{thm::YYnorm} and~\ref{thm::YYaniso} in
 the supplementary Section~\ref{sec::proofofYYnorm} and Section~\ref{sec::finalYYaniso} respectively. It is understood that for both theorems, we also obtain 
$\infonenorm{YY^T - \E (YY^T)}$ to be within a factor of $O(n) 
\shnorm{YY^T - \E (YY^T)}_2$.
We prove Theorem~\ref{thm::reading}
in   the supplementary Section~\ref{sec::proofofreading}.

\subsection{Reduction}
\label{sec::reduction}
In this section, we present a unified framework for bounding
$\twonorm{Y Y^T - \E Y Y^T}$. First, 
\ben
\nonumber
\lefteqn{YY^T - \E (Y Y^T) 
  = YY^T - \E (Y) \E(Y)^T  + \E (Y) \E(Y)^T -\E (Y Y^T) } \\
\label{eq::projection}
& = &\E(Y)(Y-\E(Y))^T + (Y-\E(Y))(\E(Y))^T  +
\hat{\Sigma}_Y - \Sigma_Y
\een
where $\hat\Sigma_Y =  (Y-\E(Y))(Y-\E(Y))^T$ and
\bens
\hat{\Sigma}_Y -
\Sigma_Y
& = &  (Y-\E(Y)) (Y-\E(Y))^T + \E (Y) \E(Y)^T -\E (Y Y^T),
\eens
from which we obtain from the well known relationship on covariance matrix 
\bens 
\Sigma_Y := \E \left((Y-\E(Y))(Y-\E(Y))^T \right) = \E (YY^T) - \E(Y) \E(Y)^T.
\eens
We now state in Lemma~\ref{lemma::tiltproject} a reduction 
principle for bounding the first component 
in~\eqref{eq::projection}:
To control $$\norm{M_Y} =\norm{\E(Y)(Y-\E(Y))^T + (Y-\E(Y))(\E(Y))^T},$$
we need to bound the projection of each mean-zero random vector $\Z_j,
\forall j \in [n]$, along the direction of $v := \mu^{(1)}-\mu^{(2)}$.
In other words,  a particular direction for which we compute the 
one-dimensional marginals, is the direction between $\mu^{(1)}$ and
$\mu^{(2)}$.

\begin{lemma}{{\bf (Reduction: a deterministic comparison lemma)}}  
  \label{lemma::tiltproject}
  Let $\Z_j, j \in [n]$ be row vectors of $X - \E X$ and $\hat{\mu}_n$
  be as defined in~\eqref{eq::muhat}.
For $x_i \in \{-1,1\}$,
\ben
\sum_{i=1}^n x_i \ip{Y_i -\E Y_i, \mu^{(1)} -\mu^{(2)}}
& \le &
\label{eq::pairwise2}
\frac{2(n-1)}{n} \sum_{i=1}^n \abs{\ip{\Z_i, \mu^{(1)}  -\mu^{(2)}} } 
\een
Then we have for $M_Y :=\E(Y)(Y-\E(Y))^T + (Y-\E(Y))(\E(Y))^T$,
 \bens
\infonenorm{M_Y}
& \le & 8 w_1 w_2 (n-1) \sum_{i=1}^n \abs{\ip{\Z_i, \mu^{(1) }-\mu^{(2)
    } }}
\;\text{ and} \; \\
\nonumber
 \twonorm{M_Y}
& \le & 4 \sqrt{n}  \sqrt{w_1 w_2} \sup_{q \in S^{n-1}} \abs{\sum_{i} q_i \ip{\Z_i, \mu^{(1)} -\mu^{(2)}}}
\eens
\end{lemma}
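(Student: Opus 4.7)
The plan is to reduce both matrix bounds to a scalar statement about the inner products $a_i := \ip{\Z_i,\mu^{(1)}-\mu^{(2)}}$ and then exploit that $M_Y$ has rank at most two. Write $v := \mu^{(1)}-\mu^{(2)}$. Since $Y_i-\E Y_i = \Z_i - (\hat{\mu}_n - \E\hat{\mu}_n)$ and $\hat{\mu}_n - \E\hat{\mu}_n = n^{-1}\sum_j \Z_j$ by \eqref{eq::muvector}, we obtain $\ip{Y_i-\E Y_i, v} = a_i - \bar a$ with $\bar a := n^{-1}\sum_j a_j$. For any $x \in \{-1,1\}^n$, setting $\bar x := n^{-1}\sum_j x_j$ and using $\sum_i (x_i - \bar x) = 0$,
\begin{equation*}
\sum_{i=1}^n x_i\,\ip{Y_i - \E Y_i, v} \;=\; \sum_i x_i (a_i - \bar a) \;=\; \sum_i (x_i - \bar x)\, a_i .
\end{equation*}
If all $x_i$ share the same sign, both sides vanish; otherwise both the $+1$ and $-1$ populations of $x$ have sizes in $[1,n-1]$, and a short calculation gives $\abs{x_i - \bar x} \le 2(n-1)/n$ for every $i$. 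The bound \eqref{eq::pairwise2} then follows from $\abs{\sum_i (x_i - \bar x) a_i} \le \max_i \abs{x_i - \bar x}\cdot \sum_i \abs{a_i}$.

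For the matrix norms, \eqref{eq::EYpre} says $\E Y_i = b_i v$ with $b_i = w_2$ on $\C_1$ and $b_i = -w_1$ on $\C_2$. Setting $c_i := \ip{Y_i - \E Y_i, v}$ this yields the rank-two factorization
\begin{equation*}
M_Y \;=\; b c^T + c b^T,\qquad \twonorm{b}^2 \,=\, n_1 w_2^2 + n_2 w_1^2 \,=\, n w_1 w_2,\qquad \onenorm{b} \,=\, 2 n w_1 w_2 .
\end{equation*}
For the cut norm, $\ip{M_Y, s t^T} = (b^T s)(c^T t) + (b^T t)(c^T s)$ for every $s,t \in \{-1,1\}^n$; since $\abs{b^T s}, \abs{b^T t} \le \onenorm{b} = 2 n w_1 w_2$ and the first inequality (applied with $x=s$ and $x=t$) gives $\abs{c^T s}, \abs{c^T t} \le \tfrac{2(n-1)}{n}\sum_i \abs{a_i}$, multiplying the two factors yields $\infonenorm{M_Y} \le 8 w_1 w_2 (n-1)\sum_i \abs{a_i}$. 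For the operator norm, $\twonorm{M_Y} \le 2\twonorm{b}\,\twonorm{c} = 2\sqrt{n w_1 w_2}\,\sup_{q \in S^{n-1}} \abs{\sum_i q_i c_i}$, and rewriting $\sum_i q_i c_i = \sum_i (q_i - \bar q)\,a_i$ with $\twonorm{q - \bar q\,\vecone_n}^2 = 1 - n\bar q^2 \le 1$ bounds the supremum by $\sup_{q \in S^{n-1}} \abs{\sum_i q_i \ip{\Z_i, v}}$, delivering the claimed operator-norm estimate.

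The only step that needs any care is the sharpened inequality $\abs{x_i - \bar x} \le 2(n-1)/n$ whenever $x \in \{-1,1\}^n$ has both signs present; the naive bound $\abs{x_i - \bar x} \le 2$ would waste a factor of two at the end. Everything else is straightforward linear algebra built around the rank-two factorization of $M_Y$, which cleanly isolates the one direction $v = \mu^{(1)}-\mu^{(2)}$ along which $\E Y$ lives, so I do not anticipate any further obstacle.
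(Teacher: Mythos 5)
Your proof is correct, and every numerical claim checks out: $\onenorm{b}=2nw_1w_2$, $\twonorm{b}^2=nw_1w_2$, the identity $\sum_i x_i(a_i-\bar a)=\sum_i(x_i-\bar x)a_i$, and the sharpened bound $\abs{x_i-\bar x}\le 2(n-1)/n$ when both signs occur (with the all-equal case handled separately). The underlying reduction is the same as the paper's --- everything collapses onto the single direction $v=\mu^{(1)}-\mu^{(2)}$ because $\E(Y)$ is rank one --- but your packaging differs in two ways worth noting. First, for \eqref{eq::pairwise2} the paper expands $Y_i-\E Y_i=\tfrac1n\sum_{j\ne i}(\Z_i-\Z_j)$ and applies the triangle inequality to the pairwise differences, whereas you use the centering identity plus the pointwise bound on $x_i-\bar x$; both give exactly $\tfrac{2(n-1)}{n}$. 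Second, for the matrix norms the paper writes out the bilinear forms $\ip{M_Y,st^T}$ and $q^TM_Yh$ directly, splitting the sum over $\C_1,\C_2$ and invoking an auxiliary lemma (its Lemma~\ref{lemma::pairwise4}) together with a Cauchy--Schwarz step producing $\sqrt{w_1w_2n}$; your explicit factorization $M_Y=bc^T+cb^T$ makes those same quantities appear as $\onenorm{b}$ and $\twonorm{b}$, replaces the auxiliary lemma by the one-line observation $\twonorm{q-\bar q\vecone_n}\le 1$, and is cleaner for it. A small bonus: handling the two symmetric halves jointly, you get $\twonorm{M_Y}\le 2\sqrt{nw_1w_2}\sup_q\abs{\sum_iq_ia_i}$, a factor of $2$ sharper than the stated bound, so the lemma follows a fortiori.
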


Upon obtaining \eqref{eq::EYpre}, Lemma~\ref{lemma::tiltproject} is 
deterministic and does not depend on covariance structure of $\Z$. 
On the other hand, controlling the second component in
\eqref{eq::projection} amounts to the problem of covariance
estimation given the mean matrix $\E(Y)$;
Lemma~\ref{lemma::YYdec} is again deterministic, where we show that
controlling the operator (and cut) norm of $\hat\Sigma_Y -\Sigma_Y$ is
reduced to controlling that for $\Z \Z^T - \E(\Z \Z^T)$.   We prove  Lemmas~\ref{lemma::tiltproject} and~\ref{lemma::YYdec} in 
 the supplementary Sections~\ref{sec::tiltproj} and~\ref{sec::SigmaYYproj} respectively.
\begin{lemma}
  \label{lemma::YYdec}
Suppose that $Y$ and $Z$ are matrices as defined in
Definition~\ref{def::estimators}.
The following holds: 
\bens 
\hat{\Sigma}_Y - \Sigma_Y = (I-P_1) (\Z \Z^T - \E (\Z\Z^T)) 
(I-P_1) 
\eens 
cf. Proposition~\ref{prop::decompose}  in the supplementary material. 
 Then $\twonorm{\hat\Sigma_Y -\Sigma_Y}
 \le \twonorm{\Z \Z^T - \E(\Z \Z^T)}$.
\end{lemma}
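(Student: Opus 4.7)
The plan is to verify both the algebraic identity and the operator-norm bound by a direct computation that exploits the fact that $Y$ is obtained from $X$ by the orthogonal projection $I_n - P_1$. The key observation is that $P_1 = \vecone_n\vecone_n^T/n$ is symmetric and idempotent, so $I_n - P_1$ is itself an orthogonal projection (onto the subspace $\{v \in \R^n : \ip{v,\vecone_n}=0\}$), and in particular symmetric with operator norm equal to $1$ for $n \ge 2$.

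From $Y = (I_n - P_1) X$ and linearity of expectation (with $I_n - P_1$ deterministic), we have $\E Y = (I_n - P_1)\E X$, hence
\[
Y - \E Y = (I_n - P_1)(X - \E X) = (I_n - P_1)\,\Z.
\]
Substituting this into the definition of $\hat\Sigma_Y$ and using $(I_n - P_1)^T = I_n - P_1$ gives
\[
\hat\Sigma_Y = (Y-\E Y)(Y-\E Y)^T = (I_n - P_1)\,\Z \Z^T\,(I_n - P_1).
\]
Taking expectations (again using that $I_n - P_1$ is deterministic) yields $\Sigma_Y = (I_n - P_1)\,\E(\Z \Z^T)\,(I_n - P_1)$. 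Subtracting these two identities gives exactly
\[
\hat\Sigma_Y - \Sigma_Y = (I_n - P_1)\bigl(\Z\Z^T - \E(\Z\Z^T)\bigr)(I_n - P_1),
\]
which is the claimed decomposition (and matches the statement of Proposition~\ref{prop::decompose} in the supplement).

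For the operator-norm bound, I would simply apply submultiplicativity of $\twonorm{\cdot}$ together with $\twonorm{I_n - P_1} \le 1$:
\[
\twonorm{\hat\Sigma_Y - \Sigma_Y} \le \twonorm{I_n - P_1}^2 \cdot \twonorm{\Z\Z^T - \E(\Z\Z^T)} \le \twonorm{\Z\Z^T - \E(\Z\Z^T)}.
\]
There is essentially no obstacle to this proof; the identity is a clean consequence of the commutativity of the deterministic projection with the expectation operator, and the norm bound is immediate from the fact that projecting can only shrink operator norms. The genuinely substantive work in controlling $\twonorm{\hat\Sigma_Y - \Sigma_Y}$ is the probabilistic bound on $\twonorm{\Z\Z^T - \E(\Z\Z^T)}$, which is handled separately in Theorems~\ref{thm::YYcrossterms} and~\ref{thm::YYcovcorr}; the role of this lemma is exactly to reduce the analysis of the covariance-type term appearing in~\eqref{eq::projection} to that already-established bound on the raw second-moment fluctuations of $\Z$.
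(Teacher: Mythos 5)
Your proof is correct and follows essentially the same route as the paper: both derive $Y-\E Y=(I-P_1)\Z$, conclude $\hat\Sigma_Y=(I-P_1)\Z\Z^T(I-P_1)$ (this is exactly Proposition~\ref{prop::decompose}), take expectations and subtract, and then bound the operator norm by submultiplicativity together with $\twonorm{I-P_1}\le 1$. No gaps.
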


\section{Proof outline of Theorem~\ref{thm::YYnorm}}
\label{sec::YYcutnorm}
We provide a proof outline for Theorem~\ref{thm::YYnorm} in this
section.
We will bound these two components~\eqref{eq::projection}
in Lemma~\ref{lemma::projerr} and Theorem~\ref{thm::YYcrossterms}
respectively. Lemma~\ref{lemma::projerr} follows from
Lemma~\ref{lemma::tiltproject} and the sub-gaussian concentration of
measure bounds in Lemma~\ref{lemma::isotropic}.
We will only state the operator norm bound in
Theorem~\ref{thm::YYcrossterms}, with the understanding  that cut norm
of a matrix is within $O(n)$ factor of the operator norm  on the same matrix. 
We defer the proof of Theorems~\ref{thm::YYnorm}
and~\ref{thm::YYcrossterms} to the supplementary
Sections~\ref{sec::proofofYYnorm} and~\ref{sec::mainwell}
respectively. The proof for Lemmas~\ref{lemma::projerr} and~\ref{lemma::isotropic}
appear in the supplementary Section~\ref{sec::isonorm}.
Let $c, c', c_1, c_5, C_3, C_4, \ldots $ be absolute constants.
\begin{lemma}{{\bf (Projection: probabilistic view)}}
\label{lemma::projerr}
Suppose conditions in Theorem~\ref{thm::YYnorm} hold.
Then we have with probability at least $1 -2 \exp(-c n)$, 
\bens
\label{eq::Yprop}
\twonorm{\E(Y)(Y-\E(Y))^T + (Y-\E(Y))(\E(Y))^T }
& \le & 
2 C_3 C_0 n \sqrt{p \gamma}   \text{ and} \;\\
\label{eq::Yprocut}
\infonenorm{\E(Y)(Y-\E(Y))^T + (Y-\E(Y))(\E(Y))^T }
& \le & C_4 C_0 n(n-1) \sqrt{p \gamma}
\eens 
\end{lemma}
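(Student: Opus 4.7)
The plan is to combine the deterministic reduction in Lemma~\ref{lemma::tiltproject} with sub-gaussian concentration for the one-dimensional marginals $\ip{\Z_i, v}$ along the single direction $v := \mu^{(1)}-\mu^{(2)} \in \R^p$. Note that $\twonorm{v}^2 = \Delta^2 = p\gamma$. Under assumption (A1), the $\Z_i$ are independent mean-zero random vectors with independent sub-gaussian coordinates of $\psi_2$-norm at most $C_0$, hence $\ip{\Z_i, v}$ are independent mean-zero sub-gaussian random variables with $\norm{\ip{\Z_i, v}}_{\psi_2} \le C C_0 \twonorm{v} = C C_0\sqrt{p\gamma}$. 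This is exactly the regime handled by the sub-gaussian concentration tool Lemma~\ref{lemma::isotropic}.

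For the operator norm, Lemma~\ref{lemma::tiltproject} gives
\[
\twonorm{M_Y} \le 4\sqrt{n}\sqrt{w_1 w_2} \, \sup_{q \in S^{n-1}} \Bigl|\sum_{i=1}^n q_i \ip{\Z_i, v}\Bigr|.
\]
The supremum is precisely $\twonorm{u}$ where $u = (\ip{\Z_i, v})_{i=1}^n \in \R^n$ is a vector with independent mean-zero sub-gaussian coordinates of $\psi_2$-norm at most $C C_0 \sqrt{p\gamma}$. By the standard Bernstein-type concentration in Lemma~\ref{lemma::isotropic}, with probability at least $1-2\exp(-cn)$,
\[
\twonorm{u} \le C' C_0 \sqrt{n p \gamma}.
\]
Using $\sqrt{w_1 w_2} \le 1/2$ yields $\twonorm{M_Y} \le 2C_3 C_0 n \sqrt{p\gamma}$, as claimed.

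For the cut norm, Lemma~\ref{lemma::tiltproject} gives
\[
\infonenorm{M_Y} \le 8 w_1 w_2 (n-1) \sum_{i=1}^n \bigl|\ip{\Z_i, v}\bigr|.
\]
Since $|\ip{\Z_i, v}|$ is sub-gaussian with $\psi_2$-norm of order $C_0\sqrt{p\gamma}$ and expectation bounded by the same quantity, Hoeffding/Bernstein concentration (another application of Lemma~\ref{lemma::isotropic}) yields, with probability at least $1-2\exp(-cn)$,
\[
\sum_{i=1}^n |\ip{\Z_i, v}| \le C'' n C_0 \sqrt{p\gamma}.
\]
Together with $w_1 w_2 \le 1/4$, this gives $\infonenorm{M_Y} \le C_4 C_0 n(n-1)\sqrt{p\gamma}$. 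A single union bound over the two concentration events preserves the probability $1-2\exp(-cn)$.

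The substantive content has been absorbed into Lemma~\ref{lemma::tiltproject}; what remains is largely a bookkeeping exercise in sub-gaussian calculus. The only mild obstacle is recognizing that the supremum over the sphere in the operator norm bound collapses to the Euclidean norm of an $n$-vector of independent sub-gaussian scalars, avoiding any $\varepsilon$-net argument on $S^{n-1}$; this is what makes the bound scale as $\sqrt{n \cdot p\gamma}$ rather than $n\sqrt{p\gamma}$, matching the target $n\sqrt{p\gamma}$ after multiplication by the prefactor $\sqrt{n}\sqrt{w_1 w_2}$ from Lemma~\ref{lemma::tiltproject}.
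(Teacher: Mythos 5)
Your proposal is correct and shares the paper's overall architecture: both reduce via Lemma~\ref{lemma::tiltproject} to controlling the one-dimensional marginals $\ip{\Z_i,\mu^{(1)}-\mu^{(2)}}$ and then invoke sub-gaussian concentration. Where you genuinely diverge is in how the two suprema are handled. The paper discretizes $\Sp^{n-1}$ with a $1/3$-net of size $9^n$, applies the fixed-$q$ tail bound \eqref{eq::qZOEYsubg} of Lemma~\ref{lemma::isotropic}, union-bounds over the net, and then uses the standard $1/(1-\ve)$ approximation step; for the cut norm it union-bounds \eqref{eq::sumZOEYsubg} over all $2^n$ sign vectors. You instead observe that $\sup_{q\in \Sp^{n-1}}\abs{\sum_i q_i\ip{\Z_i,v}}$ is, for each realization, exactly $\twonorm{u}$ with $u=(\ip{\Z_i,v})_i$, so no uniformity over $q$ is needed at all: one Bernstein bound on $\sum_i\ip{\Z_i,v}^2$ (a sum of $n$ independent sub-exponential variables with $\psi_1$-norm $O(C_0^2 p\gamma)$) gives $\twonorm{u}\le C'C_0\sqrt{np\gamma}$ with probability $1-2\exp(-cn)$, and a centering-plus-Hoeffding argument handles $\sum_i\abs{\ip{\Z_i,v}}$. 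This is cleaner and avoids the exponential-size union bounds, which in this paper are really only needed for the genuinely quadratic forms such as $\twonorm{\offd(\Z\Z^T)}$; it buys nothing in the final rates, which match the paper's exactly. One small inaccuracy: you attribute the bound $\twonorm{u}\le C'C_0\sqrt{np\gamma}$ to ``Lemma~\ref{lemma::isotropic}'', but that lemma only supplies tail bounds for fixed linear combinations $\sum_i q_i\ip{\Z_i,\mu}$; the step you actually need is Bernstein's inequality for the sum of squares (equivalently, concentration of the Euclidean norm of a vector with independent sub-gaussian coordinates, as in Lemma~\ref{lemma::bernstein}). This is a citation slip, not a gap, since the required inequality is standard and your stated $\psi_2$-bound $\norm{\ip{\Z_i,v}}_{\psi_2}\le CC_0\sqrt{p\gamma}$ is exactly what it consumes.
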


Lemma~\ref{lemma::isotropic} follows from the sub-gaussian tail bound,
since the one-dimensional marginals of $\Z_j,  \forall j\in [n]$
are sub-gaussian with bounded $\psi_2$ norms.
Denote by
  \ben
\label{eq::definemu}
\mu:= \frac{\mu^{(1)}-\mu^{(2)}}{\twonorm{\mu^{(1)}-\mu^{(2)}}}
=\frac{\mu^{(1)}-\mu^{(2)}}{\sqrt{p \gamma}} \in \Sp^{p-1}
\een
\begin{lemma}
  \textnormal{\bf (Projection for sub-gaussian random vectors)}
  \label{lemma::isotropic}
  In the settings of Theorem~\ref{thm::YYnorm},
  suppose (A1) holds and $C_0 = \max_{i,j} \norm{z_{ij}}_{\psi_2}$.
Then for any $t > 0$, and any $u =(u_1, \ldots, u_n) \in \{-1, 1\}^n$ 
\ben
\prob{\sum_{i =1}^{n} u_i \ip{\Z_i, \mu}  \ge t}
\label{eq::sumZOEYsubg}
& \le &
2 \exp\left(- {c t^2}/{(C_0^2 n)}\right); \\
\text{and for any } \; q \in \Sp^{n-1},
\prob{\sum_{i=1}^n \ip{q_i \Z_i, \mu} \ge t }
\label{eq::qZOEYsubg}
& \le & 2\exp\left(- {c' t^2}/{C_0^2}\right)
\een
\end{lemma}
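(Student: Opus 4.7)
The plan is to reduce both tail bounds to a single Hoeffding-type inequality for sums of independent mean-zero sub-gaussian real random variables, after first verifying that each one-dimensional marginal $\ip{\Z_i,\mu}$ is itself sub-gaussian with $\psi_2$-norm of order $C_0$.

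First I would fix $i \in [n]$ and use assumption (A1): the coordinates $z_{i1},\ldots,z_{ip}$ are independent, mean-zero, with $\norm{z_{ij}}_{\psi_2}\le C_0$. Writing $\ip{\Z_i,\mu}=\sum_{j=1}^p \mu_j z_{ij}$ and invoking the standard fact that a weighted sum of independent mean-zero sub-gaussian variables satisfies $\bigl\|\sum_j a_j X_j\bigr\|_{\psi_2}^2 \le C\sum_j a_j^2 \norm{X_j}_{\psi_2}^2$ (see e.g.\ the general Hoeffding lemma for sub-gaussians), I would conclude
\begin{equation*}
\norm{\ip{\Z_i,\mu}}_{\psi_2}^2 \;\le\; C\sum_{j=1}^p \mu_j^2 \norm{z_{ij}}_{\psi_2}^2 \;\le\; C\, C_0^2\,\twonorm{\mu}^2 \;=\; C\, C_0^2,
\end{equation*}
using $\mu\in \Sp^{p-1}$ from \eqref{eq::definemu}. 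Thus $\xi_i := \ip{\Z_i,\mu}$ is mean-zero sub-gaussian with $\norm{\xi_i}_{\psi_2}\le C'C_0$, and because $\Z_1,\ldots,\Z_n$ are independent by (A1), the real variables $\xi_1,\ldots,\xi_n$ are independent as well.

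Next I would apply the one-dimensional sub-gaussian concentration inequality to linear combinations $\sum_i a_i \xi_i$, which gives $\norm{\sum_i a_i\xi_i}_{\psi_2}^2 \le C\sum_i a_i^2 \norm{\xi_i}_{\psi_2}^2 \le C''\,C_0^2 \sum_i a_i^2$, and hence
\begin{equation*}
\prob{\sum_{i=1}^n a_i \xi_i \ge t} \;\le\; 2\exp\!\left(-\frac{c\,t^2}{C_0^2 \sum_i a_i^2}\right).
\end{equation*}
For \eqref{eq::sumZOEYsubg}, I would set $a_i = u_i\in\{-1,1\}$ so that $\sum a_i^2 = n$, which produces the claimed $\exp(-ct^2/(C_0^2 n))$. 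For \eqref{eq::qZOEYsubg}, I would set $a_i=q_i$ with $q\in\Sp^{n-1}$ so that $\sum a_i^2=1$, immediately yielding $\exp(-c't^2/C_0^2)$.

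There is no real obstacle: the argument is purely the standard two-step reduction (vector-level sub-gaussian $\Rightarrow$ scalar sub-gaussian marginal, then Hoeffding for independent scalar sub-gaussians). The only thing to be careful about is keeping track of absolute constants through the $\psi_2$ calculus, and making explicit use of the independence of the coordinates of each $\Z_i$ in (A1) (this is what allows the $\psi_2$-norm of $\ip{\Z_i,\mu}$ to inherit only a factor $\twonorm{\mu}=1$, rather than a dimension-dependent factor one would get from sub-gaussianity of the vector norm alone).
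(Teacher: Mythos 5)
Your proposal is correct and follows essentially the same route as the paper: both arguments first use the independence of the coordinates of $\Z_i$ together with $\twonorm{\mu}=1$ to show $\norm{\ip{\Z_i,\mu}}_{\psi_2}\le C C_0$ (the paper phrases this via the vector norm $\norm{\Z_i}_{\psi_2}\le C\max_{k}\norm{z_{ik}}_{\psi_2}$, which is the same computation), and then apply the general Hoeffding inequality for independent sub-gaussian scalars with coefficients $u_i$ (giving $\sum_i u_i^2=n$) and $q_i$ (giving $\sum_i q_i^2=1$) respectively. No gaps.
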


\begin{theorem}
  \label{thm::YYcrossterms}
In the settings of Theorem~\ref{thm::YYnorm},
we have with probability at least $1 - 2\exp(-c_6 n)$,
  \bens
  \twonorm{\hat\Sigma_Y - \E \hat\Sigma_Y}
  &=&  \twonorm{\E (\Z \Z^T) - \E (\Z \Z^T)}
   \le  C_2 C_0^2( \sqrt{p n} + n) \le \inv{12} \xi n p \gamma.
  \eens
\end{theorem}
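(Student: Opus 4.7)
The first move is to invoke Lemma~\ref{lemma::YYdec}, which represents the centered sample covariance as a two-sided projection of $\Z\Z^T - \E\Z\Z^T$:
\begin{equation*}
\hat\Sigma_Y - \E\hat\Sigma_Y = (I-P_1)(\Z\Z^T - \E\Z\Z^T)(I-P_1).
\end{equation*}
Since $I - P_1$ is an orthogonal projection with operator norm $1$, it suffices to establish the operator-norm bound
$\twonorm{\Z\Z^T - \E\Z\Z^T} \le C_2 C_0^2(\sqrt{pn} + n)$, which (combined with the sample-size assumption~\eqref{eq::kilo}) will automatically give the claimed $\le \tfrac{1}{12}\xi n p \gamma$.

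\textbf{Exploiting independence of entries.} Under assumption (A1), all entries $z_{ij}$ are independent sub-gaussian with $\norm{z_{ij}}_{\psi_2} \le C_0$. Writing $\Z_{\cdot k}\in\R^n$ for the $k$th column of $\Z$, the columns are therefore mutually independent $n$-vectors with independent coordinates, and
$\Z\Z^T = \sum_{k=1}^p \Z_{\cdot k}\Z_{\cdot k}^T$
is a sum of $p$ independent rank-one matrices. Consequently, for any fixed $v\in\Sp^{n-1}$, the vector $\Z^T v \in \R^p$ has independent coordinates $(\Z^T v)_k = \ip{\Z_{\cdot k}, v} = \sum_i v_i z_{ik}$, each sub-gaussian with $\psi_2$-norm at most $CC_0$ by Hoeffding-type aggregation.

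\textbf{Net plus Bernstein.} I would then run the standard $\varepsilon$-net argument: pick a $\tfrac14$-net $\mathcal{N}$ of $\Sp^{n-1}$ with $\abs{\mathcal{N}}\le 9^n$, reduce to
$\twonorm{\Z\Z^T - \E\Z\Z^T} \le 2 \max_{v\in\mathcal{N}} \bigl|\twonorm{\Z^T v}^2 - \E\twonorm{\Z^T v}^2\bigr|$,
and observe that for fixed $v$ the quantity $\twonorm{\Z^T v}^2 = \sum_{k=1}^p \ip{\Z_{\cdot k}, v}^2$ is a sum of $p$ independent centered sub-exponential random variables, each with $\psi_1$-norm bounded by $CC_0^2$. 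Bernstein's inequality for sub-exponentials then yields, at level $t = C_2 C_0^2(\sqrt{pn} + n)$, the tail bound $2\exp(-c' n)$, since both $t^2/(p C_0^4)$ and $t/C_0^2$ are $\gtrsim n$. A union bound over $\mathcal{N}$ costs at most $9^n$, which is absorbed into the exponent by taking $C_2$ large enough, giving the desired operator-norm bound with probability at least $1 - 2\exp(-c_6 n)$. Finally, $C_2 C_0^2(\sqrt{pn}+n) \le \tfrac1{12}\xi n p \gamma$ follows from~\eqref{eq::kilo}: the condition $p\gamma \ge C'C_0^2/\xi^2$ gives $nC_0^2 \lesssim \xi n p \gamma$, and $np \ge C C_0^4/(\xi^2\gamma^2)$ gives $C_0^2\sqrt{pn} \lesssim \xi n p \gamma$.

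\textbf{Main obstacle.} This is essentially a Vershynin-style sample-covariance estimate, and the only real subtlety is that the $\sqrt{pn}+n$ scaling (as opposed to something involving $p$) relies crucially on the \emph{column} independence supplied by (A1); without independence of coordinates within each row, $\sum_k \ip{\Z_{\cdot k},v}^2$ would not decompose as a sum of independent sub-exponentials and a different argument (of the type needed for Theorem~\ref{thm::YYaniso}) would be required. Keeping careful track of this decomposition is the one place where the proof has to be written cleanly rather than merely cited.
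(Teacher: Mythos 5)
Your argument is correct, and it reaches the bound by a route that is genuinely, if mildly, different from the paper's. The paper (Lemma~\ref{lemma::ZZoporig} together with Lemma~\ref{lemma::eventE0}) first splits $\Z\Z^T - \E(\Z\Z^T)$ into its diagonal and off-diagonal parts: the diagonal is controlled by Bernstein's inequality applied to $\twonorm{\Z_j}^2 - \E\twonorm{\Z_j}^2 = \sum_k (z_{jk}^2 - \E z_{jk}^2)$ with a union bound over $j\in[n]$, while the off-diagonal bilinear form $\sum_{i\neq j} q_i h_j \ip{\Z_i,\Z_j}$ is written as $\mvec{\Z}^T \tilde A(q,h)\mvec{\Z}$ for a block-diagonal coefficient matrix and controlled by the Hanson--Wright inequality over a two-parameter net $(q,h)\in\Net\times\Net$. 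You instead treat the full quadratic form $v^T(\Z\Z^T-\E\Z\Z^T)v=\sum_{k=1}^p\bigl(\ip{\Z_{\cdot k},v}^2-\E\ip{\Z_{\cdot k},v}^2\bigr)$ in one piece, using the column independence guaranteed by (A1) to make this a sum of $p$ independent centered sub-exponentials and applying Bernstein over a single net; this is the classical sample-covariance argument (Vershynin, Theorem 4.6.1 style) and is somewhat cleaner for this particular theorem. What the paper's more elaborate decomposition buys is reusability: the Hanson--Wright template for the off-diagonal part is exactly what gets upgraded to Theorem~\ref{thm::ZHW} and Theorem~\ref{thm::YYcovcorr} in the anisotropic setting, where your column-independence step fails --- a limitation you correctly flag yourself. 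Your closing deduction that $C_2C_0^2(\sqrt{pn}+n)\le \tfrac1{12}\xi np\gamma$ from the two halves of~\eqref{eq::kilo} matches the paper's use of that assumption.
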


\section{Proof outline for Theorem~\ref{thm::YYaniso}}
\label{sec::finalYYaniso}
We provide an outline for Theorem~\ref{thm::YYaniso} in this section.
First, we state Lemma~\ref{lemma::projWH}, 
where we extend Lemma~\ref{lemma::projerr} to the 
anisotropic cases.
The anisotropic version of Lemma~\ref{lemma::isotropic} is presented 
in the supplementary Lemma~\ref{lemma::anisoproj}.
The model under consideration in Theorem~\ref{thm::YYcovcorr}
is understood to be a special case of Theorem~\ref{thm::ZHW}.
Theorem~\ref{thm::YYaniso} follows from Theorem~\ref{thm::YYcovcorr}
and Lemma~\ref{lemma::projWH} immediately, and the probability
statements hold upon adjusting the constants. We defer all proofs to the supplementary
Section~\ref{sec::corrproofs}.
Let $c, c', C_2, C_3, C_4, \ldots$ be absolute constants.

  \begin{lemma}{{\bf (Projection: probabilistic view)}}
    \label{lemma::projWH}
  Let $\mu$ be as in~\eqref{eq::definemu} and $M_Y$ be as in 
Lemma~\ref{lemma::tiltproject}. 
    Suppose all conditions in Theorem~\ref{thm::YYaniso} hold.
Then with probability at least $1 -2 \exp(-c' n)$,   we have
\bens 
\infonenorm{M_Y} & \le &
C_4 (C_0 \max_{i} \twonorm{R_i \mu}) n(n-1) \sqrt{p \gamma}
\le \inv{12} \xi n(n-1) p \gamma \\
\twonorm{M_Y}
& \le &
2 C_3 (C_0 \max_{i} \twonorm{R_i \mu}) n \sqrt{p \gamma} \le 
 \inv{12} \xi n p \gamma 
 \eens
\end{lemma}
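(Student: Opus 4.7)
The plan is to combine the deterministic reduction already established in Lemma~\ref{lemma::tiltproject} with sub-gaussian concentration for one-dimensional marginals adapted to the anisotropic row structure $\Z_i = H_i W_i$. Lemma~\ref{lemma::tiltproject} reduces $\infonenorm{M_Y}$ to controlling $\sum_i |\ip{\Z_i, \mu^{(1)}-\mu^{(2)}}|$, and $\twonorm{M_Y}$ to controlling $\sup_{q \in \Sp^{n-1}} |\sum_i q_i \ip{\Z_i, \mu^{(1)}-\mu^{(2)}}|$. Writing $\mu^{(1)}-\mu^{(2)} = \sqrt{p\gamma}\,\mu$ with $\mu$ the unit vector from~\eqref{eq::definemu}, both problems reduce to the concentration of the scalar random variables $\eta_i := \ip{\Z_i, \mu}$. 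These are independent and sub-gaussian with $\norm{\eta_i}_{\psi_2} \le C_0 \twonorm{R_i \mu}$ by~\eqref{eq::covZ1}--\eqref{eq::covZ2}, where $R_i$ denotes the relevant square-root factor of $\cov(\Z_i) = H_i H_i^T$; this is exactly the anisotropic replacement for the isotropic bound $\norm{\eta_i}_{\psi_2} \le C_0$ used in Lemma~\ref{lemma::isotropic}.

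For the operator norm I would combine the standard Hoeffding bound
\begin{equation*}
\prob{\abs{\sum_i q_i \eta_i} > t} \le 2\exp\bigl(- c t^2/(C_0^2 \max_i \twonorm{R_i \mu}^2)\bigr), \qquad q \in \Sp^{n-1},
\end{equation*}
with a $1/4$-net on $\Sp^{n-1}$ of cardinality at most $9^n$, upgrading the pointwise estimate to $\sup_{q}\abs{\sum_i q_i \eta_i} \le C'\sqrt{n}\, C_0 \max_i \twonorm{R_i \mu}$ with probability at least $1 - 2\exp(-c n)$; this is the content of supplementary Lemma~\ref{lemma::anisoproj}. For the cut norm I would use $\sum_i |\eta_i| = \max_{s \in \{-1,1\}^n} \sum_i s_i \eta_i$ together with Hoeffding and a union bound over the $2^n$ sign patterns: choosing $t \asymp n\, C_0 \max_i \twonorm{R_i \mu}$ makes $2^n \exp(-c t^2/(n C_0^2 \max_i \twonorm{R_i \mu}^2))$ exponentially small in $n$, giving $\sum_i |\eta_i| \le C n\, C_0 \max_i \twonorm{R_i \mu}$. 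Plugging both estimates back into Lemma~\ref{lemma::tiltproject}, and using $8 w_1 w_2 \le 2$ and $4\sqrt{w_1 w_2} \le 2$, yields the claimed bounds in their first form; a union bound combines the two high-probability events into one of the form $1-2\exp(-c' n)$ after adjusting constants.

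The second inequality, i.e.\ the reduction to $\inv{12}\xi\, n p\gamma$, then follows from the separation hypothesis~\eqref{eq::miles}: since $\twonorm{R_i \mu}^2 \le \twonorm{H_i H_i^T} = \twonorm{\cov(\Z_i)}$, condition~\eqref{eq::miles} forces $C_0 \max_i \twonorm{R_i \mu} \le \xi\sqrt{p\gamma}$ once the factors $12 C_3$ and $12 C_4$ are absorbed into the absolute constant in~\eqref{eq::miles}. The main technical point I anticipate is simply tracking the anisotropic $\psi_2$-scale \emph{uniformly} across clusters of distinct covariance; once the $\eta_i$ are identified as independent sub-gaussians with $\psi_2$-norm dominated by $C_0 \max_i \twonorm{R_i \mu}$, the remaining work is a routine $\ve$-net/union-bound argument, and no assumption beyond Definition~\ref{def::WH} and the separation condition~\eqref{eq::miles} is required.
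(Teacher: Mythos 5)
Your proposal is correct and follows essentially the same route as the paper: reduce both norms of $M_Y$ via the deterministic Lemma~\ref{lemma::tiltproject} to the scalars $\ip{\Z_i,\mu}$, control their $\psi_2$-norms by $C_0\max_i\twonorm{R_i\mu}$ (the paper's Lemma~\ref{lemma::anisoproj}), apply a union bound over the $2^n$ sign patterns for the cut norm and an $\ve$-net over $\Sp^{n-1}$ for the operator norm, and absorb the constants into the separation condition~\eqref{eq::miles} for the final $\xi np\gamma$ bounds. No gaps.
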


\begin{theorem}
  \label{thm::YYcovcorr}
  In the settings of Theorem~\ref{thm::YYaniso}, we have
  with probability at least $1 - 2\exp(-c_6 n)$, 
\ben
\label{eq::ZZHop}
\twonorm{\hat\Sigma_Y - \E \hat\Sigma_Y}
& \le &
\twonorm{\Z \Z^T - \E  \Z \Z^T} \le  C_2   (C_0 \max_{i}
\twonorm{H_i})^2 (\sqrt{n p} \vee n).
\een
\end{theorem}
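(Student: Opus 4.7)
The plan is to prove the two inequalities separately. The first, $\twonorm{\hat\Sigma_Y - \E \hat\Sigma_Y} \le \twonorm{\Z\Z^T - \E\Z\Z^T}$, is immediate from Lemma~\ref{lemma::YYdec}: since $\hat\Sigma_Y - \E\hat\Sigma_Y = (I_n - P_1)(\Z\Z^T - \E\Z\Z^T)(I_n - P_1)$ and $I_n - P_1$ is an orthogonal projection (operator norm at most $1$), submultiplicativity finishes the first inequality. The real content is the second inequality, for which I plan a variational-plus-net argument that reduces the bound to a quadratic form in an isotropic sub-gaussian vector and then applies Hanson--Wright uniformly over an $\eps$-net of $\Sp^{n-1}$.

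\textbf{Variational reduction.} Writing $M = \Z\Z^T - \E\Z\Z^T$, symmetry gives $\twonorm{M} = \sup_{u \in \Sp^{n-1}} |u^T M u|$, and $u^T M u = \twonorm{\Z^T u}^2 - \E \twonorm{\Z^T u}^2$. Using the model $\Z_j = H_{\psi(j)} W_j$ from Definition~\ref{def::WH}, I stack $W = (W_1^T,\ldots,W_n^T)^T \in \R^{nm}$, whose coordinates are independent, mean-zero, with $\psi_2$-norm at most $C_0$ by~\eqref{eq::Wpsi2}. Then $\Z^T u = A_u W$, where $A_u = [u_1 H_{\psi(1)}, \ldots, u_n H_{\psi(n)}] \in \R^{p \times nm}$ is the horizontal block concatenation, so that $u^T M u = W^T Q_u W - \E W^T Q_u W$ with $Q_u := A_u^T A_u$.

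\textbf{Hanson--Wright plus covering.} The relevant matrix norms are transparent: since $\sum_i u_i^2 = 1$, the identity $A_u A_u^T = \sum_i u_i^2 H_{\psi(i)} H_{\psi(i)}^T$ is a convex combination of the per-cluster covariances, giving $\twonorm{Q_u} = \twonorm{A_u A_u^T} \le \max_i \twonorm{H_i}^2 =: \kappa$ and $\fnorm{Q_u}^2 = \tr((A_u A_u^T)^2) \le p\, \kappa^2$. Applying Hanson--Wright to the independent sub-gaussian coordinates of $W$ yields, for each fixed $u$,
\[
\Pr(|u^T M u| > t) \le 2\exp\left(-c\min\left(\frac{t^2}{C_0^4 p \kappa^2}, \frac{t}{C_0^2 \kappa}\right)\right).
\]
Choosing $t = \tilde C\, C_0^2 \kappa\,(\sqrt{np} \vee n)$ makes the exponent of order $n$ in both regimes. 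Since a standard $1/4$-net $\mathcal{N}$ of $\Sp^{n-1}$ has $|\mathcal{N}| \le 9^n$ and $\twonorm{M} \le 2 \sup_{u \in \mathcal{N}} |u^T M u|$ for symmetric $M$, a union bound with $\tilde C$ chosen large enough to make the per-point exponent exceed $(\log 9 + c_6)n$ delivers $\twonorm{\Z\Z^T - \E\Z\Z^T} \le C_2 (C_0 \max_i \twonorm{H_i})^2 (\sqrt{np} \vee n)$ with probability at least $1 - 2\exp(-c_6 n)$.

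\textbf{Main obstacle.} The delicate step is matching $t$ to the crossover between the two regimes of Hanson--Wright and verifying that the resulting exponent dominates the $9^n$ covering number. When $p \ge n$ the sub-gaussian term $t^2/(p C_0^4 \kappa^2) \asymp n$ is active, while when $p < n$ the sub-exponential term $t/(C_0^2 \kappa) \asymp n$ takes over; one must check both regimes simultaneously and confirm that the two sides of the $\sqrt{np}\vee n$ bound emerge from exactly these two cases. The rest of the argument (computation of $\fnorm{Q_u}$ from the convex-combination structure of $A_u A_u^T$, standard net manipulations, and tracking of absolute constants so that the resulting bound is compatible with the $\tfrac1{12}\xi\, npg$ threshold invoked in Theorem~\ref{thm::YYaniso}) is routine.
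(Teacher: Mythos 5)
Your proposal is correct, and it takes a genuinely different route from the paper. The paper splits $\Z\Z^T - \E\,\Z\Z^T$ into a diagonal part (handled by Lemma~\ref{lemma::E0correlated}, i.e.\ Hanson--Wright applied to $W_j^T H_i^T H_i W_j$ with a union bound over $j\in[n]$) and an off-diagonal part (handled by a two-sided net over $q,h\in\Sp^{n-1}$ together with the purpose-built anisotropic Hanson--Wright inequality of Theorem~\ref{thm::ZHW}, applied to the bilinear form $\sum_{i\ne j}\ip{\Z_i,\Z_j}q_ih_j$ with coefficient matrix $\offd(q\otimes h)$), and then recombines via the triangle inequality. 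You instead treat the full centered quadratic form $u^TMu=W^TQ_uW-\E\,W^TQ_uW$ in one shot, exploiting that Theorem~\ref{thm::HW} is already stated for centered quadratic (not just off-diagonal bilinear) forms, with a single one-sided net. The underlying mechanism is the same in both arguments --- reduce to the standard Hanson--Wright inequality for the stacked isotropic vector $W\in\R^{nm}$ via a block matrix built from the $H_i$, with the norm bookkeeping $\twonorm{Q_u}\le\max_i\twonorm{H_i}^2$ and $\fnorm{Q_u}^2\le p\max_i\twonorm{H_i}^4$ playing the role of the paper's bounds $\twonorm{L^T\tilde AL}$ and $\fnorm{L^T\tilde AL}$ --- and your crossover analysis of the two Hanson--Wright regimes against the $9^n$ covering number is exactly right. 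What your version buys is economy: no diagonal/off-diagonal decomposition, no permutation-matrix vectorization identities, and a $9^n$ rather than $9^{2n}$ union bound. What the paper's version buys is the standalone bilinear inequality of Theorem~\ref{thm::ZHW}, which it advertises as of independent interest and which is the natural tool when one genuinely needs $\offd(\cdot)$ (e.g.\ for cut-norm bounds over $s,t\in\{-1,1\}^n$, where the diagonal must be excised). Your convex-combination identity $A_uA_u^T=\sum_iu_i^2H_{\psi(i)}H_{\psi(i)}^T$ is a clean observation that the paper does not use explicitly.
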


\begin{theorem}{\textnormal\noindent{\bf (Hanson-Wright inequality for anisotropic  sub-gaussian vectors.)}}
  \label{thm::ZHW}
 Let $H_1, \ldots, H_n$ be deterministic $p \times m$ matrices,
 where we assume that $m \ge p$.
Let $\Z^T_1, \ldots, \Z^T_n \in \R^{p}$ be row vectors of $\Z$.
 We generate $\Z$ according to
 Definition~\ref{def::WH}.

Then we have for $t > 0$, for any $A = (a_{ij}) \in \R^{n \times n}$,
\ben
\nonumber
\lefteqn{\prob{
  \abs{\sum_{i=1}^n  \sum_{j \not=i}^n \ip{\Z_{i}, \Z_{j}} a_{ij}} > t}}\\
\label{eq::genDH}
& \le &
2 \exp \left(- c\min\left(\frac{t^2}{(C_0 \max_{i} \twonorm{H_i})^4  p
      \fnorm{A}^2}, \frac{t}{(C_0 \max_{i} \twonorm{H_i})^2 \twonorm{A}} \right)\right) 
\een
where $\max_{i} \norm{\Z_i}_{\psi_2} \le C C_0 \max_{i} \twonorm{H_i}$
in the sense of \eqref{eq::Zpsi2}.
\end{theorem}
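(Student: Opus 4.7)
\textbf{Proof plan for Theorem~\ref{thm::ZHW}.} The strategy is to reduce the quadratic form to one in a long vector of independent scalar sub-gaussian coordinates, and then invoke the classical Hanson--Wright inequality. The main quantitative work is to translate the block structure through to sharp bounds on the Frobenius and operator norms of the induced matrix, so that the factor $p$ (rather than $m$) appears in the first exponent of~\eqref{eq::genDH}.

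First, stack the $W_j$'s into a single vector $\vec{W} = (W_1^\top,\ldots,W_n^\top)^\top \in \R^{nm}$, whose coordinates are independent, mean-zero, sub-gaussian scalars with $\psi_2$-norm at most $C_0$ by Definition~\ref{def::WH}. Let $D = \mathrm{blockdiag}(H_1,\ldots,H_n)\in\R^{np\times nm}$ so that $\Z^{\mathrm{stack}} := (\Z_1^\top,\ldots,\Z_n^\top)^\top = D\vec{W}$. Setting $A' = A - \diag(A)$ and $\tilde A = \tfrac12(A'+(A')^\top)\otimes I_p \in \R^{np\times np}$, one gets the identity
\begin{equation*}
\sum_{i\neq j}\ip{\Z_i,\Z_j}a_{ij} \;=\; (\Z^{\mathrm{stack}})^\top \tilde A\, \Z^{\mathrm{stack}} \;=\; \vec{W}^\top M\vec{W},\qquad M := D^\top \tilde A\, D,
\end{equation*}
where $M$ is symmetric. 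Moreover $\tr(M)=\tr(\tilde A\, DD^\top)=\sum_i \tr(\tilde A_{ii}\, H_iH_i^\top)=0$, because the diagonal blocks $\tilde A_{ii}$ vanish. Combined with the isotropy of $\vec{W}$ this gives $\E[\vec{W}^\top M\vec{W}]=0$, so the classical Hanson--Wright inequality (e.g.\ Rudelson--Vershynin) applied to $\vec{W}$ and $M$ yields
\begin{equation*}
\pr\!\left(|\vec{W}^\top M\vec{W}|>t\right)\le 2\exp\!\left(-c\min\!\left(\frac{t^2}{C_0^4\,\fnorm{M}^2},\;\frac{t}{C_0^2\,\twonorm{M}}\right)\right).
\end{equation*}

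The remaining task is purely linear-algebraic: bound $\twonorm{M}$ and $\fnorm{M}$ in terms of $\max_i\twonorm{H_i}$ and the norms of $A$. For the operator norm, submultiplicativity gives $\twonorm{M}\le \twonorm{D}^2\twonorm{\tilde A}$; since $D$ is block-diagonal, $\twonorm{D}=\max_i\twonorm{H_i}$, while $\twonorm{\tilde A}=\tfrac12\twonorm{(A'+(A')^\top)\otimes I_p}\le \twonorm{A'}\le 2\twonorm{A}$ using $\twonorm{B\otimes I_p}=\twonorm{B}$ and $\twonorm{A-\diag(A)}\le 2\twonorm{A}$. For the Frobenius norm, blocking $M$ as $M_{ij}=\tilde a_{ij}H_i^\top H_j$ and using the trace inequality
\begin{equation*}
\fnorm{H_i^\top H_j}^2 = \tr(H_j^\top H_iH_i^\top H_j) \le \twonorm{H_iH_i^\top}\,\fnorm{H_j}^2 \le \twonorm{H_i}^2\,p\,\twonorm{H_j}^2
\end{equation*}
(since $H_j$ has rank at most $p$) yields $\fnorm{M}^2 \le (\max_i\twonorm{H_i})^4\cdot p\cdot \fnorm{\tfrac12(A'+(A')^\top)}^2 \le (\max_i\twonorm{H_i})^4\, p\,\fnorm{A}^2$. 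Plugging these two bounds into the Hanson--Wright estimate and absorbing constants gives exactly~\eqref{eq::genDH}.

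The only subtle points, which I would take care to verify in the write-up, are (i) getting the factor $p$ (the ambient dimension of each $\Z_i$) rather than $m$ in the first exponent, which forces the use of the rank bound $\fnorm{H_j}^2 \le p\twonorm{H_j}^2$; and (ii) the symmetrization/zero-diagonal step, which is essential so that $\E[\vec{W}^\top M\vec{W}]=0$ and the Hanson--Wright bound can be applied without a separate centering correction. Neither is deep, but both are easy to mishandle.
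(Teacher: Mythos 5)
Your proposal is correct and is essentially the paper's own argument: both reduce the quadratic form to $\mvec{\BW}^T M \mvec{\BW}$ for a matrix $M$ built from the $H_i$ and a zero-diagonal version of $A$, bound $\twonorm{M}$ and $\fnorm{M}$ by $(\max_i\twonorm{H_i})^2$ times the corresponding norms of the lifted coefficient matrix, and invoke the classical Hanson--Wright inequality; your row-stacking with $\offd(A)\otimes I_p$ differs from the paper's column-wise $\mvec{\cdot}$ with $p$ diagonal blocks only by the permutation the paper itself writes down. The factor $p$ you extract via the rank bound $\fnorm{H_j}^2\le p\twonorm{H_j}^2$ is obtained in the paper more directly from $\fnorm{\tilde A}=\sqrt{p}\,\fnorm{\offd(A)}$ together with $\fnorm{L^T\tilde A L}\le\twonorm{L}^2\fnorm{\tilde A}$, but both computations are valid and give the same bound.
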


\noindent{\bf Remarks on covariance estimation.}
Essentially, \eqref{eq::ZZHop} matches the optimal bounds on
covariance estimation, where the mean-zero random matrix consists of
independent columns $\Z^j, j=1,\ldots, p$ that are isotropic,
sub-gaussian random vectors in $\R^n$, or columns which can be
transformed to be isotropic through a common covariance matrix.
See, for example, Theorems 4.6.1 and 4.7.1~\cite{Vers18}.
The difference between \eqref{eq::ZZHop} and such known results are:
(a) we do not assume that columns are independent; (b)  we do not
require anisotropic row vectors to share identical covariance
matrices. More generally, we allow
the (sample by sample) covariance matrix
$\Sigma_X := \E \Z \Z^T= \diag( [\tr(H_1H_1^T), \ldots, \tr(H_nH_n^T)])$,
through Definition~\ref{def::WH}; and hence we are estimating a
diagonal matrix with $p$ dependent features,
where we assume that $\E(X)$ is given.
We state the operator norm bound in Theorem~\ref{thm::YYcovcorr}, 
where it is understood that \eqref{eq::ZZHop} holds under the general 
covariance model as considered in Definition~\ref{def::WH} and
Theorem~\ref{thm::ZHW}.
We prove Theorem~\ref{thm::ZHW} in the supplementary Section~\ref{sec::opcorrelated}.
The proof might be of independent interests.
Such generalization is useful since we may consider the more general
$k$-component mixture problems, as elaborated in Section~\ref{sec::kmeans}. 
See also Exercise 6.2.7~\cite{Vers18} for a related result.

\section{Experiments}
\label{sec::experiments}
In this section, we use simulation to illustrate the effectiveness and
convergence properties of the two estimators.
We use a similar setup as the one used in ~\cite{BCFZ09}.
We generate data that is a mixture of two populations.
Data matrix $X \in \{0, 1\}^{n \times p}$ consists of
independent Bernoulli random variables, where the mean parameters
$ \E (X_{ij}) := q_{\psi(i)}^j$ for all $i \in [n]$ and $j \in [p]$,
where $\psi(i) \in \{1, 2\}$ assigns nodes $i$ to  a group $\C_1$ or
$\C_2$ for each $i \in [n]$. Let $\size{\C_1} = w_1 n$ and $\size{\C_2} = w_2 n$.
We conduct experiments for both balanced ($w_1 = w_2$) and imbalanced cases.
The entrywise expected values are chosen as follows: for half of the
$p$ features, the mean parameters $q_1^j > q_2^j$,
and for the other half, $q_1^j< q_2^j$ such that $\forall j$,
$q_1^j, q_2^j \in \{\frac{1 + \alpha}{2} + \frac{\e}{2}, \frac{1 - \alpha}{2} +
\frac{\e}{2}\}$. We set $\e = 0.1 \alpha$ and $\alpha=0.04$.  Hence $\gamma =
\alpha^2 = 0.0016$, $\frac{1}{\gamma^2}=390,625$, and $\frac{1}{\gamma}=625$.
We implement {\bf Algorithm 1:} the SDP as described in~\eqref{eq::sdpmain},
and classify according to signs of $\hat{x}$ as  prescribed by
Corollary~\ref{coro::misclass}; and  {\bf Algorithm 2:} the Peng-Wei spectral
method following~\cite{PW07}.\\
\begin{tabular}{p{5.4in}}
\hline
  {\bf Algorithm 2: Spectral method for $k$-means clustering
  (Peng-Wei)~\cite{PW07}:} \\ \hline
  
  {\bf Input:} Centered data matrix $Y \in \R^{n \times p}$, $k=2$\\
  {\bf Output:} A group assignment vector $P$ \\
  
  \noindent{\bf Step 1.}
  Use SVD to obtain the leading eigenvector $v_1$ of  $Y Y^T$ and
      let $v := v_1$;\\
 \noindent{\bf Step 2.}
Let $S$ be the vector of sorted values of $v$ in descending
order. For each index $j$ in $[n]$, compute
the two means $\vc_{1}$, $\vc_2$, one for each of the two groups,
namely, $\C_1 = S_L := \{S_1, \ldots, S_j\}$ and $\C_2 = S_R :=
\{S_{j+1}, \ldots, S_n\}$ to the left (inclusive) and the right of this index;\\
 \noindent{\bf Step 3.}
Compute the total sum-of-squared Euclidean distances from each point within
a particular group to the respective mean, according to~\eqref{eq::SSE};
Let $t$ be the index that gives the minimum total distance, and its corresponding value
  be $S_t$; \\
  \noindent{\bf Step 4.}
  Set $P_i = 1$ if $v_i \ge S_t$, and $P_i = -1$ if $v_i < S_t$.
  \\ \hline
\end{tabular}

 \vskip 15pt

{\bf Success rate and misclassification rate.}
For each experiment, we run 100 trials; and for each trial, we first generate a
data matrix $X_{n \times p}$ according to the mixture of two Bernoulli distributions with
parameters described above, and then feed $Y$~\eqref{eq::defineY} to
the two estimators for classification.  We measure success rate and
misclassification rate based on $P$, the output assignment vector.  Success rate is computed as the number of
correctly classified individuals divided by the sample size $n$.
Hence misclassification rate is $1 - $ success rate.
Each data point corresponds to the average of 100 trials.
Fig.~\ref{fig::succ-rate} shows the average success rates (over $100$ trials) as $n$
increases for different values of $p$ for the balanced case.

We observe that SDP has higher average success rate for
each setting of $(n, p)$ when $np\gamma^2 > 1.5$, despite the exhaustive search in
Algorithm 2; For $np\gamma^2 < 1.5$, the rates are closer.
We also see from the plot that when
$p < 1/\gamma = 625$, for example, when $p = 500$,
the success rate remains flat across $n$.
Note that a success rate of $50\%$ is equivalent to a total failure.
In contrast, when $n$ is smaller than $1/\gamma$, as we increase $p$,
we can always classify with a high success rate. In general,
$np\gamma^2 > 1$ is indeed necessary to obtain a success rate larger
than $60\%$, when $p \ge 1/{\gamma}$.
When $n < 625$, $np \gamma^2$ plays the role of the SNR, since $n p
\gamma^2 < p \gamma$; This remains the case throughout our experiments.

{\bf Angle and $\ell_2$ convergence.}
Here we take a closer look at the trends of $\hat{x}$ and $\hat{Z}$,
the solution to SDP~\eqref{eq::sdpmain} as $n$ increases, and of
$v_1$, the leading eigenvector of $YY^T$.
In the second experiment, we set $p \in \{20000, 50000,
80000\}$, and increase $n$.  In the left column of
Fig.~\ref{fig::angle-norm-imb}, which is for the imbalanced case of $w_1=0.7$, we plot $\theta_{\SDP} := \angle(\hat{x}, \bar{x})$ between
$\hat{x}$ and its reference vector $\bar{x}$ as defined in
Theorem~\ref{thm::SDPmain} and  Corollary~\ref{coro::misclass}.

For Algorithm 2, $\theta_1 := \angle(v_1,\bar{v}_1)$ between $v_1$ and
 its reference $\bar{v}_1$, where  $\bar{v}_1$ is as defined in
 Theorem~\ref{thm::SVD}.
In this case, the angle $\angle(\bar{v}_1, \bar{x})$ between the two
reference vectors is about 22 degrees (blue horizontal dashed line).
We observe that as $n$ increases, for both algorithms,
the angles $\theta_{\SDP}$ and $\theta_1$ decrease, but
$\theta_{\SDP}$  drops much faster and decreases to $0$ when $n > 200$
for $p = 80,000$.
We also show the angle $\phi = \angle(\hat{x}, v_1)$ between the two
leading eigenvectors $\hat{x}$ and $v_1$, which largely remains flat
across all $n$.

In the right column of Fig.~\ref{fig::angle-norm-imb},
we plot $\sin(\theta_1)$ for Algorithm 2, and for SDP,
we plot $\sin(\theta_{\SDP})$, $\shnorm{Z^{*} -
  \hat{Z}}_2/{n}$, and $\shnorm{Z^{*} - \hat{Z}}_F/{n}$,
where $Z^{*} = \bar{x} \bar{x}^T$. We see that for Algorithm 1, all three metrics 
decrease as $n$ increases, following an exponential decay in $n$ as predicted by 
our theory in Theorem~\ref{thm::exprate} and
Corollary~\ref{coro::misexp}, where in each plot, $p, \gamma$ are
being fixed. The gaps between the three curves   for SDP shrink when $p, n$ increase.
For Algorithm 2,
$\sin(\theta_1)$ also decreases as $n$ increases, but at a
slower rate of $1/n$, again as predicted by Theorem~\ref{thm::SVD} and
Corollary~\ref{coro::SVD}.

\begin{figure}[!tb]
  \centering
  \vskip-35pt
  \includegraphics[width=5.5in]{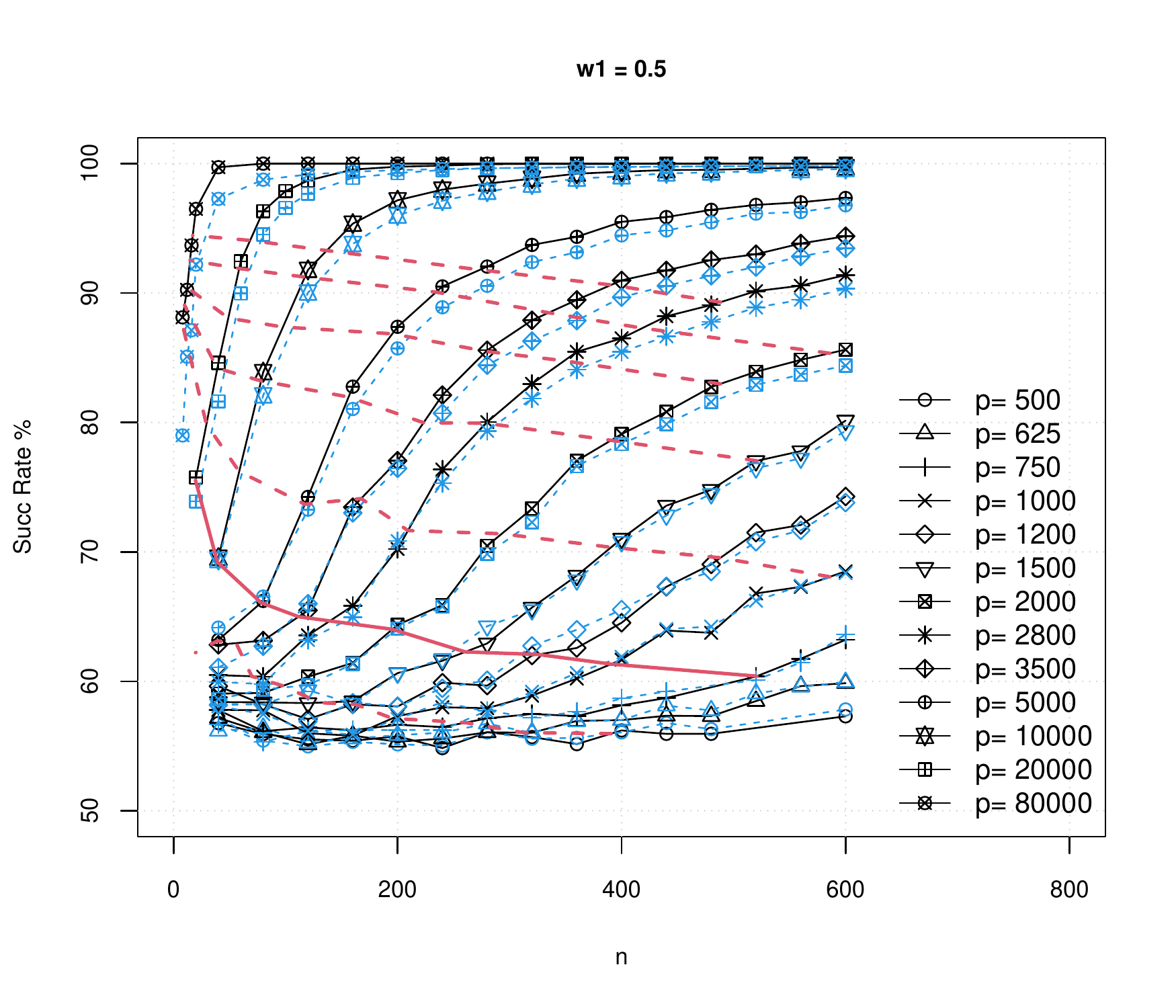}
\caption{Balanced case $w_1=0.5$. We plot the success rate for various
  values of dimension $p$ ranging from $500$ to $80000$, as $n$
  increases. Here  $\gamma = 0.0016$ and $1/\gamma = 625$.
For two lines with the same marker, the black solid line is for SDP
solution $\hat{Z}$ (and partition based on $\hat{x}$), and the blue dashed line is for
Algorithm 2. Red lines (with no markers) highlight the success rates at different
levels of $n p \gamma^2$ ranging from  $0.5$ to $3.5$,
from bottom to top with a step of $0.5$.  The solid red line is for $n p \gamma^2 = 1$.
In general, $np\gamma^2 > 1$ is necessary to see a success rate larger
than $60\%$, when $p \ge 625 = 1/{\gamma}$.}
\label{fig::succ-rate}
\end{figure}

\begin{figure}[!tb]
\centering
\vskip-10pt
\begin{subfigure}{.45\textwidth}
  \includegraphics[width=2.4in]{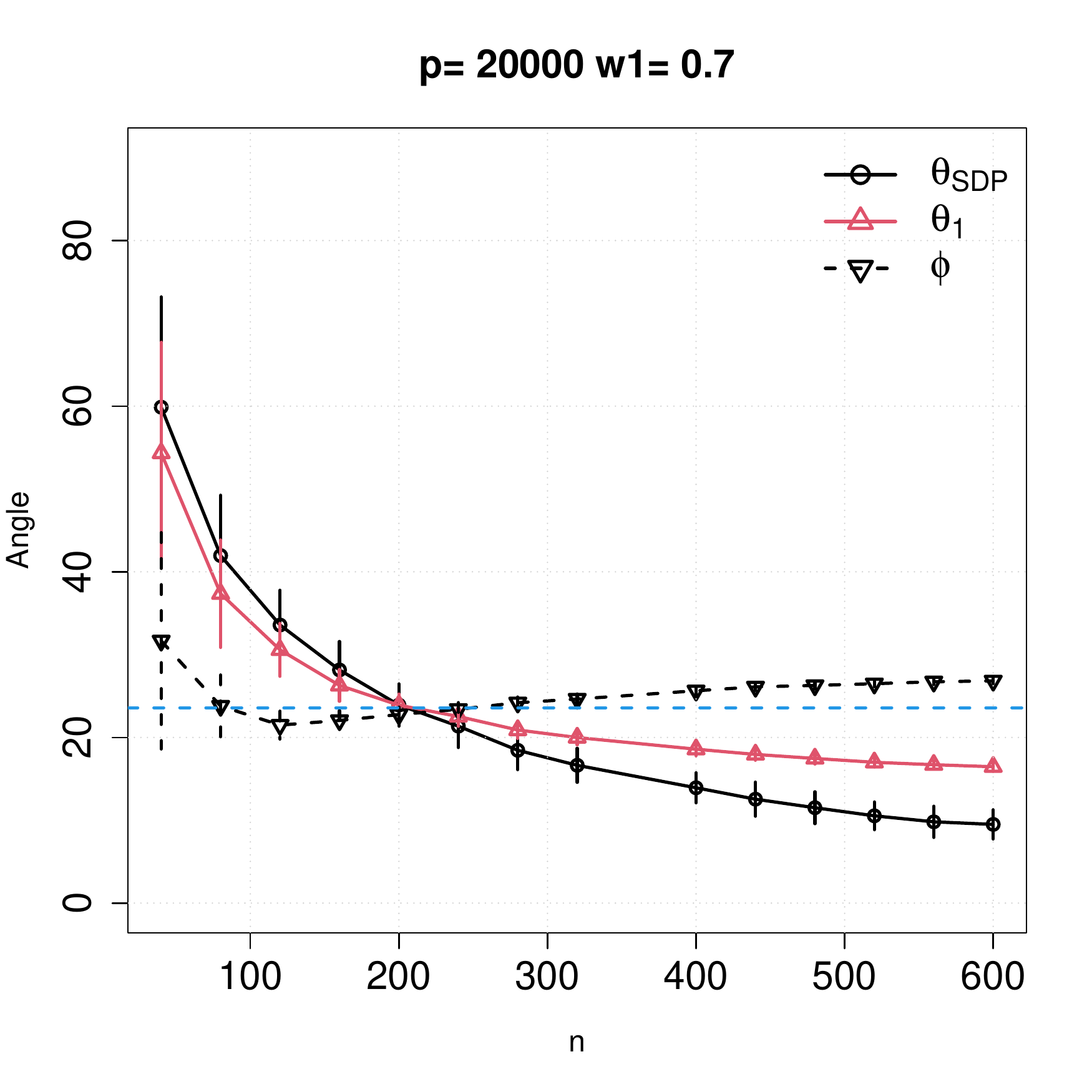}
  \end{subfigure}%
\begin{subfigure}{.45\textwidth}
  \includegraphics[width=2.4in]{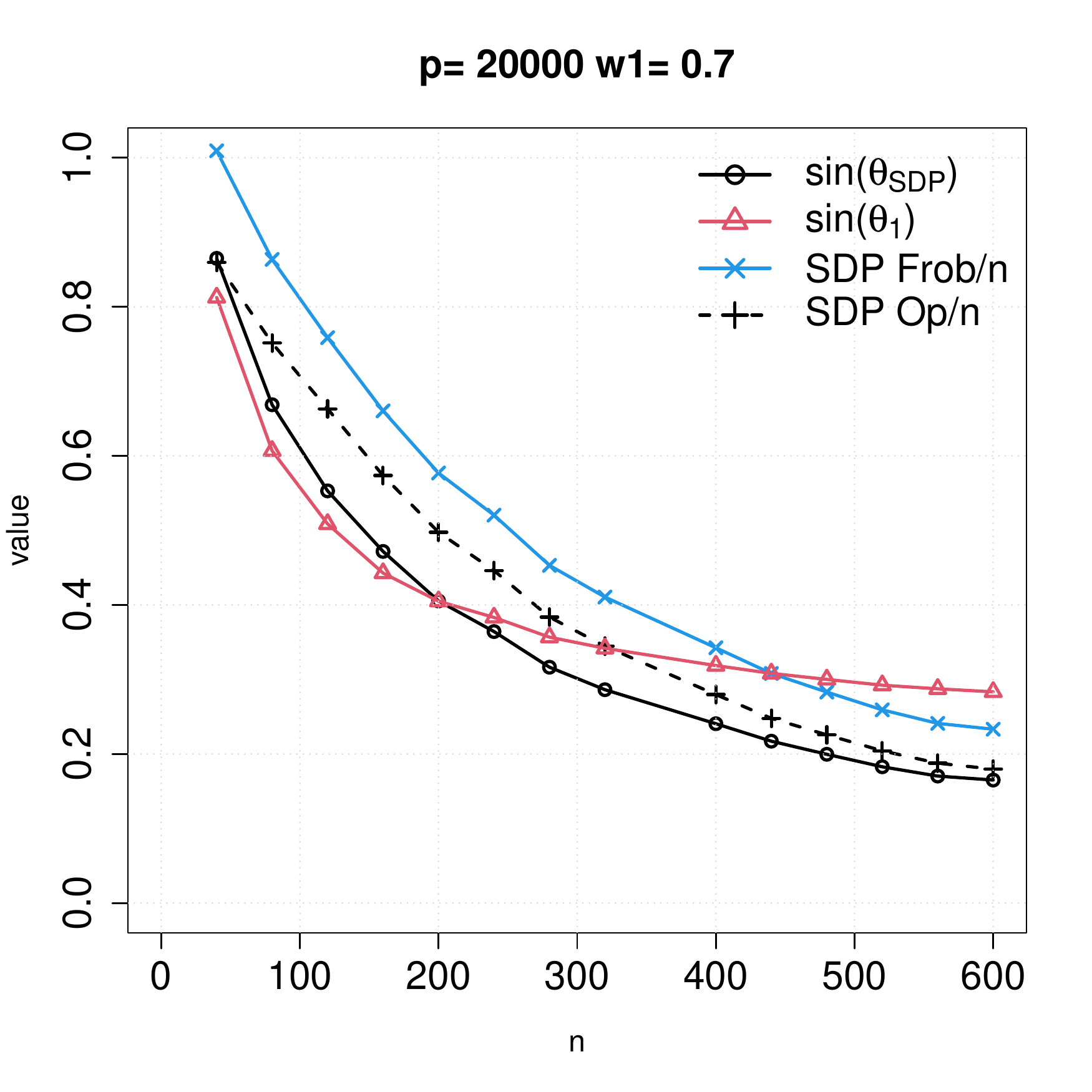}
  \end{subfigure}
  \bigskip
   \vskip-20pt
\begin{subfigure}{.45\textwidth}
  \includegraphics[width=2.4in]{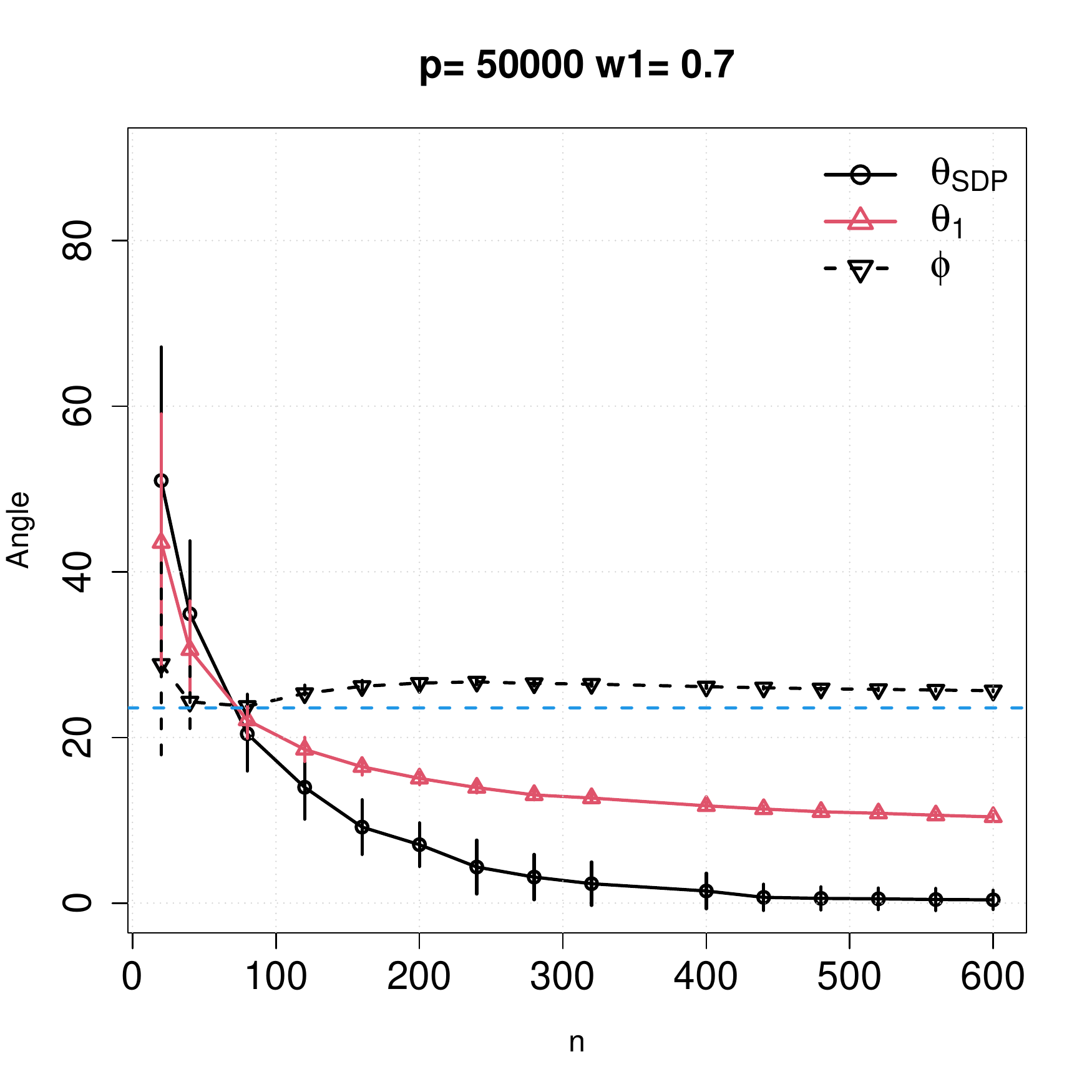}
  \end{subfigure}%
\begin{subfigure}{.45\textwidth}
  \includegraphics[width=2.4in]{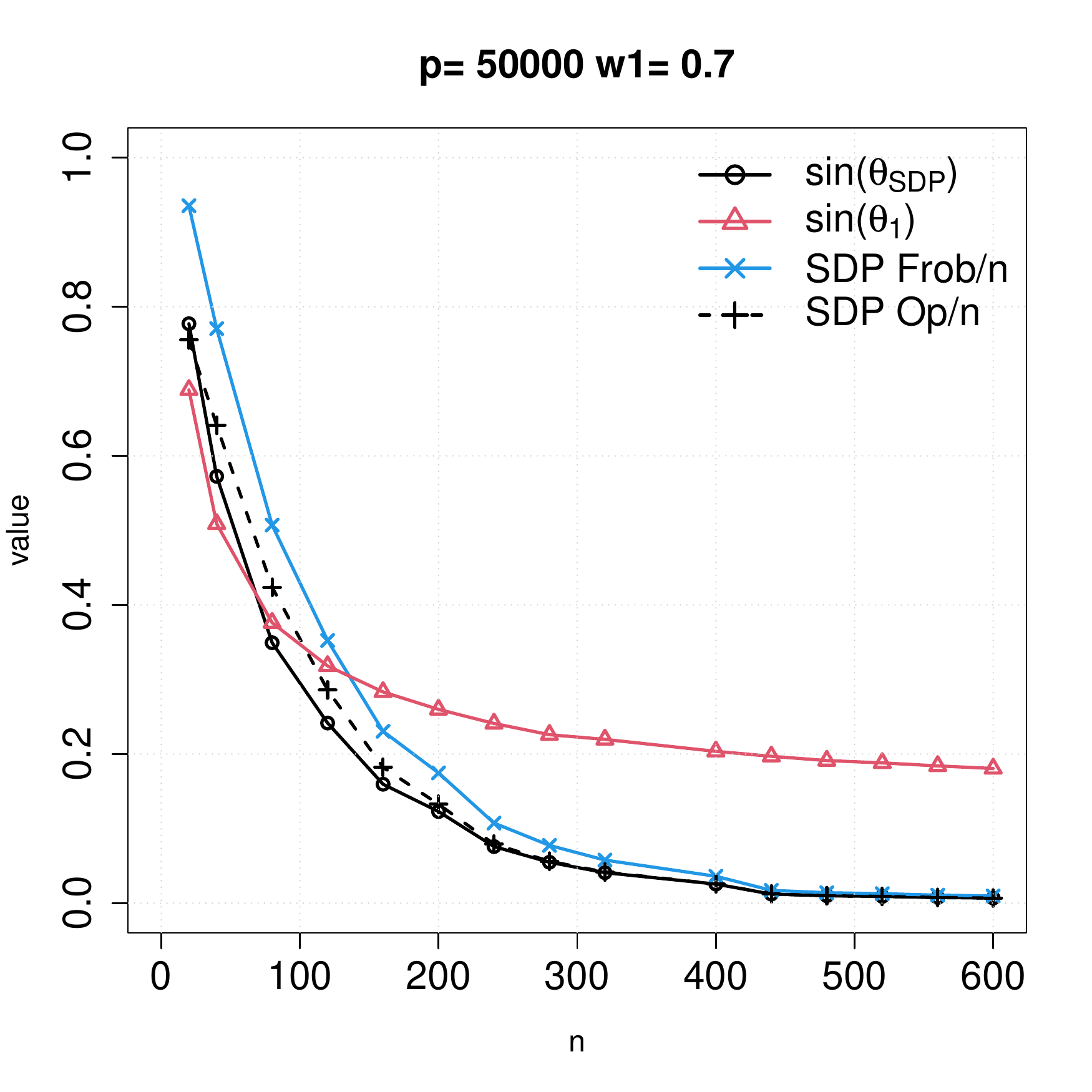}
  \end{subfigure}
\bigskip
   \vskip-20pt
\begin{subfigure}{.45\textwidth}
  \includegraphics[width=2.4in]{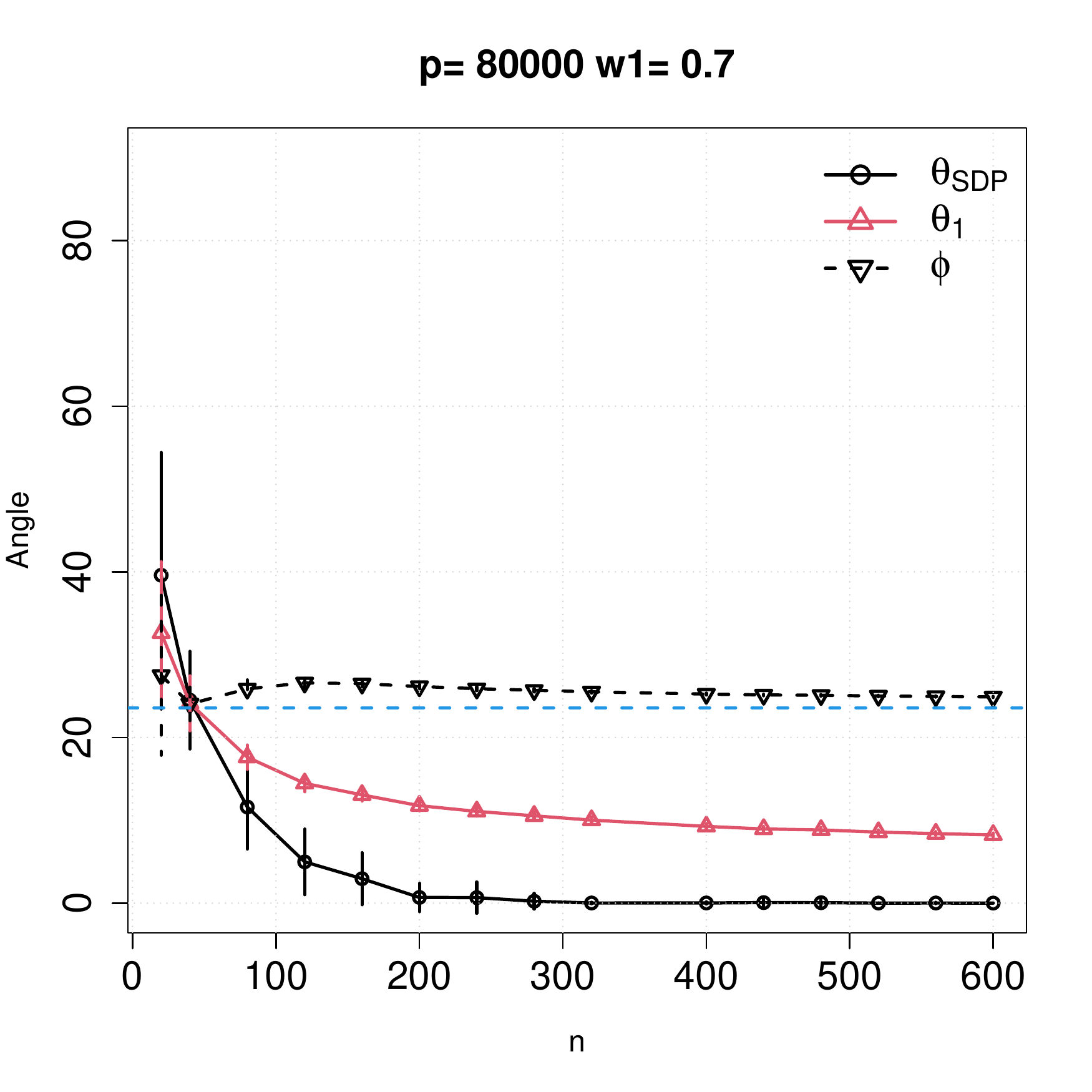}
  \end{subfigure}%
\begin{subfigure}{.45\textwidth}
  \includegraphics[width=2.4in]{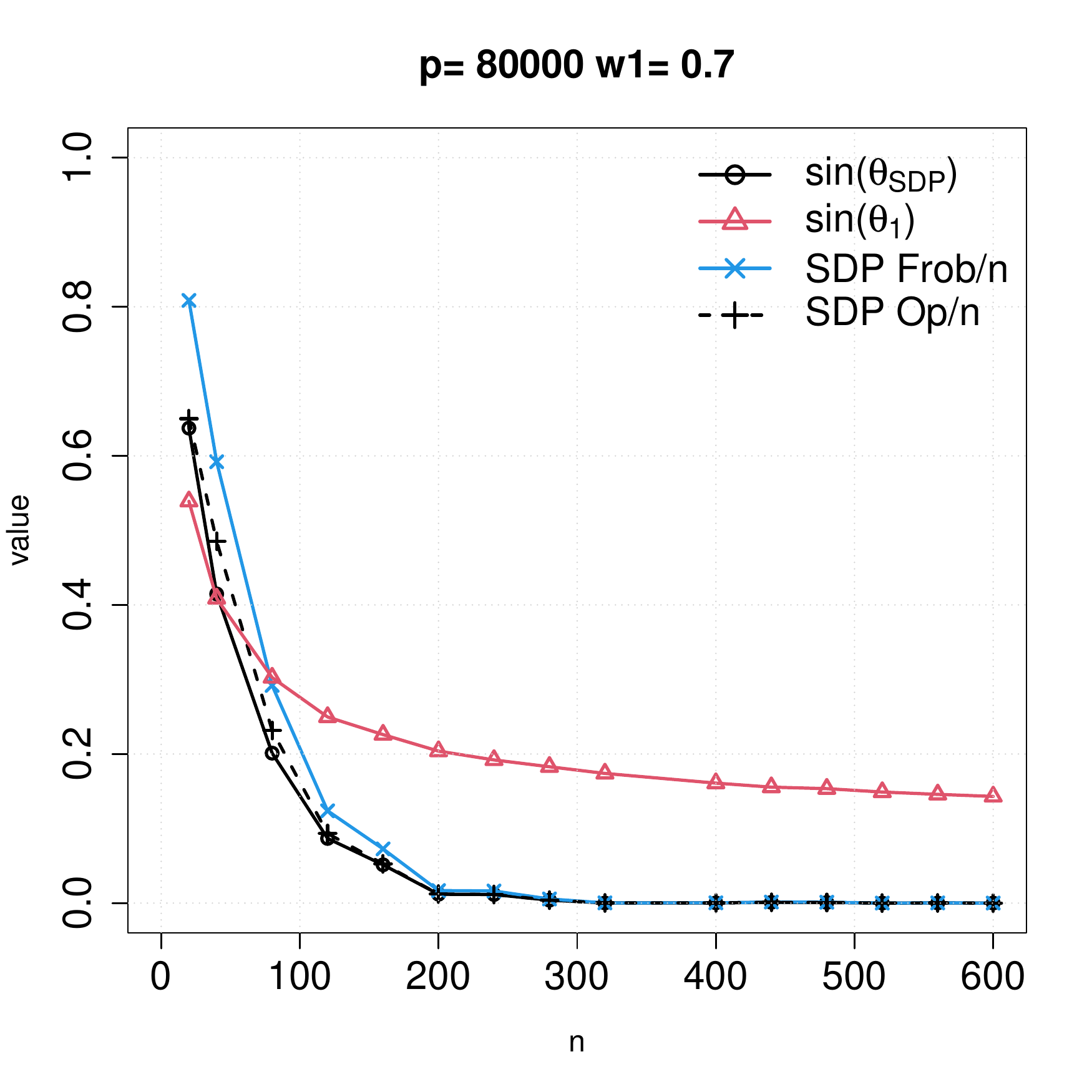}
  \end{subfigure}
\caption{Imbalanced case $w_1=0.7$,  $p \in \{20000, 50000, 80000\}$.
Left column shows the angle $\theta_{\SDP}$ (resp. $\theta_1$) between the leading
eigenvector $\hat{x}$ of SDP solution $\hat{Z}$ (resp. $v_1$ of
$YY^T$) and $\bar{x}$ (resp. $\bar{v}_1$). As $n$ increases, $\theta_{\SDP}$
decreases faster than $\theta_1$, especially for larger values of $p$.
Horizontal dashed (straight) line is the static angle
$\angle(\bar{v}_1, \bar{x})$;
The dashed curve around it is for the random $\phi = \angle(\hat{x},
v_1)$.
Each vertical bar shows one standard deviation over $100$ trials.
Right column plots $\sin(\theta_{\SDP})$, 
  ${\shnorm{Z^{*} - \hat{Z}}}_F/{n}$, $\shnorm{Z^{*} - \hat{Z}}_2/{n}$
  for SDP, and $\sin(\theta_1)$ for Algorithm 2.
}
\label{fig::angle-norm-imb}
\end{figure}

\section{Conclusion}
\label{sec::conclude}
Exploring the tradeoffs of $n$ and $p$ that are sufficient for 
classification, when sample size $n$ is small, 
is both of theoretical interests and practical value.
A recent line of work establishes approximate recovery guarantees 
of the SDPs in the low-SNR regime for sub-gaussian
mixture models; see \cite{FC18,GV19} among others.
The present work aims to further illuminate the geometric and
probabilistic features for this problem,
while allowing cluster sizes and variance profiles to vary across the two
populations. Although we use the population clustering problem as a  motivating example, 
our concentration of measure analyses in
Section~\ref{sec::finalYYaniso}, upon adaptation, will work for the general
settings~\eqref{eq::model} as well.
In particular, we study SDP relaxation as well as a simple spectral  
algorithm, which are efficiently solvable in both theoretical and 
practical senses, and provide a unified analysis of the two most commonly studied procedures  
in the literature.  By doing so, we gained new insight that the
leading eigenvectors not only contain sufficient  information for
clustering but it is also feasible to use algorithmic  techniques to
identify group memberships effectively once the SNR is bounded below by a constant.

\section*{Acknowledgement}

I would like to thank Alan Frieze for reading a crude draft of this
manuscript, and Mark Rudelson for many helpful discussions.
I thank my family for their support, especially during the pandemic.

\appendix

\section{Organization}
We prove Corollaries~\ref{coro::misclass} and~\ref{coro::misexp} in
Section~\ref{sec::proofofmisclass}.
Proofs for lemmas appearing in Section~\ref{sec::estimators}  appear in
Section~\ref{sec::proofofSDPglobal}.
We prove Theorem~\ref{thm::reading}
in  Section~\ref{sec::proofofreading}.
Proof of Theorem~\ref{thm::SVD} appears in 
Section~\ref{sec::proofofSVD}.
Proofs of Theorem~\ref{thm::YYnorm} appears in 
Section~\ref{sec::proofofYYnorm}.
Section~\ref{sec::isonorm} contains the concentration of measure analysis with 
regards to the random matrix $YY^T - \E (YY^T)$, leading to Theorem~\ref{thm::YYnorm}.
In Section~\ref{sec::proofofbias}, we prove the corresponding result for 
Lemma~\ref{lemma::EBRtilt}.
Section~\ref{sec::corrproofs} contains the concentration of measure 
analysis for anisotropic random vectors, leading to the conclusion of Theorem~\ref{thm::YYaniso}.
In Section~\ref{sec::proofofZHW}, we prove Theorem~\ref{thm::ZHW}, the
Hanson-Wright inequality for anisotropic sub-gaussian vectors, which
may be of independent interests.

\section{Proof of Corollaries~\ref{coro::misclass} and~\ref{coro::misexp}}
\label{sec::proofofmisclass}
Theorem~\ref{thm::DK} is a well-known result in perturbation theory.
See~\cite{BCFZ09} for a proof. See also Theorem 4.5.5~\cite{Vers18}
and Corollary 3 in~\cite{YWS15}.

\begin{theorem}\textnormal{\bf{(Davis-Kahan)}}
\label{thm::DK}
For $A$ and $M$ being two symmetric matrices and $E = M - A$. 
Let $\lambda_1(A) \geq \lambda_2(A) \geq \ldots \geq \lambda_n(A)$ 
be eigenvalues of $A$, with orthonormal eigenvectors 
$v_1, v_2, \ldots, v_n$ and 
let $\lambda_1(M) \geq  \lambda_2(M) \geq \ldots \geq \lambda_n(M)$ 
be eigenvalues of $M$ and $w_1, w_2, \ldots, w_n$ be the corresponding 
orthonormal eigenvectors of $M$, with $\theta_i = \angle(v_i, w_i)$. Then
\begin{gather}
\label{eq:eigen-vectors}
\theta_i \sim \sin(\theta_i) \leq 
\frac{2\twonorm{E}}{\gap(i, A)} \; \text{ where} \; \; \gap(i, A) = \min_{j \not= i}\abs{\lambda_i(A) - \lambda_j(A)}.
\end{gather}
\end{theorem}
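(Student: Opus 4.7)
The plan is to prove the sine-$\theta$ bound by expanding $w_i$ in the orthonormal eigenbasis of $A$ and reading off the coefficients via the defining equation for $w_i$. Since $M = A + E$ and $M w_i = \lambda_i(M) w_i$, rearranging gives the identity $(A - \lambda_i(M) I) w_i = -E w_i$. Writing $w_i = \sum_{j=1}^n c_j v_j$ and taking the inner product of this identity with $v_k$ for $k \ne i$, I obtain
\begin{equation*}
c_k\bigl(\lambda_k(A) - \lambda_i(M)\bigr) = -\ip{v_k, E w_i}.
\end{equation*}

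Next, since $v_1,\dots,v_n$ is orthonormal and $w_i$ is a unit vector, the sine of the angle between $v_i$ and $w_i$ satisfies $\sin^2(\theta_i) = 1 - c_i^2 = \sum_{k \ne i} c_k^2$. Substituting the coefficient identity and using Parseval gives
\begin{equation*}
\sin^2(\theta_i) \;=\; \sum_{k \ne i} \frac{\ip{v_k, E w_i}^2}{(\lambda_k(A) - \lambda_i(M))^2} \;\le\; \frac{\twonorm{E w_i}^2}{d^2} \;\le\; \frac{\twonorm{E}^2}{d^2},
\end{equation*}
where $d := \min_{k \ne i} \abs{\lambda_k(A) - \lambda_i(M)}$. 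This already yields the one-sided sine-theta estimate $\sin \theta_i \le \twonorm{E}/d$.

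The remaining step is to convert $d$, which mixes spectra of $A$ and $M$, into the pure $A$-gap appearing in~\eqref{eq:eigen-vectors}. Here I would invoke Weyl's inequality $\abs{\lambda_i(M) - \lambda_i(A)} \le \twonorm{E}$, so that for every $k \ne i$,
\begin{equation*}
\abs{\lambda_k(A) - \lambda_i(M)} \;\ge\; \abs{\lambda_k(A) - \lambda_i(A)} - \twonorm{E} \;\ge\; \gap(i, A) - \twonorm{E}.
\end{equation*}
When $\gap(i, A) \ge 2 \twonorm{E}$, the right-hand side is at least $\gap(i,A)/2$, so $d \ge \gap(i,A)/2$ and hence $\sin \theta_i \le 2 \twonorm{E}/\gap(i,A)$, which is the claimed bound. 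The complementary regime $\gap(i, A) < 2\twonorm{E}$ is trivial, since then the right-hand side of~\eqref{eq:eigen-vectors} exceeds $1$ and the bound holds vacuously.

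The main obstacle is essentially conceptual rather than computational: one must carefully justify passing from $\abs{\lambda_k(A) - \lambda_i(M)}$ (the quantity that appears naturally in the eigenbasis expansion) to the cleaner $A$-only gap via Weyl, and absorb the resulting $\twonorm{E}$ loss into the factor of $2$ in the numerator. No further machinery is needed beyond orthonormal expansion, Parseval, the operator-norm bound $\twonorm{Ew_i} \le \twonorm{E}$, and Weyl's inequality.
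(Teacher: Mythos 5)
Your argument is correct and complete. The paper itself does not prove this theorem; it only cites \cite{BCFZ09}, Theorem 4.5.5 of \cite{Vers18}, and Corollary 3 of \cite{YWS15}, and your eigenbasis-expansion argument (project $(A-\lambda_i(M)I)w_i=-Ew_i$ onto the $v_k$, apply Parseval, then use Weyl to trade the mixed gap $\min_{k\ne i}\abs{\lambda_k(A)-\lambda_i(M)}$ for $\gap(i,A)/2$) is precisely the classical proof underlying those references. The only point worth making explicit is the degenerate case $\lambda_k(A)=\lambda_i(M)$ for some $k\ne i$: there you cannot divide to solve for $c_k$, but Weyl then forces $\gap(i,A)\le\twonorm{E}<2\twonorm{E}$, so this situation is already absorbed into your vacuous regime and the division is only ever performed when $d\ge\gap(i,A)/2>0$.
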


\begin{proofof}{Corollary~\ref{coro::misclass}}
  See proof of Corollary 1.2~\cite{GV15} for the first result,
  which follows from Davis-Kahan Theorem and is a direct
  consequence of the error bound \eqref{eq::hatZFnorm}, while noting that
  the largest eigenvalue of $\bar{x} \bar{x}^T$ is $n$ while all
  others are 0, and hence the spectral gap in the sense of
  Theorem~\ref{thm::DK} equals $n$;
  In more details, we have by Theorem~\ref{thm::SDPmain} and Corollary 3~\citep{YWS15},
  \bens
\min_{\alpha = \pm 1}
\twonorm{(\alpha \hat{x} - \bar{x})/\sqrt{n}}^2
  &  \le & \frac{2^{3} \twonorm{\hat{Z} - \bar{x} \bar{x}^T}^2}{\gap(1, \bar{x} \bar{x}^T)^2} 
  \le   {2^{3} \fnorm{\hat{Z} - \bar{x} \bar{x}^T}^2}/{n^2} \\
  &  \le &   2^{3}   {4  K_G  \xi}/{w_{\min}^2}
  \eens
\end{proofof}

\begin{proofof}{Corollary~\ref{coro::misexp}}
  The angle between $\hat{x}/\sqrt{n}$ and $\bar{x}/\sqrt{n}$ can be expressed as
  \begin{eqnarray}
  \label{eq::angsdp}
  \cos(\theta_{\SDP}) = \cos(\angle(\hat{x}, \bar{x})) 
  & = & \ip{\hat{x}, \bar{x}}/n
  \end{eqnarray}
The upper bound on $\sin(\theta_{\SDP})$ follows from 
Theorems~\ref{thm::exprate} and~\ref{thm::DK};
Moreover, by Davis-Kahan Theorem, cf. Corollary 3~\citep{YWS15}, we
have with probability at least $1-2 \exp(-c n) - 2/n^2$,
\bens
\min_{\alpha = \pm 1}
\twonorm{(\alpha \hat{x} - \bar{x})/\sqrt{n}}
  &  \le & \frac{2^{3/2} \twonorm{\hat{Z} - \bar{x} \bar{x}^T}}{\gap(1, \bar{x} \bar{x}^T)} 
  \le   {2^{3/2} \fnorm{\hat{Z} - \bar{x} \bar{x}^T}}/{n} \\
    &  \le &   2^{3/2}   (2 \onenorm{\hat{Z} - \bar{x} \bar{x}^T})^{1/2}/{n}  \le 
   4 \exp(-c_0 s^2 w_{\min}^4/2)
   \eens
   where the last inequality holds upon adjusting that constants. The corollary thus holds.
\end{proofof}

\section{Proofs for results in Section~\ref{sec::estimators}}
\label{sec::proofofSDPglobal}
Combining \eqref{eq::GDSet}, \eqref{eq::GD} and \eqref{eq::GD1}, we
have the following Fact~\ref{fact::GD}.

\begin{fact}{(Grothendieck's inequality, PSD)}
\label{fact::GD}
Every matrix $B \in \R^{n \times n}$ satisfies
\bens
\max_{Z \in \M_G^{+}} \abs{\ip{B, Z}} \le K_G \norm{B}_{\infty \to 1}.
\eens
\end{fact}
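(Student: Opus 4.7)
The plan is to show the fact as an essentially immediate consequence of chaining the three cited equations \eqref{eq::GDSet}, \eqref{eq::GD}, and \eqref{eq::GD1}, with the only substantive verification being the set inclusion $\M_G^{+} \subseteq \M_G$.

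First I would verify $\M_G^{+} \subseteq \M_G$. For any $Z \in \M_G^{+}$, positive semidefiniteness lets me factor $Z = UU^T$ for some $U \in \R^{n \times n}$ (e.g., via Cholesky or eigendecomposition). The diagonal constraint $\diag(Z) \preceq I_n$ then translates to $\twonorm{U_i}^2 = Z_{ii} \le 1$, so each row $U_i$ of $U$ lies in $B_2^n$. Writing $Z = SV^T$ with $S = V = U$ exhibits $Z$ as an element of $\M_G$, which proves the containment. Monotonicity of the supremum then yields
\begin{equation*}
\max_{Z \in \M_G^{+}} \abs{\ip{B, Z}} \le \max_{Z \in \M_G} \abs{\ip{B, Z}}.
\end{equation*}

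Next I would apply Grothendieck's inequality in the form already displayed as \eqref{eq::GD} to obtain
\begin{equation*}
\max_{Z \in \M_G} \abs{\ip{B, Z}} \le K_G \max_{Z \in \M_1} \abs{\ip{B, Z}},
\end{equation*}
and finally invoke the identification \eqref{eq::GD1} of the sign-matrix maximum with the $\ell_\infty \to \ell_1$ norm, namely $\max_{Z \in \M_1} \abs{\ip{B, Z}} = \norm{B}_{\infty \to 1}$. Concatenating the three inequalities delivers the claimed bound.

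I do not anticipate any real obstacle. The only step that is not purely a rewriting is the PSD factorization argument used to embed $\M_G^{+}$ into $\M_G$, and this is standard. In particular, no probabilistic or quantitative estimate is needed; the fact is a deterministic corollary of Grothendieck's inequality once the geometry of the PSD block-diagonal constraint $\diag(Z) \preceq I_n$ is matched to the unit-ball constraint on the rows in the definition of $\M_G$.
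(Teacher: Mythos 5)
Your proposal is correct and follows exactly the route the paper takes: the paper derives Fact~\ref{fact::GD} by combining the inclusion $\M_G^{+}\subset\M_G$ from \eqref{eq::GDSet} with Grothendieck's inequality \eqref{eq::GD} and the identification \eqref{eq::GD1}. The only difference is that you explicitly justify the inclusion via the PSD factorization $Z=UU^T$ with rows in $B_2^n$, a step the paper asserts without proof in \eqref{eq::GDSet}.
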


\subsection{Proof of Lemma~\ref{lemma::GVGD}}
\begin{proofof2}
The upper bound in \eqref{eq::hatZsand} is trivial by definition of
$Z^{*}, \hat{Z} \in \M_{\opt}$ and uses the fact that $Z^{*} := \arg\max_{Z  \in   \M_{\opt}} \ip{R, Z}$;
The lower bound depends on Fact \eqref{fact::GD}, which implies that
\ben 
\label{eq::GD2}
\forall Z \in 
\M_{\opt}, \quad \abs{\ip{B-R, Z}} \le K_G \norm{B-R}_{\infty \to 1} =: \ve 
\een
Now to prove the lower bound in\eqref{eq::hatZsand}, we will first
replace $R$ by $B$ using \eqref{eq::GD2},
\bens
\ip{R, \hat{Z}} & \ge & \ip{B, \hat{Z}} - \ve \\
& \ge & \ip{B, Z^{*}} - \ve  \ge  \ip{R, Z^{*}} - 2 \ve
\eens
where the second inequality uses the fact that
$\hat{Z} := \arg\max_{Z  \in \M_{\opt}} \ip{B, Z}$,  and $\hat{Z}, Z^{*} \in 
\M_{\opt}$ by definition~\eqref{eq::hatZ}, while the last inequality
holds by~\eqref{eq::GD2}, since $Z^{*} \in \M_{\opt}$ and hence
\ben
\label{eq::Zstar}
\abs{\ip{B-R, Z^{*}}} & \le & K_G \norm{B-R}_{\infty \to 1} 
\een
Hence~\eqref{eq::upperZZR} holds.
Finally, we prove \eqref{eq::supZZR}; By \eqref{eq::GD2},
\eqref{eq::Zstar}, and the triangle inequality, we have
for all $Z \in \M_{\opt}$, 
\bens
\abs{\ip{B-R, Z-Z^{*}}} & \le & 2 K_G \norm{B-R}_{\infty \to 1} 
\eens
from which \eqref{eq::supZZR} follows.
\end{proofof2}

\subsection{Proof of Proposition~\ref{prop::optsol} }
\label{sec::optsolAB}
\begin{proofof2}
Recall
\bens
\M_{\opt} & := & \left\{Z :  Z \succeq 0, \diag(Z) = I_n\right\}
\subset \M^{+}_{G} \subset [-1, 1]^{n \times n}; 
\eens
Notice that for the second term in \eqref{eq::defineAintro}, we have
$\ip{(E_n -I_n), Z} = \ip{(E_n -I_n), \offd(Z)}$ in the objective
function~\eqref{eq::sdpmain}, which does not depend on $\diag(Z)$;
Hence, to maximize
\bens
\ip{A, Z}
& = & \ip{{A}, \offd(Z)} + \ip{A, \diag(Z)} \\
& = & \ip{\offd(A), \offd(Z)} + \ip{\diag(YY^T), \diag(Z)},
\eens
one must set the diagonal $Z_{jj} \in [0, 1]$ to be 1, since $\diag(YY^T)
\ge 0$. Moreover, increasing $\diag(Z)$ will only make it
easier to satisfy $Z \succeq 0$ and hence to maximize $\ip{\offd(A),
  \offd(Z)}$.
Thus, the set of optimizers $\hat{Z}$ as in~\eqref{eq::hatZintro} must
satisfy $\diag(\hat{Z})  = I_n$. Thus \eqref{eq::Aquiv} holds by
definition of~$\M_{\opt}$ as above.
Moreover,~\eqref{eq::optsolAB} holds due to the
fact that $\ip{I_n, Z} = \tr(Z) = n$ for all $Z$ in the feasible set $\M_{\opt}$.
\end{proofof2}

\subsection{Proof of Lemma~\ref{lemma::ZRnormintro} }
\begin{proofof2}
One can check that the maximizer of $\ip{R, Z}$ on the larger set
$[-1, 1]^{n \times n}$, which contains the feasible set
$\M_{\opt}$, is $Z^{*}$.
Clearly $\diag(Z^{*}) = I_n$. Since $Z^{*} = \bar{x} \bar{x}^T\succeq
0$ belongs to the smaller set
$\M_{\opt} \subset M_{G}^+$, it must be the maximizer of $\ip{R, Z}$ on that set as well. 
\end{proofof2}

\subsection{Proof of Lemma~\ref{lemma::onenorm} }
\label{sec::proofofonenorm}
\begin{proofof2}
  We will prove that  \eqref{eq::Rlower} holds for all $Z \in [-1,
  1]^n \supset \M_{\opt}$.
 Recall we have
\bens
\nonumber
 R= \E(Y) \E(Y)^T
 &=: & \twonorm{\mu^{(1)} -\mu^{(2)}}^2 
\left[
\begin{array}{cc}
  w_2^2 \vecone_{n_1} \otimes \vecone_{n_1} &- w_1 w_2 \vecone_{n_1}
\otimes \vecone_{n_2 } \\
 - w_1 w_2 \vecone_{n_2} \otimes \vecone_{N_1}  & w_1^2 \vecone_{n_2}
 \otimes \vecone_{n_2}   \end{array}
\right]  \\
 &= &
p \gamma 
\left[
\begin{array}{cc}
  w_2^2 E_{n_1} & - w_1 w_2E_{n_1 \times n_2} \\
 - w_1 w_2 E_{n_2 \times n_1} & w_1^2 E_{n_2} 
\end{array}
\right] =:                                            
p \gamma 
\left[
\begin{array}{cc} \mathbb{A} & \mathbb{B}\\
 \mathbb{B}^T & \mathbb{C}
\end{array}
\right]
\eens
Now all entries of $Z^{*}, Z$ belong to $[-1, 1]$.
Clearly, for the upper left and lower right diagonal blocks, denoted by $\D = \{\mathbb{A}, \mathbb{C}\}$, we have
$Z^{*} - Z \ge 0$, since all entries of $Z^{*}$ on these blocks are $1$s.
Similarly, for the off-diagonal blocks $\{\mathbb{B}, \mathbb{B}^T\}$,
we have
$Z-Z^{*} \ge 0$ since all entries of $Z^{*}$  on these blocks are $-1$s.
Thus we have
\bens
\inv{p \gamma}\ip{R, Z^{*} - Z}
& = &  \sum_{(i,j) \in \mathbb{A}} w_2^2 (Z^{*} - Z)_{ij} +
\sum_{(i,j) \in \mathbb{C}} w_1^2 (Z^{*} - Z)_{ij}  -
\sum_{(i,j) \in \mathbb{B}, \mathbb{B}^T} w_1 w_2 (Z^{*} - Z)_{ij}  \\
& \ge &
(w_2^2 \wedge w_1^2 \wedge w_1 w_2)
\big(
  \sum_{(i,j) \in \D} (Z^{*} - Z) + \sum_{(i,j) \in \{\mathbb{B},
    \mathbb{B}^T\}}  (Z- Z^{*})_{ij}  \big) \\
& \ge & \min_{j =1, 2} w_j^2 \onenorm{Z- Z^{*}}
\eens
where we use the fact that 
\bens
\sum_{(i,j) \in \D} (Z^{*} - Z)_{ij}  + \sum_{(i,j) \in \{\mathbb{B},
  \mathbb{B}^T\}}  (Z- Z^{*})_{ij}  
& = & \onenorm{Z- Z^{*}}
\eens
The lemma is thus proved.
\end{proofof2}

\section{Proof of Theorem~\ref{thm::reading}}
\label{sec::proofofreading}

We first state Lemma~\ref{lemma::TLbounds}.
\begin{lemma}\textnormal{(Deterministic bounds)}
  \label{lemma::TLbounds}
Let $\lambda$ and $\tau$ be as defined in~\eqref{eq::defineLT}
using matrix $Y$ as specified in Definition~\ref{def::estimators}.
By definition of $\tau$ and $\lambda$, we have
\ben
 \label{eq::lambdabound}
(n-1) \abs{\lambda - \E \lambda}
& = &  \abs{\tau - \E \tau} \le \twonorm{YY^T - \E (Y Y^T)}
\een
\end{lemma}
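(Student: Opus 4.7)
The plan is to establish the equality first and the inequality second, both by short algebraic observations.

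The key identity that drives the equality is $\sum_{i=1}^n Y_i = 0$, which follows directly from the global centering: since $Y = (I_n - P_1) X$ and $(I_n - P_1)\vecone_n = \vecone_n - \vecone_n = 0$, summing the rows of $Y$ gives $Y^T \vecone_n = X^T(I_n - P_1)\vecone_n = 0$. Consequently,
\begin{equation*}
0 = \twonorm{\textstyle\sum_i Y_i}^2 = \sum_i \ip{Y_i, Y_i} + 2 \sum_{i<j} \ip{Y_i, Y_j},
\end{equation*}
which rearranges to $\sum_{i<j} \ip{Y_i, Y_j} = -\tfrac{1}{2} \sum_i \twonorm{Y_i}^2 = -\tfrac{1}{2}\tr(YY^T)$. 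Plugging into the definitions of $\lambda$ and $\tau$ gives the deterministic identity
\begin{equation*}
(n-1)\lambda = \tfrac{2}{n}\sum_{i<j}\ip{Y_i,Y_j} = -\tfrac{1}{n}\tr(YY^T) = -\tau.
\end{equation*}
Since this identity is linear in $YY^T$, applying it also to $\E(YY^T)$ and subtracting yields $(n-1)(\lambda - \E\lambda) = -(\tau - \E\tau)$, from which the first equality in \eqref{eq::lambdabound} is immediate.

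For the second inequality, I would write $\tau - \E\tau = \tfrac{1}{n}\tr(M)$ where $M := YY^T - \E(YY^T)$ is symmetric. For any symmetric $n\times n$ matrix $M$ with eigenvalues $\mu_1,\ldots,\mu_n$, each $|\mu_k|\le \twonorm{M}$, hence $|\tr(M)| \le \sum_k |\mu_k| \le n\,\twonorm{M}$. Dividing by $n$ gives $|\tau - \E\tau| \le \twonorm{YY^T - \E(YY^T)}$, completing the proof.

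There is no real obstacle here; the lemma is essentially bookkeeping, and the only piece that needs to be invoked explicitly is the global centering identity $(I_n - P_1)\vecone_n = 0$, which is what forces the tight algebraic link between the off-diagonal average $\lambda$ and the diagonal average $\tau$ of the Gram matrix of the centered data.
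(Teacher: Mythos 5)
Your proof is correct and follows essentially the same route as the paper: the equality comes from the centering identity $\sum_i Y_i = 0$ (equivalently $\vecone_n^T YY^T \vecone_n = 0$), which forces $(n-1)\lambda = -\tau$, and the inequality is a one-line bound on $\tfrac{1}{n}|\tr(YY^T - \E(YY^T))|$. The only cosmetic difference is that the paper bounds the trace via the maximum absolute diagonal entry of the symmetric matrix rather than via its eigenvalues, which is an equivalent observation.
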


\begin{proof}
Now \eqref{eq::lambdabound} holds since  $2 {n \choose 2} \abs{\lambda - \E \lambda}
= \abs{n (\tau - \E \tau) } \le n \twonorm{YY^T - \E (Y Y^T)}$ where
\bens
\nonumber
\lefteqn{\abs{\sum_{i=1}^n 
  (\ip{Y_i, Y_i} -  \E \ip{Y_i, Y_i}) } =: n \abs{(\tau - \E \tau) }}\\
& \le &
\label{eq::tau2}
n \max_{i}  \abs{\ip{Y_i, Y_i} -  \E \ip{Y_i, Y_i} } \le n
\twonorm{YY^T - \E (Y Y^T)} 
\eens
\end{proof}

\begin{proofof}{Theorem~\ref{thm::reading}}
We have by Theorem~\ref{thm::YYnorm} (resp. Theorem~\ref{thm::YYaniso}),
with probability at least $1-2\exp(-cn)$,
\bens
\infonenorm{B- \E B}
&  \le & \infonenorm{ Y Y^T  - \E (Y Y^T)} +  \abs{\lambda - \E 
  \lambda} \infonenorm{E_n-I_n} \\
& \le &
\infonenorm{ Y Y^T  - \E (Y Y^T)} +  n \twonorm{Y Y^T  - \E   (Y Y^T)}
\le  \inv{3} \xi n^2 p \gamma 
\eens
where we use the fact that $\infonenorm{E_n-I_n} = n(n-1)$ and  by Lemma~\ref{lemma::TLbounds},
\bens 
(n-1) \abs{\lambda -\E \lambda} = \abs{\tau -\E \tau}
=\inv{n}\abs{\tr(YY^T - \E (YY^T))} \le \twonorm{Y Y^T  - \E   (Y Y^T)}
\eens
Now $\twonorm{E_n-I_n} \le \norm{E_n-I_n}_{\infty} = (n-1)$;
and thus similarly, 
\bens
\twonorm{B- \E B}
&  \le & \twonorm{ Y Y^T  - \E (Y Y^T)} +
\abs{\lambda - \E   \lambda} \twonorm{E_n-I_n} \\
& \le &
2\twonorm{ Y Y^T  - \E (Y Y^T)} \le  \inv{3} \xi n p \gamma
\eens
Theorem~\ref{thm::reading} then holds by the triangle 
inequality; cf.~\eqref{eq::Bdev2} and~\eqref{eq::YYop}, in view of 
Lemma~\ref{lemma::EBRtilt}.  See also Lemma~\ref{lemma::unbalancedbias}. 
\end{proofof}

\section{Proof of Theorem~\ref{thm::SVD}}
\label{sec::proofofSVD}
\begin{proofof2}
  It is known that for any real symmetric matrix, there exist a set of $n$ 
  orthonormal eigenvectors. First we state Fact~\ref{fact::fullbasis}.
  Fact~\ref{fact::fullbasis}  is also not surprising, since the 
  sum of all off-diagonal entries of $A$ is 0.

\begin{fact}
\label{fact::fullbasis}
Suppose that we observe one instance of the 
gram matrix $\hat{S}_n := X X^T$. 
Then
\ben
\label{eq::YYgram}
YY^T & = &   (I-P_1) XX^T  (I-P_1) 
\een
where
$$\ip{YY^T, E_n} = \vecone^T_n YY^T 
\vecone_n = 0.$$
Moreover, by construction, we have for $A$ as defined in~\eqref{eq::defineAintro},
\bens
\ip{A, E_n}
& = & \vecone_n^T YY^T \vecone_n - \lambda \ip{ (E_n -I_n), E_n}  = -
\lambda n (n-1) =\tr(YY^T)
\eens
where
\bens
\lambda & = & \inv{n(n-1)}\sum_{i\not= j} \ip{Y_i, Y_j} 
=-\inv{n (n-1) }\tr(YY^T) = -\frac{\tau}{n-1}
\eens 
In other words, we have $\ip{A, P_1} = \ip{A, \vecone_n \vecone_n^T/n}
= \tr(YY^T)/n =: \tau$
\end{fact}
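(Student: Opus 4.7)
The fact assembles several structural identities for $Y$, $YY^T$, $\lambda$, $\tau$, and $A$. My plan is to unpack them in order directly from the definitions, since the only substantive input is the centering identity $(I-P_1)\vecone_n = 0$. First, I would write $Y = (I-P_1)X$ straight from~\eqref{eq::defineY} and observe that $P_1 = \vecone_n \vecone_n^T/n$ is symmetric and idempotent, so $(I-P_1)$ is symmetric; then
$$YY^T = (I-P_1) X X^T (I-P_1)^T = (I-P_1) X X^T (I-P_1),$$
which is~\eqref{eq::YYgram}. The identity $P_1 \vecone_n = \vecone_n$ gives $(I-P_1)\vecone_n = 0$, and applying this on the right yields $YY^T \vecone_n = 0$; equivalently $\ip{YY^T, E_n} = \vecone_n^T YY^T \vecone_n = 0$.

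Next I would expand $\ip{A, E_n}$ using $A = YY^T - \lambda(E_n - I_n)$ from~\eqref{eq::defineAintro}. The first summand contributes $0$ by the preceding step; for the second, $\ip{E_n, E_n} = n^2$ and $\ip{I_n, E_n} = \tr(E_n) = n$, so $\ip{E_n - I_n, E_n} = n(n-1)$ and hence $\ip{A, E_n} = -\lambda n(n-1)$. To identify this with $\tr(YY^T)$, I would use the Gram decomposition
$$0 \;=\; \vecone_n^T YY^T \vecone_n \;=\; \sum_{i,j} \ip{Y_i, Y_j} \;=\; \tr(YY^T) + \sum_{i \not= j} \ip{Y_i, Y_j},$$
so that $\sum_{i \not= j} \ip{Y_i, Y_j} = -\tr(YY^T)$; substituting into the definition of $\lambda$ in~\eqref{eq::defineAintro} yields $\lambda = -\tr(YY^T)/(n(n-1))$, and multiplying through by $-n(n-1)$ recovers the claimed identification.

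The two closing identities are then immediate: $\tau = \inv{n}\sum_i \ip{Y_i, Y_i} = \tr(YY^T)/n$ by~\eqref{eq::defineBintro}, so $\lambda = -\tau/(n-1)$, and $\ip{A, P_1} = \ip{A, E_n}/n = -\lambda(n-1) = \tau$. There is no real obstacle; every assertion is a consequence of the projection identity $(I-P_1)\vecone_n = 0$ combined with the specific scaling $\lambda = \frac{2}{n(n-1)}\sum_{i<j}\ip{Y_i, Y_j}$ in~\eqref{eq::defineAintro}, which is precisely calibrated so that the global off-diagonal sum of $A$ vanishes. This fact is invoked in the proof of Theorem~\ref{thm::SVD} to ensure that the leading eigenvector of $YY^T$ lies in $\vecone_n^{\perp}$, which in turn makes it a common eigenvector of $YY^T$, $A$, and $B$ and lets the Davis-Kahan gap be taken against $R$.
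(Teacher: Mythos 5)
Your proof is correct and follows essentially the same route as the paper, which embeds these computations directly in the statement of the Fact: everything reduces to $(I-P_1)\vecone_n=0$ applied to $YY^T=(I-P_1)XX^T(I-P_1)$, the identity $\sum_{i\neq j}\ip{Y_i,Y_j}=-\tr(YY^T)$, and the normalizations of $\lambda$ and $\tau$. No gaps.
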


Recall that $R$ is rank one with 
  $\lambda_{\max}(R) = \tr(R) = w_1 w_2 n p \gamma$ while $\bar{x} R \bar{x} /n =
  (4 w_1 w_2) w_1 w_2 n p \gamma \le \inv{4} n p \gamma$.
Hence $\bar{x}/\sqrt{n}$ coincides with $\bar{v}_1$ when $w_1 = w_2 = 1/2$.
Hence for $\gap(1, R)$, as defined in Theorem~\ref{thm::DK},
\bens
\gap(1, R) = \lambda_{\max}(R) = w_1 w_2 n p \gamma.
\eens
We check the claim that the leading eigenvector of $B$ coincides 
with that of $YY^T$ in Fact~\ref{fact::topeigen}.
Clearly,
\begin{eqnarray}
  \label{eq::angsvd}
  \cos(\theta_1) = \cos(\angle({v}_1, \bar{v}_1))
  & = & \ip{v_1, \bar{v}_1}
\end{eqnarray}
and hence 
\bens 
\theta_1 = \arccos (\ip{v_1, \bar{v}_1}). 
\eens
  Hence we can use the first eigenvector of $YY^T$ to partition the
  two groups of points in $\R^{p}$.   To obtain an upper bound on
  $\sin(\theta_1)$, we apply the Davis-Kahan perturbation bound as
  follows.  Since $v_1, \bar{v}_1$ are the leading eigenvectors of $B$ and $R$
  respectively, \eqref{eq::angSVD} holds by
  Theorems~\ref{thm::reading} and~\ref{thm::DK}:
  \bens
\sin(\theta_1) & := & \sin(\angle({v}_1, \bar{v}_1)) \le 
\frac{2 \twonorm{B - R}}{\lambda_{\max}(R)} = 
\frac{2 \twonorm{B - R}}{w_1 w_2 n p \gamma} \le \frac{2 \xi}{w_1 w_2}.
\eens
Moreover, we have~\eqref{eq::normSVD} holds by Corollary
3~\citep{YWS15}: since
\bens
  \min_{\alpha=\pm 1}  \twonorm{\alpha v_1 -  \bar{v}_1}^2
  & \leq & \left(\frac{2^{3/2} \twonorm{B - R}}{w_1 w_2 n p \gamma}
  \right)^2 \\
    & \leq & \frac{2^{3} \xi^2}{(w_1 w_2)^2} =: \delta', \text{ where} \; \; \delta' = {8  \xi^2}/{(w_1^2
    w_2^2)} \le c_2 \xi^2/w_{\min}^2; 
\eens
The theorem thus holds.
\end{proofof2}

It remains to state Fact~\ref{fact::topeigen}.
\begin{fact}
  \label{fact::topeigen}
Let $YY^T = \sum_{j=1}^{n-1} \lambda_j v_j v_j^T$.
  Denote by $\tilde{A} = YY^T - \lambda E_n$, then
 \ben
  \label{eq::eigenA}
\tilde{A} & := & YY^T - \lambda E_n = 
\sum_{j=1}^{n-1} \lambda_j v_j v_j^T + \frac{n \tau}{n-1} \vecone
\vecone^T/n \succeq 0;
\een
The leading eigenvector of $\tilde{A}$ (resp. $A$ and $B$) will
coincide with that of $YY^T$ with
 \ben
 \label{eq::eigenYY}
 \lambda_{\max}(\tilde{A}) & = & \lambda_{\max}(YY^T)   \ge 
n \tau/(n-1)
\een
where strict inequality holds if and only if not all eigenvalues of
$YY^T$ are identical.
Thus the symmetric matrices $A = \tilde{A}  + \lambda I_n$ and $B = A
+ \E \tau I_n$ also share the same leading eigenvector $v_1$ with
$YY^T$, so long as not all eigenvalues of $YY^T$ are identical,
with $\lambda_{\max}(A) \ge \tau$.
\end{fact}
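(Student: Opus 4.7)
The plan is to first unpack $\tilde{A} = YY^T - \lambda E_n$ using identities already recorded in Fact~\ref{fact::fullbasis}, namely $\lambda = -\tau/(n-1)$ and $YY^T \vecone_n = 0$ (the latter because $YY^T = (I-P_1)XX^T(I-P_1)$ and $(I-P_1)\vecone_n = 0$). Since $\vecone_n/\sqrt{n}$ lies in the kernel of $YY^T$, it is automatically orthogonal to each $v_j$ and extends the given list to a full orthonormal eigenbasis of $YY^T$. Substituting $-\lambda E_n = (\tau/(n-1))\vecone_n \vecone_n^T = (n\tau/(n-1))(\vecone_n \vecone_n^T/n)$ into $\tilde{A}$ places the rank-one correction entirely along the direction $\vecone_n/\sqrt{n}$, which yields \eqref{eq::eigenA} as the genuine spectral decomposition of $\tilde{A}$ with eigenvalues $\lambda_1,\ldots,\lambda_{n-1}$ along $v_j$ and $n\tau/(n-1)$ along $\vecone_n/\sqrt{n}$. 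Positivity then follows at once: $YY^T \succeq 0$ gives $\lambda_j \ge 0$, and $\tau = \tr(YY^T)/n = (1/n)\sum_j \lambda_j \ge 0$, so $\tilde{A}\succeq 0$.

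The pivotal step for identifying the leading eigenvector is to notice that $n\tau/(n-1) = (1/(n-1))\sum_{j=1}^{n-1}\lambda_j$ is precisely the \emph{arithmetic mean} of the principal eigenvalues of $YY^T$. Hence $\lambda_1$ dominates it, with equality if and only if all $\lambda_j$ coincide. This identifies $v_1$ as the top eigenvector of $\tilde{A}$ under the stated non-degeneracy condition and gives \eqref{eq::eigenYY}.

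Finally, since $A = \tilde{A} + \lambda I_n$ and $B = A + \E\tau I_n$ differ from $\tilde{A}$ only by scalar multiples of the identity, the entire spectrum shifts uniformly and every eigenvector is preserved; so $v_1$ remains the leading eigenvector of both $A$ and $B$ under the same non-degeneracy condition. The bound $\lambda_{\max}(A) \ge \tau$ is then immediate: $\lambda_{\max}(A) = \lambda_1 + \lambda = \lambda_1 - \tau/(n-1) \ge n\tau/(n-1) - \tau/(n-1) = \tau$. I do not anticipate any real technical obstacle; the whole argument is elementary linear algebra, organized around the observation that the rank-one correction $-\lambda E_n$ acts purely on the kernel direction of $YY^T$. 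The only bookkeeping point requiring some care is to verify at the outset that $\vecone_n$ is orthogonal to every $v_j$, so that the displayed expression for $\tilde{A}$ is genuinely a decomposition in an orthonormal basis.
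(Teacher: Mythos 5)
Your proposal is correct and follows essentially the same route as the paper: both arguments rest on $\vecone_n$ being orthogonal to the $v_j$ so that $-\lambda E_n = \frac{n\tau}{n-1}P_1$ supplies the missing eigendirection, on the observation that $n\tau/(n-1)$ is the mean of the $n-1$ eigenvalues of $YY^T$ (hence dominated by $\lambda_1$, with equality iff all eigenvalues coincide), and on the fact that adding multiples of $I_n$ shifts the spectrum uniformly without disturbing eigenvectors. Your explicit computation $\lambda_{\max}(A)=\lambda_1-\tau/(n-1)\ge\tau$ matches the paper's trace-based bound.
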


\begin{proof}
  Clearly,  the additional terms involving $E_n$ and $I_n$ are either 
orthogonal to eigenvectors $v_1, \ldots, v_{n-1}$ of $YY^T$, or 
act as an identity map on the subspace spanned by   $\{v_1, \ldots, 
v_{n-1}\}$. 
Now   \eqref{eq::eigenA} holds since $\ip{v_j, \vecone_n}=0$ for all
$j$ and hence $\{v_1, \ldots, v_{n-1} , \vecone_n/\sqrt{n}\}$ forms
the set of orthonormal eigenvectors for $\tilde{A}$ (resp. $A$ and
$B$); and moreover, in view of Fact~\ref{fact::fullbasis},
\bens
- \lambda E_n & = & \frac{n \tau}{n-1} P_1 =\frac{n 
  \tau}{n-1} \vecone_n  \vecone_n^T/n,  \; \; \text{ where } \;\; \lambda =-\frac{\tau}{n-1}, 
\eens
Since we have at most $n-1$ non-zero eigenvalues  
and they sum up to be $\tr(YY^T)$, we have
\bens
\lambda_{\max}(YY^T) & \ge &  \tr(YY^T)/(n-1) = n \tau/(n-1)
\eens
where strict inequality holds when these eigenvalues are not all  
identical.

Finally,~\eqref{eq::eigenYY} holds since
$\lambda_1(\tilde{A}) := \lambda_{\max}(YY^T)$ in view of the
eigen-decomposition~\eqref{eq::eigenA} and the displayed equation immediately above.
Now for $A =  YY^T - \lambda (E_n -I_n)$, we have $\tr(A) =
\tr(YY^T)$, and hence $\lambda_{\max}(A) \ge \tau$.
Moreover, the extra terms $\propto I_n$ in $A$ (resp. $B$)
will not change the order of the sequence of eigenvalues for $B$ (resp. $A$) with
respect to that established for $\tilde{A}$; Hence all symmetric matrices
$B$, $\tilde{A}$, and $A$ share the same leading eigenvector $v_1$
with $YY^T$.
\end{proof}

\section{Proofs for results in Section~\ref{sec::proofreadingmain}}

Proposition~\ref{prop::decompose} holds regardless 
of the weights or the number of mixture components.
\begin{proposition}{\textnormal{\bf (Covariance projection: general
      mixture models)}}
  \label{prop::decompose}
  Let $Y = X- P_1 X$ be as defined in
  Definition~\ref{def::estimators}.  Let $\Z = X - \E  X$. We first rewrite $\hat\Sigma_Y =  (Y-\E(Y))(Y-\E(Y))^T$ as follows:
\ben 
\label{eq::SigmaY}
\hat\Sigma_Y 
& := &  (I-P_1) \Z \Z^T(I-P_1)  = M_1 -(M_2 - M_3), \\
\label{eq::defineM1}
\; \; \text{ where } \; 
M_1 & := & \hat\Sigma_X = (X-\E(X)) (X-\E(X))^T = \Z \Z^T, \; \; \text{
  and} \; \; P_1 =\inv{n} \vecone_n \vecone_n^T \\
\label{eq::defineM2}
M_2 & = &  \Z \Z^T P_1 + P_1 \Z \Z^T, \; \text {and} \;
M_3  = P_1 \Z \Z^T P_1,  \\
\label{eq::YYrelations2}
\text{ and} \; \;
\Sigma_Y & := &\E \hat\Sigma_Y :=  (I-P_1) \E (\Z \Z^T)(I-P_1) 
\een
Then we have \eqref{eq::projection}, since
\bens
\label{eq::YYrelations}
YY^T - \E (Y) \E(Y)^T 
& = &
\hat\Sigma_Y + \E(Y)(Y-\E(Y))^T + (Y-\E(Y))(\E(Y))^T.
\eens
\end{proposition}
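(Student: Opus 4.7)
The plan is to reduce everything to a single identity $Y - \E Y = (I-P_1)\Z$, after which the proposition becomes algebra. Starting from the definition $Y = (I-P_1)X$ in~\eqref{eq::defineY} and using linearity of expectation, one has $\E Y = (I-P_1)\E X$, so that
\begin{equation*}
Y - \E Y = (I-P_1)(X - \E X) = (I-P_1)\Z.
\end{equation*}
Because $I-P_1$ is symmetric, taking the outer product yields
\begin{equation*}
\hat\Sigma_Y = (Y-\E Y)(Y-\E Y)^T = (I-P_1)\, \Z\Z^T\, (I-P_1),
\end{equation*}
which is the first identity in~\eqref{eq::SigmaY}.

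Next I would expand the product $(I-P_1)\Z\Z^T(I-P_1)$ by distributing. Since $P_1^T = P_1$, the expansion gives exactly
\begin{equation*}
\Z\Z^T - \Z\Z^T P_1 - P_1 \Z\Z^T + P_1\Z\Z^T P_1 = M_1 - M_2 + M_3,
\end{equation*}
matching the stated decomposition $M_1 - (M_2 - M_3)$ with $M_1, M_2, M_3$ as in~\eqref{eq::defineM1} and~\eqref{eq::defineM2}. Taking expectations on both sides and using linearity gives $\Sigma_Y = \E\hat\Sigma_Y = (I-P_1)\E(\Z\Z^T)(I-P_1)$, which is~\eqref{eq::YYrelations2}.

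Finally, for the cross-term identity leading to~\eqref{eq::projection}, I would simply write $Y = (Y - \E Y) + \E Y$ and square out:
\begin{equation*}
YY^T = (Y-\E Y)(Y-\E Y)^T + (Y-\E Y)(\E Y)^T + \E(Y)(Y-\E Y)^T + \E(Y)\E(Y)^T,
\end{equation*}
so rearranging gives $YY^T - \E(Y)\E(Y)^T = \hat\Sigma_Y + \E(Y)(Y-\E Y)^T + (Y-\E Y)(\E Y)^T$, as claimed. Substituting $\hat\Sigma_Y - \Sigma_Y = YY^T - \E(YY^T) - [\E(Y)(Y-\E Y)^T + (Y-\E Y)(\E Y)^T]$ then recovers~\eqref{eq::projection}. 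There is no genuine obstacle here: the proposition is purely a bookkeeping identity, and the only subtlety is being careful with the symmetry of $P_1$ when expanding the conjugation. I would present the argument in three short displays — defining the centered form, expanding the conjugation, and then expanding $YY^T$ around its mean — without invoking any probabilistic facts about $\Z$.
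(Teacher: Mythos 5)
Your proposal is correct and follows essentially the same route as the paper: the paper's proof likewise reduces everything to the identity $Y - \E Y = (I-P_1)(X-\E X) = (I-P_1)\Z$, conjugates by the symmetric projector $I-P_1$, and dismisses the remaining expansions as obvious. You simply spell out those remaining bookkeeping steps (the $M_1 - M_2 + M_3$ expansion, the expectation, and the decomposition of $YY^T$ around its mean), all of which check out.
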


\begin{proof}
Recall $\vecone(X) =\inv{n}\vecone_n \vecone_n^T X =: P_1 X$; Then
\ben 
\nonumber 
\hat{\Sigma}_Y := (Y - \E Y)(Y - \E Y)^T 
& = & (X-\E(X) -(\vecone{(X)} - \E \vecone{(X)} )) (X-\E(X) 
-(\vecone{(X)}- \E \vecone{(X)} ))^T \\
& = &
\nonumber
(X - P_1 X - (\E(X)- P_1 \E X))(X - P_1 X - (\E(X)- P_1 \E X))^T\\ 
& = & \big(  (I-P_1) (X -\E(X)\big) \big( (I-P_1) (X -\E(X)) \big)^T  \\
\nonumber
& = & (I-P_1) (X -\E(X)) (X -\E(X))^T (I-P_1)  \\
\nonumber 
& = & (I-P_1) \Z \Z^T (I-P_1) 
\een
The rest are obvious.
\end{proof}

\subsection{Proof of Theorem~\ref{thm::YYnorm}}
\label{sec::proofofYYnorm}
\begin{proofof2}
We use a shorthand notation for $M_Y:= \E(Y)(Y-\E(Y))^T + (Y-\E(Y))(\E(Y))^T$.
Thus we have by the triangle inequality,
~\eqref{eq::projection}, \eqref{eq::projection}
(Proposition~\ref{prop::decompose}),
Lemma~\ref{lemma::projerr}, and Theorem~\ref{thm::YYcrossterms},
with probability at least $1-2\exp(- c_6 n) - 2 \exp(-c n)$, 
\bens
\nonumber
\twonorm{YY^T - \E (Y Y^T)}
\nonumber
& \le & \twonorm{\hat\Sigma_Y -\Sigma_Y} +
\twonorm{M_Y} \le \twonorm{\Z \Z^T - \E (\Z \Z^T) } +  \twonorm{M_Y} \\
\label{eq::YYsum2}
& \le & C_2 C_0^2( \sqrt{p n} \vee n) + 2 C_3 C_0 n \sqrt{p \gamma} 
\le \inv{6} \xi n p \gamma  \\
\infonenorm{YY^T - \E (Y Y^T)}
& \le & n  \twonorm{YY^T - \E (Y Y^T)} \le   \inv{6} \xi n^2 p \gamma 
\eens
where the last inequality holds by~\eqref{eq::kilo}, while adjusting 
the constants.
\end{proofof2}

\subsection{Proof of Lemma~\ref{lemma::tiltproject} }
\label{sec::tiltproj}
First, we verify \eqref{eq::EYpre}:
\ben
\nonumber
\forall i \in \C_1\; \; \E Y_i
&= & \E X_i - (w_1 \mu^{(1)} + w_2 \mu^{(2)}) =
\mu^{(1)}(1 - w_1) - w_2 \mu^{(2)}\\
\label{eq::EY1}
&= & 
w_2 (\mu^{(1)} -  \mu^{(2)}) \\
\nonumber
\forall i \in \C_2 \; \; 
\E Y_i
& = & \E X_i - (w_1 \mu^{(1)} + w_2 \mu^{(2)}) =
\mu^{(2)} (1- w_2) - w_1 \mu^{(1)}\\
\label{eq::EY2}
& = &
w_1 (\mu^{(2)} -\mu^{(1)})
\een

\begin{lemma}
  \label{lemma::pairwise4}
Suppose all conditions in Lemma~\ref{lemma::tiltproject} hold. Then
\ben
\sup_{q \in \Sp^{n-1}} 
\sum_{i=1}^n q_i \ip{Y_i - \E (Y_i),\mu^{(1)} -\mu^{(2)}}
& \le &
\label{eq::pairwise4}
2 \sup_{q \in \Sp^{n-1}}  \sum_{i=1}^n q_i \ip{\Z_i, \mu^{(1)} -\mu^{(2)}}.
\een
\end{lemma}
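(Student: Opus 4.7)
The plan is to reduce the left side directly to the right side by exploiting the explicit form of the centering operator. First I would substitute $Y_i - \E Y_i = \Z_i - \inv{n}\sum_{j=1}^n \Z_j$, which is the row-wise form of the identity $Y - \E Y = (I - P_1)(X - \E X) = (I-P_1)\Z$ used throughout Section~\ref{sec::estimators}. Denoting $a_i := \ip{\Z_i, \mu^{(1)} - \mu^{(2)}}$ for brevity, this gives the clean decomposition
\begin{equation*}
\sum_{i=1}^n q_i \ip{Y_i - \E Y_i, \mu^{(1)} - \mu^{(2)}}
= \sum_{i=1}^n q_i a_i \;-\; \Big(\inv{n}\sum_{i=1}^n q_i\Big)\Big(\sum_{j=1}^n a_j\Big).
\end{equation*}

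Next I would bound the two pieces separately. The first piece $\sum_i q_i a_i$ is already of the desired form and is trivially at most $\sup_{q' \in \Sp^{n-1}} \sum_i q'_i a_i$ whenever $q \in \Sp^{n-1}$. For the second piece I would use Cauchy-Schwarz twice: first $\abs{\sum_i q_i} \le \sqrt{n}\,\twonorm{q} = \sqrt{n}$, and second $\abs{\sum_j a_j} = \sqrt{n}\,\abs{\ip{\vecone_n/\sqrt{n},\, a}} \le \sqrt{n}\,\sup_{q' \in \Sp^{n-1}}\sum_j q'_j a_j$, since $\vecone_n/\sqrt{n} \in \Sp^{n-1}$. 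Multiplying and dividing by $n$ gives that the second piece is also bounded by $\sup_{q'} \sum_j q'_j a_j$. Adding the two bounds yields the factor of $2$ in \eqref{eq::pairwise4}.

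There is no genuine obstacle here; the lemma is a deterministic algebraic identity coupled with a loose use of the triangle inequality. The only thing worth remarking on is that a tighter bound with factor $1$ is actually available: since $\tilde q := (I-P_1)q$ satisfies $\twonorm{\tilde q} \le \twonorm{q} = 1$, one has $\sum_i q_i(a_i - \bar a) = \sum_i \tilde q_i a_i \le \sup_{q' \in \Sp^{n-1}} \sum_i q'_i a_i$ directly. The factor $2$ version suffices for the subsequent application in Lemma~\ref{lemma::tiltproject}, which is presumably why the authors state it in the looser form that parallels the discrete bound \eqref{eq::pairwise2} for $x \in \{-1,1\}^n$.
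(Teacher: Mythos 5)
Your proof is correct and follows essentially the same route as the paper's: both write $Y_i - \E Y_i = \Z_i - \inv{n}\sum_j \Z_j$, split off the term $\inv{n}(\sum_i q_i)(\sum_j \ip{\Z_j,\mu^{(1)}-\mu^{(2)}})$, and bound it by the supremum using $\abs{\sum_i q_i}\le \sqrt{n}$ together with the test vector $\vecone_n/\sqrt{n}\in \Sp^{n-1}$. Your side remark that the projection $(I-P_1)q$ gives the sharper factor $1$ is a valid observation not made in the paper, but the factor $2$ is all that is used downstream.
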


\begin{proof}
\bens
\lefteqn{\sum_{i=1}^n q_i \ip{Y_i - \E (Y_i),\mu^{(1)} -\mu^{(2)}}
= 
\inv{n} \sum_{i=1}^n q_i \ip{\sum_{j\not=i} (\Z_i - \Z_j),\mu^{(1)}
  -\mu^{(2)}} }\\
& = &
\frac{n-1}{n} \left(\sum_{i=1}^n q_i \ip{\Z_i, \mu^{(1)} -\mu^{(2)}}
\right) + 
\inv{n} \sum_{i=1}^n q_i \ip{\Z_i -\sum_{j=1}^n \Z_j,\mu^{(1)}
  -\mu^{(2)}} \\
& = &
\sum_{i=1}^n q_i \ip{\Z_i, \mu^{(1)} -\mu^{(2)}}  -
\inv{n} \sum_{i=1}^n q_i \sum_{j=1}^n \ip{\Z_j,\mu^{(1)}  -\mu^{(2)} }
\eens
where 
\bens
\lefteqn{
\abs{\inv{n} \sum_{i=1}^n q_i \sum_{j=1}^n \ip{\Z_j,\mu^{(1)}  -\mu^{(2)} }}
\le 
\sup_{q \in \Sp^{n-1}}  \inv{n} \onenorm{q} \abs{\sum_{j=1}^n
  \ip{\Z_j,\mu^{(1)} -\mu^{(2)}}} }\\
& \le &
\abs{ \sum_{j=1}^n \inv{\sqrt{n}} \ip{\Z_j,\mu^{(1)}  -\mu^{(2)}}} \le
\label{eq::pairwise5}
\sup_{q \in \Sp^{n-1}}  \sum_{i=1}^n q_i \ip{\Z_i, \mu^{(1)} -\mu^{(2)}} 
\eens
Thus \eqref{eq::pairwise4} holds and the lemma is proved.
\end{proof}

\begin{proofof}{Lemma~\ref{lemma::tiltproject} }
Due to the symmetry, we need to compute only
\bens
\norm{(Y-\E(Y))\E(Y)^T} & = &
\norm{(\Z -(\vecone{(X)} - \E \vecone{(X)} )) (\E(X) - \E \vecone{(X)}
  )^T} 
\eens
First, we show that \eqref{eq::pairwise2} holds. 
Now
\bens
Y_i - \E Y_i  & =&
(X_i - \E X_i) - ((\hat{\mu}_n - \E \hat{\mu}_n) =
\Z_i- \left(\inv{n} \sum_{i=1}^n (X_i - \E X_i) \right)\\
& = & \frac{n-1}{n} \Z_i - \inv{n}\sum_{j\not=i}^n \Z_j = 
\inv{n} \sum_{j\not=i}^n (\Z_i - \Z_j)
\eens
and for $x_i \in \{-1,1\}$,
\bens
\lefteqn{
\sum_{i=1}^n x_i \ip{Y_i -\E Y_i, \mu^{(1)} -\mu^{(2)}} =
\inv{n}  \sum_{i=1}^n x_i \sum_{j\not=i}^n \ip{(\Z_i - \Z_j), \mu^{(1)}
  -\mu^{(2)}} } \\
\nonumber 
& \le &
\inv{n}  \sum_{i=1}^n \sum_{j\not=i}^n \abs{\ip{(\Z_i - \Z_j), \mu^{(1)}
    -\mu^{(2)}} } \le \frac{2(n-1)}{n} \sum_{i=1}^n \abs{\ip{\Z_i, \mu^{(1)}  -\mu^{(2)}} } 
\eens
Then we have by definition of the cut norm,
\eqref{eq::EY1}, \eqref{eq::EY2}, and \eqref{eq::pairwise2},
\bens
\lefteqn{
\infonenorm{(Y-\E (Y)) \E (Y)^T} = \sup_{x, y \in \{-1,1\}^{n}}
\sum_{i=1}^n x_i \cdot}\\
& &
\left(\sum_{j \in \C_1} y_j \ip{Y_i - \E (Y_i),  w_2 (\mu^{(1)} -\mu^{(2)})} +
  \sum_{j \in \C_2} y_j  \ip{Y_i - \E (Y_i), w_1 (\mu^{(2)} -\mu^{(1)})}\right)\\
& \le &
\sup_{x, y \in \{-1,1\}^{n}}
  \sum_{i=1}^n x_i \left(\ip{ Y_i - \E (Y_i), \mu^{(1)}
    -\mu^{(2)}} (\sum_{j \in \C_1} w_2 y_j - \sum_{j \in \C_2} 
w_1 y_j  ) \right)\\
& \le &
(w_2 \abs{\C_1} + w_1 \abs{\C_2}) \sum_{i=1}^n \abs{\ip{Y_i - \E 
    (Y_i), \mu^{(1)}  -\mu^{(2)}}} \\
& = &
2 w_2 w_1 n \max_{x \in \{-1, 1\}^n}\sum_{i=1}^n x_i \ip{Y_i - \E 
  (Y_i), \mu^{(1)}  -\mu^{(2)}}  \\
& \le &
4 w_1 w_2(n-1) \sum_{i=1}^n \abs{\ip{\Z_i, \mu^{(1)}  -\mu^{(2)}} }
\eens
Similarly, we have by \eqref{eq::EY1} and \eqref{eq::EY2},
\ben
\nonumber
\lefteqn{
  \twonorm{(Y-\E (Y)) \E (Y)^T} =
\sup_{q, h \in \Sp^{n-1}}
\sum_{i=1}^n q_i \cdot}\\
& &
\nonumber
\left(\sum_{j \in \C_1} h_j \ip{Y_i - \E (Y_i),  w_2 (\mu^{(1)} -\mu^{(2)})}
+  \sum_{j \in \C_2} h_j  \ip{Y_i - \E (Y_i), w_1 (\mu^{(2)} -\mu^{(1)})}\right)\\
& \le &
\label{eq::defQ}
\sup_{q, h \in \Sp^{n-1}} 
\left(\sum_{i=1}^n q_i \ip{ Y_i - \E (Y_i),\mu^{(1)} -\mu^{(2)}}
  (\sum_{j \in \C_1} w_2 h_j - \sum_{j \in \C_2} w_1 h_j  ) \right) =: Q
\een
where by \eqref{eq::defQ} and \eqref{eq::pairwise4}, and $w_1 w_2 \le 1/4$,
\bens
Q & \le &
\sup_{q \in \Sp^{n-1}} 
\abs{\sum_{i=1}^n q_i \ip{ Y_i - \E (Y_i),\mu^{(1)}
      -\mu^{(2)}} }\cdot  \sup_{h \in \Sp^{n-1}} 
\abs{\sum_{j \in \C_1} w_2 h_j - \sum_{j \in \C_2} w_1 h_j  } \\
& \le &
2 \sup_{q\in \Sp^{n-1}} \abs{\sum_{i=1}^n q_i \ip{\Z_i, \mu^{(1)}
    -\mu^{(2)}} } \cdot \sqrt{n w_1 w_2}  \le  \sqrt{n} \sup_{q \in \Sp^{n-1}} \abs{\ip{\sum_{i} q_i \Z_i,
    \mu^{(1)} -\mu^{(2)}}}
\eens
where for $\twonorm{h_{\C_i}} =  \sqrt{\sum_{j \in \C_i} h^2_j}, i
  =1, 2$ and $h \in \Sp^{n-1}$, 
  \bens
\abs{\sum_{j \in \C_1} w_2 h_j - \sum_{j \in 
    \C_2} w_1 h_j }
&  \le & 
w_2 \sum_{j \in \C_1} \abs{h_j} + w_1 \sum_{j \in 
  \C_2} \abs{h_j}   =:  w_2  \onenorm{h_{\C_1}} + w_1 \onenorm{h_{\C_2}} \\
&  \le & 
w_2 \sqrt{\abs{\C_1}} \twonorm{h_{\C_1}} + w_1 \sqrt{\abs{\C_2}}
\twonorm{h_{\C_2}} \\
&  \le & 
\sqrt{w_1 w_2 n}
\left(\sqrt{w_2} \twonorm{h_{\C_1}} + \sqrt{w_1}  \twonorm{h_{\C_2}} \right)
\le \sqrt{w_1 w_2 n}
\eens
where $1= w_1 + w_2 \ge 2 \sqrt{w_1 w_2}$ and by
Cauchy-Schwarz, we have for $\bar{w_0} = (\sqrt{w_2}, \sqrt{w_1})$ such
that $\twonorm{\bar{w_0}} =\sqrt{w_1 + w_2} =1$ and $z =( \twonorm{h_{\C_1}},
\twonorm{h_{\C_2}})$ such that $\twonorm{z} = 1$,
$$\ip{\bar{w_0}, z} = \left(\sqrt{w_2} \twonorm{h_{\C_1}} + \sqrt{w_1}
  \twonorm{h_{\C_2}} \right) \le \twonorm{w_0} \twonorm{z} =1.$$
\end{proofof}

\subsection{Proof of Lemma~\ref{lemma::YYdec}}
\label{sec::SigmaYYproj}
\begin{proofof2}
Reduction in the operator norm holds since by Proposition~\ref{prop::decompose},
\bens
\nonumber
\hat\Sigma_Y -\Sigma_Y
& :=& (Y - \E Y)(Y - \E Y)^T - \E ((Y - \E Y)(Y - \E Y)^T) \\
\nonumber
& = & (I-P_1) (\Z \Z^T - \E(\Z \Z^T)) (I-P_1)
\eens
and clearly,
 \ben
   \twonorm{\hat\Sigma_Y -\Sigma_Y}
   \label{eq::ZZop}
& \le & \twonorm{I-P_1} \twonorm{\Z \Z^T - \E(\Z \Z^T)} \twonorm{I-P_1} \\
   \nonumber
& \le & \twonorm{\Z \Z^T - \E(\Z \Z^T)}. 
\een
\end{proofof2}

\section{Proofs for Section~\ref{sec::YYcutnorm} on isotropic design}
\label{sec::isonorm}
  Under (A2),  the row vectors $\Z_1, \Z_2, \ldots, \Z_n \in \R^{p}$ of 
  matrix $\Z = X -\E X$, are independent,  sub-gaussian vectors with 
  sub-gaussian  norm, cf. Lemma 3.4.2~\cite{Vers18}.
  To bridge the deterministic bounds in Lemma~\ref{lemma::tiltproject}
and the probabilistic statements in Lemma~\ref{lemma::projerr}, we 
use the tail bounds in Lemma~\ref{lemma::isotropic}.
Combining Lemmas~\ref{lemma::tiltproject} and~\ref{lemma::isotropic}
proves Lemma~\ref{lemma::projerr}.

\subsection{Proof of Lemma~\ref{lemma::projerr}}
\label{sec::projectproof}
\begin{proofof2}
Let $\ve = 1/3$. Let $\Pi_n$ be an $\ve$-net of $\Sp^{n-1}$ such 
that  $\size{\Pi_n} \le (1+ 2/\ve)^{n}$;
We have by \eqref{eq::qZOEYsubg} and the union bound,
\ben
\label{eq::defineE3}
\prob{\E_3} & := &
  \prob{\abs{\sup_{ q \in \Pi_n}
      \sum_{i=1}^n q_i \ip{\Z_i, \mu }}\ge \half C_3 C_0  \sqrt{n}} \\
  \nonumber
& \le & 9^n \cdot 2 \exp\left(- c_6 {C_3^2 n}/{4}\right) \le 2\exp(-c_1 n)
\een
for some absolute constants $C_3, c_1$ and $\mu$ as in~\eqref{eq::definemu}.
Thus we have on event $\E_3^c$, by a standard approximation argument,
\bens
\label{eq::concertq}
\sup_{q \in \Sp^{n-1}} \sum_{i=1}^n q_i \ip{\Z_i, \mu}
& \le &
\inv{1-\ve} \sup_{q \in \Pi_n} \sum_{i=1}^n q_i \ip{\Z_i, \mu} \le  C_3 C_0 \sqrt{n}
\eens
Similarly, we have by  the union bound and \eqref{eq::sumZOEYsubg},
\ben
\label{eq::defineE4}
\prob{\E_4}
& := &
\prob{  \max_{ u \in \{-1, 1\}^n}
  \abs{\sum_{i=1}^n u_i \ip{\Z_i, \mu}}
  \ge \half C_4 C_0 n} \\
\nonumber
& \le &
2^n \exp\left(- c_5 \frac{(C_4 C_0)^2 n^2}{4 C_0^2 n}\right)  \le 2\exp(-c'n);
\een
Hence on $\E_4^c$, the second inequality follows
from~\eqref{eq::pairwise2}.
\nonumber
\bens
\sup_{u \in \{-1, 1\}^n} \sum_{i=1}^n u_i \ip{Y_i -\E Y_i,   \mu}
&\le &  \frac{2(n-1)}{n} \sup_{u \in \{-1, 1\}^n}
\sum_{i=1}^n u_i \ip{\Z_i, \mu} \le C_4 C_0 (n-1) 
\eens
We have by Lemma~\ref{lemma::tiltproject}, on event $\E_3^c \cap \E_4^c$,
\bens
\twonorm{M_Y}
& \le &
4 \sqrt{n}  \sqrt{w_1 w_2} \sup_{q \in S^{n-1}} \abs{\sum_{i} q_i 
  \ip{\Z_i, \mu^{(1)} -\mu^{(2)}}} \le 
\label{eq::Y2}
2 C_3 C_0 n \sqrt{p \gamma}
\eens
and
\bens
\nonumber
\infonenorm{M_Y}
& \le & 8 w_1 w_2 (n-1) \sup_{u \in \{-1, 1\}^n}
\sum_{i=1}^n u_i \ip{\Z_i, \mu^{(1) }-\mu^{(2) } } \le 
\label{eq::Y1}
C_4 C_0 n(n-1) \sqrt{p  \gamma}
\eens
Thus the lemma holds upon adjusting the constants.
\end{proofof2}

\subsection{Proof of Lemma~\ref{lemma::isotropic}}
\begin{proofof2}
  Let $\mu$ be as in~\eqref{eq::definemu} and recall
\bens 
\max_i \norm{\Z_i}_{\psi_2} \le C C_0
\eens
Moreover,
by independence of $\Z_1, \ldots, \Z_n$, we have for $u =(u_1, \ldots, u_n) \in \{-1, 1\}^n$,
\bens
\norm{\sum_{i=1}^n u_i \ip{\Z_i, \mu}}^2_{\psi_2}
& \le &  C \sum_{i=1}^n \norm{\ip{\Z_i, \mu}}^2_{\psi_2}  \le  C \sum_{i=1}^n  \norm{\Z_i}^2_{\psi_2}
\eens
where $\norm{\ip{\Z_j, \mu}}_{\psi_2} \le  \norm{\Z_j}_{\psi_2}$ by
definition of~\eqref{eq::Wpsi},
and for any $q \in \Sp^{n-1}$ and $t > 0$, we have
\bens 
\norm{\sum_{i=1}^n q_i \ip{\Z_i, \mu}}^2_{\psi_2}
\le C  \sum_{i=1}^n q_i^2 \norm{\ip{\Z_i, \mu}}^2_{\psi_2} 
 \le   C \max_{i} \norm{\Z_i}^2_{\psi_2} \le  C' C_0^2
\eens
Thus we have the following sub-gaussian tail bounds,
for any $u =(u_1, \ldots, u_n) \in \{-1, 1\}^n$ and $t > 0$,
\bens
\nonumber
\prob{\sum_{i =1}^{n} u_i \ip{\Z_i, \mu}  \ge t}
& \le &
2 \exp\left(-\frac{c t^2}{ \sum_{i=1}^n 
    \norm{\Z_i}^2_{\psi_2}}\right)  \le 
2 \exp\left(-\frac{c t^2}{C_0^2 n}\right)
\eens
and for any $q \in \Sp^{n-1}$ and $t > 0$, 
\bens
\prob{\sum_{i=1}^n q_i \ip{\Z_i, \mu} \ge t }
& \le &
2 \exp\left(-\frac{c t^2}{\max_{i=1}^n \norm{\Z_i}^2_{\psi_2}}\right) 
\le 2\exp\left(- \frac{c' t^2}{C_0^2}\right)
\eens
See Proposition  2.5.2 (i)~\cite{Vers18}. Thus the lemma holds.
\end{proofof2}

\subsection{Proof sketch of Theorem~\ref{thm::YYcrossterms}}
\label{sec::mainwell}
First, notice that $M_1 = \Z \Z^T =: \hat\Sigma_X$ is the empirical covariance matrix based on 
the original data $X$. 
In order to prove the concentration of measure bounds for 
Theorem~\ref{thm::YYcrossterms}, we will first state
the operator norm bound on $M_1 -\E M_1$ in
Lemma~\ref{lemma::ZZoporig}.

Let $C_{\diag}, C_{\offd}, C_1, C_2, c, c', \ldots$ be some absolute
constants, which may change line by line. 
Denote by $\E_0$ the following event:
\ben 
\label{eq::defineE0}
\E_0: &&
\exists j \in [n] \quad \abs{\twonorm{\Z_j}^2 - \E \twonorm{\Z_j}^2}
> C_{\diag}  C_0^2   (\sqrt{n p} \vee n) 
  \een
\begin{lemma}{\bf (M1 term: operator norm)}
  \label{lemma::ZZoporig}
Choose $\ve = 1/4$ and construct an $\ve$-net $\Net$ whose size is 
upper bounded by $\abs{\Net} \le (\frac{2}{\ve}+1)^n \le 9^n$. 
Recall that $\Z = X - \E X$. Fix $\ve = 1/4$.
 Under the conditions in Theorem~\ref{thm::YYcrossterms},
 denote by $\E_8$ the following event:
 \bens
\text{ event } \E_8: \quad  \left\{\max_{q, h \in \Net}
  \sum_{i=1}^n  \sum_{j \not=i}^n \ip{\Z_{i}, \Z_{j}}  q_i h_j > C_1
  C_0^2  (\sqrt{n p} \vee n)\right\}
  \eens
 As a consequence, on event $\E_0^c \cap  \E_8^c$,  we have
\bens 
\twonorm{\Z \Z^T - \E  \Z \Z^T}
&\le&
C_2 C_0^2 (\sqrt{n p} \vee n) 
\eens
where $\prob{\E_0^c \cap  \E_8^c} \ge 1- 2\exp(-c_4 n)$,
upon adjusting the constants.
\end{lemma}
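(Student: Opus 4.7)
The plan is to split $\Z\Z^T - \E\Z\Z^T$ into its diagonal and off-diagonal parts. Under assumption (A1) the coordinates $z_{ij}$ are independent with mean zero, so the off-diagonal entries of $\E\Z\Z^T$ vanish: for $i \neq j$, $\E \ip{\Z_i,\Z_j} = \sum_k \E z_{ik}\, \E z_{jk} = 0$. Hence $\Z\Z^T - \E\Z\Z^T = D + F$, where $D := \diag(\Z\Z^T) - \E\diag(\Z\Z^T)$ is diagonal with entries $\twonorm{\Z_j}^2 - \E\twonorm{\Z_j}^2$, and $F := \offd(\Z\Z^T)$ records the mean-zero quantities $\ip{\Z_i,\Z_j}$ for $i\neq j$. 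By the triangle inequality, it suffices to bound $\twonorm{D}$ on $\E_0^c$ and $\twonorm{F}$ on $\E_8^c$.

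For the diagonal part, each $\twonorm{\Z_j}^2 = \sum_{k=1}^p z_{jk}^2$ is a sum of $p$ independent centered sub-exponential variables with $\psi_1$-norm at most $2C_0^2$. Bernstein's inequality gives
\begin{equation*}
\prob{|\twonorm{\Z_j}^2 - \E\twonorm{\Z_j}^2| > t} \le 2\exp\!\left(-c \min\!\left(\frac{t^2}{C_0^4 p},\, \frac{t}{C_0^2}\right)\right).
\end{equation*}
Choosing $t = C_{\diag}\, C_0^2(\sqrt{np}\vee n)$, the minimum inside the exponential is at least of order $n$ (in both the $p \ge n$ and $p < n$ regimes), so a union bound over $j \in [n]$ yields $\prob{\E_0} \le 2n\exp(-c'n) \le \exp(-cn)$ for $C_{\diag}$ sufficiently large. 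Consequently $\twonorm{D} = \max_j |\twonorm{\Z_j}^2 - \E\twonorm{\Z_j}^2| \le C_{\diag}\, C_0^2(\sqrt{np}\vee n)$ on $\E_0^c$.

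For the off-diagonal part I proceed by an $\epsilon$-net plus Hanson-Wright argument. For every fixed pair $q, h \in \Sp^{n-1}$,
\begin{equation*}
q^T F h = \sum_{i\neq j} q_i h_j \sum_{k=1}^p z_{ik} z_{jk} = \mvec{\Z}^T M \mvec{\Z},
\end{equation*}
where $M$ is the symmetric matrix on index set $[n]\times[p]$ with $M_{(i,k),(j,l)} = \half(q_i h_j + q_j h_i)\delta_{kl}$ for $i\neq j$ and zero otherwise. A direct calculation gives $\fnorm{M}^2 \le p$ and $\twonorm{M} \le 2$, while the vanishing diagonal of $M$ ensures $\E\, \mvec{\Z}^T M \mvec{\Z} = 0$. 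Since $\mvec{\Z}$ has independent sub-gaussian coordinates with $\psi_2$-norm at most $C_0$, the Hanson-Wright inequality gives
\begin{equation*}
\prob{|q^T F h| > t} \le 2\exp\!\left(-c \min\!\left(\frac{t^2}{C_0^4 p},\, \frac{t}{C_0^2}\right)\right).
\end{equation*}
Setting $t = C_1 C_0^2(\sqrt{np}\vee n)$ and taking a union bound over $\Net \times \Net$, which has cardinality at most $9^{2n}$, gives $\prob{\E_8} \le 2\cdot 9^{2n}\exp(-c''n) \le 2\exp(-c_4 n)$ for $C_1$ large enough. A standard approximation argument with $\epsilon = 1/4$ then yields $\twonorm{F} \le (1-2\epsilon)^{-1}\sup_{q,h\in\Net} q^T F h \le 2 C_1 C_0^2(\sqrt{np}\vee n)$ on $\E_8^c$.

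Combining the two bounds on $\E_0^c \cap \E_8^c$ gives $\twonorm{\Z\Z^T - \E\Z\Z^T} \le (C_{\diag} + 2C_1)\, C_0^2(\sqrt{np}\vee n) =: C_2 C_0^2 (\sqrt{np}\vee n)$, with probability at least $1 - 2\exp(-c_4 n)$. The main technical point is the Hanson-Wright setup: one must symmetrize the quadratic form and zero out its diagonal (so $\E\,\mvec{\Z}^T M \mvec{\Z}$ vanishes) while simultaneously verifying $\fnorm{M}^2 \lesssim p$ and $\twonorm{M}\lesssim 1$. It is precisely these two norms, feeding into the two branches of the Hanson-Wright minimum, that produce the $\sqrt{np}$ and $n$ terms in the final bound.
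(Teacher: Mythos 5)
Your proposal is correct and follows essentially the same route as the paper: diagonal part via Bernstein's inequality for the sub-exponential sums $\twonorm{\Z_j}^2-\E\twonorm{\Z_j}^2$ (the paper's Lemma~\ref{lemma::eventE0}), off-diagonal part via an $\ve$-net and the Hanson--Wright inequality applied to $\mvec{\Z}$ with a block-diagonal coefficient matrix satisfying $\fnorm{\cdot}^2\le p$ and $\twonorm{\cdot}=O(1)$, then a union bound over $9^{2n}$ net pairs and the standard $(1-2\ve)^{-1}$ approximation step. The only cosmetic difference is that you symmetrize the coefficient matrix to $\tfrac12(q_ih_j+q_jh_i)$ while the paper uses $\offd(q\otimes h)$ directly; both satisfy the same norm bounds and yield the identical conclusion.
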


When we take $\Z$ as a sub-gaussian ensemble with independent entries, 
our bounds on $\norm{M_1 -\E M_1}$  (cut norm and operator norm) 
depend on the Bernstein's type of inequalities and higher dimensional 
Hanson-Wright inequalities.
We will state Lemma~\ref{lemma::eventE0} in
Section~\ref{sec::subexp}, where we bound the probability of event $\E_0$.
We prove Lemma~\ref{lemma::ZZoporig} in 
Section~\ref{sec::proofofZZopnorm} using the standard net argument.
Neither weights, nor the number of mixture components, will affect
such bounds.

\subsection{Bounds on independent sub-exponential random variables}
\label{sec::subexp}
We now derive the corresponding bounds using properties of
sub-exponential random variables.
The sub-exponential (or $\psi_1$) 
norm of random variable $S$, denoted by $\norm{S}_{\psi_1}$, is defined as 
\ben 
\label{eq::subexp}
&&
\norm{S}_{\psi_1} = \inf\{t > 0\; : \; \E \exp(\abs{S}/t) \le 2 \}. 
\een 
A random variable $Z$ is sub-gaussian if and only if $S :=
Z^2$ is sub-exponential with $\norm{S}_{\psi_1} =
\norm{Z}^2_{\psi_2}$. 

The proof does not depend on the specific sizes $\abs{\C_j} \forall j$ of clusters.
Lemma~\ref{lemma::bernstein} concerns the sum of independent
sub-exponential random variables.
We also state the Hanson-Wright  inequality~\cite{RV13}.
\begin{lemma}
  \label{lemma::bernstein}{\textnormal{(Bernstein's inequality, cf. Theorem 2.8.1~\cite{Vers18})}}
Let $X_1, \ldots, X_n$ be independent, mean-zero, sub-exponential
random variables. Then for every $t >0$, 
\ben
\prob{\abs{\sum_{j=1}^n X_{j} } \ge t} & \le &
\nonumber 
2 n \exp\left(-c\min\left(\frac{t^2}{\sum_{j=1}^n \norm{X_{j}}^2_{\psi_1}}, 
    \frac{t}{\max_{j} \norm{X_{j}}_{\psi_1}}\right) \right) 
\een
\end{lemma}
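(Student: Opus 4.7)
The plan is to prove the stated Bernstein-type bound using the standard moment generating function (MGF) approach together with the Chernoff method. The factor of $n$ appearing in the exponent is much weaker than what the classical Bernstein's inequality yields (which has the prefactor $2$), so it suffices to establish the standard version and then use $2 \le 2n$ trivially.

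First, I would recall the characterization of sub-exponentiality via MGF bounds in a neighborhood of the origin: for any mean-zero sub-exponential random variable $X$ with $\norm{X}_{\psi_1} = K$, there exist absolute constants $C,c > 0$ such that $\E \exp(\lambda X) \le \exp(C \lambda^2 K^2)$ for all $\abs{\lambda} \le c/K$. This is a standard equivalence (see, e.g., Proposition 2.7.1 of Vershynin). Applying this to each $X_j$ and invoking independence, we get, for all $\abs{\lambda} \le c/\max_{j}\norm{X_j}_{\psi_1}$,
\begin{equation*}
\E\exp\Bigl(\lambda \sum_{j=1}^n X_j\Bigr) = \prod_{j=1}^n \E \exp(\lambda X_j) \le \exp\Bigl(C \lambda^2 \sum_{j=1}^n \norm{X_j}^2_{\psi_1}\Bigr).
\end{equation*}

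Next, by Markov's inequality (Chernoff bound), for every such $\lambda > 0$ and every $t > 0$,
\begin{equation*}
\prob{\sum_{j=1}^n X_j \ge t} \le \exp\bigl(-\lambda t + C \lambda^2 \sigma^2\bigr), \qquad \sigma^2 := \sum_{j=1}^n \norm{X_j}^2_{\psi_1}.
\end{equation*}
Writing $K_* := \max_j \norm{X_j}_{\psi_1}$, I would then optimize over $\lambda \in (0, c/K_*]$, which splits naturally into two cases. In the ``sub-gaussian regime'' $t \le c'\sigma^2/K_*$, the unconstrained minimizer $\lambda = t/(2C\sigma^2)$ lies within the admissible range and yields the exponent $-ct^2/\sigma^2$. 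In the ``sub-exponential regime'' $t > c'\sigma^2/K_*$, we are forced to take $\lambda = c/K_*$ at the boundary, producing the linear-in-$t$ exponent $-ct/K_*$. Combining both regimes gives the minimum-of-two form appearing in the lemma.

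Finally, applying the same argument to $-\sum_j X_j$ and union-bounding yields the two-sided inequality with prefactor $2$; bounding $2 \le 2n$ trivially matches the stated form. The main technical step is the MGF bound for individual sub-exponential variables, which is standard; beyond that, the argument is purely algebraic, and the only subtlety is tracking that the optimal $\lambda$ stays within the admissible range $\abs{\lambda} \le c/K_*$, which is exactly what dictates the crossover between the two tail regimes.
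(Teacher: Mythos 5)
Your proposal is correct: the paper does not actually prove this lemma but quotes it directly from Vershynin (Theorem 2.8.1 of \cite{Vers18}), and the MGF/Chernoff argument you give — the sub-exponential MGF bound on $\abs{\lambda}\le c/K$, independence, and the two-regime optimization over $\lambda$ that produces the $\min$ of the quadratic and linear exponents — is exactly the standard proof of that cited theorem. Your observation that the stated prefactor $2n$ is strictly weaker than the classical prefactor $2$, so the standard version implies the lemma trivially, is also accurate.
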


\begin{theorem}{\textnormal{\cite{RV13}}}
\label{thm::HW}
Let $X = (X_1, \ldots, X_m) \in \R^m$ be a random vector with independent components $X_i$ which satisfy $\E X_i = 0$ and $\norm{X_i}_{\psi_2} \leq C_0$. Let $A$ be an $m
\times m$ matrix.
Then, for every $t > 0$, 
\bens 
\prob{\abs{X^T A X - \E (X^T A X) } > t} 
\leq 
2 \exp \left(- c\min\left(\frac{t^2}{C_0^4 \fnorm{A}^2}, \frac{t}{C_0^2 \twonorm{A}} \right)\right). 
\eens 
\end{theorem}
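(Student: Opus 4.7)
The plan is to split the centered quadratic form as
\[
X^T A X - \E(X^T A X) = \underbrace{\sum_{i} a_{ii}(X_i^2 - \E X_i^2)}_{D} \;+\; \underbrace{\sum_{i \neq j} a_{ij} X_i X_j}_{S},
\]
bound the diagonal term $D$ and the off-diagonal term $S$ by essentially the same two-regime tail, and then combine them via a union bound with $t/2$ on each piece.

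For $D$, the key observation is that if $\|X_i\|_{\psi_2}\le C_0$ then $X_i^2$ is sub-exponential with $\|X_i^2 - \E X_i^2\|_{\psi_1} \le 2 C_0^2$, and the summands $a_{ii}(X_i^2-\E X_i^2)$ are independent and mean-zero. So I would apply Bernstein's inequality (Lemma~\ref{lemma::bernstein}) with $\sum_i \|a_{ii}(X_i^2-\E X_i^2)\|_{\psi_1}^2 \lesssim C_0^4\sum_i a_{ii}^2 \le C_0^4 \fnorm{A}^2$ and $\max_i \|a_{ii}(X_i^2-\E X_i^2)\|_{\psi_1} \lesssim C_0^2 \max_i |a_{ii}| \le C_0^2 \twonorm{A}$, which already gives the desired two-regime bound on $D$.

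For $S$, I would use the decoupling inequality of de la Peña--Montgomery-Smith to replace $S$ (at the cost of a universal constant in the tail) by the decoupled chaos $S' = \sum_{i,j} a_{ij} X_i X_j'$, where $X'$ is an independent copy of $X$. Conditioning on $X$, $S'$ is a weighted sum of independent sub-gaussian variables $X_j'$ with weights $u_j = \sum_i a_{ij} X_i$, so the conditional MGF satisfies $\E_{X'} \exp(\lambda S') \le \exp\bigl(C C_0^2 \lambda^2 \twonorm{A^T X}^2\bigr)$. Taking expectation over $X$ now requires bounding $\E \exp\bigl(C C_0^2 \lambda^2 X^T A A^T X\bigr)$, which one handles by diagonalizing $A A^T = \sum_k \sigma_k^2 v_k v_k^T$ and using that each $\ip{X,v_k}$ is sub-gaussian with norm $\lesssim C_0$: each factor contributes $(1 - c \lambda^2 C_0^4 \sigma_k^2)^{-1/2}$ (valid provided $\lambda^2 C_0^4 \twonorm{A}^2$ is small enough), so the product is controlled by $\exp\bigl(C' \lambda^2 C_0^4 \fnorm{A}^2\bigr)$.

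The main obstacle is precisely the control of this decoupled Gaussian-type chaos: the MGF estimate is only valid in the regime $\lambda \lesssim 1/(C_0^2 \twonorm{A})$, which produces the sub-exponential $t/(C_0^2 \twonorm{A})$ term in the min, while optimizing $\lambda$ inside this regime against $\exp(-\lambda t + C' \lambda^2 C_0^4 \fnorm{A}^2)$ produces the sub-gaussian $t^2/(C_0^4 \fnorm{A}^2)$ term. Assembling the Chernoff bound for $S$ from this MGF estimate, and combining with the Bernstein bound for $D$, yields the stated two-regime inequality up to a universal constant absorbed into $c$.
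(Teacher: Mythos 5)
First, note that the paper does not actually prove Theorem~\ref{thm::HW}; it is quoted verbatim from Rudelson--Vershynin \cite{RV13} and used as a black box, so there is no in-paper proof to compare against. Your sketch reproduces the standard strategy of \cite{RV13}: split off the diagonal, handle it by Bernstein (your bounds $\sum_i a_{ii}^2 \le \fnorm{A}^2$ and $\max_i |a_{ii}| \le \twonorm{A}$ are correct), and treat the off-diagonal chaos by decoupling followed by a moment-generating-function estimate. The decoupling step and the conditional bound $\E_{X'}\exp(\lambda S') \le \exp\bigl(C C_0^2 \lambda^2 \twonorm{A^T X}^2\bigr)$ are both fine.

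The genuine gap is in the next step, where you bound $\E \exp\bigl(C C_0^2 \lambda^2\, X^T A A^T X\bigr)$ by diagonalizing $AA^T = \sum_k \sigma_k^2 v_k v_k^T$ and letting ``each factor contribute'' $(1 - c\lambda^2 C_0^4 \sigma_k^2)^{-1/2}$. Turning the expectation of the product into a product of expectations requires the projections $\ip{X, v_k}$ to be independent across $k$; for a general sub-gaussian vector $X$ with independent coordinates, projections onto an orthonormal basis that is not axis-aligned are \emph{not} independent, so this factorization is unjustified. It cannot be patched by a generalized H\"older inequality either: that route forces the MGF bound to hold only for $\lambda^2 C_0^4 \fnorm{A}^2 \lesssim 1$ rather than $\lambda^2 C_0^4 \twonorm{A}^2 \lesssim 1$, and after the Chernoff optimization you would get $t/(C_0^2\fnorm{A})$ in place of $t/(C_0^2\twonorm{A})$ in the sub-exponential regime --- a strictly weaker inequality. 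The missing ingredient, which is exactly the technical heart of \cite{RV13}, is a Gaussian comparison: after decoupling, condition on one vector at a time and dominate the sub-gaussian MGF by that of a standard Gaussian, replacing both $X$ and $X'$ by independent Gaussians $g, g'$ at the cost of absolute constants. Only then does rotation invariance reduce the chaos to $\sum_k \sigma_k g_k g_k'$ with genuinely independent factors, for which the product formula $\prod_k (1-\lambda^2\sigma_k^2)^{-1/2}$ is exact and yields the correct $\twonorm{A}$-controlled regime. With that comparison step inserted, the rest of your argument (two-regime Chernoff optimization and the union bound with the diagonal term) goes through.
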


\begin{lemma}
  \label{lemma::eventE0}
Let $\Z = (z_{jk}) \in \R^{n \times p}$ be a random matrix whose
entries are independent, mean-zero, sub-gaussian random variables with
$\max_{j, k} \norm{z_{jk}}_{\psi_2} \le C_0$. 
Then we have for $t_{\diag} =C_{\diag} C_0^2 (\sqrt{n p} \vee n)$,
\bens
 \prob{\E_0} & :=  &
\prob{\exists j  \in [n], \quad\abs{ \twonorm{\Z_j}^2 - \E \twonorm{\Z_j}^2} 
  > t_{\diag}}  \le  2 \exp(- c_0 n )
\eens
where $\{\Z_j, j \in [n]\}$ are row vectors of matrix $\Z$, and
$C_{\diag}$ and  $c_0$ are absolute constants.
\end{lemma}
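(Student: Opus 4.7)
The plan is to fix a single row $j$, write $\twonorm{\Z_j}^2 = \sum_{k=1}^p z_{jk}^2$ as a sum of independent sub-exponential random variables, apply Bernstein's inequality (Lemma~\ref{lemma::bernstein}) to each row, and union-bound over the $n$ rows. The key observation is the standard fact that a sub-gaussian variable has a sub-exponential square: $\|z_{jk}^2\|_{\psi_1} = \|z_{jk}\|_{\psi_2}^2 \le C_0^2$, and hence the centered variables $X_{jk} := z_{jk}^2 - \E z_{jk}^2$ satisfy $\|X_{jk}\|_{\psi_1} \le 2C_0^2$ by the triangle inequality for $\psi_1$.

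For fixed $j \in [n]$, Bernstein's inequality applied to the $p$ independent mean-zero sub-exponential variables $X_{jk}$, $k \in [p]$, yields
\begin{equation*}
\prob{\abs{\twonorm{\Z_j}^2 - \E\twonorm{\Z_j}^2} > t_{\diag}} \le 2\exp\!\left(-c\, \min\!\left(\frac{t_{\diag}^2}{p\,C_0^4},\ \frac{t_{\diag}}{C_0^2}\right)\right).
\end{equation*}
I would then split into two cases according to which branch governs $\sqrt{np}\vee n$. If $n \le p$, then $t_{\diag} = C_{\diag} C_0^2 \sqrt{np}$, so $t_{\diag}^2/(pC_0^4) = C_{\diag}^2 n$ and $t_{\diag}/C_0^2 = C_{\diag}\sqrt{np} \ge C_{\diag} n$; the minimum is therefore at least $C_{\diag} n$ (for $C_{\diag}\ge 1$). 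If $n > p$, then $t_{\diag} = C_{\diag} C_0^2 n$, so $t_{\diag}^2/(pC_0^4) = C_{\diag}^2 n^2/p \ge C_{\diag}^2 n$ and $t_{\diag}/C_0^2 = C_{\diag} n$; the minimum is again at least $C_{\diag} n$. Hence in both regimes the per-row bound is at most $2\exp(-c' C_{\diag}\, n)$ for an absolute constant $c'>0$.

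A union bound over the $n$ rows gives
\begin{equation*}
\prob{\E_0} \le \sum_{j=1}^n \prob{\abs{\twonorm{\Z_j}^2 - \E\twonorm{\Z_j}^2} > t_{\diag}} \le 2n\exp(-c' C_{\diag}\, n) \le 2\exp(-c_0 n),
\end{equation*}
where we choose $C_{\diag}$ large enough that $c' C_{\diag} - \log(n)/n \ge c_0$ for all $n$ larger than some absolute threshold; finitely many small $n$ are absorbed into $c_0$ by adjusting constants.

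There is no real obstacle here: this is a routine Bernstein-plus-union-bound computation. The only delicate point to verify is that both branches in Bernstein's minimum produce a linear-in-$n$ exponent in both regimes of $\sqrt{np}\vee n$, so that the resulting constant $c_0$ depends only on $C_0$, $C_{\diag}$, and the universal constant from Lemma~\ref{lemma::bernstein}, and in particular is independent of $p$. This is precisely what forces the threshold to be the larger of $\sqrt{np}$ and $n$ rather than just one of them.
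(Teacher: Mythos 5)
Your proposal is correct and follows essentially the same route as the paper: write $\twonorm{\Z_j}^2-\E\twonorm{\Z_j}^2$ as a sum of $p$ independent centered sub-exponential variables with $\psi_1$-norm $O(C_0^2)$, apply Bernstein's inequality (Lemma~\ref{lemma::bernstein}), observe that both branches of the minimum are at least of order $C_{\diag}\,n$ because $(\sqrt{np}\vee n)^2\ge np$ and $\sqrt{np}\vee n\ge n$, and absorb the factor $n$ from the union bound by taking $C_{\diag}$ large. The only cosmetic difference is that you make the two regimes $n\le p$ and $n>p$ explicit, whereas the paper handles them in a single line.
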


\begin{proof}
Denote by 
\ben 
\label{eq::defineSJK}
S_{jk} = z_{jk}^2-  \E z_{jk}^2, \quad \text{where} \; z_{jk} = X_{jk}
-\E X_{jk}, \forall i \in [n], k \in [p]
\een
It follows from \eqref{eq::subexp} that $S_{jk}$ is a mean-zero, sub-exponential random variable since
  \bens
  \max_{j,k}
\norm{S_{jk}}_{\psi_1} & \le  &  C C_0^2 \; \; \; \text{ since} \; \;
  \forall j, k, \; \norm{z^2_{jk}}_{\psi_1} =   \norm{z_{jk}}^2_{\psi_2} \le C_0^2, \\
\text{ and } \; \norm{S_{jk}}_{\psi_1} & =  &
\norm{z^2_{jk} - \E z^2_{jk} }_{\psi_1} \le 
C \norm{z_{jk}^2}_{\psi_1} = C \norm{z_{jk}}^2_{\psi_2} \le C C_0^2
\eens
See Exercise 2.7.10~\cite{Vers18}.
Set $t_3 = C_{\diag} C_0^2 \sqrt{n p} \vee n$.
We have by Bernstein's inequality Lemma~\ref{lemma::bernstein}
and the union bound, the following large deviation bound:
\ben
\nonumber
\lefteqn{
  \prob{\exists j  \in [n], \quad\abs{ \twonorm{\Z_j}^2 - \E \twonorm{\Z_j}^2} 
    \ge t_3}  \le \sum_{j =1}^n \prob{\abs{\sum_{k=1}^{p} S_{jk} } \ge t_3}  \le }\\
&&
\nonumber
2 n \exp\left(-c\min\left(\frac{(C_{\diag} C_0^2 \sqrt{n p} \vee n)^2}{\max_{j \in [n]}\sum_{k=1}^p
      \norm{S_{jk}}^2_{\psi_1}}, \frac{C_{\diag} C_0^2 \sqrt{n p} \vee n}{\max_{j,k}
      \norm{S_{jk}}_{\psi_1}}\right) \right) \\
\label{eq::diag2exp}
& \le & 2n \exp\left(-c\min\left(\frac{C^2_{\diag}n p}{p},  C_{\diag} n
  \right) \right)  \le 
2 \exp(-c_0 n) 
\een
where 
for all ${j \in [n]}$, $\sum_{k=1}^p   \norm{S_{jk}}^2_{\psi_1} \le p   C^2 C_0^4$.
The lemma thus holds. 
\end{proof}

\subsection{Proof of Lemma~\ref{lemma::ZZoporig}}
\label{sec::proofofZZopnorm}
\begin{proofof2}
  We use Theorem~\ref{thm::HW} to bound the off-diagonal part.
  Recall $\max_{j, k} \norm{z_{jk}}_{\psi_2} \le C_0$.
  Let $\mvec{\Z} = \mvec{X - \E X}$ be formed by concatenating
  columns of  matrix $\Z$ into a long vector of size $np$.
For a particular realization of $q, h \in \Sp^{n-1}$,
  we construct a block-diagonal matrix $\tilde{A}(q,h)$, where $\diag(\tilde{A}) = 0$,
with $p$ identical block-diagonal coefficient matrices
$A_{qh}^{(k)} = \offd( q \otimes h), \forall k$ of size $n \times n$
along the diagonal.
Then
\bens 
\abs{\sum_{i=1}^n q_i \sum_{j\not=i}^n h_j \ip{\Z_i, \Z_j} }
 &:=& \abs{\mvec{\Z}^T \tilde{A}_{q, h} \mvec{\Z}}
\eens 
and
\bens
\forall q, h \in \Sp^{n-1}, \quad
\fnorm{\tilde A(q, h)}^2  & = & \sum_{k=1}^p \fnorm{A_{qh}^{(k)}}^2
= p \fnorm{\offd(q \otimes h)}^2 \le p, \\
\twonorm{\tilde{A}(q, h)} & = &
 \twonorm{\offd(q \otimes h)} \le 1, \\
\text{ since } \;\;
\twonorm{\offd(q \otimes h)} & \le &  \fnorm{\offd(q \otimes h)} \le 
\fnorm{q \otimes h}^2  = \tr(qh^T h q^T) =1.
\eens
Taking a union bound over all $\abs{\Net}^2$ pairs $q, h \in \Net$, the
$\ve$-net of $\Sp^{n-1}$,  we have by Theorem~\ref{thm::HW},
for some sufficiently large constants $C_1$ and $c > 4\ln 9$,
\bens
\lefteqn{
  \prob{\E_8} := \prob{\max_{q, h \in \Net}
 \abs{\sum_{i=1}^n \sum_{j \not= i}^n q_i  h_j \ip{\Z_{i}, \Z_{j}}}> C_1
 C_0^2 (\sqrt{p n} \vee n)} } \\
&\le &
\abs{\Net}^2 \prob{\abs{\mvec{\Z}^T \tilde{A}(q,h) \mvec{\Z}} > C_1
  C_0^2 (\sqrt{p n} \vee n)} \\
&\le &  2 \times 9^{2n} \exp \left(- c\min\left({C_1^2 p n}/{p},
  {C_1 n}  \right)\right) \\
& \le &  2\exp\left(-c n + 2n \ln 9\right) = 2 \exp\left(-c_3 n \right) 
\eens
A standard approximation argument shows that under $\E_8^c$, we have
  \bens
  \twonorm{\offd(\Z \Z^T)}
  & =  &
  \sup_{q\in \Sp^{n-1}}   \sum_{i=1}^n \sum_{j \not=i}^n 
 q_i q_j \ip{\Z_{i}, \Z_{j}}  \\
 & \le &
\inv{(1-2\ve)} \sup_{q, h\in \Net}   \sum_{i=1}^n \sum_{j \not=i}^n 
 q_i h_j \ip{\Z_{i}, \Z_{j}}  \le 2 C_1 C_0^2  (\sqrt{ np} \vee n)
 \eens
See for example Exercise 4.4.3~\cite{Vers18}.

The large deviation bound on the operator norm follows from the
triangle inequality: on event $\E_8^c \cap \E_0^c$, 
   \bens
     \twonorm{\Z \Z^T - \E(\Z \Z^T) }
  & \le &
  \twonorm{\diag(\Z \Z^T - \E(\Z \Z^T)) }   +  \twonorm{\offd(\Z \Z^T)} \le C_2 C_0^2  (\sqrt{ np} \vee n)
  \eens
  for some absolute constant $C_2$.
\end{proofof2}

\section{Bias terms}
\label{sec::proofofbias}
This section proves results needed for Theorem~\ref{thm::reading}.
We prove Lemmas~\ref{lemma::EBRtilt} in Section \ref{sec::proofofEBR},
where we also state Lemma~\ref{lemma::unbalancedbias}.
Combining~\eqref{eq::Etau},~\eqref{eq::Elamb}, and
Proposition~\ref{prop::biasfinal}, we obtain an expression on $\E
B-R$. Recall $R =\E (Y) \E(Y)^T$ is as defined in~\eqref{eq::Rtilt}.
We have the following facts about $R$.
\begin{fact}
  \label{fact::Rtrace}
  When we sum over all entries in $R$, clearly, we have for $R$ as
  defined in~\eqref{eq::Rtilt},  $\vecone^T_n R \vecone_n = 0$, 
  \ben
  \label{eq::negTR}
\vecone^T_n\offd( R) \vecone_n & = &\vecone^T_n R 
\vecone_n- \tr(R) = - \tr(R) = -n p \gamma w_2 w_1, \;\text{ where} \\
 \nonumber
\inv{p \gamma}\tr(R) & = & n w_2^2 w_1 +  n w_1^2 w_2=w_1 w_2 n \quad
\text{ and hence }  \quad \twonorm{R} = \tr(R) =w_1 w_2 n p \gamma
\een
\end{fact}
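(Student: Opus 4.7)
\textbf{Proof proposal for Fact~\ref{fact::Rtrace}.}
The plan is to exploit two structural observations: (a) $R = \E(Y)\E(Y)^T$ where $Y = (I-P_1)X$ has columns that sum to zero, and (b) $\E(Y)$ is rank one, so $R$ is rank one. Both facts together give all four claims with essentially no case analysis.

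First I would prove $\vecone_n^T R \vecone_n = 0$. Since $Y = (I-P_1)X$ and $\vecone_n^T (I-P_1) = \vecone_n^T - \vecone_n^T P_1 = 0$, we have $\vecone_n^T Y = 0$ identically, hence $\vecone_n^T \E(Y) = 0$ and $\vecone_n^T R \vecone_n = (\vecone_n^T \E(Y))(\E(Y)^T \vecone_n) = 0$. The identity $\vecone_n^T \offd(R)\vecone_n = \vecone_n^T R \vecone_n - \tr(R)$ is just the decomposition into diagonal and off-diagonal contributions, so this immediately yields the middle equalities in~\eqref{eq::negTR}.

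Next I would compute $\tr(R)$ directly from the block form in~\eqref{eq::Rtilt}. The diagonal of the upper-left block $p\gamma w_2^2 E_{n_1}$ contributes $n_1 p\gamma w_2^2 = n w_1 w_2^2 p\gamma$, and the diagonal of the lower-right block $p\gamma w_1^2 E_{n_2}$ contributes $n_2 p\gamma w_1^2 = n w_2 w_1^2 p\gamma$. Summing and factoring gives
\[
\tr(R) = n p \gamma w_1 w_2 (w_1 + w_2) = n p \gamma w_1 w_2,
\]
which establishes the trace formula in~\eqref{eq::negTR} and therefore also the value $-n p \gamma w_1 w_2$ for the off-diagonal sum.

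Finally, for the operator norm, I would note that by Definition~\ref{def::estimators} and~\eqref{eq::EYpre} we can write $\E(Y) = v (\mu^{(1)}-\mu^{(2)})^T$ where $v \in \R^n$ has $v_i = w_2$ for $i \in \C_1$ and $v_i = -w_1$ for $i \in \C_2$. Hence $R = \E(Y)\E(Y)^T = p\gamma \, v v^T$ is rank one and positive semidefinite, so $\twonorm{R} = p\gamma \twonorm{v}^2 = p\gamma(n_1 w_2^2 + n_2 w_1^2) = n p\gamma w_1 w_2 (w_2 + w_1) = \tr(R)$, matching the trace computation above. None of these steps should present any real obstacle; the only thing to watch is keeping the weights $w_1, w_2$ versus the cluster sizes $n_1 = w_1 n$, $n_2 = w_2 n$ straight in the block-wise trace computation.
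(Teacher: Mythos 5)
Your proposal is correct and fills in, by the natural direct computation, exactly what the paper leaves implicit (the Fact is asserted with "clearly" and no separate proof is given): the trace is read off the diagonal blocks of~\eqref{eq::Rtilt}, the off-diagonal sum follows from $\vecone_n^T R \vecone_n = 0$ (which you obtain from $\vecone_n^T(I-P_1)=0$, equivalently from $\ip{v,\vecone_n}=n_1w_2-n_2w_1=0$), and the operator norm equals the trace because $R=p\gamma\, vv^T$ is rank one and positive semidefinite. All the weight bookkeeping ($n_1 w_2^2 + n_2 w_1^2 = n w_1 w_2$) checks out.
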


 \begin{lemma}
   \label{lemma::traceR}
We have by Fact~\ref{fact::Rtrace}, ${\tr(R)}/{n}  =  p \gamma w_1 w_2$
and hence for $n \ge 4$,
\ben
\label{eq::Rop}
\twonorm{\frac{\tr(R)}{n-1} \left( I_n-
    {E_n}/{n}\right)} & \le & \frac{n}{n-1}
o \gamma w_1 w_2 \le p \gamma /3 \\
\label{eq::Rcut}
\text{ and hence } \quad
\infonenorm{\frac{\tr(R)}{n-1} \left( I_n-{E_n}/{n}\right)} & \le  &
\frac{n}{n-1} n p \gamma w_1 w_2 \le n p \gamma /3
\een
where $\infonenorm{I_n-{E_n}/{n}} \le n
\twonorm{I_n-{E_n}/{n}} = 1$ since $I_n- {E_n}/{n}$ is a projection matrix.
\end{lemma}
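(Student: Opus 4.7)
The plan is to prove both norm bounds directly from Fact~\ref{fact::Rtrace}, which already supplies $\tr(R) = w_1 w_2 n p \gamma$, together with the observation that $I_n - E_n/n = I_n - P_1$ is an orthogonal projection onto the hyperplane $\vecone_n^\perp \subset \R^n$. Being a projection, it is symmetric and idempotent, so $\twonorm{I_n - E_n/n} = 1$; this is the only spectral fact needed.

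For the operator-norm bound \eqref{eq::Rop}, I would pull the scalar coefficient out of the norm:
\[
\twonorm{\frac{\tr(R)}{n-1}\left(I_n - \frac{E_n}{n}\right)} \;=\; \frac{\tr(R)}{n-1}\cdot\twonorm{I_n - \frac{E_n}{n}} \;=\; \frac{n}{n-1}\,p\gamma\, w_1 w_2.
\]
To upgrade this to the claimed $p\gamma/3$, I combine two elementary inequalities: $w_1 w_2 \le 1/4$ by AM--GM (since $w_1 + w_2 = 1$), and $n/(n-1) \le 4/3$ whenever $n \ge 4$. Their product is bounded by $1/3$, which delivers \eqref{eq::Rop}.

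For the cut-norm bound \eqref{eq::Rcut}, I would invoke the deterministic comparison $\infonenorm{A} \le n\twonorm{A}$ for any $n\times n$ matrix $A$, which is recorded in Definition~\ref{def::cutnorm} (it follows from $\max_{x,y\in\{-1,1\}^n}\ip{A, xy^T} \le \twonorm{x}\twonorm{y}\twonorm{A} \le n\twonorm{A}$). Applying this with $A = \frac{\tr(R)}{n-1}(I_n - E_n/n)$ and substituting the operator-norm bound proved in the previous step yields $\infonenorm{A} \le \frac{n}{n-1}\,np\gamma\, w_1 w_2 \le np\gamma/3$.

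There is no genuine obstacle; the lemma is a short consequence of Fact~\ref{fact::Rtrace}, the projection identity $\twonorm{I_n - P_1} = 1$, AM--GM on $w_1 w_2$, and the standard $\infonenorm{\cdot} \le n \twonorm{\cdot}$ comparison. The only thing worth flagging is that the mild hypothesis $n \ge 4$ is precisely what converts the prefactor $\tfrac{n}{n-1}\cdot w_1 w_2 \le \tfrac{n}{4(n-1)}$ into $\tfrac{1}{3}$; for smaller $n$ one would pick up a slightly larger absolute constant but the argument is otherwise identical.
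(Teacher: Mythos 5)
Your proof is correct and follows essentially the same route the paper takes (the paper in fact embeds the argument in the lemma statement itself): $\tr(R)=w_1w_2np\gamma$ from Fact~\ref{fact::Rtrace}, $\twonorm{I_n-E_n/n}=1$ because it is an orthogonal projection, $w_1w_2\le 1/4$ and $n/(n-1)\le 4/3$ for $n\ge 4$ giving the factor $1/3$, and $\infonenorm{\cdot}\le n\twonorm{\cdot}$ for the cut norm. No gaps.
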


\subsection{Some useful propositions}

Next, we compute the mean values in Proposition~\ref{eq::M123hub}, and 
we obtain an expression on $\E B-R$ in
Proposition~\ref{prop::biasfinal}.
Proposition~\ref{eq::M123hub} is proved in Section~\ref{sec::biasproofs}. 
\begin{proposition}{\textnormal{\bf (Covariance projection: two groups)}}
  \label{eq::M123hub}
    Let $N_j = w_j n$ for $j \in \{1, 2\}$.
    W.l.o.g.,  suppose that the first $N_1$ rows in $X$ are in $\C_1$
    and the following $N_2$ rows are in $\C_2$.
    Let $M_1, M_2, M_3$ be defined as in
    Proposition~\ref{prop::decompose}.
  Let $V_1$ and $V_2$ be the same as in~\eqref{eq::Varprofile}:
    \ben 
  \label{eq::defineV1}
  V_1 := \E \ip{\Z_j, \Z_j} \; \;  \forall j \in \C_1  \;\;   \text{ and } \; \;  
 V_2 := \E \ip{\Z_j, \Z_j} \; \;  \forall j \in \C_2 
  \een
  Let  $V_m :=  w_1 V_1 + w_2 V_2 =W_n/n$.
Let  $\hat\Sigma_Y$ be as in \eqref{eq::SigmaY}.
  Let  $W_n$ be defined as in~\eqref{eq::defineWn}:
  \ben
 \label{eq::defineWn}
  W_n & := &
     \E \sum_{i=1}^n \ip{\Z_i, \Z_i} = \sum_{i=1}^n \sum_{k=1}^p \E  z_{ik}^2.
\een
 Then
  \ben
    \label{eq::EM1unb}
    \E M_1 & =&  
\left[\begin{array}{cc} V_1 I_{N_1} & 0 \\
        0 & V_2 I_{N_2}
      \end{array}
    \right]    =: (w_1 V_1 + w_2 V_2) I_n + W_0, \\
 \label{eq::EM1unb}
 \E M_2 & =&  \frac{V_1 + V_2}{n} E_n + W_2, \\ 
\label{eq::EM3unb}
\E M_3 &:= &
\E \ip{\hat{\mu}_n - \E \hat{\mu}_n, \hat{\mu}_n - \E 
  \hat{\mu}_n}
\vecone_n \otimes \vecone_n  = \frac{W_n}{n^2} E_n,\\
    \text{ and } \; \; 
W_n/n^2 & := & (\abs{\C_1} V_1 +  \abs{\C_2} V_2)/n^2 = (w_1 V_1 +
w_2 V_2)/n =: \frac{V_w}{n},
\een
where $\tr (\E M_1) /n = V_m$ and
\ben
\label{eq::defineW2}              
 W_0   & =&
(V_1 -V_2) \left[\begin{array}{cc} w_2 I_{N_1} & 0 \\
0 & -w_1 I_{N_2} \end{array}    \right]\; \; \text {and} \;  \;
W_2  :=
\frac{V_1-V_2}{n}
\left[
\begin{array}{cc}
E_{N_1} &  0 \\
0  & - E_{N_2} 
\end{array}
\right].
\een
Now putting things together, we obtain the expression for covariance
of $Y$: 
\bens
\lefteqn{\Sigma_Y := \E (YY^T) - \E(Y) \E(Y)^T
 = 
\E M_1 + \E M_3- \E M_2 } \\
& = &
\left[\begin{array}{cc}
        V_1 I_{N_1} & 0 \\
0 &  V_2 I_{N_2} 
\end{array}
\right]  - \frac{w_2 V_1 + w_1 V_2 }{n} E_n - W_2 \; 
\eens
    which simplifies to
    \bens
    \Sigma_Y =    V (I_n - E_n/n) \; \text{ in case } \; \; V_1  = V_2 = V
    \eens
  \end{proposition}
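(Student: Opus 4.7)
The identities all follow from linearity of expectation together with the independence and mean-zero properties of the rows $\Z_1,\ldots,\Z_n$, so the task is essentially one of careful bookkeeping in a $2\times 2$ block structure. My plan is to compute $\E M_1$, $\E M_2$, $\E M_3$ in turn and then assemble $\Sigma_Y$ via the identity $\Sigma_Y = \E M_1 + \E M_3 - \E M_2$ that follows from Proposition~\ref{prop::decompose}.

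First I would compute $\E M_1 = \E(\Z \Z^T)$. Since the rows are independent with $\E \Z_i = 0$, we have $\E \ip{\Z_i,\Z_j} = 0$ for $i\neq j$, while the diagonal entries equal $V_1$ for $i\in\C_1$ and $V_2$ for $i\in\C_2$ by the definition~\eqref{eq::defineV1}. Thus $\E M_1 = \diag(V_1 I_{N_1},\,V_2 I_{N_2})$, and the claimed decomposition $\E M_1 = V_m I_n + W_0$ is obtained by writing $V_1 = V_m + w_2(V_1-V_2)$ and $V_2 = V_m - w_1(V_1-V_2)$, which also verifies the formula for $W_0$ in~\eqref{eq::defineW2}.

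Next, by linearity, $\E M_2 = (\E M_1) P_1 + P_1 (\E M_1)$. Using $P_1 = \vecone_n \vecone_n^T/n$ and the block form of $\E M_1$, the vector $(\E M_1)\vecone_n$ equals $(V_1\vecone_{N_1}^T, V_2\vecone_{N_2}^T)^T$, so each summand is a rank-one matrix whose block entries are easy to read off. Adding them yields the block-constant matrix with entries $2V_1/n$, $(V_1+V_2)/n$, $(V_1+V_2)/n$, $2V_2/n$ on the four blocks. Subtracting the uniform ``mean'' $(V_1+V_2)/n \cdot E_n$ leaves precisely the matrix $W_2$ in~\eqref{eq::defineW2}, whose off-diagonal blocks vanish and whose diagonal blocks are $\pm(V_1-V_2)/n \cdot E_{N_j}$. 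For $\E M_3 = P_1 \E(\Z \Z^T) P_1$, I use that $\vecone_n^T \E(\Z \Z^T)\vecone_n = \tr(\E\Z\Z^T) = W_n$ (since the off-diagonal entries of $\E\Z\Z^T$ vanish), which gives $\E M_3 = (W_n/n^2) E_n$ directly; the identification $W_n/n^2 = V_w/n$ is immediate from the definitions.

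Finally, to read off $\Sigma_Y = \E M_1 + \E M_3 - \E M_2$, I collect the three coefficients multiplying $E_n$: $W_n/n^2 - (V_1+V_2)/n = V_m/n - (V_1+V_2)/n = -(w_2 V_1 + w_1 V_2)/n$, which matches the stated expression, while $W_0$ cancels after the subtraction involving $W_2$ is carried out on the diagonal blocks (or, equivalently, the two contributions to the diagonal simply combine into $\diag(V_1 I_{N_1}, V_2 I_{N_2}) - W_2$). The balanced special case $V_1=V_2=V$ then follows by inspection: $W_0 = W_2 = 0$ and $\Sigma_Y = V I_n - (V/n) E_n = V(I_n - E_n/n)$. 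There is no substantive obstacle here; the only care needed is to keep the block partitioning consistent throughout, especially when absorbing the diagonal ``variance gap'' $(V_1-V_2)$ into the symmetric off-diagonal correction $W_2$ versus the diagonal correction $W_0$.
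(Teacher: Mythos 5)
Your proposal is correct and follows essentially the same route as the paper: compute $\E M_1$, $\E M_2$, $\E M_3$ by linearity of expectation using independence and mean-zero rows, peel off the $V_m I_n$ and $\frac{V_1+V_2}{n}E_n$ parts to identify $W_0$ and $W_2$, and combine the $E_n$ coefficients via $V_m - (V_1+V_2) = -(w_2V_1 + w_1V_2)$. The only cosmetic difference is that you evaluate $\E M_2$ as $(\E M_1)P_1 + P_1(\E M_1)$ rather than writing out the expectation of $\Z(\vecone(X)-\E\vecone(X))^T$ plus its transpose block by block, which is the same computation.
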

We prove Proposition~\ref{prop::biasfinal} in
Section~\ref{sec::biasproofEBR}.
Intuitively, $W_2$ and $\mathbb{W}$ arise due to the imbalance in variance profiles. 
\begin{proposition}{\textnormal{\bf (Bias decomposition)}}
  \label{prop::biasfinal}
Let $M_1, M_2, M_3$, $W_0$, $W_2$, $V_1, V_2...$ be the same as in
Propositions~\ref{prop::decompose}
and~\ref{eq::M123hub}.
Then
\ben
\label{eq::wow}
\E B - R 
& =&  W_0 - \mathbb{W} -\frac{\tr(R)}{(n-1)} (I_n - \frac{E_n}{n})  \;
\text{ where} \; \; \\
\label{eq::wow2}
\mathbb{W}
& := & W_2 
+\frac{ (V_1 - V_2)(w_2 - w_1)}{n} E_n 
\een
where for $W_2$ defined in~\eqref{eq::defineW2}.
Moreover, when $V_1 = V_2$, $W_0 = \mathbb{W}=0$.
\end{proposition}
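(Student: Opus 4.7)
The plan is to compute $\E B$ directly, using the decomposition of $YY^T$ into mean, cross, and covariance terms, and then to read off $\E B - R$ term by term. First I would write
\[
B = YY^T - \lambda (E_n - I_n) - \E \tau\, I_n,
\]
so $\E B = \E YY^T - \E\lambda\,(E_n - I_n) - \E\tau\, I_n$. Using $YY^T = \E(Y)\E(Y)^T + M_Y + \hat\Sigma_Y$, where $M_Y$ is the cross term and $\hat\Sigma_Y$ is as in Proposition~\ref{prop::decompose}, together with $\E M_Y = 0$, gives $\E YY^T = R + \Sigma_Y$, and hence
\[
\E B = R + \Sigma_Y - \E\lambda\,(E_n - I_n) - \E\tau\, I_n.
\]

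Next I would compute $\E\lambda$ and $\E\tau$ using Fact~\ref{fact::fullbasis}: since $\vecone_n^T YY^T \vecone_n = 0$ almost surely, we have the exact identity $\lambda = -\tau/(n-1)$, so $\E\lambda = -\E\tau/(n-1)$. Substituting this back,
\[
-\E\lambda\,(E_n - I_n) - \E\tau\, I_n = \frac{\E\tau}{n-1} E_n - \frac{n\,\E\tau}{n-1} I_n = -\frac{n\,\E\tau}{n-1}\bigl(I_n - E_n/n\bigr).
\]
Then $n\,\E\tau = \tr(\E YY^T) = \tr(R) + \tr(\Sigma_Y)$. Using Proposition~\ref{eq::M123hub}, I would verify that $\tr(W_0) = 0$ and $\tr(W_2) = (V_1-V_2)(w_1-w_2)$, which after cancellation yields the clean identity $\tr(\Sigma_Y) = (n-1) V_m$ with $V_m = w_1 V_1 + w_2 V_2$. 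Therefore
\[
\E B - R = \Sigma_Y - V_m\bigl(I_n - E_n/n\bigr) - \frac{\tr(R)}{n-1}\bigl(I_n - E_n/n\bigr).
\]

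Finally, I would plug in $\Sigma_Y = V_m I_n + W_0 - W_2 - \tfrac{w_2 V_1 + w_1 V_2}{n} E_n$ from Proposition~\ref{eq::M123hub} and observe that the $V_m I_n$ term cancels against $V_m I_n$ in the second term, leaving a residual multiple of $E_n$:
\[
\Sigma_Y - V_m(I_n - E_n/n) = W_0 - W_2 + \frac{V_m - (w_2 V_1 + w_1 V_2)}{n} E_n.
\]
A one-line algebraic simplification gives $V_m - (w_2 V_1 + w_1 V_2) = (w_1 - w_2)(V_1 - V_2)$, so the residual is exactly $-\mathbb{W}$ in the notation of~\eqref{eq::wow2}. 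Combining, one reaches~\eqref{eq::wow} as claimed, and when $V_1 = V_2$, both $W_0$ and $\mathbb{W}$ vanish by their definitions.

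The substantive step is really the identity $\E\lambda = -\E\tau/(n-1)$, which is what forces the isolation of the $\tr(R)(I_n - E_n/n)/(n-1)$ term; the rest is careful bookkeeping of the diagonal/off-diagonal splits in $\Sigma_Y$ under unequal weights and variance profiles. The main obstacle is simply making sure the trace computation for $\Sigma_Y$ absorbs the asymmetric contributions from $W_0$ and $W_2$ cleanly, which is exactly where the $(w_1-w_2)(V_1-V_2)$ factor appears and merges into $\mathbb{W}$.
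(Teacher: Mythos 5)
Your proposal is correct: every identity you use checks out against the paper's Propositions~\ref{prop::decompose} and~\ref{eq::M123hub} (in particular $\tr(\E M_1)=nV_m$, $\tr(\E M_3)=V_m$, $\tr(\E M_2)=2V_m$ give $\tr(\Sigma_Y)=(n-1)V_m$, and $V_m-(w_2V_1+w_1V_2)=(w_1-w_2)(V_1-V_2)$ produces exactly the $E_n$-correction in $\mathbb{W}$). The route is organized differently from the paper's, though. The paper's proof of Proposition~\ref{prop::biasfinal} never invokes the pathwise relation $\lambda=-\tau/(n-1)$; instead it computes $\E\tau$ and $\E\lambda$ separately as a trace and an off-diagonal sum of $\E M_1-\E M_2+\E M_3+R$ (using $\offd(\E M_1)=0$ and $\vecone_n^T\offd(R)\vecone_n=-\tr(R)$), and then cancels term by term: the $\E M_3$ and $\bar M_2\propto E_n$ pieces vanish identically, $\E M_1$ contributes $W_0$, $W_2$ contributes $\mathbb{W}$, and $R$ contributes $-\tr(R)(I_n-E_n/n)/(n-1)$. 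Your version collapses the two correction terms at once into $-\tfrac{n\E\tau}{n-1}(I_n-E_n/n)$ via Fact~\ref{fact::fullbasis} and then needs only the single trace computation for $\Sigma_Y$; this is shorter and makes the role of the centering projection $I_n-E_n/n$ transparent, at the cost of obscuring which of $\E M_1,\E M_2,\E M_3$ is responsible for $W_0$ versus $\mathbb{W}$ — information the paper's term-by-term cancellation keeps explicit and reuses in Lemma~\ref{lemma::unbalancedbias}. Both arguments rest on the same underlying fact ($\vecone_n^T YY^T\vecone_n=0$, equivalently $\vecone_n^T(\Sigma_Y+R)\vecone_n=0$), so the difference is one of bookkeeping, not substance.
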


Next, we state the following fact about $W_2$.
\begin{fact}
  \label{fact::W2}
  Denote by
\bens 
\mathbb{W} :=
W_2 - \inv{n} \tr(W_2) I_n- \frac{\vecone_n^T \offd(W_2) \vecone_n}{n(n-1)} (E_n -I_n) 
\eens
Then $\mathbb{W}$ coincides with \eqref{eq::wow2}.
Moreover, we have
\bens
\mathbb{W} 
& =&  W_2 -\frac{ (V_2 - V_1)(w_2 - w_1)}{n} E_n \\
& := &
\frac{V_1-V_2}{n}
\left[
\begin{array}{cc}
2 w_2   E_{N_1}  &  (w_2 - w_1) E_{N_1 \times N_2} \\
(w_2 - w_1)  E_{N_1 \times N_2}   & - 2 w_1 E_{N_2} 
\end{array}
\right]
\eens
Moreover, we have
\ben
\label{eq::W3cutnorm}
\twonorm{\mathbb{W}}
& \le& \abs{V_1 - V_2} (w_1 \vee w_2)  \; \text{ and } \;\;
\infonenorm{\mathbb{W}} \le  n \abs{V_1 - V_2} (w_1 \vee w_2) 
\een
\end{fact}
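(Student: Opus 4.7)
\begin{proofof}{Fact~\ref{fact::W2} (proof plan)}
The plan is to first show that the two definitions of $\mathbb{W}$ agree, then verify the explicit block form by direct block-wise addition, and finally reduce the norm computation to a $2\times 2$ spectral problem exploiting the low rank of $\mathbb{W}$.

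\emph{Step 1 (matching the two definitions).} Starting from $W_2 = \frac{V_1-V_2}{n}\mathrm{diag}(E_{N_1},\,-E_{N_2})$, I would compute the three scalars $\tr(W_2)$, $\vecone_n^T W_2 \vecone_n$, and $\vecone_n^T\offd(W_2)\vecone_n$. Using $N_1=w_1 n$, $N_2 = w_2 n$, one gets $\tr(W_2)=(V_1-V_2)(w_1-w_2)$ and $\vecone_n^T W_2\vecone_n=(V_1-V_2) n(w_1-w_2)$, so $\vecone_n^T\offd(W_2)\vecone_n=(V_1-V_2)(w_1-w_2)(n-1)$. Plugging into the definition
\[
\mathbb{W}=W_2-\inv{n}\tr(W_2)I_n-\frac{\vecone_n^T\offd(W_2)\vecone_n}{n(n-1)}(E_n-I_n),
\]
the $I_n$ contributions cancel, leaving $\mathbb{W}=W_2-\frac{(V_1-V_2)(w_1-w_2)}{n}E_n=W_2+\frac{(V_1-V_2)(w_2-w_1)}{n}E_n$, which is exactly~\eqref{eq::wow2}.

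\emph{Step 2 (block form).} Now I would add the block matrices $W_2$ and $\frac{(V_1-V_2)(w_2-w_1)}{n} E_n$ entry-block-wise. On the $(1,1)$ block the coefficient becomes $\frac{V_1-V_2}{n}(1+w_2-w_1)=\frac{V_1-V_2}{n}\cdot 2w_2$, on the $(2,2)$ block it is $\frac{V_1-V_2}{n}(-1+w_2-w_1)=-\frac{V_1-V_2}{n}\cdot 2w_1$, and on the off-diagonal blocks the coefficient is simply $\frac{V_1-V_2}{n}(w_2-w_1)$. This reproduces the displayed block matrix.

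\emph{Step 3 (operator and cut norm).} I would observe that $\mathbb{W}$ is supported in the two-dimensional subspace $S=\Span\{\vecone_{\C_1},\vecone_{\C_2}\}$ (extended by zeros), and map out the action of $\mathbb{W}$ in the orthonormal basis $f_i:=\vecone_{\C_i}/\sqrt{N_i}$, $i=1,2$. A direct computation reduces $\mathbb{W}|_S$ to
\[
\frac{V_1-V_2}{n}\cdot n\begin{pmatrix} 2w_1w_2 & (w_2-w_1)\sqrt{w_1w_2} \\ (w_2-w_1)\sqrt{w_1w_2} & -2w_1w_2 \end{pmatrix}.
\]
The trace is $0$, and using the identity $(w_2-w_1)^2+4w_1w_2=(w_1+w_2)^2=1$, the determinant simplifies to $-(V_1-V_2)^2 w_1w_2$; hence the two nonzero singular values are both equal to $\abs{V_1-V_2}\sqrt{w_1w_2}$. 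This gives $\twonorm{\mathbb{W}}=\abs{V_1-V_2}\sqrt{w_1w_2}\le\abs{V_1-V_2}(w_1\vee w_2)$, and the cut-norm bound follows from $\infonenorm{\mathbb{W}}\le n\twonorm{\mathbb{W}}$ as in Definition~\ref{def::cutnorm}.

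The only slightly subtle point is the sign bookkeeping in Step~1 (in particular that the $(E_n-I_n)$ adjustment and the $I_n$ adjustment combine so that the $I_n$ terms cancel and one is left with a pure rank-one correction along $E_n$); after that, the remaining work is the clean $2\times 2$ spectral computation, where the key observation is the algebraic identity $(w_1-w_2)^2+4w_1w_2=1$ that produces a closed-form singular value.
\end{proofof}
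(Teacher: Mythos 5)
Your proposal is correct, and Steps 1 and 2 (computing $\tr(W_2)$, $\vecone_n^T W_2\vecone_n$, observing the cancellation of the $I_n$ terms, and adding the blocks) follow exactly the paper's own computation. The only divergence is the final norm bound: the paper simply uses the symmetric-matrix inequality $\twonorm{\mathbb{W}}\le\norm{\mathbb{W}}_{\infty}$ and bounds the maximum absolute row sum, whereas you exploit the rank-two structure to compute the spectrum exactly, obtaining $\twonorm{\mathbb{W}}=\abs{V_1-V_2}\sqrt{w_1w_2}$, which is in fact sharper than the stated bound $\abs{V_1-V_2}(w_1\vee w_2)$; both routes are valid and the cut-norm bound then follows from $\infonenorm{\mathbb{W}}\le n\twonorm{\mathbb{W}}$ in either case.
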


\begin{proof}
  By definition of $W_2$ as in~\eqref{eq::defineW2}, we have
\bens
\tr(W_2) & = &
\frac{V_2-V_1}{n}(w_2 n-w_1 n)= (V_2 - V_1)(w_2 - w_1) \;  \text{ and }  \\
\vecone_n^T W_2 \vecone_n 
& = &
\frac{V_2-V_1}{n} (w^2_2 n^2 - w_1^2 n^2)
=  n(V_2 - V_1)(w^2_2 - w^2_1) \\
& = & n(V_2 - V_1)(w_2 - w_1) 
\eens
Thus
\bens 
\frac{\vecone_n^T \offd(W_2) \vecone_n}
{n(n-1)} & = & \frac{\vecone_n^T W_2 \vecone_n -\tr(W_2)}{n(n-1)} =
\frac{(V_2 - V_1)(w_2 - w_1)}{n};
\eens
Then
\bens
\mathbb{W} 
& = &
W_2 - \inv{n} \tr(W_2) I_n- \frac{\vecone_n^T \offd(W_2) 
  \vecone_n}{n(n-1)} (E_n -I_n) \\
& = &
W_2 -\frac{(V_2 - V_1)(w_2 - w_1)}{n} E_n.
\eens
Hence~\eqref{eq::wow2} holds.
Moreover, by symmetry, $\twonorm{\mathbb{W}} \le \norm{\mathbb{W}}_{\infty}   \le 
\abs{V_1 - V_2} (w_1 \vee w_2).$
\end{proof}

\subsection{Proof of Lemma~\ref{lemma::EBRtilt}}
\label{sec::proofofEBR}
\begin{proofof2}
We have by  Proposition~\ref{prop::biasfinal},
\ben
\nonumber                                   
\norm{\E B - R}
  & =&  \norm{W_0 - \mathbb{W} -\frac{\tr(R)}{(n-1)} (I_n - \frac{E_n}{n})}
  \\
 \label{eq::normEBR}
& \le &
  \norm{W_0}                  
 +\norm{ \mathbb{W}} +\norm{\frac{\tr(R)}{(n-1)} (I_n - \frac{E_n}{n}) }
 \een
 where the $\norm{\cdot}$ is understood to be either the operator or
 the cut norm.
Recall that 
\bens 
W_0 & := & \E M_1 - V_w I_n = (V_1 - V_2)\left[\begin{array}{cc} w_2 I_{w_1 n} & 0 \\
0 & - w_1  I_{w_2 n}   \end{array}    \right] \\
 \text{ and hence} \; \; 
  \vecone^T_n W_0 \vecone_n & = & \tr(W_0) = 0 \; \; \text{and} \; \; 
  V_m := \tr (\E M_1)/n = w_1 V_1 + w_2 V_2 
\eens 
and  hence $W_0$ disappears if the two clusters have identical sum of
variances: $V_1 = V_2$.
Clearly, 
\ben 
\label{eq::W0opnorm}
\twonorm{W_0}  & \le & \abs{V_1 -  V_2}  (w_2 \vee w_1) 
\een 
Combining \eqref{eq::normEBR},~\eqref{eq::W0opnorm},~\eqref{eq::W3cutnorm},
and~\eqref{eq::Rcut}, we have for
$w_{\min} := w_1 \wedge w_2$ and $ n  \xi \ge \inv{2 w_{\min}}$
\bens
\twonorm{\E B - R} & \le  &
\twonorm{W_0} + \twonorm{\mathbb{W}} + 
\twonorm{\frac{\tr(R)}{(n-1)} (I_n - \frac{E_n}{n})} \\
& \le & 2 \abs{V_1 -  V_2} (w_1 \vee w_2) + p \gamma/3 \\
& = &
\frac{2}{3} \xi n p \gamma (1- w_{\min}) + p \gamma/3 
\le \frac{2}{3} \xi n p \gamma 
\eens
where we use the fact that
\bens
\frac{2}{3} \xi p \gamma n w_{\min} \ge  p \gamma/3  \; \; \text{ since
} \; \; 2 \xi n w_{\min} \ge  1
\eens
Now the bound on the cut norm follows since 
\bens 
\infonenorm{\E B - R} & \le  & n \twonorm{\E B - R}   \le \frac{2}{3} \xi n^2 p \gamma 
\eens 
The lemma thus holds for the general setting; when $V_1 = V_2$, we
show the improved bounds in Lemma~\ref{lemma::unbalancedbias}.
\end{proofof2}

Corollary~\ref{coro::W2bound} follows from the proof of
Lemma~\ref{lemma::EBRtilt}, which we state to prove a bound for the
balanced cases. The proof is given in Section~\ref{sec::W2coro}.
\begin{corollary}
 \label{coro::W2bound}
For general cases, we have by definition,
\bens
  W_2 & := &
\frac{V_1-V_2}{n}
\left[
\begin{array}{cc}
E_{N_1} &  0 E_{N_1 \times N_2} \\
0  E_{N_1 \times N_2}   & - E_{N_2} 
\end{array}
\right]
\eens
Moreover we have the following term which depends on the weights,
\bens
    \infonenorm{\frac{ (V_2 - V_1)(w_2 - w_1)}{n} E_n} & \le  &
    n \abs{(V_2 - V_1)(w_2 - w_1) } \le 
    \xi n^2 p \gamma \abs{w_2 - w_1}  \\
    \infonenorm{W_2} & := & n \abs{V_1 - V_2} (w_1^2 + w_2^2) \\
  \text{ and hence } \; \;
  \infonenorm{W_0 -W_2}&  \le & n \twonorm{W_0-W_2}
  <  n \abs{V_1 -  V_2} \le  \xi p n^2 \gamma
\eens
\end{corollary}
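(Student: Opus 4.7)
The plan is that this corollary is essentially a set of bookkeeping computations; no probabilistic argument is needed. The first display restates the definition of $W_2$ from Proposition~\ref{eq::M123hub}, so nothing is to be shown there. For the remaining three bounds I would proceed by directly computing cut norms of block-diagonal or rank-one matrices, and then invoking assumption (A2) in the form $\abs{V_1-V_2} \le \frac{1}{3} \xi n p \gamma$ in the last step to absorb the factor $\abs{V_1-V_2}$ into the stated $\xi n^2 p \gamma$ bound.

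The first bound on $\infonenorm{\frac{(V_2-V_1)(w_2-w_1)}{n} E_n}$ is immediate from $\infonenorm{E_n} = \vecone_n^T E_n \vecone_n = n^2$, which gives $n \abs{V_2-V_1}\abs{w_2-w_1}$, and then (A2) yields the claimed $\xi n^2 p \gamma \abs{w_2-w_1}$. For $\infonenorm{W_2}$, I would use that $W_2$ is block diagonal with the two blocks $\pm\frac{V_1-V_2}{n} E_{N_i}$, and that cuts factor across blocks, so that the optimizing sign vectors $s,t\in\{-1,1\}^n$ can be chosen independently on each block. Since $\infonenorm{E_{N_i}} = N_i^2$, this gives
\bens
\infonenorm{W_2} \;=\; \frac{\abs{V_1-V_2}}{n}(N_1^2+N_2^2) \;=\; n\abs{V_1-V_2}(w_1^2+w_2^2).
\eens

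For the final bound on $\infonenorm{W_0-W_2}$, I would first write
\bens
W_0 - W_2 \;=\; (V_1-V_2)\,\mathrm{blockdiag}\!\left(w_2 I_{N_1} - E_{N_1}/n,\; -w_1 I_{N_2} + E_{N_2}/n\right),
\eens
then diagonalize each block. Since $E_{N_i}/n = w_i P_{\vecone_{N_i}}$ is a rescaled rank-one projection onto $\vecone_{N_i}$, the first block has eigenvalue $w_2-w_1$ on $\Span\{\vecone_{N_1}\}$ and eigenvalue $w_2$ on its orthogonal complement; similarly the second block has eigenvalues $-w_1+w_2$ and $-w_1$. Using $\abs{w_2-w_1}\le w_1\vee w_2$, this gives
\bens
\twonorm{W_0-W_2} \;\le\; \abs{V_1-V_2}(w_1\vee w_2) \;<\; \abs{V_1-V_2},
\eens
where the strict inequality uses $w_1,w_2\in(0,1)$. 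Then the general inequality $\infonenorm{A}\le n\twonorm{A}$ from Definition~\ref{def::cutnorm} together with (A2) produces $\infonenorm{W_0-W_2} < n\abs{V_1-V_2} \le \xi p n^2 \gamma$.

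There is no real obstacle beyond careful accounting: the one place to be watchful is the eigen-decomposition step, where one must remember that $E_{N_i}/n$ acts as $w_i$ (not $1$) on the direction $\vecone_{N_i}$ because the normalization is by $n$, not $N_i$; getting this right is what produces the clean $w_1\vee w_2$ bound and lets the rest of the chain close with (A2).
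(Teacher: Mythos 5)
Your proposal is correct and follows essentially the same route as the paper: restate $W_2$, compute the cut norms of the rank-one and block-diagonal pieces directly, and invoke (A2) in the form $\abs{V_1-V_2}\le \tfrac{1}{3}\xi n p\gamma$ to close the chain, together with $\infonenorm{A}\le n\twonorm{A}$. The only (harmless) difference is in bounding $\twonorm{W_0-W_2}$: you diagonalize each block exactly, using that $E_{N_i}/n$ has eigenvalue $w_i$ on $\vecone_{N_i}$, to get the sharp value $\abs{V_1-V_2}(w_1\vee w_2)<\abs{V_1-V_2}$, whereas the paper uses the cruder symmetric-matrix bound $\twonorm{\cdot}\le\norm{\cdot}_{\infty}$ via the maximum absolute row sum, which gives $\abs{V_1-V_2}(1-2/n)$; both suffice.
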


\begin{lemma}{\textnormal{\bf (Reductions)}}
\label{lemma::unbalancedbias}
    Let $W_0, W_2, V_1, V_2$ be the same as in Proposition~\ref{eq::M123hub}.
 Recall that $R =\E(Y) \E(Y)^T$.
\noindent{\bf When $V_1 = V_2$}, we have
\bens
\E B - R  = - \frac{\tr(R)}{(n-1)} (I_n -    \frac{E_n}{n}) =
- p \gamma w_2 w_1 \frac{n}{n-1}(I_n -    \frac{E_n}{n})
\eens
and hence for $n \ge 4$,
\bens
\infonenorm{ \E B - R} =
\infonenorm{p \gamma w_2 w_1 \frac{n}{n-1}(I_n - \frac{E_n}{n})} \le
n p \gamma/3
\eens
\noindent{\bf For balanced clusters, that is, when $w_1 = w_2$},
we have
\bens
\E B - R
& =&  W_0 - W_2 -\frac{\tr(R)}{(n-1)} (I_n - \frac{E_n}{n})
\eens
and hence for $n \ge 4$,
\bens
\twonorm{ \E B - R}
& \le &
\frac{p \gamma}{3} + \abs{V_1 - V_2} \le \inv{3} (1 + o(1)) \xi p n \gamma  \\
\infonenorm{ \E B - R}
& \le &
\frac{n p \gamma}{3} + \abs{V_1 - V_2} n \le \inv{3} (1 + o(1)) \xi p n^2 \gamma.
\eens
\end{lemma}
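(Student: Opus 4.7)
The plan is to read off both assertions as direct specializations of the general bias decomposition already established in Proposition~\ref{prop::biasfinal}, namely
\[
\E B - R \;=\; W_0 \,-\, \mathbb{W} \,-\, \frac{\tr(R)}{n-1}\left(I_n - \frac{E_n}{n}\right),
\qquad \mathbb{W} \;=\; W_2 + \frac{(V_1-V_2)(w_2-w_1)}{n}E_n.
\]
The entire argument amounts to checking which of the three summands on the right collapses under the hypotheses $V_1 = V_2$ or $w_1 = w_2$, and then invoking the ready-made operator/cut norm bounds that have already been recorded.

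First I would treat the case $V_1 = V_2$. Reading off the definition~\eqref{eq::defineW2}, both $W_0$ and $W_2$ carry an overall factor $V_1 - V_2$, hence both vanish, which in turn forces $\mathbb{W} = 0$. The decomposition above therefore collapses to $\E B - R = -\frac{\tr(R)}{n-1}(I_n - E_n/n)$, and the identity $\tr(R) = np\gamma w_1 w_2$ from Fact~\ref{fact::Rtrace} rewrites this as $-p\gamma w_1 w_2 \tfrac{n}{n-1}(I_n - E_n/n)$. The $\infty\!\to\!1$ bound then follows immediately from~\eqref{eq::Rcut} in Lemma~\ref{lemma::traceR}, which already shows $\infonenorm{\tfrac{\tr(R)}{n-1}(I_n - E_n/n)} \le np\gamma/3$ for $n \ge 4$; no further computation is needed.

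Next I would handle the balanced case $w_1 = w_2 = 1/2$. Here the second summand in $\mathbb{W}$ carries the factor $(w_2 - w_1)$ and hence vanishes, so $\mathbb{W} = W_2$ exactly, yielding the stated formula $\E B - R = W_0 - W_2 - \frac{\tr(R)}{n-1}(I_n - E_n/n)$. Applying the triangle inequality and combining (i) the bound $\twonorm{W_0 - W_2} < |V_1 - V_2|$ (with its $\infty\!\to\!1$ counterpart within a factor of $n$) from Corollary~\ref{coro::W2bound}, and (ii) the trace-term bounds from Lemma~\ref{lemma::traceR}, gives $\twonorm{\E B - R} \le p\gamma/3 + |V_1 - V_2|$ and the corresponding $\infty\!\to\!1$ inequality. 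The final rewriting as $\tfrac{1}{3}(1+o(1))\xi pn\gamma$ uses assumption (A2), namely $|V_1 - V_2| \le \tfrac{1}{3}\xi n p\gamma$, and the trivial estimate $p\gamma/3 \le \xi pn\gamma/3$ when $\xi n \ge 1$.

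There is no real obstacle: every ingredient has already been assembled in Proposition~\ref{prop::biasfinal}, Corollary~\ref{coro::W2bound}, Lemma~\ref{lemma::traceR}, and Fact~\ref{fact::Rtrace}. The only care required is bookkeeping---checking that $(V_1 - V_2)$ factors out of both $W_0$ and $W_2$ in the first case, and that the $E_n$-summand of $\mathbb{W}$ carries the factor $(w_2 - w_1)$ in the second case---after which the norm inequalities are just an application of the triangle inequality against the cited bounds.
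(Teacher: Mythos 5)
Your proposal is correct and follows essentially the same route as the paper: specialize the decomposition $\E B - R = W_0 - \mathbb{W} - \frac{\tr(R)}{n-1}(I_n - E_n/n)$ from Proposition~\ref{prop::biasfinal}, observe that $V_1=V_2$ kills $W_0$ and $\mathbb{W}$ while $w_1=w_2$ kills the $E_n$-summand of $\mathbb{W}$, and then invoke Lemma~\ref{lemma::traceR} and Corollary~\ref{coro::W2bound} via the triangle inequality. No gaps.
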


\begin{proof}
Recall
\bens
W_0 -W_2 
& =&
(V_1 - V_2) \left[\begin{array}{cc} w_2 I_{N_1} -   E_{N_1} /n & 0 \\
 0 & - (w_1 I_{N_2}  - E_{N_2} /n) \end{array}    \right]
\eens
The case where $V_1 = V_2$ follows from
Lemma~\ref{lemma::traceR} and~\eqref{eq::wow}.
We now show the balanced case where $w_1 = w_2$.
Under the conditions of Lemma~\ref{lemma::EBRtilt}, we have  for $\xi
= \Omega(1/n)$, by \eqref{eq::Rop} and~\eqref{eq::wow},
\bens 
\twonorm{\E B - R} & \le  &
\twonorm{W_0-W_2} + \twonorm{\frac{(V_1 - V_2)(w_1 - w_2)}{n} E_n} +
\twonorm{\frac{\tr(R)}{n-1} \left( I_n-
    {E_n}/{n}\right)} \\
& \le  &
\abs{V_1 - V_2} + p \gamma/3 \le \inv{3} \xi n p \gamma + p \gamma/3 
\eens
Similarly, we obtain
\bens
\infonenorm{ \E B - R}
& \le  &
\abs{V_1 - V_2} n + n p \gamma/3
\le   \inv{3} \xi n^2 p \gamma + n p \gamma/3 
\eens
The proof follows from Corollary~\ref{coro::W2bound} immediately.
\end{proof}

\subsection{Proof of Proposition~\ref{eq::M123hub}}
\label{sec::biasproofs}
\begin{proofof2}
  Denote by  $\tr (\E M_1) /n = (w_1 V_1 + w_2 V_2) = V_m$.
\bens
\nonumber
\E M_1 & =& 
\E  \left((X-\E(X)) (X-\E(X))^T \right)  = \E \Z \Z^T \\
& =& 
\left[\begin{array}{cc} V_1 I_{w_1 n} & 0 \\
        0 & V_2 I_{w_2 n}
      \end{array}    \right] 
\eens
Moreover, upon subtracting the component of $V_w I_n = \inv{n}\tr (\E M_1) 
I_n$ from $\E M_1$, we have $W_0$:
\bens
\E M_1   - V_w I_n  & := & \E M_1   - \inv{n}\tr (\E M_1)  I_n
=
    (V_1 -V_2) \left[\begin{array}{cc} w_2 I_{w_1 n} & 0 \\
        0 & - w_1 I_{w_2 n} 
            \end{array}    \right]        
          =:   W_0;
\eens
Next we evaluate $\E M_2$: for $\Z = X -\E X$
\bens
\nonumber
\E M_2 & = & \E \left(\left(
    \Z (\vecone{(X)} - \E \vecone{(X)} )^T +
    (\vecone{(X)}- \E \vecone{(X)})\Z^T\right)\right) \\
& = &
\label{eq::EM2unb}
\frac{1}{n}
\left[
\begin{array}{cc}
2 V_1 E_{N_1} &
  (V_1 + V_2) E_{N_1 \times N_2} \\
  (V_1 + V_2) E_{N_1 \times N_2}  &
2 V_2 E_{N_2}
\end{array}
\right]  \\
\text{and hence} \quad
\E M_2 - \frac{ (V_1 + V_2)}{n} E_n
& = &
\nonumber
\frac{V_1 - V_2}{n}
\left[
\begin{array}{cc}
E_{N_1} &
 0 E_{N_1 \times N_2} \\
 0 E_{N_1 \times N_2}  &
- E_{N_2}
\end{array}
\right]   =: W_2
\eens
For $W_n$ as defined in  \eqref{eq::defineWn}, we have
\bens
\E \ip{\hat{\mu}_n - \E \hat{\mu}_n, \hat{\mu}_n - \E
  \hat{\mu}_n} &:= & \inv{n^2}\sum_{i=1}^n \sum_k \E z_{ik}^2 =
\frac{w_1V_1 +  w_2 V_2}{n} = \frac{V_m}{n} \\
\E M_3 &:= & \E \ip{\hat{\mu}_n - \E \hat{\mu}_n, \hat{\mu}_n - \E
  \hat{\mu}_n} E_n = \frac{\abs{\C_1} V_1 +  \abs{\C_2} V_2}{n^2} E_n
= \frac{W_n}{n^2} E_n
\eens
Now putting things together,
\bens
\lefteqn{\E (YY^T) - \E(Y) \E(Y)^T
 = 
\E M_1 + \E M_3- \E M_2 } \\
& = &
\left[\begin{array}{cc}
        V_1 I_{N_1} & 0 \\
0 &  V_2 I_{N_2} 
\end{array}
\right] + \frac{V_m}{n} E_n - \frac{ (V_1 + V_2)}{n} E_n - W_2 \; \; \; \text{ where} \\
\frac{V_m}{n}  - \frac{ (V_1 + V_2)}{n} & = &
-\frac{V_1(1 -w_1) + V_2(1-w_2)}{n} =
-\frac{V_1 w_2+ V_2 w_1}{n} 
\eens
The proposition thus holds.
\end{proofof2}

\subsection{Proof of Proposition~\ref{prop::biasfinal}}
\label{sec::biasproofEBR}
\begin{proofof2}
First, we have by Proposition~\ref{prop::decompose}, and $R= \E(Y) \E(Y)^T$, 
\ben 
\label{eq::YYrelations3}
 \E (YY^T)  = \Sigma_Y  + R =  \E M_1  - \E M_2  + \E M_3 + R 
\een 
We have by Fact~\ref{fact::Rtrace} and~\eqref{eq::YYrelations3}, 
\ben 
\nonumber 
\E \tau & = &
\inv{n}\sum_{i=1}^n \E \ip{Y_i, Y_i}
= \E \tr(YY^T)/n =\tr({\Sigma}_Y)/n + \tr(R)/n\\
\label{eq::Etau}
& = &
\inv{n} \left(\tr(\E M_1 + \E M_3)- \tr(\E M_2) \right)+ \tr(R)/n  \\
\nonumber 
\E \lambda & = & \inv{n(n-1)}\sum_{i\not=j}^n \E \ip{Y_i, Y_j} =
 \inv{n(n-1)} \vecone_n^T \E(\offd(YY^T)) \vecone_n \\
& = &
\label{eq::Elamb}
\inv{n(n-1)} \vecone_n^T \offd(\E M_3  -\E M_2) \vecone_n -
\frac{\tr(R)}{n(n-1)} 
\een
where in \eqref{eq::Elamb} we use the fact 
that $\offd(\E M_1) =0$ by~\eqref{eq::EM1unb} and \eqref{eq::negTR}.
Hence by definition of $B$ and $R$, we have
\ben
\E B - R & =  &
\nonumber
\E M_1 - \E M_2 + \E M_3 - \E \tau I_n  -\E \lambda 
(E_n - I_n) \\
\nonumber
& =  &
\E M_1 - \E M_2 + \E M_3 - \left(\inv{n}\tr(\E M_1 + \E M_3) -
  \inv{n} \tr(\E M_2) +  \frac{\tr(R)}{n}\right)  I_n \\
\label{eq::EBRfinal}
&  & -\left(\inv{n(n-1)} \vecone_n^T \offd(\E M_1 + \E M_3  -\E M_2)
  \vecone_n - \frac{\tr(R)}{n(n-1)}\right)(E_n -I_n)
\een
  \bit
  \item
Notice that $\E M_3 = \frac{V_w}{n} E_n$ and hence its contribution
to $\E \tau$ and $\E \lambda$ is the same;
Thus we have
\ben
\nonumber
\tr(\E M_3) & =  & \inv{(n-1)} \vecone_n^T \offd(\E M_3) 
  \vecone_n =\frac{W_n}{n} = V_m \; \text{   and
    by~\eqref{eq::EM3unb}}, \\
  \label{eq::EM30}
&& \E M_3 - \inv{n} \tr(\E M_3) I_n 
- \frac{\vecone_n^T \offd(\E 
  M_3) \vecone_n}{n(n-1)}(E_n - I_n) = 0; 
\een
\item
$\E M_1$ is a diagonal matrix and hence $\offd(\E M_1) =0$.
Now we have by~\eqref{eq::EM1unb},
\ben
\label{eq::M1fix}
&& \E M_1 -  \inv{n} \tr(\E M_1) I_n = W_0 \text{ and } \offd(\E M_1) 
= 0 
\een
\item
For $\E M_2$, we decompose it into one component proportional to 
$E_n$: $\bar{M}_2 :=\frac{V_1 + V_2}{n} E_n$ and another component 
$W_2 = \E M_2 -\bar{M}_2$. 
By Proposition~\ref{eq::M123hub}, we have 
\ben
\nonumber
W_2 &= & \E M_2 -\bar{M}_2 =\E M_2 - \frac{V_1 + V_2}{n} E_n \\
 & := &
\frac{V_1-V_2}{n}
\left[
\begin{array}{cc}
E_{N_1} &  0 E_{N_1 \times N_2} \\
0  E_{N_1 \times N_2}   & - E_{N_2} 
\end{array}
\right] \\
\label{eq::BarM2}
\text{ where by definition } &&
\bar{M}_2  - \frac{\tr(\bar{M}_2)}{n} I_n - \frac{1_n^T 
  \offd(\bar{M}_2) 1_n}{n(n-1)} (E_n- I_n) =0
\een
\eit
Thus we have by Fact~\ref{fact::W2} and \eqref{eq::BarM2}
\ben
\nonumber
\lefteqn{
  \E M_2 - \inv{n} \tr(\E M_2) I_n- \frac{\vecone_n^T \offd(\E 
  M_2) \vecone_n}{n(n-1)} (E_n -I_n) }\\
& = &
\label{eq::M2red}
W_2 - \inv{n} \tr(W_2) I_n-
\frac{\vecone_n^T \offd(W_2) \vecone_n}{n(n-1)} (E_n -I_n)  =:
\mathbb{W}
\een
Now by \eqref{eq::Etau},~\eqref{eq::Elamb},
\eqref{eq::EBRfinal},~\eqref{eq::EM30},~\eqref{eq::M1fix},~\eqref{eq::M2red},
and Proposition~\ref{eq::M123hub}, 
\ben
\nonumber
\E B - R & =  &
\E M_1 - \E M_2 + \E M_3 - \E \tau I_n  -\E \lambda 
(E_n - I_n) \\
\label{eq::prelude}
& =:  &
W_0 - \mathbb{W} -  \frac{\tr(R)}{n-1} (I_n -E_n/n)
\een
where in step 2, we simplify all terms involving $\E M_1$ and $\E M_2$, and 
eliminate all terms involving $\E M_3$.
\end{proofof2}

\subsection{Proof of Corollary~\ref{coro::W2bound} }
\label{sec::W2coro}
\begin{proofof2}
Now
\bens
W_0 - W_2   & =&
(V_1 -V_2) \left[\begin{array}{cc} w_2 I_{N_1} -E_{N_1}/n& 0 \\
        0 & -(w_1 I_{N_2} -E_{N_2} /n) \end{array} \right].
    \eens
Moreover, due to symmetry, for $w_j > 1/n$,
\bens
\twonorm{W_0-W_2}
& \le &
\norm{W_0-W_2}_{\infty} :=\max_{i} \sum_{j=1}^n \abs{W_{0,ij}
  -W_{2,ij}} \\
& \le &  \abs{V_1 - V_2}
((w_2 -1/n) + (w_1 n-1)/n) \vee( (w_1 -1/n)+ (w_2 n-1) /n)
< \abs{V_1 - V_2}
\eens
where 
\bens 
\lefteqn{((w_2 -1/n) + (w_1 n-1)/n) \vee ( (w_1 -1/n)+ (w_2 n-1) 
  /n)}\\
&  =&
((w_2 -1/n) + (w_1-1/n) \vee( (w_1 -1/n)+ (w_2 -1/n) = 1 -2/n 
\eens
Thus we have by the triangle inequality,
\bens
\infonenorm{W_0 -W_2}
&  \le&
\infonenorm{W_2} + \infonenorm{W_0} \\
& \le & \abs{V_1 - V_2}n  \big(\abs{w_2 w_1 + w_1 w_2} + (w_1^2 +
w_2^2)\big)\\
&  = & \abs{V_1 - V_2}n  \le \half \xi n^2 p \gamma 
\eens
\end{proofof2}

\section{Proofs for Section~\ref{sec::finalYYaniso}}
\label{sec::corrproofs}

\begin{proofof}{Theorem~\ref{thm::YYaniso}}
  The proof of Theorem~\ref{thm::YYaniso}
  follows that of  Theorem~\ref{thm::YYnorm} in
  Section~\ref{sec::proofofYYnorm},
  in view of  Theorem~\ref{thm::YYcovcorr} and Lemma~\ref{lemma::projWH}.
Finally, the probability statements hold by adjusting the constants.
\end{proofof}

\subsection{Preliminary results}
Lemma~\ref{lemma::anisoproj} follows from the sub-gaussian tail bound.
We prove Lemma~\ref{lemma::projWH} in Section~\ref{sec::proofprojWH}.
\begin{lemma}
  \textnormal{\bf (Projection for anisotropic sub-gaussian random
    vectors).}
  \label{lemma::anisoproj}
 Suppose all conditions in Lemma~\ref{lemma::projWH} hold. Let $\mu$
 be as defined in~\eqref{eq::definemu}.
 Then
\ben
\label{eq::Zcovdef}      
\norm{\ip{\Z_j, \mu}}_{\psi_2}
& \le & C_0 \norm{\ip{\Z_j, \mu}}_{L_2} :=  C_0 \sqrt{\mu^T \cov(\Z_j)
  \mu} \\
\nonumber
\text{where } \; \sqrt{\mu^T \cov(\Z_j)  \mu}
& = &  \sqrt{\mu^T H_i H_i^T \mu} = \twonorm{R_i \mu} \; \; \text{ for
  each} \; j \in \C_i, i =1, 2
\een 
Thus for any $t > 0$, for some absolute constants $c, c'$,
we have for each $q \in \Sp^{n-1}$ and $u =(u_1,\ldots, u_n) \in \{-1,
1\}^n$ the following tail bounds:
\ben
\label{eq::proanis}
\prob{\abs{\sum_{i=1}^n q_i \ip{\Z_i, \mu}} \ge t} & \le &
2 \exp\left(- \frac{c t^2}{(C_0 \max_{i} \twonorm{R_i \mu})^2}\right),
\; 
\text{ and} \; \\
\prob{\abs{\sum_{i =1}^{n} u_i \ip{\Z_i, \mu }} \ge t}
\label{eq::proanisum}
& \le &
2 \exp\left(-\frac{c' t^2}{n (C_0 \max_{i} \twonorm{R_i \mu})^2}\right)
\een
\end{lemma}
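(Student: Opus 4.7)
The plan is to reduce everything to a one-dimensional sub-gaussian projection statement for a single vector $\Z_j$, and then use independence of $\Z_1, \ldots, \Z_n$ together with the rotation-invariance of the $\psi_2$-norm under linear combinations. First, using the data-generative representation $\Z_j = H_i W_j$ for $j \in \C_i$, I write
\begin{equation*}
\ip{\Z_j, \mu} = \ip{H_i W_j, \mu} = \ip{W_j, H_i^T \mu},
\end{equation*}
so the question reduces to controlling a fixed linear functional of $W_j$. By Definition~\ref{def::WH}, $W_j$ is a mean-zero, isotropic, sub-gaussian vector with $\norm{W_j}_{\psi_2} \le C C_0$. By definition of~\eqref{eq::Wpsi}, I then obtain $\norm{\ip{W_j, H_i^T\mu}}_{\psi_2} \le \norm{W_j}_{\psi_2} \twonorm{H_i^T \mu} \le C_0 \twonorm{H_i^T \mu}$, which gives \eqref{eq::Zcovdef} once I observe that isotropy of $W_j$ yields $\E \ip{W_j, H_i^T\mu}^2 = \twonorm{H_i^T \mu}^2 = \mu^T H_i H_i^T \mu$, matching $\cov(\Z_j)$ as in~\eqref{eq::rowcov} and therefore $\norm{\ip{\Z_j, \mu}}_{L_2} = \twonorm{R_i \mu}$.

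For the two tail bounds \eqref{eq::proanis} and \eqref{eq::proanisum}, the plan is to use that $\Z_1, \ldots, \Z_n$ are independent across rows (even though $H_i$ is shared within a cluster, the rows are still independent by Definition~\ref{def::WH}), so that $\{\ip{\Z_i, \mu}\}_{i=1}^n$ is a collection of independent mean-zero sub-gaussian scalars, each with $\psi_2$-norm at most $C_0 \max_i \twonorm{R_i \mu}$. The standard fact that the $\psi_2$-norm squared of a sum of independent centered sub-gaussians is bounded by a constant times the sum of squared $\psi_2$-norms (Proposition 2.6.1 in Vershynin) then gives
\begin{equation*}
\norm{\sum_{i=1}^n q_i \ip{\Z_i, \mu}}_{\psi_2}^2 \le C \sum_{i=1}^n q_i^2 \norm{\ip{\Z_i,\mu}}_{\psi_2}^2 \le C (C_0 \max_i \twonorm{R_i \mu})^2
\end{equation*}
for any $q \in \Sp^{n-1}$, and likewise
\begin{equation*}
\norm{\sum_{i=1}^n u_i \ip{\Z_i, \mu}}_{\psi_2}^2 \le C \sum_{i=1}^n \norm{\ip{\Z_i,\mu}}_{\psi_2}^2 \le C n (C_0 \max_i \twonorm{R_i \mu})^2
\end{equation*}
for any $u \in \{-1,1\}^n$. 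Applying the standard sub-gaussian tail bound (Proposition 2.5.2 of Vershynin) to each of these scalar random variables directly yields \eqref{eq::proanis} and \eqref{eq::proanisum}.

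There is essentially no hard step here: the result is a clean anisotropic analogue of Lemma~\ref{lemma::isotropic}, and the only substantive point is to replace the uniform norm bound $\norm{\Z_i}_{\psi_2} \le C C_0 \max_i \twonorm{H_i}$ used in the isotropic case by the direction-aware sharper quantity $C_0 \max_i \twonorm{R_i \mu}$, which is what makes the argument sensitive to the alignment between the separation direction $\mu$ and the covariance $H_i H_i^T$. The mild bookkeeping issue is simply to track that $R_i$ plays the role of a square-root of $\cov(\Z_j) = H_i H_i^T$ along the direction $\mu$, which is ensured by the identity $\twonorm{R_i \mu}^2 = \mu^T H_i H_i^T \mu$ as stated in \eqref{eq::Zcovdef}.
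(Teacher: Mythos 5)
Your proposal is correct and follows essentially the same route as the paper's proof: reduce $\ip{\Z_j,\mu}=\ip{W_j,H_i^T\mu}$ to a one-dimensional marginal with $\norm{\ip{\Z_j,\mu}}_{\psi_2}\le \norm{W_j}_{\psi_2}\twonorm{R_i\mu}$, then use independence of the rows and the additivity of squared $\psi_2$-norms (Vershynin, Prop.~2.6.1) followed by the standard sub-gaussian tail bound (Prop.~2.5.2). The paper's argument is identical, including the identification $R_i=H_i^T$ and the $L_2$-norm computation via isotropy of $W_j$.
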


We prove Lemma~\ref{lemma::E0correlated} in Section~\ref{sec::proofE0corre}.
As a special case, we recover results in Lemma~\ref{lemma::eventE0}.
\begin{lemma}
  \label{lemma::E0correlated}
  Let $W_j$ be a mean-zero, unit variance, sub-gaussian random vector with
  independent entries, with $\max_{j, k} \norm{w_{jk}}_{\psi_2} \le  C_0$.
Let $\{\Z_j, j \in [n]\}$ be row vectors of $\Z$,  where $\Z_j = H_i
W_j$ for $j \in \C_i, i=1, 2$. Then for each $j \in \C_i, i=1, 2$, we have 
\bens
\prob{\abs{\twonorm{\Z_j}^2 - \E \twonorm{\Z_j}^2} > t}
& = &
\prob{\abs{\twonorm{H_i W_j}^2 -\fnorm{H_i}^2} > t} \\
& \le &
2 \exp \left(- c\min\left(\frac{t^2}{(C_0^2\twonorm{H_i} \fnorm{H_i})^2}, \frac{t}{(C_0\twonorm{H_i})^2} \right)\right) 
\eens
Hence for rank $p$ matrix $H_i$, we recover the result in \eqref{eq::diag2exp} in case  
$H=I$,
\ben
\nonumber
\prob{\abs{\twonorm{\Z_j}^2 - \E \twonorm{\Z_j}^2} > t}
& \le &
\label{eq::rK}
2 \exp \left(- {c}\min\left(\frac{t^2}{p (C_0 \twonorm{H_i})^4 }, 
    \frac{t}{(C_0 \twonorm{H_i})^2} \right)\right) 
\een
\end{lemma}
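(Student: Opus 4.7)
The plan is to write $\twonorm{\Z_j}^2 = \twonorm{H_i W_j}^2 = W_j^T (H_i^T H_i) W_j$ as a quadratic form in the isotropic sub-gaussian vector $W_j$ with independent coordinates, and then apply the classical Hanson-Wright inequality (Theorem~\ref{thm::HW}, the $n=m$, $X = W_j$ case) with the coefficient matrix $A := H_i^T H_i \in \R^{m \times m}$.

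First I would verify that the recentering constant is correct: since $\E(W_j W_j^T) = I_m$ by \eqref{eq::Wpsi2},
\begin{equation*}
\E \twonorm{\Z_j}^2 = \E\, \tr(H_i^T H_i W_j W_j^T) = \tr(H_i^T H_i) = \fnorm{H_i}^2,
\end{equation*}
matching the quantity inside the absolute value on the left-hand side. Next I would record the two matrix norms that enter the Hanson-Wright bound: $\twonorm{A} = \twonorm{H_i^T H_i} = \twonorm{H_i}^2$, and using the standard submultiplicative inequality $\fnorm{BC} \le \twonorm{B}\fnorm{C}$,
\begin{equation*}
\fnorm{A} = \fnorm{H_i^T H_i} \le \twonorm{H_i^T}\fnorm{H_i} = \twonorm{H_i}\fnorm{H_i}.
\end{equation*}

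Substituting these bounds into Theorem~\ref{thm::HW} applied to $W_j$ (whose coordinates satisfy $\max_k \norm{w_{jk}}_{\psi_2}\le C_0$) immediately gives the first displayed inequality:
\begin{equation*}
\prob{\abs{\twonorm{\Z_j}^2 - \fnorm{H_i}^2} > t} \;\le\; 2\exp\!\left(-c\min\!\left(\tfrac{t^2}{(C_0^2 \twonorm{H_i}\fnorm{H_i})^2}, \tfrac{t}{(C_0\twonorm{H_i})^2}\right)\right).
\end{equation*}
For the stated specialization \eqref{eq::rK} to rank-$p$ matrices $H_i \in \R^{p\times m}$, I would use $\fnorm{H_i}^2 = \sum_{\ell=1}^p \sigma_\ell(H_i)^2 \le p\, \twonorm{H_i}^2$ to replace $\twonorm{H_i}^2 \fnorm{H_i}^2$ in the denominator of the sub-Gaussian regime by $p\, \twonorm{H_i}^4$, which only weakens the bound; the sub-exponential regime is unchanged. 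The case $H_i = I_p$ recovers \eqref{eq::diag2exp} in Lemma~\ref{lemma::eventE0} upon taking a union bound over $j \in [n]$ and choosing $t = C_{\diag} C_0^2(\sqrt{np}\vee n)$.

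There is no real obstacle here: the statement is essentially a bookkeeping corollary of the classical Hanson-Wright inequality once the quadratic-form representation is adopted and the two norms of $H_i^T H_i$ are computed. The only place that requires a moment of care is the Frobenius-norm estimate $\fnorm{H_i^T H_i}\le \twonorm{H_i}\fnorm{H_i}$, which ensures the bound scales with $\fnorm{H_i}$ (and hence with $\sqrt{p}\twonorm{H_i}$ in the rank-$p$ case) rather than the looser $\fnorm{H_i}^2$ that a naive application would produce.
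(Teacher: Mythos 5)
Your proposal is correct and follows essentially the same route as the paper's proof: both write $\twonorm{\Z_j}^2$ as the quadratic form $W_j^T M W_j$ with $M = H_i^T H_i$, compute $\twonorm{M} = \twonorm{H_i}^2$ and $\fnorm{M} \le \twonorm{H_i}\fnorm{H_i}$, and invoke the Hanson--Wright inequality (Theorem~\ref{thm::HW}), with the specialization via $\fnorm{H_i}^2 \le p\,\twonorm{H_i}^2$. No gaps.
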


Next we show that conclusion identical to those in
Lemma~\ref{lemma::ZZoporig} holds, upon updating events $\E_0$ and $\E_8$ for the operator
norm for anisotropic random vectors $\Z_j$.
Denote by $\E_0$ the event: for some absolute constant $C_{\diag}$,
\bens
\E_0 := \left\{\exists j \in [n]\quad \abs{\twonorm{\Z_j}^2 - \E
    \twonorm{\Z_j}^2} > C_{\diag}  (C_0 \max_{i} \twonorm{H_i})^2  (\sqrt{n p} \vee n)
\right\}
\eens
Denote by $\E_8$ the following event:  for some absolute constant $C_1$,
 \bens 
 \E_8: \quad 
\left\{\max_{q, h \in \Net}
  \sum_{i=1}^n  \sum_{j \not=i}^n \ip{\Z_{i}, \Z_{j}}  q_i h_j >  C_1 
  ( C_0 \max_{i} \twonorm{H_i})^2 (\sqrt{n p} \vee n)\right\}
  \eens 
  where $\Net$ is  the $\ve$-net of $\Sp^{n-1}$ for $\ve < 1/4$ as
  constructed in Lemma~\ref{lemma::ZZoporig}.

\subsection{Proof of Lemma~\ref{lemma::projWH}}
\label{sec::proofprojWH}
\begin{proofof2}
  Let $c, c', C_3, C_4$ be some absolute constants.
  Let $M_Y :=\E(Y)(Y-\E(Y))^T + (Y-\E(Y))\E(Y)^T$.
  Clearly, vectors $\Z_1, \Z_2, \ldots, \Z_n \in \R^{p}$ are independent. 
  Let $\mu$ be as in~\eqref{eq::definemu}.
  Then, we have by Lemma~\ref{lemma::anisoproj},
\ben
\nonumber
\lefteqn{\prob{\E_4} :=\prob{\max_{ u \in \{-1, 1\}^n}
    \sum_{i =1}^{n} u_i \ip{\Z_i, \mu}} \ge  \half C_4 n  (C_0  \max_i \twonorm{R_i \mu}) }\\
& \le&
2^n 2 \exp\left(- \frac{c' n^2 (C_0 \max_i \twonorm{R_i \mu})^2  }
  {C  (C_0 \max_i \twonorm{R_i \mu})^2 n}\right)  \le  2 \exp(- c' n).
\een
Thus we have on event $\E_4^c$, 
 \bens
 \sum_{i} \abs{\ip{\Z_i, \mu^{(1)} -\mu^{(2)}}}
& < & \half C_4  (C_0 \max_{i} \twonorm{R_i \mu}) n \sqrt{p \gamma}.
 \eens
Construct an $\ve$-net $\Pi_n$ of $\Sp^{n-1}$, where $\ve  = 1/3$ and
$\size{\Pi_n} \le (1+ 2/\ve)^{n}$. For a suitably chosen constant $C_3$,  we have
by Lemma~\ref{lemma::anisoproj},
\bens
\prob{\E_3} & := &
  \prob{\exists q \in \Pi_n,
    \abs{\sum_{i=1}^n q_i \ip{\Z_i, \mu}} \ge  \half C_3  (C_0 \max_i \twonorm{R_i \mu}) \sqrt{n }}\\
\label{eq::E8enso}
& \le&
9^n 2 \exp\left(- \frac{c' n (C_0 \max_i \twonorm{R_i
      \mu})^2}{C (C_0 \max_i \twonorm{R_i \mu})^2}\right)   \le  2 \exp(- c' n).
\eens
Moreover, by a standard approximation argument, we have on event $\E_3^c$,
\bens
 \sup_{q \in \Sp^{n-1}}  \sum_{i=1}^n q_i \ip{\Z_i, \mu}
  & \le &\inv{1-\ve} \sup_{q \in \Pi_n}  \sum_{i=1}^n q_i \ip{\Z_i,
    \mu}  \le  C_3 C_0 (\max_i \twonorm{R_i \mu})\sqrt{n}.
   \eens
 We have by Lemma~\ref{lemma::tiltproject},
 on event $\E_4^c \cap \E_3^c$,
 \bens
\norm{M_Y}_{\infty \to 1} & \le & 8 w_1 w_2 (n-1) \sum_{i=1}^n \abs{\ip{\Z_i, \mu^{(1)}
      -\mu^{(2)} }}  \le C_4 n (n-1)  (C_0 \max_{i} \twonorm{R_i
    \mu})\sqrt{p \gamma}  \\
\twonorm{M_Y} & \le &
4\sqrt{ w_1 w_2 n} \sup_{q \in \Sp^{n-1}} \abs{\sum_{i=1}^n  q_i \ip{\Z_i, \mu^{(1)} -\mu^{(2)}}} \le 
2 C_3 (C_0 \max_{i} \twonorm{R_i \mu}) n \sqrt{p \gamma}.
\eens
\end{proofof2}

\subsection{Proof of Lemma~\ref{lemma::anisoproj}}
\label{sec::projlemmas}
\begin{proof}
  Denote by $\mu =(\mu^{(1)}  -\mu^{(2)})/ \sqrt{p \gamma} \in 
  \Sp^{p-1}$.  First, we have by definition, $\Z_j = H_i W_j$,
  for each $j \in  \C_i, i =1, 2$; cf.~\eqref{eq::Wpsi}.
Hence  $\Z_j$ is a sub-gaussian random vector with its marginal $\psi_2$ norm bounded in the
sense of \eqref{eq::covZ1} and \eqref{eq::covZ2} with
\ben
\label{eq::covW1}
\norm{\ip{\Z_j,  \mu}}_{\psi_2} & := &
\norm{\ip{H_i W_j, \mu}}_{\psi_2} 
\le \norm{W_j}_{\psi_2} \twonorm{R_i \mu} \le C_0  \twonorm{R_i \mu}.
\een
where $W_j \in \R^{m}$ is a mean-zero, isotropic, sub-gaussian random 
vector satisfying $\norm{W_j}_{\psi_2} \le C_0$;
Hence \eqref{eq::Zcovdef} holds and for all $j \in \C_i$,
\ben
\norm{\ip{\Z_j, \mu^{(1)}  -\mu^{(2)}}}_{\psi_2}
& \le &  
C_0 \sqrt{p \gamma} \twonorm{R_i \mu} \; \; \text{ and } \;  
R_i = H_i^T.
\een
First, we have by independence of $\Z_j, \forall j$,
\bens 
\forall q \in \Sp^{n-1}, \quad 
\norm{\sum_{i=1}^n q_i \ip{\Z_i, \mu^{(1)}-\mu^{(2)}}}^2_{\psi_2}
& \le &  C \sum_{i=1}^n q_i^2 \norm{\ip{\Z_i, \mu^{(1)}-\mu^{(2)}}}^2_{\psi_2}  \\
& \le &  C p \gamma (C_0 \max_{i} \twonorm{R_i \mu})^2  \\
\text{ and for } \;  u_i \in \{-1, 1\}, \; \; 
\norm{\sum_{i=1}^n u_i \ip{\Z_i, \mu^{(1)}-\mu^{(2)}}}^2_{\psi_2}
& \le &  C \sum_{i=1}^n \norm{\ip{\Z_i, \mu^{(1)}-\mu^{(2)}}}^2_{\psi_2}  \\
& \le &  C n p \gamma (C_0 \max_{i} \twonorm{R_i \mu})^2.
\eens
Then \eqref{eq::proanis} and  \eqref{eq::proanisum} follow from
the sub-gaussian tail bound, for example, Propositions  2.6.1 and 2.5.2 (i)~\cite{Vers18}.
See also the proof for Lemma~\ref{lemma::isotropic}.
\end{proof}

\subsection{Proof of Theorem~\ref{thm::ZHW}}
\label{sec::proofofZHW}
 \label{sec::opcorrelated}
In the rest of this section, we prove Theorem~\ref{thm::ZHW}. The proof may be of
independent interests. 
 We generate $\Z$ according to Definition~\ref{def::WH}:
 \bens
 \Z_j & = & H_j W_j  \in \R^{p}\; \; \text{ and hence}\;\;
\Z  = \sum_{j=1}^n e_j W_j^T H_j^T = \sum_{j=1}^n \diag(e_j)  \BW  H_j^T
\eens
where $W_1^T, \ldots, W_n^T \in \R^{m}$ are independent, mean-zero,
isotropic row vectors of $\BW = (w_{jk})$, where we assume that coordinates $w_{jk}$
are also independent with $\max_{j,k} \norm{w_{jk}}_{\psi_2} \le C_0$.
Throughout this section, let $\mvec{\Z} =
\mvec{X - \E X}$ be formed by concatenating columns of  matrix $\Z$
into a long vector of size $np$.
Denote by $\otimes$ the tensor product. Recall
$e_1, \ldots, e_n$ are the canonical basis of $\R^n$.

\begin{proofof}{Theorem~\ref{thm::ZHW}}
  By Definition~\ref{def::WH},
  \ben
  \nonumber
\mvec{\Z} & = & \sum_{j=1}^n \mvec{\diag(e_j)  \BW H_j^T }  = \sum_{j=1}^n H_j 
\otimes \diag(e_j) \mvec{\BW} \\
  \label{eq::LW}
& =: &  L \mvec{\BW}  \in \R^{n p} \; \; \text{ where } \; L :=\sum_{j=1}^n H_j 
\otimes \diag(e_j) \in \R^{np \times mn}
\een
On the other hand, we have for $\BW \in \R^{n \times m}$, $\Z_j = H_j W_j$ and hence 
\ben
\nonumber
\Z^T & = &  [\Z_1, \ldots, \Z_n]
= \sum_{j=1}^n \Z_j \otimes e_j^T = \sum_{j=1}^n H_j \BW^T \diag(e_j) \; \text{ and hence} \; \;\\
\nonumber
\mvec{\Z^T} 
& =  & \mvec{\sum_{j=1}^n H_j \BW^T \diag(e_j)}  =   \sum_{j=1}^n
\mvec{H_j \BW^T \diag(e_j)} \\
\label{eq::RWT}
& = &\sum_{j=1}^n (\diag(e_j) \otimes H_j) \mvec{\BW^T} 
=: R \mvec{\BW^T}
\een
Then there exist some permutation matrices $P, Q$ such that
\ben
\label{eq::permute}
L   =   \sum_{i=1}^n H_i 
\otimes \diag(e_i)
& =  & P^T \big(\sum_{i=1}^n \diag(e_i)
\otimes H_i  \big)  Q =: P^T R Q
\een
where $H_j H_j^T$ denotes the covariance matrix for each row vector
$\Z_j$, $j \in [n]$.

We now show \eqref{eq::permute} with an
explicit construction. It is well known that there exist permutation matrices $P, Q$ such
that 
\ben
\label{eq::definePQ}
\mvec{\Z^T}   & = & P \mvec{\Z} \; \text{ and } \;  \mvec{\BW^T} = Q
\mvec{\BW} \\
\nonumber
\text{ and hence by}~\eqref{eq::LW}, \;
\mvec{\Z^T} & = & P \mvec{\Z} = P L \mvec{\BW}
\een
On the other hand, we have by \eqref{eq::RWT} and \eqref{eq::definePQ}
\bens
\mvec{\Z^T} & = &
R \mvec{\BW^T}= R Q \mvec{\BW}.
\eens
This shows that for $P^T = P^{-1}$,
\bens
PL = R Q \; \; \text{ and hence} \;\; L =P^T R Q
\eens
and   hence~\eqref{eq::permute} indeed holds.
See Lemma 4.3.1 and Corollary 4.3.10~\cite{HJ91}.

First we rewrite the quadratic form as follows: for any matrix $A = (a_{ij}) \in \R^n$,
   \bens
  \label{eq::quadZZ}
  \abs{\sum_{i=1}^n  \sum_{j \not=i}^n \ip{\Z_{i}, \Z_{j}}  a_{ij}}
&  = &
  \abs{\sum_{i=1}^n  \sum_{j \not=i}^n a_{ij} \sum_{k=1}^p z_{ik} z_{jk}  } \\
  &  = &
  \mvec{\Z}^T \tilde A \mvec{\Z}  =  \mvec{\BW}^T L^T \tilde A L  \mvec{\BW},
  \eens  
  where $\tilde A$  is a block-diagonal matrix with $\diag(\tilde{A}) = 0$,
  and $p$ identical blocks $\tilde{A}^k =\offd(A), \forall k \in [p]$ of size $n 
  \times n$ along the main diagonal, where $\twonorm{\offd(A)} \le
  \twonorm{A} +   \twonorm{\diag(A)} \le 2\twonorm{A}$.
We now compute
\bens 
\twonorm{L^T \tilde{A} L}
  & \le &  \twonorm{\sum_{i=1}^n H_i \otimes \diag(e_i)}^2 
  \twonorm{\tilde A}   \le \max_{i} \twonorm{H_i}^2  \twonorm{\tilde A}   \\
  \text{where  }
   \twonorm{L}   & = &  \twonorm{R} =
  \twonorm{\sum_{i=1}^n \diag(e_i)  \otimes H_i  }  \le   \max_{i} \twonorm{H_i} \\
  \fnorm{L^T \tilde{A} L}
  & \le &   \twonorm{L}^2 \fnorm{\tilde A}  \le  \max_{i} \twonorm{H_i}^2   \fnorm{\tilde A}    \le  \max_{i} \twonorm{H_i}^2 \sqrt{p}   \fnorm{A},
  \eens
  where we use the property of block-diagonal matrix for $R =
  \sum_{i=1}^n \diag(e_i)  \otimes H_i$, which is also known as
  a direct sum over $H_i, i=1, \ldots, n$.

Hence for any $t >0$, by the Hanson-Wright inequality
(Theorem~\ref{thm::HW}),
\bens 
\lefteqn{\prob{\abs{\mvec{\Z}^T \tilde A \mvec{\Z} } > t}
 = 
\prob{\abs{\mvec{\BW}^T L^T \tilde{A}  L\mvec{\BW} } > t} }\\
  & \leq &
  2 \exp \left(- c\min\left(\frac{t^2}{C_0^4 (\max_i \twonorm{H_i}^4)  
        \fnorm{\tilde{A}}^2},
\frac{t}{C_0^2  (\max_i \twonorm{H_i}^2) \twonorm{\tilde{A}}}\right) \right) \\
& \leq & 2 \exp \left(- c\min\left(\frac{t^2}{C_0^4 (\max_{i} \twonorm{H_i}^4) 
      p \fnorm{A}^2}, \frac{t}{C_0^2  (\max_i \twonorm{H_i}^2) \twonorm{A}}\right) \right).
\eens
Thus~\eqref{eq::genDH} holds.
\end{proofof}

\subsection{Proof of Lemma~\ref{lemma::E0correlated}}
\label{sec::proofE0corre}
\begin{proofof2}
  We prove the lemma with the full generality by allowing each row 
  vector to have its own covariance $A_i= H_i H_i^T$, where $A_i \in \R^{p \times p}$, is the
covariance for row vector $\Z_j \in \R^{p}$ for $j \in \C_i$ as shown in \eqref{eq::rowcov}.
Now we also introduce the positive semidefinite matrix $M:= H_i^T H_i
\succeq 0 \in \R^{m \times m}$. First, we bound the $\ell_2$ norm for
each anisotropic vector $\Z_j^T = W^T_j H^T_i \in \R^{p}$, where $W_j \in
\R^m$, and $j \in \C_i$
\ben
\label{eq::Znorm}
\twonorm{\Z_j}^2
& = &  \ip{H_i W_j, H_i W_j} = W_j^T H_i^T H_i W_j =:  W_j^T M W_j \\
\nonumber 
\text{where  } \; \; \tr(H_i H_i^T)
& = & \tr(M) = \fnorm{H_i}^2 \le (m
\wedge p ) \twonorm{H_i}^2,
\een
where $W_{j}$ is an isotropic sub-gaussian random vectors with independent, mean-zero, coordinates, and in \eqref{eq::Znorm}, we use the isotropic property of $W_j$.
Now clearly,
\ben
\label{eq::Mfnorm}
\fnorm{M}  = \fnorm{H_i^T H_i} \le \twonorm{H_i} \fnorm{H_i}  \quad \text{ and }
\twonorm{M} & = & \twonorm{H_i}^2;
\een
Thus we have for any $t > 0$,
\bens 
\lefteqn{\prob{\abs{\twonorm{\Z_j}^2 - \E \twonorm{\Z_j}^2} > t}
  = \prob{\abs{\twonorm{H_i W_j}^2 - \E \twonorm{H_i W_j}^2} > t}  }\\
& = & \prob{\abs{W^T_j M W_j -\fnorm{H_i}^2} > t }  \le 
2 \exp \left(- c\min\left(\frac{t^2}{C_0^4 \fnorm{M}^2},
    \frac{t}{C_0^2 \twonorm{M}} \right)\right) \\
& \le & 2 \exp \left(- c\min\left(\frac{t^2}{C_0^4 
\twonorm{H_i}^2  \fnorm{H_i}^2}, \frac{t}{C_0^2 
\twonorm{H_i}^2}  \right)\right),
\eens
and hence we can also recover the result in 
\eqref{eq::diag2exp} in case $H_i=I$. 
\end{proofof2}

\subsection{Proof of Theorem~\ref{thm::YYcovcorr} }
\label{sec::ZZHproof}
\begin{proofof2}
 First, we choose 
$t_{\diag}  = C (\max_i \twonorm{H_i}^2) C_0^2 (\sqrt{n p} \vee n)$
and finish the calculations. 
First, we have by Lemma~\ref{lemma::E0correlated},
\bens 
\lefteqn{\prob{\max_{i} \max_{j \in \C_i} \abs{\twonorm{\Z_j}^2 - \E \twonorm{\Z_j}^2} > t_{\diag}}
    =: \prob{\E_0}  = }\\
& \le & 2 n \exp \left(- c\min\left(\frac{(\max_i \twonorm{H_i}^4) n
      p}{\max_i (\twonorm{H_i}^2 \fnorm{H_i}^2)}, \frac{(\max_i
      \twonorm{H_i}^2)n}{\max_i (\twonorm{H_i}^2)} \right)\right) \\
& \le &
2 n \exp \left(- c\min\left(\frac{n p}{ (m \wedge p)}, n\right)\right) 
\le  2 \exp(- c' n),
\eens
where for the $p \times m$ matrix $H_i$,  we have $\fnorm{H_i} \le \sqrt{p 
  \wedge m} \twonorm{H_i}$.
We use Theorem~\ref{thm::ZHW} to bound the off-diagonal part.
Hence for all $q, h \in \Sp^{n-1}$, $A(q,h) := \offd(q \otimes h) =(a_{ij})$,
\bens 
\twonorm{{A}(q, h)} \le  \fnorm{A(q,h)} \le 1
\eens 
Let $t_{\offd} = C_{\offd} C_0^2 (\max_{i} \twonorm{H_i}^2) ( \sqrt{p n}
\vee n)$.
For a particular realization of $q, h \in \Sp^{n-1}$ and ${A}(q, h)
=(a_{ij})$ as defined above, and Theorem~\ref{thm::ZHW},
\bens 
\lefteqn{\prob{\abs{\sum_{i=1}^n  \sum_{j \not=i}^n \ip{\Z_{i}, \Z_{j}}
      q_{i} h_{j}} > t_{\offd}}  =  \prob{\sum_{i=1}^n  \sum_{j \not=i}^n \ip{\Z_i, \Z_j} a_{ij} > t_{\offd}}  } \\
& \le &
2 \exp \left(- c\min\left(\frac{C_0^4 (\max_{i} \twonorm{H_i}^4)
      ( \sqrt{p n}\vee n)^2}{C_0^4 (\max_{i}  \twonorm{H_i}^4) p \fnorm{A(q,h)}^2},
    \frac{C_0^2 (\max_{i} \twonorm{H_i}^2) ( \sqrt{p n}
\vee n)}{C_0^2 (\max_{i} \twonorm{H_i}^2) \twonorm{A(q, h)}}  \right)\right)  \\
& \le &
2 \exp \left(- c n\min (C_1^2, C_1)\right) \leq    2 \exp(-c n)
\eens
for some sufficiently large constants $C_1$ and $c > 4 \ln 9$.
Let $\Net$ be as defined in Lemma~\ref{lemma::ZZoporig}.

Taking a union bound over all $\abs{\Net}^2 \le 9^{2n}$ pairs $q, h \in \Net$, the
$\ve$-net of $\Sp^{n-1}$, we conclude that
\bens
\lefteqn{\prob{\max_{q, h \in \Net}
 \abs{\sum_{i=1}^n \sum_{j \not= i}^n q_i  h_j \ip{\Z_{i}, \Z_{j}}}>
 t_{\offd}} =: \prob{\E_8}} \\
&\le &
\abs{\Net}^2 \cdot 2 \exp \left(- c n \min(C_1^2, C_1) 
\right)   \leq    2 \times 9^{2n} \exp \left(- c n \min(C_1^2, C_1)   \right) \\
& \le & 2\exp\left(-c n + 2n \ln 9\right) = 2 \exp\left(-c_3 n \right)
\eens
One can show that \eqref{eq::ZZHop} holds by a standard approximation 
argument under $\E_8^c$: we have 
\ben
\nonumber
  \twonorm{\offd(\Z \Z^T)}
  & =  &
  \sup_{q\in \Sp^{n-1}}   \sum_{i=1}^n \sum_{j \not=i}^n 
  q_i q_j \ip{\Z_{i}, \Z_{j}}  \le
\inv{(1-2\ve)} \sup_{q, h\in \Net}   \sum_{i=1}^n \sum_{j \not=i}^n 
q_i h_j \ip{\Z_{i}, \Z_{j}} \\
\label{eq::Zop2}
& \le & 2  (\max_{i} \twonorm{H_i}^2)C_1 C_0^2 (\sqrt{ np} \vee n)
 \een
 See for example Exercise 4.4.3~\cite{Vers18}.
 Thus we have on event $\E_8^c \cap 
 \E_0^c$,
 \bens
\twonorm{\Z \Z^T- \E (\Z \Z^T) } &\le &
\norm{\diag(\Z \Z^T) -\E \diag(\Z \Z^T)}_{\max} +\twonorm{\offd(\Z \Z^T)} \\
& \le &  C' C_0^2 (\max_{i} \twonorm{H_i}^2)  (\sqrt{ np} \vee n)
\eens
Moreover, we have $\prob{\E_8^c \cap \E_0^c}  \ge 1- 2 \exp(-c n)$
upon adjusting the constants.
\end{proofof2}

\bibliography{final,subgaussian,clustering}

\end{document}